\newif\ifcol
\newcommand{\colorr}{\color[rgb]{0.8,0,0}}
\newcommand{\colorg}{\color[rgb]{0,0.5,0}}
\newcommand{\colorb}{\color{black}}
\newcommand{\colord}{\color{black}}
\newcommand{\colorp}{\color{black}}
\newcommand{\colorn}{\color{black}}{\color[rgb]{0.7,0.5,0}}
\newcommand{\colora}{\color{black}}{\color[rgb]{0,0,0.8}}
\newcommand{\colorc}{\color{black}}{\color[rgb]{0.5,0,0.5}}
\newcommand{\colore}{\color[rgb]{0.7,0.5,0}}
\newcommand{\colorr}{\color{black}}
\newcommand{\colorg}{\color{black}}
\newcommand{\colorb}{\color{black}}
\newcommand{\colord}{\color{black}}
\newcommand{\colorp}{\color{black}}
\newcommand{\colorn}{\color{black}}
\newcommand{\colora}{\color{black}}
\newcommand{\colorc}{\color{black}}
\newcommand{\colore}{\color{black}}
\newtheorem{definition}{Definition}[section]
\newtheorem{lemma}{Lemma}[section]
\newtheorem{proposition}{Proposition}[section]
\newtheorem{theorem}{Theorem}[section]
\newtheorem{remark}{Remark}[section]
\newtheorem{example}{Example}[section]
\newcommand{\EQ}[1]{\begin{equation}{#1}\end{equation}}
\newcommand{\EQQ}[1]{\begin{equation}{#1 \nonumber}\end{equation}}
\newcommand{\EQN}[1]{\begin{equation}\begin{split}{#1}\end{split}\end{equation}}
\newcommand{\EQNN}[1]{\begin{equation}\begin{split}{#1 \nonumber}\end{split}\end{equation}}
\newcommand{\MAT}[2]{\left(\begin{array}{#1} #2 \end{array} \right)} 
\begin{document}

\title{Malliavin Calculus Techniques for Local Asymptotic Mixed Normality and Their Application to  Degenerate Diffusions}
\author{Masaaki Fukasawa$^*$ and Teppei Ogihara$^{**}$\\
$*$
\begin{small}Graduate School of Engineering Science, Osaka University, 1-3 Machikaneyama, Toyonaka, Osaka 560--8531, Japan\end{small}\\
$**$ 
\begin{small}Graduate School of Information Science and Technology, University of Tokyo, \end{small}\\
\begin{small}7-3-1 Hongo, Bunkyo-ku, Tokyo 113--8656, Japan \end{small}
}
\maketitle

\noindent
{\bf Abstract.}
We study sufficient conditions for a local asymptotic mixed normality
property of statistical models.
We develop a scheme with the $L^2$ regularity condition proposed by
Jeganathan [\textit{Sankhy\=a Ser. A} \textbf{44} (1982) 173--212] so that it is applicable to high-frequency observations of stochastic processes.
Moreover, by combining with Malliavin calculus techniques by Gobet [\textit{Bernoulli} \textbf{7} (2001) 899--912, 2001],
we introduce tractable sufficient conditions for smooth observations in the Malliavin sense,
which do not require Aronson-type estimates of the transition density function.
Our results, unlike those in the literature,
 can be applied even when the transition density function has zeros.
For an application, we show the local asymptotic mixed normality property
of degenerate (hypoelliptic) diffusion models under high-frequency
observations, in both complete and partial observation frameworks.
The former and the latter extend previous results
for elliptic diffusions and for integrated diffusions, respectively.
\\

\noindent
{\bf Keywords.} degenerate diffusion processes; 
integrated diffusion processes; local asymptotic mixed normality; 
{\colorb  $L^2$ } regularity condition; Malliavin calculus; {\colorn partial observations}

\begin{discuss}

{\colorr 
＜式番号＞
\begin{itemize}
\item Theorem:\ref{L2regu-thm},{L2regu-thm};\ref{Malliavin-LAMN-thm2},{Malliavin-LAMN-thm2}; \ref{Malliavin-LAMN-thm},{Malliavin-LAMN-thm};
\ref{partial-LAMN}, {partial-LAMN}; \ref{degenerate-LAMN-thm}, {degenerate-LAMN-thm}
\item Section:\ref{L2regu-subsection}, {L2regu-subsection}; \ref{LAMN-Malliavin-subsection}, {LAMN-Malliavin-subsection};
\ref{integrated-diffusion-subsection}, {integrated-diffusion-subsection}; \ref{integrated-diffusion-LAMN-section}, {integrated-diffusion-LAMN-section};
\ref{Mmat-inverse-est-subsection}, {Mmat-inverse-est-subsection}
\item Proposition:\ref{delLogP-eq}, {delLogP-eq};\ref{delp-p-est-prop}, {delp-p-est-prop};\ref{delLogP-approx-prop},{delLogP-approx-prop}
\item Lemma:\ref{tildeK-est-lemma}, {tildeK-est-lemma}
\end{itemize}

$\lVert \cdot \rVert_{k,p}$の定義はNualart (1.37)にある．

$p$の$x_{j-1}$に関する連続性はおそらくなくてもいい。$p$のdefから可測性は言える。あとは$\sup_{x_{j-1}}$の評価があればいいか

eqnarrayは全てnonumberになっているからこのままでいい．
}
\end{discuss}

\section{Introduction}

In the study of statistical inference for parametric models, {\it
asymptotic efficiency} {\colorb plays a key role} when we consider {\colore the	} asymptotic
optimality of estimators.
This notion {\colore was} first studied for models {\colore that} satisfy {\it local asymptotic normality} (LAN);
H\'ajek \cite{haj70} showed the convolution theorem, and H\'ajek \cite{haj72} showed the minimax theorem under the LAN property.
Both theorems give different concepts of asymptotic efficiency.
For statistical models with {\colore the extended notion of} {\it local asymptotic mixed normality} (LAMN),
Jeganathan \cite{jeg82,jeg83} showed the convolution theorem and the minimax theorem.

Gobet \cite{gob01} showed the LAMN property for discretely observed diffusion processes on a fixed interval.
In {\colore that} model, the maximum-likelihood-type estimator proposed {\colore by} Genon-Catalot and Jacod~\cite{gen-jac93} is asymptotically efficient.
{\colore For} further results related to diffusion processes on a fixed interval, 
see Gloter and Jacod~\cite{glo-jac01a} (LAN for noisy observations of diffusion processes with deterministic diffusion coefficients),
Gloter and Gobet~\cite{glo-gob08} (LAMN for integrated diffusion processes),
Ogihara~\cite{ogi15} (LAMN for nonsynchronously observed diffusion processes),
and Ogihara~\cite{ogi18} (LAN for noisy, nonsynchronous observations of diffusion processes with deterministic diffusion coefficients).

In the model of discretely observed diffusion processes {\colore by} Gobet~\cite{gob01}, 
he initiated a scheme based on Malliavin calculus techniques to show the LAMN property.
{\colorc He introduced Malliavin calculus techniques to control {\colore the} asymptotics of log-likelihood ratios,
and his scheme is {\colore effective} for diffusion processes when the diffusion coefficient matrix is nondegenerate.}
In his scheme, it is crucial that the transition density functions of diffusion processes are estimated from below and above
by Gaussian density functions.
{\colorn Such estimates are {\colore known as} Aronson's estimate.}
Gloter and Gobet \cite{glo-gob08} {\colorb gave} {\colorn Aronson's estimate}, and consequently showed
the LAMN property for the one-dimensional integrated diffusion processes by using Gobet's scheme.
However, {\colorc the proof} of {\colorc Aronson's estimate} (Theorem 4) {\colore crucially} depends on 
the fact that the latent process is one-dimensional.

For {\colorn a diffusion model with the degenerate diffusion coefficient and} a multi-dimensional integrated diffusion model, it seems difficult
to obtain {\colorn Aronson's estimate} {\colorc in general}, and therefore we cannot apply Gobet's scheme.
On the other hand, Theorem 1 in Jeganathan~\cite{jeg82} introduced a scheme by using {\colore the} so-called $L^2$ regularity condition to show the LAMN property.
{\colore An advantage} of this scheme is that we do not need estimates for
the transition density functions.
{\colore However, the} results in \cite{jeg82} 
are not directly applicable to {\colore high-frequency} observations
that require a framework of triangular arrays.
Further, for integrated diffusions, 
following the idea in \cite{glo-gob08}, 
we need to consider a triangular array of expanding data blocks.

This paper studies {\colorn four} topics. First, we extend Theorem~1 in~\cite{jeg82} so that it can be applied to statistical {\colore models} with triangular array observations
appearing in the above diffusion models with high-frequency observations.
Second, we show that the new scheme based on the $L^2$ regularity condition can be applied under several conditions described via notions of Malliavin calculus.
The new scheme is {\colore highly} compatible with Gobet's scheme. {\colore Indeed}, the $L^2$ regularity condition is satisfied when observations are smooth in the Malliavin sense,
and the inverse of Malliavin matrix and its derivatives have moments (see (B1), (B2), and Theorem~\ref{Malliavin-LAMN-thm2}).
Moreover, if observations have {\colore a Euler--Maruyama} approximation, {\colore then the} sufficient conditions for the LAMN property is simplified (Theorem~\ref{Malliavin-LAMN-thm}).
Third, by using these schemes, we show the LAMN property for {\colorn diffusion processes with the degenerate diffusion coefficient (degenerate diffusion)
in which it is difficult to obtain Aronson-type {\colore estimates} in general.
{\colore Finally}, we deal with the LAMN property for partial observations of degenerate diffusion processes.}

{\colorn Our} new schemes can be applied to general statistical models without transition density estimates. 
In particular, they can be applied even when the transition density
function has zero points.
{\colorc The $L^2$ regularity condition is related to differences {\colore in the roots} of transition density functions,
and it is not easily applied when the transition density functions have zero points (see (\ref{L2-regu-equiv})).
However, we will see in Section~\ref{LAMN-Malliavin-subsection} that if {\colore the} observations are smooth in {\colore the} Malliavin's sense 
and the Malliavin matrix is nondegenerate, {\colore then} we can apply the $L^2$ regularity condition even when the transition density has zero points.
{\colore Consequently}, this scheme enables us to study the LAMN property for several statistical models in Wiener space.
First, this scheme {\colore allows a} simplified proof of the results in Gobet~\cite{gob01}.
Moreover, this scheme yields two interesting results.
The first one is an extension of the results in Gobet~\cite{gob01} to a wider class including degenerate diffusion processes;
we emphasis that this is achieved {\colore because} we do not rely on Aronson-type estimates.
The second one is an extension of the LAMN property for one-dimensional integrated diffusion processes 
in Gloter and Gobet~\cite{glo-gob08} to the multi-dimensional case.
We deal with the integrated diffusion process model in {\colore the} general framework of partial observations for degenerate diffusion processes.
We {\colore find} that efficient asymptotic variance {\colore is the same for an} integrated diffusion process model and for {\colore a} diffusion process model,
and {\colore that} they are exactly twice as large as the statistical model of both observations (see Remark~\ref{integrated-rem}). 
{\colore Because} our scheme does not require transition density estimates, 
{\colore we expect these ideas to be useful also} for jump-diffusion process models or L\'evy driven {\colore stochastic differential equation} models.
However, we left this {\colore for} future work.}

{\colorb Our study of integrated diffusion models is motivated by 
{\colore experimental observations of single molecules}
(see e.g. Li et al.~\cite{liEtAl10}), {\colore behind which are Langevin-type} molecular dynamics
\begin{equation*}
\ddot{Y} = b(\dot{Y},Y) + {\colore a(\dot{Y})}\dot{W}.
\end{equation*}
Here $Y$ represents the position of a molecule (or a particle) and $\dot{W}$ is {\colore white}
noise.
When $a = 0$ this reduces to the {\colore Newtonian} equation of classical dynamics.
The system can be written as an integrated diffusion
\begin{equation}\label{Langevin-eq}
\begin{split}
 & dY_t = X_t dt, \\
& dX_t = b(X_t,Y_t)dt + {\colore a(X_t)}dW_t.
 \end{split}
\end{equation}
Our LAMN property enables us to discuss optimality in estimating the
coefficient $a$ 
based on high-frequency observations of the position $Y$.

}

The rest of this paper is organized as follows. In Section~\ref{main-results-section}, we introduce {\colore our} main results, {\colore namely},
the extended scheme using the $L^2$ regularity condition, the scheme via Malliavin calculus techniques, 
and the LAMN property of {\colorc degenerate} diffusion processes.
Section~\ref{Mcal-section} contains details of Malliavin calculus techniques. We combine the extended scheme of {\colore the} $L^2$ regularity condition
with {\colore the} approaches of Gobet~\cite{gob01} and Gloter and Gobet~\cite{glo-gob08}.

\section{Main results}\label{main-results-section}

\subsection{The LAMN {\colore property} via the $L^2$ regularity condition}\label{L2regu-subsection}

In this subsection, we extend Theorem~1 in Jeganathan~\cite{jeg82} to statistical models of triangular array observations
so that it can be applied to high-frequency observations of stochastic processes.

Let $\{P_{\theta,n}\}_{\theta\in\Theta}$ be a family of probability
 measures defined on $({\colorc \mathfrak{R}_n},\mathcal{F}_n)$,
where $\Theta$ is an open subset of $\mathbb{R}^d$. 
We often regard a $p$-dimensional vector $v$ as a $p\times 1$ matrix.
{\colorn $I_k$ denotes the unit matrix of size $k$ for $k\in\mathbb{N}$.}

{\colore
\begin{description}
\item[{\colorc Condition} (L).] The following two conditions are satisfied for $\{P_{\theta,n}\}_{\theta\in\Theta}$.
\begin{enumerate}
\item 
There exists a sequence $\{V_n(\theta_0)\}$ of $\mathcal{F}_n$-measurable $d$-dimensional vectors and a sequence $\{T_n(\theta_0)\}$
of $\mathcal{F}_n$-measurable $d\times d$ symmetric matrices such that 
\begin{equation}\label{Tn-nd}
P_{\theta_0,n}[T_n(\theta_0) {\rm \ is \  nonnegative \ definite}]=1
\end{equation}
for any $n\in\mathbb{N}$, and
\begin{equation}\label{log-like-est2}
\log \frac{dP_{\theta_0+r_nh,n}}{dP_{\theta_0,n}}-h^{\top}V_n(\theta_0)+\frac{1}{2}h^{\top}T_n(\theta_0)h\to 0
\end{equation}
in $P_{\theta_0,n}$-probability for any $h\in\mathbb{R}^d$, where
$\{r_n\}$ is a sequence of positive definite matrices
and $\top$ denotes the transpose operator for matrices.
\item {\colore There} exists an almost surely nonnegative definite random matrix $T(\theta_0)$ such that
\begin{equation*}
\mathcal{L}(V_n(\theta_0),T_n(\theta_0)|P_{\theta_0,n})\to \mathcal{L}(T^{1/2}(\theta_0)W,T(\theta_0)),
\end{equation*}
where $W$ is a $d$-dimensional standard normal random variable independent of $T(\theta_0)$.
\end{enumerate}
\end{description}
}

The following definition of the LAMN property is Definition~1 in~\cite{jeg82}.
\begin{definition}\label{LAMN-def}
The sequence of the families $\{P_{\theta,n}\}_{\theta\in\Theta} \ (n\in\mathbb{N})$
satisfies the LAMN condition at $\theta=\theta_0\in\Theta$ if {\colore Condition (L) is satisfied, 
$P_{\theta_0,n}[T_n(\theta_0) {\rm \ is \  positive \ definite}]=1$
for any $n\in\mathbb{N}$,
and $T(\theta_0)$ is positive definite almost surely.}
\end{definition}

{\colore For proving} the LAMN property for diffusion processes using a localization technique {\colore such as} Lemma 4.1 in~\cite{gob01},
Condition~(L) is useful {\colore because} (L) for the localized model often implies (L) for the original model.
See the proofs of Theorems~\ref{degenerate-LAMN-thm} and~\ref{partial-LAMN-thm} for the details. 
\begin{remark}\label{Gamma-pd-rem}
When Condition~(L) is satisfied and $T(\theta_0)$ is positive definite almost surely, by setting
\begin{equation}
{\colore \tilde{T}_n(\theta_0)=T_n(\theta_0)1_{\{T_n(\theta_0) \ {\rm is \ p.d.}\}}+I_d1_{\{T_n(\theta_0) \ {\rm is \ not \ p.d.}\}},}
\end{equation}
the LAMN property holds with $\tilde{T}_n(\theta_0)$ and {\colore $V_n(\theta_0)$}.
\end{remark}
\begin{discuss}
{\colorr $\tilde{T}_n\approx T_n$よりLAMNの1が成り立ち，
\begin{equation*}
(W_n,\tilde{T}_n)\approx (T_n^{-1/2}1_{\{T_n(\theta_0) \ {\rm is \ p.d.}\}}T_n^{1/2}W_n,T_n)\to (T^{-1/2}T^{1/2}W,T)=(W,T)
\end{equation*}
よりOK.
}
\end{discuss}

Let $(m_n)_{n=1}^\infty$ be a sequence of positive integers.
Let $\{{\colorc \mathfrak{R}_{n,j}}\}_{j=1}^{m_n}$ be a sequence of complete, separable metric spaces,
 and let $\Theta$ be an open subset of $\mathbb{R}^d$.
Let {\colorc $\mathfrak{R}_n=\mathfrak{R}_{n,1}\times \cdots \times \mathfrak{R}_{n,m_n}$}.
We consider statistical experiments {\colorc $(\mathfrak{R}_n, \mathcal{B}(\mathfrak{R}_n), \{P_{\theta,n}\}_{\theta\in\Theta})$}.
{\colorb Let $X_j = X_{n,j} : {\colorc \mathfrak{R}_n \to \mathfrak{R}_{n,j}}$ be the natural
projection, $\bar{X}_j = \bar{X}_{n,j} = (X_1, \dots, X_j)$, and
$\mathcal{F}_j = \mathcal{F}_{n,j}=\sigma(\bar{X}_j)$ 
for $0\leq j\leq m_n$.
Suppose that there exists a $\sigma$-finite measure $\mu_j = \mu_{n,j}$
on {\colorc $\mathfrak{R}_{n,j}$}
such that $P_{\theta,n}(X_1 \in \cdot) \ll \mu_1$ and 
$P_{\theta,n}(X_j \in \cdot | \bar{X}_{j-1} = \bar{x}_{j-1})\ll \mu_j$ for 
all $\bar{x}_{j-1} \in {\colorc \mathfrak{R}_{n,1} \times \dots \times \mathfrak{R}_{n,j-1}}$,
$2\leq j\leq m_n$.
}
\begin{discuss}
{\colorr $P_{\theta,n}|_{\mathcal{F}_k}\ll \mu_{n,1}\otimes \cdots \otimes \mu_{n,k}$ for $1\leq k\leq m_n$なら
\begin{equation*}
F_k(\bar{x}_k)=\frac{dP_{\theta,n}|_{\mathcal{F}_k}}{d(\mu_{n,1}\otimes \cdots \otimes \mu_{n,k})}
\end{equation*}
として
\begin{equation*}
P_{\theta,n}(x_j\in A|\bar{x}_{j-1})=\int_A\frac{F_j(\bar{x}_j)}{F_{j-1}(\bar{x}_{j-1})}\mu_j(dx_j)
\end{equation*}
か．
}
\end{discuss}
Let $E_\theta = E_{\theta,n}$ denote the expectation with respect to
$P_{\theta,n}$, and let $p_j = p_{n,j}$ be the conditional density functions defined by
\begin{equation*}
p_1(\theta)= \frac{dP_{\theta,n}(X_1\in
 \cdot)}{d\mu_1} : {\colorc \mathfrak{R}_{n,1}} \to \mathbb{R}, 
\quad p_j(\theta)= 
\frac{dP_{\theta,n}(X_j\in \cdot | \bar{X}_{j-1})}{d\mu_{j}} : 
{\colorc \mathfrak{R}_{n,j}} \to \mathbb{R}
\end{equation*}
for $2\leq j\leq m_n$.
Then we can see that for $g : {\colorc \mathfrak{R}_{n,1} \times \dots \mathfrak{R}_{n,j}}
\to \mathbb{R}$,
\begin{equation}\label{condE-eq}
\int_{{\colorc \mathfrak{R}_{n,j}}} p_j(\theta)g(\bar{X}_{j-1}, \cdot)d\mu_{j}
=E_{\theta}[g(\bar{X}_{j-1}, X_{j})|\mathcal{F}_{j-1}].
\end{equation}

\begin{discuss}
{\colorr ∵任意のBorel functions $g,h$に対して
\begin{equation*}
E_{\theta}[\int p_j(\theta)g(x_j,\bar{x}_{j-1})d\mu_jh(\bar{x}_{j-1})]=E_{\theta}[g(x_j,\bar{x}_{j-1})h(\bar{x}_{j-1})]
\end{equation*}
だから．
}
\end{discuss}

\begin{description}
\item[Assumption (A1).] There {\colore is a} $d\times d$ positive definite matrix $r_n$
and {\colore measurable} functions 
$$ {\colore \dot{\xi}_{n,j}(\theta_0, \cdot ): \mathfrak{R}_{n,1}\times \dots \times \mathfrak{R}_{n,j} \to \mathbb{R}^d} $$
 such that for every $h\in\mathbb{R}^d$,
\begin{equation}\label{L2-regu-eq}
\sum_{j=1}^{m_n}E_{\theta_0}\bigg[\int
 {\colorb [\xi_{n,j}}(\theta_0,h)-\frac{1}{2}h^{\top}{\colorb
 r_n}{\colorb \dot{\xi}_{j}}(\theta_0)]^2d {\colorb \mu_{j}}\bigg]\to 0
\end{equation}
as $n\to\infty$, where
\begin{equation*}
{\colorb\xi_{n,j}}(\theta_0,h)=\sqrt{p_j(\theta_0+{\colorb r_nh})}-\sqrt{p_j(\theta_0)}
\end{equation*}
and
  $${\colore \dot{\xi}_{j}(\theta_0)= \dot{\xi}_{n,j}(\theta_0, \bar{X}_{j-1},\cdot ): 
    \mathfrak{R}_{n,j} \to \mathbb{R}^d.} $$
\end{description}
Condition~(A1) {\colore is} the $L^2$ regularity {\colore condition}.

For a vector $x=(x_1,\cdots, x_k)$, we denote
$\partial_x^l=(\frac{\partial^l}{\partial x_{i_1}\cdots \partial
x_{i_l}})_{i_1,\cdots,i_l=1}^k$.
If $p_j$ is smooth with respect to $\theta$ and $p_j\neq 0$, then the log-likelihood ratio is rewritten as
\begin{equation*}
\log \frac{dP_{\theta',n}}{dP_{\theta,n}}=\sum_{j=1}^{m_n}\log\frac{p_j(\theta')}{p_j(\theta)}
=\sum_{j=1}^{m_n}\int^1_0\frac{\partial_{\theta}p_j}{p_j}(t\theta'+(1-t)\theta)dt(\theta'-\theta).
\end{equation*}
To show the LAMN property, we {\colore must} identify the limit distribution of this function under $P_{\theta,n}$.
{\colore Doing so} requires estimates for density ratios with different probability measures, which are not easy {\colore to obtain} for stochastic processes in general.
Gobet~\cite{gob01} {\colorb dealt} with this problem for discretely observed diffusion processes 
by using estimates from below and above by Gaussian density functions and show the LAMN property of that model.
\begin{discuss}
{\colorr 
\begin{eqnarray}
P_{\theta,n}(A_1\times \cdots \times A_{m_n})&=&E_\theta[E[1_{A_{m_n}}|\bar{x}_{m_n-1}]1_{A_1\times \cdots A_{m_n}}]
=E_\theta[\int 1_{A_{m_n}}p_{m_n}(\theta)d\mu_{n,m_n}1_{A_1\times \cdots A_{m_n-1}}] \nonumber \\
&=&E_\theta[\int \int 1_{A_{m_n-1}\times A_{m_n}}p_{m_n-1}p_{m_n}d\mu_{n,m_n}d\mu_{n,m_n-1}1_{A_1\times \cdots \times A_{m_n-2}}]=\cdots  \nonumber \\
&=&\int \cdots \int 1_A\prod_jp_j\prod_jd\mu_j. \nonumber
\end{eqnarray}
よって
\begin{equation*}
\frac{dP_{\theta,n}}{d\otimes_j\mu_j}=\prod_jp_j.
\end{equation*}
ここでは条件付き確率が$p_j$でその積でdensityが書けるのは一般に成立．あとはtransition densityの表現で問題が起きないかどうか．
}
\end{discuss}

On the other hand, if $p_j\in C^2(\Theta)$ and $p_j(\theta_t)\neq 0$ for any $t\in [0,1]$ $\mu_j$-a.e.
for $\theta_t=\theta_0+t{\colorb r_n}h$, then by setting
${\colorb\dot{\xi}_{j} (\theta_0)}=\partial_{\theta}p_j(\theta_0)p_j(\theta_0)^{-1/2}$ we obtain
\EQN{\label{L2-regu-equiv}
&\int
 [{\colorb\xi_{n,j}}(\theta_0,h)-\frac{1}{2}h^{\top}{\colorb r_n \dot{\xi}_{j} (\theta_0)}]^2d{\colorb
 \mu_{j}} \\
&\quad =\int \bigg[h^{\top}{\colorb
 r_n}\int^1_0\frac{\partial_{\theta}p_j(\theta_t)}{2\sqrt{p_j(\theta_t)}}dt-\frac{1}{2}h^{\top}{\colorb
 r_n}\frac{\partial_{\theta}p_j(\theta_0)}{\sqrt{p_j(\theta_0)}}\bigg]^2d{\colorb
 \mu_{j}} \\
&\quad =\int \bigg[\frac{h^{\top}{\colorb
 r_n}}{2}\int^1_0\int^t_0\bigg(\frac{\partial_{\theta}^2p_j(\theta_s)}{\sqrt{p_j(\theta_s)}}-{\colore \frac{\partial_{\theta}p_j(\partial_{\theta}p_j)^\top}{2p_j^{3/2}}(\theta_s)}\bigg)dsdt{\colorb
 r_n}h\bigg]^2d{\colorb
 \mu_{j}} \\
&\quad \leq \frac{1}{4}\sup_{0\leq s\leq
 1}E_{\theta_s}\bigg[\bigg\{h^{\top}{\colorb
 r_n}\bigg(\frac{\partial_{\theta}^2p_j(\theta_s)}{p_j(\theta_s)}-{\colore \frac{\partial_{\theta}p_j(\partial_{\theta}p_j)^\top}{2p_j^2}(\theta_s)}\bigg){\colorb
 r_n}h\bigg\}^2\bigg|{\colorb\mathcal{F}_{j-1}}\bigg].
}
\begin{discuss}
{\colorr Ibragimov Hasminskiiの教科書のII.3のRemark 3.2でL2 regularityを上のような二階微分の条件に変換している．
$p$は$(U_0^{n,\theta},\cdots, U_{k_j-k_{j-1}}^{n,\theta})$のdensity. 
}
\end{discuss}
In the right-hand side of the above inequality, 
{\colore the value $\theta_s$ of the parameter is the same for the probability measure of expectation and $p_j$ in the integrand, and therefore we do not need estimates for the transition density ratios.}

Thus, a scheme with the $L^2$ regularity condition does not require estimates for {\colore the} transition density function.
This is a big advantage, and this scheme {\colore can be} applicable to multi-dimensional integrated diffusion processes
 {\colore and  degenerate} diffusion processes,
where it is difficult to obtain estimates for transition density functions.

Define
{\colorb
\begin{equation*}
\eta_j= \left(\frac{\dot{\xi}_{j}(\theta_0)}{\sqrt{p_j(\theta_0)}} 1_{\{p_j(\theta_0)
 \neq 0\}}\right) (X_j).
\end{equation*}
}
\begin{description}
\item[Assumption (A2).] 
$E_{\theta_0}[|\eta_j|^2|\mathcal{F}_{j-1}] < \infty $ and 
$E_{\theta_0}[\eta_j|\mathcal{F}_{j-1}]=0 $, $P_{\theta_0,n}$-almost
	   surely 
for every $j\geq 1$. 

\item[Assumption (A3).] For every $\epsilon >0$ and $h\in \mathbb{R}^d$,
\begin{equation*}
\sum_{j=1}^{m_n}E_{\theta_0}[|h^{\top}{\colorb r_n}\eta_j|^21_{\{|h^{\top}{\colorb r_n}\eta_j|>\epsilon\}}]\to 0.
\end{equation*}
\item[Assumption (A4).] For every $h\in\mathbb{R}^d$, there exists a constant $K>0$ such that
\begin{equation*}
\sup_{n\geq 1}\sum_{j=1}^{m_n}E_{\theta_0}[|h^{\top}{\colorb r_n}\eta_j|^2]\leq K.
\end{equation*}
\end{description} 
Let 
\begin{equation}\label{TnWn-def}
{\colorp T_n}={\colorb
r_n}\sum_{j=1}^{m_n}E_{\theta_0}[\eta_j\eta_j^{\top}|{\colorb \mathcal{F}_{j-1}}]{\colorb r_n}
\quad {\rm and} \quad {\colorp V_n}={\colorb r_n}\sum_{j=1}^{m_n}\eta_j.
\end{equation}
\begin{description}
\item[Assumption (A5).] 
There exists a random $d\times d$ symmetric matrix $T$ such that
$P[T \ {\rm is \ {\colorp n.d.}}]=1$ and
\begin{equation*}
\mathcal{L}(({\colorp V_n}, T_n)|P_{\theta_0,n}) \to  \mathcal{L}(T^{1/2}W,T),
\end{equation*}
where $W \sim N(0,I_d)\perp T$. 
{\colorp \item[Assumption (P).] $T$ in (A5) is positive definite almost surely.}
\end{description} 
\begin{discuss}
{\colorr (A2)を使っているところは今のところJeg82 Lem 1とLem7の証明．ちなみにLem1はLem 5,6でも使う．}
\end{discuss}

Conditions (A1)--(A4) correspond to (2.A.1), (2.A.2), (2.A.4), {\colore and} (2.A.5) in~\cite{jeg82}, respectively.
{\colore Condition (A5) ensures Point~2 of Condition (L)}. For the sequential observations in~\cite{jeg82},
convergence of ${\colorb r_n}\sum_{j=1}^{m_n}\eta_j$ always holds by {\colore virtue} of Hall~\cite{hal77} (see (2.3) in~\cite{jeg82}).
However, the results of~\cite{hal77} cannot be applied to the triangular array observations,
so we {\colore instead} assume (A5) for our scheme.
{\colorp To check (A5), the results in Sweeting~\cite{swe80} are useful. Moreover, it}
is {\colorb not} difficult to check (A5) for statistical models of discretely observed diffusion processes
{\colorp by using a martingale central limit theorem}.
See, for example, {\colorp Theorems~\ref{degenerate-LAMN-thm} and~\ref{partial-LAMN-thm} and their proofs}.

\begin{discuss}
{\colorr Sweetingは$\sum_j\partial_\theta(\partial_\theta p_j/p_j)\overset{P}\to \Gamma_\theta$ for any $\theta$ならだいたいOKか}
\end{discuss}

\begin{theorem}\label{L2regu-thm}
Assume $(A1)$--$(A5)$. {\colorp Then (L) holds true with $T_n$ and $V_n$ in~(\ref{TnWn-def}) for the family $\{P_{\theta,n}\}_{\theta,n}$ of probability measures.
If further (P) is satisfied, {\colore then} $\{P_{\theta,n}\}_{\theta,n}$ satisfies the LAMN condition at $\theta=\theta_0$
with {\colore $\tilde{T}_n$} in Remark~\ref{Gamma-pd-rem}.}
\end{theorem}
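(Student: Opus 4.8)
The plan is to verify Condition~(L) for the pair $T_n,V_n$ of~(\ref{TnWn-def}) and then to read off the LAMN assertion from Remark~\ref{Gamma-pd-rem}. Point~2 of~(L) is literally Assumption~(A5), and the nonnegative definiteness~(\ref{Tn-nd}) is automatic, since $T_n=r_n\big(\sum_{j}E_{\theta_0}[\eta_j\eta_j^\top|\mathcal{F}_{j-1}]\big)r_n$ is a sum of conjugates of conditional second-moment matrices. Hence the whole content of~(L) is the stochastic expansion~(\ref{log-like-est2}), namely \[ \log\frac{dP_{\theta_0+r_nh,n}}{dP_{\theta_0,n}} = h^\top V_n - \tfrac12 h^\top T_n h + o_{P_{\theta_0,n}}(1), \] which is the triangular-array version of Theorem~1 in Jeganathan~\cite{jeg82}.

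First I would write the log-likelihood as a sum of conditional increments. On the full-measure set $\{p_j(\theta_0)\neq0\}$ put $\zeta_j=\xi_{n,j}(\theta_0,h)/\sqrt{p_j(\theta_0)}$; the factorization of the density of $P_{\theta,n}$ with respect to $\mu_1\otimes\cdots\otimes\mu_{m_n}$ into $\prod_j p_j(\theta)$ gives $\log(dP_{\theta_0+r_nh,n}/dP_{\theta_0,n})=2\sum_j\log(1+\zeta_j)$, and the Taylor expansion $2\log(1+x)=2x-x^2+R(x)$ with $|R(x)|\le C|x|^3$ for $|x|\le\tfrac12$ splits this into a linear term $2\sum_j\zeta_j$, a quadratic term $-\sum_j\zeta_j^2$, and a remainder. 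The basic consequence of~(A1) that drives everything is that, because $\xi_{n,j}-\tfrac12h^\top r_n\dot\xi_j=(\zeta_j-\tfrac12h^\top r_n\eta_j)\sqrt{p_j(\theta_0)}$ on $\{p_j(\theta_0)\neq0\}$, one has $\sum_j E_{\theta_0}\big[(\zeta_j-\tfrac12h^\top r_n\eta_j)^2\mid\mathcal{F}_{j-1}\big]\to0$ in $L^1$.

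For the linear term I would center, $2\sum_j\zeta_j=2\sum_j(\zeta_j-E_{\theta_0}[\zeta_j|\mathcal{F}_{j-1}])+2\sum_jE_{\theta_0}[\zeta_j|\mathcal{F}_{j-1}]$. Since $h^\top r_n\eta_j$ is centered by~(A2), the centered part differs from $h^\top V_n=\sum_jh^\top r_n\eta_j$ by a martingale whose sum of conditional variances is dominated by the $L^1$-null quantity above, so it is $o_{P_{\theta_0,n}}(1)$ by a standard martingale inequality (e.g.\ Lenglart's inequality). For the conditional-mean part I would use the exact identity $E_{\theta_0}[\zeta_j|\mathcal{F}_{j-1}]=-\tfrac12\int\xi_{n,j}^2\,d\mu_j$, which follows from $\int p_j(\theta)\,d\mu_j=1$ at both parameter values together with the Hellinger-affinity computation; then~(A1) lets me replace $\xi_{n,j}$ by $\tfrac12h^\top r_n\dot\xi_j$ and obtain $2\sum_jE_{\theta_0}[\zeta_j|\mathcal{F}_{j-1}]=-\tfrac14h^\top T_nh+o_{P_{\theta_0,n}}(1)$.

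For the quadratic term I would again substitute $\zeta_j\approx\tfrac12h^\top r_n\eta_j$ via~(A1) and then replace $\sum_j(h^\top r_n\eta_j)^2$ by its compensator $h^\top T_nh$ through a weak law of large numbers for the centered triangular array $(h^\top r_n\eta_j)^2-E_{\theta_0}[(h^\top r_n\eta_j)^2|\mathcal{F}_{j-1}]$, which is controlled by the Lindeberg condition~(A3) together with the uniform bound~(A4); this gives $\sum_j\zeta_j^2=\tfrac14h^\top T_nh+o_{P_{\theta_0,n}}(1)$. Adding the two bias contributions $-\tfrac14h^\top T_nh$ and $-\tfrac14h^\top T_nh$ reproduces the required $-\tfrac12h^\top T_nh$. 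The main obstacle, as always in $L^2$-regularity arguments, is the remainder together with the exceptional sets where the expansion breaks down, in particular where $1+\zeta_j\le0$ (so that $p_j(\theta_0+r_nh)$ vanishes and the logarithm is singular) and where $p_j(\theta_0)=0$; the decisive tool is the asymptotic negligibility of $\max_j|\zeta_j|$, which follows by combining~(A1) with~(A3) and yields both $\sum_j\zeta_j^2=O_{P_{\theta_0,n}}(1)$ and $|\sum_jR(\zeta_j)|\le C\max_j|\zeta_j|\sum_j\zeta_j^2\to0$. These are the triangular-array analogues of the martingale lemmas of~\cite{jeg82} (which repeatedly use~(A2)), and adapting them is where the real work lies. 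Once~(L) holds, under~(P) the limit $T$ is almost surely positive definite and $(V_n,T_n)$ converges in distribution to $(T^{1/2}W,T)$; by the Portmanteau theorem $P_{\theta_0,n}[T_n\text{ is not p.d.}]\to0$, so $\tilde T_n=T_n$ with probability tending to one and Remark~\ref{Gamma-pd-rem} delivers the LAMN condition at $\theta_0$ with $\tilde T_n$.
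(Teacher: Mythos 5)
Your proposal is correct and follows essentially the same route as the paper's proof: your $\zeta_j$ is exactly the paper's $\dot{\eta}_{nj}$, and your decomposition — Taylor expansion of $2\sum_j\log(1+\zeta_j)$, the Hellinger-affinity identity $E_{\theta_0}[\zeta_j|\mathcal{F}_{j-1}]=-\tfrac12\int\xi_{n,j}^2\,d\mu_j$, the $(A1)$-substitution $\zeta_j\approx\tfrac12 h^\top r_n\eta_j$, the McLeish-type truncated law of large numbers from (A3)--(A4), negligibility of $\max_j|\zeta_j|$ for the cubic remainder, and Remark~\ref{Gamma-pd-rem} under (P) — reproduces precisely the paper's steps (\ref{Xni-conv}) and (\ref{jeg-eq3})--(\ref{jeg-eq4}), i.e.\ the triangular-array adaptation of Lemmas 1 and 5--7 of Jeganathan~\cite{jeg82}.
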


\begin{arXiv}
A proof is given in the appendix.
\end{arXiv}
\begin{as}
{\colorc The proof is {\colore given in} Section~\ref{L2regu-proof-section} {\colore of} the supplementary {\colore material}~\cite{fuk-ogi20s}.}
\end{as}

\begin{remark}
We assumed that ${\colorb r_n}$ is positive definite because {\colore this assumption is made} in the definition of the LAMN property in Jeganathan~\cite{jeg82} (Definition~1).
However, we can see that Theorem~\ref{L2regu-thm} holds even {\colore if} ${\colorb r_n}$ is a nondegenerate {\colore asymmetric} matrix.
In that case, {\colore even} though the assumptions of convolution theorem (Corollary 1) in~\cite{jeg82} are not satisfied,
the convolution theorem in H\'ajek~\cite{haj70} is satisfied when {\it local asymptotic normality} is satisfied
({\colore i.e.,} $T$ in (A5) is non-random) and the operator norm of ${\colorb r_n}{\colorb r_n^\top}$ converges to zero.
\end{remark}

\subsection{The LAMN {\colore property} via Malliavin calculus techniques}\label{LAMN-Malliavin-subsection}

\begin{discuss}
{\colorr adaptive推定をするわけではないから$\Theta_1\times \Theta_2$型でやるかどうかは関係ない}
\end{discuss}

Gobet~\cite{gob01,gob02} used Malliavin calculus techniques to show the LAMN property for discretely observed diffusion processes.
Gloter and Gobet~\cite{glo-gob08} developed Gobet's scheme {\colore into a} more general one and showed the LAMN property for a one-dimensional integrated diffusion process.
{\colorb These approaches require estimates for transition density
functions by Gauss density functions, {\colore thereby} hampers
multi-dimensional extension {\colore of their results}. Our alternative approach
 introduces} tractable sufficient conditions to show the LAMN property for smooth observations in the Malliavin sense,
by combining with a scheme with the $L^2$ regularity condition in Section~\ref{L2regu-subsection}.  
In particular, we can show the $L^2$ regularity condition under (B1) and (B2), 
which are related to {\colore the} smoothness of observations and estimates for the inverse of {\colore the} Malliavin matrix (Theorem~\ref{Malliavin-LAMN-thm2}).
If further, observations have {\colore a} Gaussian approximation like {\colore the Euler--Maruyama} approximation, {\colore then the} sufficient conditions for the LAMN property are simplified {\colore as} in Theorem~\ref{Malliavin-LAMN-thm}.
\begin{discuss}
{\colorr Jump Malliavinを含むJacodの教科書のMalliavin operatorを使ったアプローチは$\partial p/p$の表現は得られそうだが，
その後の証明を丸々焼き直す必要があるので今回はやめておく．
integration-by-parts-settingを使った議論も同じ．
非有界作用素の理論はB→Hの作用素をあまり扱っていないから辞めといた方がよさそう．}
\end{discuss}

\begin{discuss}
{\colorr Let $(B,H,\mu)$ be an abstract Wiener space.}
\end{discuss}

We assume that {\colorp $\Theta$ is convex and that} $m_n\to \infty$ as $n\to \infty$.
\begin{discuss}
{\colorr Proposition 3.1で$C_0^\infty$の可分性を言うためにconvexが必要か．不要かもしれないが，$\theta_0$固定のLAMNでは$\Theta$は凸で問題ない}
\end{discuss}
Let $(\epsilon_n)_{n=1}^\infty$ be a sequence of positive numbers and $(\Omega,\mathcal{F},P)$ be a probability space. 
Let $({\colorb k^n_j})_{j=0}^{m_n}$ be an increasing sequence of
nonnegative integers such that ${\colorb k^n_0}=0$.
{\colore Hereinafter, we abbreviate $k^n_j$ as simply $k_j$.}
Let $N_n=k_{m_n}$ and $X_j^{n,\theta}$ be an $\mathbb{R}^{k_j-k_{j-1}}$-valued random variable on $(\Omega,\mathcal{F},P)$ for $1\leq j\leq m_n$.
Let $P_{\theta,n}$ be the induced probability measure by $\{X_j^{n,\theta}\}_{j=1}^{m_n}$ on $(\mathbb{R}^{N_n},\mathcal{B}(\mathbb{R}^{N_n}))$ and $\mathcal{F}_{j,n}=\{A\times \mathbb{R}^{N_n-k_j}|A\in \mathcal{B}(\mathbb{R}^{k_j})\}\subset \mathcal{B}(\mathbb{R}^{N_n})$.
\begin{discuss}
{\colorr (B4)を言うために$(\Omega,\mathcal{F},P)$の存在が必要．$P_{\theta,n}$はデータだけの空間にしたいからそこで$\mathcal{L}(\tilde{F})$は定義したくない．}
\end{discuss}
For each $1\leq j\leq m_n$, we adopt the notation of Nualart~\cite{nua06}. {\colore Specifically,} let $H_j$ be a real separable Hilbert space and
$W_j=\{W_j(h),h\in H_j\}$ be an isonormal Gaussian process defined on a complete probability space $(\Omega_j,\mathcal{G}_j,Q_j)$.
We assume that $\mathcal{G}_j$ is generated by $W_j$. {\colore Even though} these
objects possibly depend on $n$, we omit the dependence in our notation.
\begin{discuss}
{\colorr 元の定義：For each $1\leq j\leq m_n$, let $H_j$ be a real separable Hilbert space, $B_j$ be a real separable Banach space,
$\mathcal{B}_j$ be the Borel $\sigma$-field on $B_j$ and $Q_j$ be a probability measure such that
$(B_j,\mathcal{B}_j,Q_j)$ be a complete probability space.
Let $W_j=\{W_j(h),h\in H_j\}$ be isonormal Gaussian process defined on $(B_j,\mathcal{B}_j,Q_j)$,
and we assume that $\mathcal{B}_j$ is generated by $W_j$. $B_j=L^p(\Omega,\mathcal{G},Q)$ということ．
}
\end{discuss}
Let $\delta_j$ be {\colore the Hitsuda--Skorokhod integral (the divergence operator), $D_j$ be the Malliavin--Shigekawa} derivative, 
and $\mathcal{S}_j=\{f(W_j(h_1),\cdots,W_j(h_k));k\in\mathbb{N},h_i\in H_j \ (i=1,\cdots,k), f\in C^\infty(\mathbb{R}^k)\}$.
For $k\in\mathbb{Z}_+$ and $p\geq 1$, $\lVert \cdot \rVert_{k,p}$ denotes the operator on $\mathcal{S}_j$ defined by
\begin{equation*}
\lVert F\rVert_{k,p}=\bigg[E_j[|F|^p]+\sum_{l=1}^kE_j[\lVert D_j^lF\rVert_{H_j^{\otimes l}}^p]\bigg]^{1/p},
\end{equation*}
where $E_j$ denotes the expectation with respect to $Q_j$. 
\begin{discuss}
{\colorr ミンコフスキーの不等式から劣加法性が成立し，ノルムになる．ソボレフ空間のノルムの定義の仕方と一致．}
\end{discuss}
Let $\mathbb{D}^{k,p}_j$ be the completion of $\mathcal{S}_j$ with respect to the distance $d(F,G):=\lVert F-G \rVert_{k,p}$.
For general properties of $W_j$, $D_j$, and $\delta_j$, {\colore see} Nualart~\cite{nua06}.
Let {\colore $F_{n,\theta,j,\bar{x}_{j-1}}$} be an $\mathbb{R}^{k_j-k_{j-1}}$-valued random variable on $(\Omega_j,\mathcal{G}_j)$
such that
$Q_jF_{n,\theta,j,\bar{x}_{j-1}}^{-1}={\colorb P}(X_j^{n,\theta}\in
\cdot |\bar{X}_{j-1}^{n,\theta}=\bar{x}_{j-1})$, where
$\bar{X}_{j-1}^{n,\theta}=\{ {\colorb X_l^{n,\theta}}\}_{l=1}^{j-1}$ {\colorb and $\bar{x}_{j-1}\in \mathbb{R}^{k_{j-1}}$}.
We assume that $F_{n,\theta,j,\bar{x}_{j-1}}$ is Fr\'echet differentiable with respect to $\theta$ on $L^p(\Omega_j)$ for any $p>1$
and denote its derivative by $\partial_\theta F_{n,\theta,j,\bar{x}_{j-1}}=(\partial_{\theta_1} F_{n,\theta,j,\bar{x}_{j-1}},\cdots, \partial_{\theta_d} F_{n,\theta,j,\bar{x}_{j-1}})^\top$.
We often omit the parameter $\bar{x}_{j-1}$ in $F_{n,\theta,j,\bar{x}_{j-1}}$ and write $F_{n,\theta,j}$.
Let $C^\infty_o$ denote the space of all $C^\infty$ functions with compact {\colore support}. 
For a matrix $A$, we denote its {\colore element $(i,j)$} by $[A]_{ij}$. 
{\colore Similarly, we denote by $[V]_l$ the $l$-th element of a vector $V$.}
Let $\bar{k}_n=\max_j(k_j-k_{j-1})$.
\begin{discuss}
{\colorr 積分観測では$m_n\approx n$}
\end{discuss}

We assume the following conditions.
\begin{description}
\item[Assumption (B1).]  $\partial_\theta^l{\colore [F_{n,\theta,j}]_i}\in \cap_{p>1}\mathbb{D}^{4-l,p}_j$ for any $n,\theta,j,i,0\leq l\leq 3$, and $\sup_{n,i,j,\bar{x}_{j-1},\theta}\lVert \partial_\theta^l{\colore [F_{n,\theta,j}]_i}\rVert_{4-l,p}<\infty$ for $p>1$.
\begin{discuss}
{\colorr $K^{-1}\in \mathbb{D}^{1,2}$等のために$F$の$p$はかなり大きくとる必要があるから任意の$p$でよい．(B3)の$p$も同じ}
\end{discuss}
\item[Assumption (B2).]  The matrix $K_j(\theta)=(\langle {\colore D_j[F_{n,\theta,j}]_k,D_j[F_{n,\theta,j}]_l}\rangle_{H_j})_{k,l}$ is invertible almost surely for any $j,\bar{x}_{j-1}$ and $\theta$, and there exists a constant $\alpha_n\geq 1$ such that
\begin{equation*}
\sup_{i,l,j,\bar{x}_{j-1},\theta}\lVert [K_j^{-1}(\theta)]_{il}\rVert_{2,8}\leq \alpha_n, \quad {\rm and} \quad \epsilon_n^2\bar{k}_n^4\sqrt{m_n}\alpha_n^2\to 0
\end{equation*}
as $n\to\infty$.
\end{description}
\begin{discuss}
{\colorr 
$\lVert \cdot \rVert_{k,p}$の$p$に関する単調性は，$q>p$の時
\begin{eqnarray}
\lVert F\rVert_{k,p}\leq \bigg[E_j[|F|^q]^{p/q}+\sum_{l=1}^kE_j[\lVert D_j^l F\rVert_{H^{\otimes l}}^q]^{p/q}\bigg]^{1/p}
\leq \lVert F\rVert_{k,q}. \nonumber
\end{eqnarray}
($x,y\geq 0$, $r\in [0, 1]$に対して，$(x+y)^r\leq x^r+y^r)$)
よりわかるので，(B2)の定義は問題なし．

$(\det K_j)^{-1}$の評価より$[K_j^{-1}]_{il}$の評価が以下では必要なのでそれを仮定する．$(\det K_j)^{-1}$評価からも得られるが余因子が絡んできて行列サイズが発散しうる状況で一般的に書くのは面倒．

＜(B2)の有界性を使った評価に関して＞\\
今までは$a,b$の有界性などから定数は初期値$\bar{x}_{j-1}$に依存しない形だったから$\sup_{\bar{x}_{j-1}}E\j[\cdot]$が評価できているので$E[\cdot]$も評価できる．
エルゴードの時もモーメント評価ができる設定位を考えればよいから(B2)は$E$評価でいいだろう．しかしエルゴードを考えると$\mathcal{S}(c_n)$を使う方が良い．
For a sequence $(c_n)_{n=1}^\infty$ of positive numbers, $\mathcal{S}(c_n)$ denotes the set of all sequences $(p_j(\bar{x}_{j-1},\theta))_{j=1}^{m_n}$ of measurable functions such that
\begin{equation*}
\sup_nc_n^{-1}E\bigg[\sum_{j=1}^{m_n}|p_j(\bar{x}_{j-1})|\bigg]<\infty.
\end{equation*}

densityの存在を言うのにマルコフ性の仮定がなくてもおそらく大丈夫か．
多分$P(\cdot |\bar{x}_{j-1})=Q_jF^{-1}$と書けることと$K$が計算できて$K^{-1}$が評価できることが実質的にマルコフ並みの良さを仮定しているか．
$x_1,\cdots,x_{j-1}$に複雑に依存していたらこんなにきれいに書けないだろう．
}
\end{discuss}

Let $\theta_0$ be the true value of parameter $\theta$. 
We will see later in Proposition~\ref{delLogP-eq} that $F_{n,\theta,j}$ admits a density $p_{j,\bar{x}_{j-1}}(x_j,\theta)$
{\colore that} satisfies $p_{j,\bar{x}_{j-1}}(x_j,\cdot) \in C^2(\Theta)$ almost {\colore everywhere} in $x_j\in \mathbb{R}^{k_j-k_{j-1}}$ under (B1) and (B2).
{\colord Let $N_j=\{x_j\in\mathbb{R}^{k_j-k_{j-1}}|\sup_{\theta\in\Theta}p_{j,\bar{x}_{j-1}}(x_j,\theta)>\inf_{\theta\in\Theta}p_{j,\bar{x}_{j-1}}(x_j,\theta)=0\}$
and $M_j=\{x_j\in\mathbb{R}^{k_j-k_{j-1}}|\inf_{\theta\in\Theta}p_{j,\bar{x}_{j-1}}(x_j,\theta)>0\}$. We further assume the following condition.
\begin{description}
\item[Assumption (N1).] For any $h\in \mathbb{R}^d$, 
\begin{equation*}
E_{\theta_0}\bigg[\sum_{j=1}^{m_n}\int_{N_j}p_{j,\bar{x}_{j-1}}(x_j,\theta_0+r_nh)dx_j\bigg]\to 0
\end{equation*}
as $n\to\infty$.
\end{description}
}

{\colord If $\sup_{\theta\in\Theta}p_j(x_j,\theta)=0$ or $\inf_{\theta\in\Theta}p_j(x_j,\theta)>0$, we have $x_j\in N_j^c$.
Condition~(N1) says that the probability of other cases is asymptotically negligible. This condition is used to validate an estimate such as (\ref{L2-regu-equiv}).
However, if $F_{n,\theta,j}$ is approximated by a Gaussian random variable and satisfies (B3) and (N2) below,
{\colore then} we can check (A1) without (N1) (see Lemma~\ref{L2-regu-lemma}).}

With these definitions, the following theorem shows that the $L^2$ regularity condition is automatically satisfied under (B1), (B2), and (N1).
{\colord Let 
\begin{equation}\label{N1-xi-def}
\dot{\xi}_j(\theta)=\frac{\partial_\theta p_j}{2\sqrt{p_j}}1_{M_j}(x_j,\theta),
\quad \eta_j=\frac{\partial_\theta p_j}{2p_j}1_{M_j}(x_j,\theta_0).
\end{equation}
}
\begin{theorem}\label{Malliavin-LAMN-thm2}
Assume (B1), (B2), (N1), (A4), and (A5) {\colord with $\dot{\xi}_j(\theta)$ and $\eta_j$ defined in (\ref{N1-xi-def})}. 
{\colorp Then (L) holds true for $\{P_{\theta,n}\}_{\theta,n}$ at $\theta=\theta_0$ with $r_n = \epsilon_nI_d$.
If further (P) is satisfied, {\colore then} $\{P_{\theta,n}\}_{\theta,n}$ satisfies the LAMN condition at $\theta=\theta_0$.}
\end{theorem}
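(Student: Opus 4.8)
The plan is to deduce the conclusion from Theorem~\ref{L2regu-thm} by verifying its hypotheses (A1)--(A3); since (A4) and (A5) are assumed outright and (P) enters only for the LAMN conclusion, once (L) is obtained the positive-definite case follows immediately from the second assertion of Theorem~\ref{L2regu-thm} with $\tilde T_n$ as in Remark~\ref{Gamma-pd-rem}. Throughout I take $r_n=\epsilon_nI_d$ and $\mu_j$ equal to Lebesgue measure on $\mathbb{R}^{k_j-k_{j-1}}$. The first step is to invoke Proposition~\ref{delLogP-eq}, whose hypotheses are exactly (B1) and (B2): it provides the conditional density $p_{j,\bar x_{j-1}}(x_j,\theta)$, its $C^2$-regularity in $\theta$, and the Malliavin integration-by-parts representations of $\partial_\theta p_j/p_j$ and $\partial_\theta^2 p_j/p_j$ as conditional expectations of Skorokhod integrals. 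Together with the moment estimates for these quantities developed in Section~\ref{Mcal-section}, these representations furnish bounds on conditional moments of $\partial_\theta^l p_j/p_j$ that are uniform in $\theta$ and $\bar x_{j-1}$ and carry explicit powers of $\bar k_n$ and $\alpha_n$.

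For (A1) I would split the integration domain as $\mathbb{R}^{k_j-k_{j-1}}=M_j\cup N_j\cup R_j$, where $R_j=\{\sup_\theta p_j=0\}$. On $R_j$ the density vanishes identically in $\theta$, so the integrand is zero. On $N_j$ one has $\dot\xi_j(\theta_0)=0$ by the indicator $1_{M_j}$ in (\ref{N1-xi-def}), whence the integrand equals $(\sqrt{p_j(\theta_0+r_nh)}-\sqrt{p_j(\theta_0)})^2\le p_j(\theta_0+r_nh)+p_j(\theta_0)$; applying (N1) once with $h$ and once with $h=0$ shows that $\sum_j E_{\theta_0}[\int_{N_j}\cdots\,dx_j]\to0$. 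On $M_j$ we have $\inf_\theta p_j>0$, so, $\Theta$ being convex, $p_j(\theta_t)>0$ along the whole segment and the chain of identities leading to (\ref{L2-regu-equiv}) is valid; this bounds the $M_j$-part by $\tfrac14\sup_{s}E_{\theta_s}[\{h^\top r_n(\partial_\theta^2p_j/p_j-\tfrac12\partial_\theta p_j(\partial_\theta p_j)^\top/p_j^2)(\theta_s)r_nh\}^2\mid\mathcal F_{j-1}]$. Here $r_n=\epsilon_nI_d$ extracts a factor $\epsilon_n^4$, and the uniform Malliavin moment bounds control the remaining conditional expectation by a constant times a fixed power of $\bar k_n$ and $\alpha_n$ independent of $\bar x_{j-1}$, so the outer $E_{\theta_0}$ preserves that bound; summing over the $m_n$ blocks and using the rate condition $\epsilon_n^2\bar k_n^4\sqrt{m_n}\,\alpha_n^2\to0$ of (B2) then drives this contribution to zero.

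For (A2) and (A3) I would argue as follows. Finiteness of $E_{\theta_0}[|\eta_j|^2\mid\mathcal F_{j-1}]$ is immediate from the moment bound on $\partial_\theta p_j/p_j$, and the centering $E_{\theta_0}[\eta_j\mid\mathcal F_{j-1}]=0$ follows by differentiating the identity $\int p_j(\theta)\,dx_j\equiv1$ under the integral sign, legitimate by the same uniform bounds, which yields $\int\partial_\theta p_j(\theta_0)\,dx_j=0$; one then discards the $N_j$-part of this integral using (N1) and notes that $\partial_\theta p_j\equiv0$ on $R_j$. Condition (A3) I would obtain by a Lyapunov argument: with $r_n=\epsilon_nI_d$, $\sum_j E_{\theta_0}[|h^\top r_n\eta_j|^21_{\{|h^\top r_n\eta_j|>\varepsilon\}}]\le\varepsilon^{-2}\epsilon_n^4\sum_jE_{\theta_0}[|h^\top\eta_j|^4]$, and the fourth-moment bound on $\eta_j$ together with $\epsilon_n^4 m_n\to0$ (a consequence of (B2), since $\bar k_n,\alpha_n\ge1$) sends this to zero.

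The main obstacle is neither the reduction nor the domain splitting but the quantitative heart of the argument: establishing the Malliavin integration-by-parts representations of $\partial_\theta\log p_j$ and its second derivative and the accompanying moment estimates, uniformly in $\theta$ and $\bar x_{j-1}$, with precisely the powers of $\bar k_n$ and $\alpha_n$ for which the rate condition in (B2) is calibrated; this is exactly the content of Section~\ref{Mcal-section}. A conceptual point worth stressing is that, as the discussion following (\ref{L2-regu-equiv}) emphasizes, within each block the expectation $E_{\theta_s}$ and the density $p_j(\theta_s)$ in the integrand carry the same parameter $\theta_s$, so no estimate of a transition-density ratio across different parameter values is ever needed; only uniform-in-$\theta$ control is required, which the Malliavin bounds supply even where the density degenerates, the degeneracy being absorbed entirely into the $M_j/N_j$ split and (N1).
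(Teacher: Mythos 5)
Your overall route coincides with the paper's own proof: reduce to Theorem~\ref{L2regu-thm} with $r_n=\epsilon_n I_d$ and $\mu_j$ Lebesgue, split the state space as $M_j\cup N_j\cup R_j$, handle $N_j$ by applying (N1) at $h$ and at $h=0$ (the term $\dot{\xi}_j(\theta_0)$ vanishing there because of the indicator $1_{M_j}$), note everything vanishes on $R_j$, run the identity (\ref{L2-regu-equiv}) on $M_j$ where positivity along the segment $\{\theta_{th}\}$ holds, and close (A1) and (A3) with Proposition~\ref{delp-p-est-prop} and the calibration $\epsilon_n^4 m_n\alpha_n^4\bar{k}_n^8=(\epsilon_n^2\sqrt{m_n}\,\alpha_n^2\bar{k}_n^4)^2\to0$ from (B2). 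All of that is correct and is what the paper does.

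There is, however, one step that fails as written: in your verification of (A2) you cannot ``discard the $N_j$-part of this integral using (N1).'' Condition (A2) demands the \emph{exact} identity $E_{\theta_0}[\eta_j\mid\mathcal F_{j-1}]=0$, $P_{\theta_0,n}$-almost surely, for every fixed $n$ and $j$; this is what makes $V_n=r_n\sum_j\eta_j$ a sum of martingale differences inside the proof of Theorem~\ref{L2regu-thm}. Assumption (N1) is only an asymptotic statement about $E_{\theta_0}\big[\sum_j\int_{N_j}p_j\,dx_j\big]$ as $n\to\infty$, and an $o(1)$ bound on a sum over $j$ can never yield an exact almost-sure identity for each $j$. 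The argument that actually works, and that the paper uses, is pointwise: differentiating $\int p_j(x_j,\theta)\,dx_j\equiv1$ under the integral (justified by the uniform bounds) gives $\int\partial_\theta p_j(x_j,\theta_0)\,dx_j=0$; and at every $x_j$ with $p_j(x_j,\theta_0)=0$, the point $\theta_0$ is an \emph{interior minimum} of the nonnegative $C^2$ function $\theta\mapsto p_j(x_j,\theta)$, so $\partial_\theta p_j(x_j,\theta_0)=0$ there. Consequently $\int\frac{\partial_\theta p_j}{p_j}1_{\{p_j\neq0\}}(x_j,\theta_0)\,p_j(x_j,\theta_0)\,dx_j=\int\partial_\theta p_j(x_j,\theta_0)\,dx_j=0$, with no appeal to (N1) whatsoever. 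Note that this centers the $1_{\{p_j\neq0\}}$-version of $\eta_j$ (which is how the paper's displayed check of (A2) is written); for the $1_{M_j}$-version of (\ref{N1-xi-def}) the residual term is $\int_{N_j\cap\{p_j(\cdot,\theta_0)>0\}}\partial_\theta p_j(x_j,\theta_0)\,dx_j$, which is exactly the quantity your (N1) appeal was trying, impermissibly, to wave away, and which must instead be handled by the interior-minimum/indicator argument rather than by an asymptotic condition.
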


In the following, we give tractable sufficient conditions for (A4) and (A5) when $F_{n,\theta,j}$ has a Gaussian approximation $\tilde{F}_{n,\theta,j}$.
\begin{description}
\item[Assumption (B3).]  {\colore There exist a matrix $B_{j,i,\theta}=B_{j,i,\theta,\bar{x}_{j-1},n}$ and $h_{j,l}=h_{n,\theta,j,l,\bar{x}_{j-1}}\in H_j \ (1\leq l\leq k_j-k_{j-1})$
such that $\tilde{F}_{n,\theta,j,\bar{x}_{j-1}}=(W_j(h_{j,l}))_{l=1}^{k_j-k_{j-1}}$ is Fr\'echet differentiable with respect to $\theta$ on $L^p$ space,
and $\partial_{\theta_i}\tilde{F}_{n,\theta,j}=B_{j,i,\theta}\tilde{F}_{n,\theta,j}$.
Moreover, $\partial_\theta B_{j,i,\theta}$ exists and is continuous with respect to $\theta$, and there exists a constant $C_p$ and a sequence $(\rho_n)_{n\in\mathbb{N}}$ of positive numbers such that
}
\begin{equation*}
\sup_{{\colord n,}i,j,\bar{x}_{j-1},\theta,l_1,l_2}|[{\colord \partial_\theta^l}B_{j,i,\theta}]_{l_1,l_2}|<\infty,
\end{equation*}
{\colore and
\EQQ{\lVert [F_{n,\theta,j}-\tilde{F}_{n,\theta,j}]_{i'}\rVert_{3,p} 
+\lVert \partial_{\theta_i}[F_{n,\theta,j}-\tilde{F}_{n,\theta,j}]_{i'}\rVert_{2,p} \leq C_p\rho_n}}
for {\colore $p>1$}, $1\leq j\leq m_n$, $1\leq i\leq d$, $1\leq i'\leq k_j-k_{j-1}$, $\theta\in \Theta$ and $\bar{x}_{j-1}$.
\begin{discuss}
{\colorr $X$から$W$を復元するには2.3の$a$: invertibleは必要．復元しないと条件付き期待値を外すのが難しい．

$\delta D\tilde{F}=\tilde{F}$も必要だが，この時$E[F]=E[\delta DF]=0$で，Nualart Prop 1.2.2より
$E[\lVert DF\rVert^2_H]=\sum_{n=1}^\infty n\lVert J_nF\rVert_2^2<\infty$.
\begin{equation*}
E[\lVert DF\rVert^2_H]=E[F\delta DF]=E[F^2]=\sum_{n=1}^\infty \lVert J_nF\rVert^2_2
\end{equation*}
より$J_nF=0$ for $n\geq 2$なのである$h\in H$があって$F=W(h)$. $\delta D\tilde{F}=c\tilde{F}$も考えられるが，$\tilde{K}$がdeterministicになるには$F=W(h)$が必要だからこれを仮定する．}
\end{discuss}
\end{description}
\begin{discuss}
{\colorr 条件付き期待値をはずすにはこういう評価が必要．
densityの存在はマリアバンマトリックスの非退化性とNualart Thm 2.1.2からでる．
$\tilde{K}$の各要素の評価は$K$の評価と$K-\tilde{K}$の評価から出る．
}
\end{discuss}

{\colord For a statistical model of diffusion processes, $F_{n,\theta,j}$ corresponds to {\colora normalized} discrete observations and $\tilde{F}_{n,\theta,j}$ corresponds their {\colore Euler--Maruyama} approximations.
See (\ref{F-def}) and (\ref{tildeF-def}) for an example.}

Let $\tilde{K}_j(\theta)=(\langle h_{j,l_1},h_{j,_2}\rangle_{H_j})_{l_1,l_2}$.
Then, we will see that for sufficiently large $n$, $\tilde{K}_j(\theta)$ is invertible almost surely under (B1)--(B3) and that $\alpha_n\rho_n\bar{k}_n^2\to 0$ 
in Lemma~\ref{tildeK-est-lemma} of Section~\ref{Mcal-section}.
Let
\begin{equation*}
\mathcal{L}_{j,i,\bar{x}_{j-1}}(u,\theta)=u^\top B_{j,i,\theta}^\top \tilde{K}_j^{-1}(\theta)u-{\rm tr}(B_{j,i,\theta}).
\end{equation*}

Let $\Phi_{j,i}=(B_{j,i,\theta_0}^\top\tilde{K}_j^{-1}(\theta_0)+\tilde{K}_j^{-1}(\theta_0)B_{j,i,\theta_0})/2$, and let
\EQQ{\gamma_j(\bar{x}_{j-1})=(2{\rm tr}(\Phi_{j,i}\tilde{K}_j(\theta_0)\Phi_{j,i'}\tilde{K}_j(\theta_0)))_{i,i'=1}^d.}
\begin{description}
\item[Assumption (B4).] There exist $\mathbb{R}^d$-valued random variables $\{G_j^n\}_{1\leq j\leq m_n,n,\theta}$ and a filtration $\{\mathcal{G}_j\}_{j=1}^{m_n}$ on $(\Omega,\mathcal{F},P)$ 
such that 
  $(X_j^{n,\theta_0},G_j^n)$ is $\mathcal{G}_j$-measurable, $E[G_j^n|\mathcal{G}_{j-1}]=0$, and
  \EQN{\label{Gj-def-eq}
   &Q_j((F_{n,\theta_0,j},(\mathcal{L}_{j,i,\bar{x}_{j-1}}(\tilde{F}_{n,\theta_0,j},{\colorc \theta_0}))_{i=1}^d)\in A)|_{\bar{x}_{j-1}=\bar{X}_{j-1}^{n,\theta}} \\
   &\quad =P((X_j^{n,\theta_0},G_j^n)\in A|\mathcal{G}_{j-1})
  }
for $A\in \mathcal{B}(\mathbb{R}^{k_j-k_{j-1}}\times \mathbb{R}^d)$ and sufficiently large $n$.
Moreover, 
{\colore \EQQ{\sup_n\bigg(\epsilon_n^2\sum_{j=1}^{m_n}E[|\gamma_j(\bar{X}_{j-1}^{n,\theta_0})|]\bigg)<\infty,}} 
\begin{equation}\label{rho-condition}
\alpha_n\rho_n{\colord \bar{k}_n^2}\to 0 \quad {\rm and} \quad \epsilon_n^2m_n\alpha_n^3\rho_n{\colord \bar{k}_n^6}\to 0
\end{equation} 
as $n\to \infty$, and there exist a random $d\times d$ {\colorp matrix} $\Gamma$ and a $d$-dimensional standard normal random variable $\mathcal{N}$ such that
\begin{equation}\label{B4-stable-conv}
\bigg(\epsilon_n\sum_{j=1}^{m_n}G_j^n, \epsilon_n^2\sum_{j=1}^{m_n}\gamma_j({\colorn \bar{X}_{j-1}^{n,\theta_0}})\bigg)\overset{d}\to (\Gamma^{1/2}\mathcal{N},\Gamma).
\end{equation}
\begin{discuss}
{\colorr (\ref{B4-stable-conv})式の十分条件を書こうとすると，特にorthogonal martingaleとの条件付き期待値評価を一般に書くのが大変だからやめておく}
{\colorr 元々$\sup_{i,n}(n^{-1}\sum_{j=1}^{m_n}\sup_{\bar{x}_{j-1}}E_j[\mathcal{L}_{j,i}(\tilde{F}_{n,\theta,j},\theta)^2])<\infty$を仮定していたが，
$\tilde{F}=W(h)\sim N(0,\tilde{K})$より$E_j[\mathcal{L}(\tilde{F})^2]={\rm tr}(B^2)+{\rm tr}(B^\top \tilde{K}^{-1} B\tilde{K})$.
よって
\begin{equation*}
\sup_{i,n}\frac{1}{n}\sum_{j=1}^{m_n}\sup_{\bar{x}_{j-1}}({\rm tr}(B^2)+{\rm tr}(B^\top \tilde{K}^{-1} B\tilde{K}))<\infty
\end{equation*}
を仮定すれば成り立つ．}
\end{discuss}
\item[Assumption (N2).] {\colord ${\colore [F_{n,\theta,j}]_i}\in \cap_{p>1, r\in\mathbb{N}}\ \mathbb{D}^{r,p}$ for $n,\theta,j,i,\bar{x}_{j-1}$ 
and 
  \EQQ{\sup_{\theta\in\Theta}\lVert {\colore [F_{n,\theta,j}]_i}\rVert_{r,p}<\infty} 
for any $n,i,j,\bar{x}_{j-1}$, $r\in\mathbb{N}$ and $p>1$.
{\colore Also, 
  \EQQ{\partial_{\theta_i}D_j[\tilde{F}_{n,\theta,j}]_k=\sum_{l=1}^{k_j-k_{j-1}}[B_{j,i,\theta}]_{k,l}D_j[\tilde{F}_{n,\theta,j}]_l}}
and $\sup_{\theta\in\Theta}E_j[|\det K_j^{-1}(\theta)|^p]<\infty$ for any $p>1$, $j$, $k$, $\bar{x}_{j-1}$ and $1\leq i\leq d$.}
\item[Assumption (B5).] {\colord (N1) or (N2) holds true.}
{\colorp \item[Assumption (P$'$).] $\Gamma$ in (B4) is positive definite almost surely.}
\end{description}

{\colorp
\begin{remark}
{\colore Because}
  \EQNN{x^\top \gamma_jx&=2{\rm tr}\bigg(\bigg(\sum_{i=1}^d\Phi_{j,i}x_i\bigg)\tilde{K}_j\bigg(\sum_{i'=1}^d\Phi_{j,i'}x_{i'}\bigg)\tilde{K}_j\bigg) \\
   &=2{\rm tr}\bigg(\tilde{K}_j^{1/2}\bigg(\sum_{i=1}^d\Phi_{j,i}x_i\bigg)\tilde{K}_j\bigg(\sum_{i'=1}^d\Phi_{j,i'}x_{i'}\bigg)\tilde{K}_j^{1/2}\bigg)\geq 0
  }
for $x=(x_1,\cdots,x_d)\in\mathbb{R}^d$, $\gamma_j$ is symmetric and nonnegative definite.
Hence $\Gamma$ is also symmetric and nonnegative definite almost surely under (B1)--(B3) and {\colore the fact} that
$\epsilon_n^2\sum_{j=1}^{m_n}\gamma_j({\colorn \bar{X}_{j-1}^{n,\theta_0}})\overset{d}\to \Gamma$ as $n\to\infty$.
\end{remark}
}

\begin{theorem}\label{Malliavin-LAMN-thm}
Assume {\colord (B1)--(B5)}. Then $\{P_{\theta,n}\}_{\theta,n}$ satisfies {\colorp (L)}
 {\colord  with {\colorp $T(\theta_0)$ equal to} $\Gamma$ in (B4), $r_n=\epsilon_nI_d$,
$\dot{\xi}_j(\theta)$ and $\eta_j$ are defined by (\ref{N1-xi-def}) if (N1) is satisfied, and
\begin{equation*}
\dot{\xi}_j(\theta)=\frac{\partial_\theta p_j}{2\sqrt{p_j}}1_{\{p_j\neq 0\}}(x_j,\theta),
\quad \eta_j=\frac{\partial_\theta p_j}{2p_j}1_{\{p_j\neq 0\}}(x_j,\theta_0)
\end{equation*}
if (N2) is satisfied.} {\colorp If further ($P'$) is satisfied, {\colore then} $\{P_{\theta,n}\}_{\theta,n}$ satisfies the LAMN property at $\theta=\theta_0$.}
\end{theorem}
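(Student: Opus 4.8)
The plan is to derive Theorem~\ref{Malliavin-LAMN-thm} from Theorem~\ref{Malliavin-LAMN-thm2} (equivalently from Theorem~\ref{L2regu-thm}) by showing that the extra Gaussian-approximation hypotheses (B3), (B4), and (B5) imply the conditions (A4) and (A5) that those theorems take as input, with the limiting information matrix $T(\theta_0)$ identified as $\Gamma$ from (B4). The base conditions (B1) and (B2) are assumed directly, and the $L^2$ regularity condition (A1) will be obtained either from Theorem~\ref{Malliavin-LAMN-thm2} itself when (N1) holds, or from Lemma~\ref{L2-regu-lemma} when (N2) holds; (B5) is precisely the disjunction that makes one of these two routes available. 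Once (A1), (A4), and (A5) are in hand, (L) follows, and (P$'$) is exactly (P) for $T=\Gamma$, so the LAMN conclusion follows via Remark~\ref{Gamma-pd-rem}.

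The heart of the argument is a score approximation. By Proposition~\ref{delLogP-eq} the conditional density $p_j$ is $C^2$ in $\theta$, and the score $\eta_j=\partial_\theta p_j/(2p_j)$ of (\ref{N1-xi-def}) admits an integration-by-parts representation as a conditional expectation of a Skorokhod integral built from $F_{n,\theta_0,j}$ and $K_j^{-1}$. I would first show that replacing $F_{n,\theta_0,j}$ by its Gaussian surrogate $\tilde{F}_{n,\theta_0,j}=(W_j(h_{j,l}))_l$ turns this representation into the Gaussian score $\mathcal{L}_{j,i,\bar{x}_{j-1}}(\tilde{F}_{n,\theta_0,j},\theta_0)$: since $\partial_{\theta_i}\tilde{F}=B_{j,i}\tilde{F}$ and $\tilde{F}\sim N(0,\tilde{K}_j)$, the quantity $u^\top B_{j,i}^\top\tilde{K}_j^{-1}u-\mathrm{tr}(B_{j,i})$ is exactly the log-derivative of the Gaussian density, and $\Phi_{j,i}$ and $\gamma_j$ are its symmetrization and conditional covariance. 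The difference $\eta_j-(\mathcal{L}_{j,i}(\tilde{F}_{n,\theta_0,j},\theta_0))_i$ is then controlled in $L^2$ by the approximation parameter $\rho_n$ of (B3), multiplied by powers of the inverse-Malliavin-matrix bound $\alpha_n$ and the block size $\bar{k}_n$; these are the integration-by-parts computations of Section~\ref{Mcal-section}, together with the comparison $K_j\approx\tilde{K}_j$ of Lemma~\ref{tildeK-est-lemma}.

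With this approximation I would verify (A4) and (A5). For (A4), $\epsilon_n^2\sum_j E_{\theta_0}[|\eta_j|^2]$ is close to $\epsilon_n^2\sum_j E[|\mathcal{L}_{j,i}(\tilde{F}_{n,\theta_0,j},\theta_0)|^2]$, whose conditional version is $\mathrm{tr}\,\gamma_j(\bar{X}_{j-1})$, so the boundedness assumption $\sup_n\epsilon_n^2\sum_j E[|\gamma_j|]<\infty$ in (B4) gives the uniform bound once the accumulated error is shown negligible. For (A5), the distributional identity (\ref{Gj-def-eq}) matches the conditional law of $(X_j^{n,\theta_0},(\mathcal{L}_{j,i}(\tilde{F}_{n,\theta_0,j},\theta_0))_i)$ under $Q_j$ with that of $(X_j^{n,\theta_0},G_j^n)$ under $P$; combined with the score approximation, this transfers $V_n=\epsilon_n\sum_j\eta_j$ and $T_n=\epsilon_n^2\sum_j E_{\theta_0}[\eta_j\eta_j^\top|\mathcal{F}_{j-1}]$ of (\ref{TnWn-def}) onto $\epsilon_n\sum_j G_j^n$ and $\epsilon_n^2\sum_j\gamma_j(\bar{X}_{j-1})$. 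The stable convergence (\ref{B4-stable-conv}) then yields $(V_n,T_n)\to(\Gamma^{1/2}\mathcal{N},\Gamma)$, which is (A5) with $T=\Gamma$.

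The main obstacle will be controlling the accumulated approximation error uniformly across the $m_n\to\infty$ blocks when the block dimensions $\bar{k}_n$ and the inverse-matrix norms $\alpha_n$ are allowed to grow. Each per-block error is $O(\rho_n)$ up to powers of $\alpha_n$ and $\bar{k}_n$, and summing $m_n$ of them after the $\epsilon_n$-scaling is what forces the two rate conditions in (\ref{rho-condition}), namely $\alpha_n\rho_n\bar{k}_n^2\to0$ and $\epsilon_n^2 m_n\alpha_n^3\rho_n\bar{k}_n^6\to0$; checking that these exact powers suffice to kill both the first-moment error in $T_n$ and the second-moment error in $V_n$ is the delicate bookkeeping. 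Once this is done, I would secure (A1): under (N1) this is already internal to Theorem~\ref{Malliavin-LAMN-thm2}, which then yields (L) directly from the verified (A4) and (A5); under (N2) I would instead use Lemma~\ref{L2-regu-lemma} for (A1), note that the mean-zero condition (A2) and the Lindeberg condition (A3) are immediate from the score representation and $\epsilon_n\to0$, and apply Theorem~\ref{L2regu-thm}. In either case (L) holds with $T=\Gamma$, and (P$'$) together with Remark~\ref{Gamma-pd-rem} upgrades this to the LAMN property.
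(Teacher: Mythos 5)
Your proposal is correct and follows essentially the same route as the paper: reduce to checking (A4) and (A5) via Remark~\ref{Gamma-pd-rem}, Lemma~\ref{L2-regu-lemma}, and the proof of Theorem~\ref{Malliavin-LAMN-thm2}; approximate the true score $\mathcal{E}_j^1$ by the Gaussian score $\mathcal{L}_{j,i,\bar{x}_{j-1}}(\tilde{F}_{n,\theta_0,j},\theta_0)$ in $L^2$ (Proposition~\ref{delLogP-approx-prop}, built on Lemma~\ref{tildeK-est-lemma}); transfer to $(G_j^n,\gamma_j)$ via (\ref{Gj-def-eq}); and absorb the accumulated errors using (\ref{rho-condition}) before invoking the stable convergence (\ref{B4-stable-conv}) and Remark~\ref{Gamma-pd-rem}. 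This is exactly the paper's argument, including the identification of the two rate conditions as the decisive bookkeeping.
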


{\colord {\colore Note} that Theorem~\ref{Malliavin-LAMN-thm} works without {\colore having to identify} zero points of the density function $p_j$, unlike {\colore in} previous studies.
This is a {\colore major} advantage because it is often not {\colore an} easy task to show {\colore either} that $p_j$ has no zero points {\colore or} that zero points are common for every $\theta$.
For example, {\colore to our knowledge,} there are no results related to zero points of transition density functions
for the statistical model of multi-dimensional integrated diffusion processes.
In the following section, we see that Theorem~\ref{Malliavin-LAMN-thm} can be applied {\colore to} this model.}

{\colorp
The following lemma is useful when we check (\ref{B4-stable-conv}) by using a martingale central limit theorem.
\begin{as}
The proof is {\colore given in} Section~\ref{Mcal-proof-section} {\colore of} the supplementary {\colore material}~\cite{fuk-ogi20s}.
\end{as}
\begin{arXiv}
{\colorc The proof is left to Section~\ref{Mcal-proof-section} in the appendix.}
\end{arXiv}

{\colore For a matrix $A$, $\lVert A\rVert_{{\rm op}}$ denotes the operator norm of $A$.}
\begin{lemma}\label{B4-suff-lemma}
Assume (B1)--(B3) and that (\ref{Gj-def-eq}) is satisfied for any $A\in \mathcal{B}(\mathbb{R}^{k_j-k_{j-1}}\times \mathbb{R}^d)$. Then,
\begin{enumerate}
\item $E[G_j^n(G_j^n)^\top|\mathcal{G}_{j-1}]=\gamma_j({\colorn \bar{X}_{j-1}^{n,\theta_0}})$ for any $1\leq j\leq m_n$, {\colore and}
\item $\epsilon_n^4\sum_{j=1}^{m_n}E[|G_j^n|^4|\mathcal{G}_{j-1}]\overset{P}\to 0$ as $n\to\infty$
if 
\begin{equation}
\epsilon_n^4m_n\alpha_n^8\bar{k}_n^8\sup_{j,i,\tilde{x}_{j-1}}
\lVert B_{j,i,\theta_0} \tilde{K}_j(\theta_0)+\tilde{K}_j(\theta_0)B_{j,i,\theta_0}^\top \rVert_{{\rm op}}^4\to 0.
\end{equation}
\end{enumerate}
\end{lemma}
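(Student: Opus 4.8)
The plan is to reduce both assertions, via the conditional-distribution identity (\ref{Gj-def-eq}), to moment computations for the single $Q_j$-distributed vector $(\mathcal{L}_{j,i,\bar{x}_{j-1}}(\tilde{F}_{n,\theta_0,j},\theta_0))_{i=1}^d$. The structural observation driving everything is that by (B3) the approximating variable $\tilde{F}_{n,\theta_0,j}=(W_j(h_{j,l}))_l$ is a centered Gaussian vector with covariance $\tilde{K}_j(\theta_0)$, so each $\mathcal{L}_{j,i}$ is a centered quadratic form in a Gaussian vector and every quantity below is obtained from Gaussian-moment identities. Throughout I write $\tilde{F}=\tilde{F}_{n,\theta_0,j}$, $\tilde{K}=\tilde{K}_j(\theta_0)$, and $B_i=B_{j,i,\theta_0}$.

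For Part 1 I would first symmetrize the quadratic form. Since $\tilde{F}^\top B_i^\top\tilde{K}^{-1}\tilde{F}$ is a scalar it equals $\tilde{F}^\top\Phi_{j,i}\tilde{F}$, where $\Phi_{j,i}=(B_i^\top\tilde{K}^{-1}+\tilde{K}^{-1}B_i)/2$ is its symmetric part; moreover $\mathrm{tr}(\Phi_{j,i}\tilde{K})=\mathrm{tr}(B_i)$, so
\begin{equation*}
\mathcal{L}_{j,i}(\tilde{F},\theta_0)=\tilde{F}^\top\Phi_{j,i}\tilde{F}-\mathrm{tr}(\Phi_{j,i}\tilde{K})
\end{equation*}
is exactly the centered quadratic form. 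In particular $E_j[\mathcal{L}_{j,i}]=0$ (consistent with $E[G_j^n|\mathcal{G}_{j-1}]=0$), and the Gaussian identity $\mathrm{Cov}(\tilde{F}^\top A\tilde{F},\tilde{F}^\top C\tilde{F})=2\mathrm{tr}(A\tilde{K}C\tilde{K})$ for symmetric $A,C$ gives $E_j[\mathcal{L}_{j,i}\mathcal{L}_{j,i'}]=2\mathrm{tr}(\Phi_{j,i}\tilde{K}\Phi_{j,i'}\tilde{K})=[\gamma_j(\bar{x}_{j-1})]_{i,i'}$. By (\ref{Gj-def-eq}) the left-hand side is the $(i,i')$-entry of $E[G_j^n(G_j^n)^\top|\mathcal{G}_{j-1}]$, which proves the first claim.

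For Part 2 I would reduce to a coordinatewise fourth-moment bound: $|G_j^n|^4\leq d\sum_{i=1}^d(\mathcal{L}_{j,i})^4$, so by (\ref{Gj-def-eq}), $E[|G_j^n|^4|\mathcal{G}_{j-1}]\leq d\sum_i E_j[\mathcal{L}_{j,i}^4]$. Since $\mathcal{L}_{j,i}$ belongs to the second Wiener chaos of $\tilde{F}$, hypercontractivity yields $E_j[\mathcal{L}_{j,i}^4]\leq C(E_j[\mathcal{L}_{j,i}^2])^2=C([\gamma_j]_{ii})^2$ with a universal constant $C$. It then remains to bound $[\gamma_j]_{ii}=2\mathrm{tr}(\Phi_{j,i}\tilde{K}\Phi_{j,i}\tilde{K})$ by operator norms: using $\tilde{K}\Phi_{j,i}\tilde{K}=(B_i\tilde{K}+\tilde{K}B_i^\top)/2$ one gets $\mathrm{tr}(\Phi_{j,i}\tilde{K}\Phi_{j,i}\tilde{K})\leq\bar{k}_n\lVert\tilde{K}^{-1/2}(B_i\tilde{K}+\tilde{K}B_i^\top)\tilde{K}^{-1/2}\rVert_{{\rm op}}^2/4$, a bound in terms of $\lVert\tilde{K}^{-1}\rVert_{{\rm op}}$ and $\lVert B_i\tilde{K}+\tilde{K}B_i^\top\rVert_{{\rm op}}$. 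The factor $\lVert\tilde{K}^{-1}\rVert_{{\rm op}}$ is controlled, uniformly in $j$ and $\bar{x}_{j-1}$, by $\alpha_n$ and $\bar{k}_n$ through (B2) and Lemma~\ref{tildeK-est-lemma}. Summing over the $m_n$ blocks, multiplying by $\epsilon_n^4$, and using $\alpha_n,\bar{k}_n\geq1$, the stated hypothesis dominates the resulting expression; hence the sum of conditional expectations tends to $0$ in $L^1$, and therefore in probability.

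The main obstacle is the uniform operator-norm control of $\tilde{K}_j^{-1}(\theta_0)$: (B2) supplies only entrywise Sobolev bounds on $K_j^{-1}$, so one must pass from the random $K_j$ to the deterministic $\tilde{K}_j$ via Lemma~\ref{tildeK-est-lemma} and then convert an entrywise bound into an operator-norm bound, which costs powers of $\bar{k}_n$ coming from the matrix size $k_j-k_{j-1}$. Keeping careful track of these powers of $\alpha_n$ and $\bar{k}_n$ so that they are absorbed by the hypothesis $\epsilon_n^4m_n\alpha_n^8\bar{k}_n^8\sup_{j,i,\bar{x}_{j-1}}\lVert B_{j,i,\theta_0}\tilde{K}_j+\tilde{K}_jB_{j,i,\theta_0}^\top\rVert_{{\rm op}}^4\to0$ is the principal bookkeeping task; the Gaussian and second-chaos moment computations themselves are routine.
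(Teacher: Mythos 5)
Your proposal is correct and takes essentially the same route as the paper: both parts reduce via (\ref{Gj-def-eq}) to $Q_j$-moments of the centered Gaussian quadratic forms $\mathcal{L}_{j,i}(\tilde{F}_{n,\theta_0,j},\theta_0)$, use the covariance identity behind (\ref{gamma-j-eq}) for Point~1, and control the fourth moment through $\lVert\tilde{K}_j^{-1}(\theta_0)\rVert_{{\rm op}}\leq C\alpha_n\bar{k}_n$ (Lemma~\ref{tildeK-est-lemma}) together with the operator norm of $B_{j,i,\theta_0}\tilde{K}_j(\theta_0)+\tilde{K}_j(\theta_0)B_{j,i,\theta_0}^\top$. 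The only cosmetic difference is that you pass through second-chaos hypercontractivity and the second moment, whereas the paper bounds $E_j[|\tilde{F}_{n,\theta_0,j}^\top\Phi_{j,i}\tilde{F}_{n,\theta_0,j}|^4]$ directly; your resulting factor $\alpha_n^4\bar{k}_n^6$ is in fact slightly sharper than the paper's $\alpha_n^8\bar{k}_n^8$ and is absorbed by the stated hypothesis since $\alpha_n,\bar{k}_n\geq 1$.
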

}

{\colorp
\subsection{The LAMN property for degenerate diffusion models}\label{degenerate-diffusion-subsection}

{\colorn In this section, we show the LAMN property for degenerate diffusion processes
by applying the results in Sections~\ref{L2regu-subsection} and~\ref{LAMN-Malliavin-subsection}.}

Let $r\in\mathbb{N}$, and let $(\Omega,\mathcal{F},P)$ be the canonical probability space associated with {\colore an} $r$-dimensional Wiener process $W=\{W_t\}_{t\in [0,1]}$, that is,
$\Omega=C([0,1];\mathbb{R}^r)$, $P$ is the $r$-dimensional Wiener measure, $W_t(\omega)=\omega(t)$ for $\omega\in \Omega$, and $\mathcal{F}$ is the completion of {\colore the} Borel $\sigma$-field of $\Omega$ with respect to $P$.
Let $D$ be the {\colore Malliavin--Shigekawa} derivative related to the underlying Hilbert space $H=L^2([0,1];\mathbb{R}^r)$.
Let $\Theta$ be a bounded open convex set in $\mathbb{R}^d$.
We regard $\partial_xv=(\partial_{x_i}v_j)_{i,j}$ as a matrix for vectors $x$ and $v$.
$\bar{A}$ denotes the closure for a set $A$.

For $\theta\in \Theta$, let $X^\theta=(X_t^\theta)_{t\in [0,1]}$ be an $m$-dimensional diffusion {\colore process} satisfying $X_0^\theta=z_{{\rm ini}}$, and
\begin{equation}\label{degenerate-sde}
dX_t^\theta=b(X_t^\theta,\theta)dt+a(X_t^\theta,\theta)dW_t, \quad t\in [0,1],
\end{equation}
where $z_{{\rm ini}}\in \mathbb{R}^m$ and $a$ and $b$ are Borel functions.
We consider a statistical model with observations $(X_{j/n}^\theta)_{j=0}^n$.

Let $\kappa={\rm rank}(a(z,\theta))$. 
Assume that $m/2\leq \kappa<m$ and {\colore that} $\kappa$ does not depend on $(z,\theta)$.
Then, by singular value decomposition, we can find an orthogonal matrix $U_{z,\theta}$ such that 
$U_{z,\theta}a(z,\theta)=(\tilde{a}(U_{z,\theta}z,\theta)^\top, O_{m-\kappa,r}^\top)^\top$,
where $O_{k,l}$ denotes {\colore a} $k\times l$ matrix with each element equal to zero.

{\colore First, we} assume that $U_{z,\theta}$ does not depend on $(z,\theta)$.
\begin{description}
\item[Assumption (C1).] The derivatives $\partial_z^i\partial_\theta^ja(z,\theta)$ and $\partial_z^i\partial_\theta^jb(z,\theta)$ exist on 
$\mathbb{R}^m\times \Theta$ and can be extended to continuous functions on 
$\mathbb{R}^m\times \bar{\Theta}$
for $i\in\mathbb{Z}_+$ and $0\leq j\leq 3$.
Moreover, $\sup_{z,\theta}(|\partial_zb(z,\theta)|\vee |\partial_za(z,\theta)|)<\infty$, 
and there exist an orthogonal matrix $U$ and $\mathbb{R}^\kappa\otimes \mathbb{R}^r$-, $\mathbb{R}^\kappa$-, and $\mathbb{R}^{m-\kappa}$-valued 
Borel functions $\tilde{a}(z,\theta)$, $\tilde{b}(z,\theta)$, and $\check{b}(z)$, respectively, such that 
\begin{equation}
Ua(z,\theta)=
\left(\begin{array}{c} \tilde{a}(Uz,\theta) \\ O_{m-\kappa,r} \end{array}\right), 
\quad Ub(z,\theta)=
\left(\begin{array}{c} \tilde{b}(Uz,\theta) \\ \check{b}(Uz) \end{array}\right)
\end{equation}
for any 
$z\in\mathbb{R}^m$ 
and $\theta\in{\colorc \bar{\Theta}}$. Further, $\tilde{a}\tilde{a}^\top(z,\theta)$ is positive definite for any 
$(z,\theta)\in\mathbb{R}^m\times \bar{\Theta}$.
\end{description}
\begin{discuss}
{\colorr 一階微分の有界性は解の存在だけならsupにthetaを入れる必要はないが，$DF$の一様評価などでthetaに関する一様性が必要}
\end{discuss}

There exists a unique strong solution $(X_t^\theta)_{t\in[0,1]}$ of (\ref{degenerate-sde}) under (C1).
Let $P_{\theta,n}$ be the distribution of $(X_{k/n}^\theta)_{k=0}^n$, and let $\theta_0$ be the true value of $\theta$.
We denote $X_t=X_t^{\theta_0}$.

Under (C1), by setting $Y_t^\theta=UX_t^\theta$, we obtain
\begin{equation}\label{Y-SDE}
dY_t^\theta=
\left(\begin{array}{c} \tilde{b}(Y_t^\theta,\theta) \\ \check{b}(Y_t^\theta) \end{array}\right)dt
+\left(\begin{array}{c} \tilde{a}(Y_t^\theta,\theta) \\ O_{m-\kappa,r} \end{array}\right)dW_t. 
\end{equation}

We denote $z=(x,y)$ for $x\in\mathbb{R}^\kappa$ and $y\in\mathbb{R}^{m-\kappa}$,
{\colore $\nabla_1=(\partial_{z_1},\cdots, \partial_{z_\kappa})$,
$\nabla_2=(\partial_{z_{\kappa+1}},\cdots, \partial_{z_m})$,
}
${\rm Ker}(A)=\{x\in \mathbb{R}^l;Ax=0\}$, and by $A^+$ the Moore--Penrose inverse for a $k\times l$ matrix $A$. 

\begin{description}
\item[Assumption (C2).] The derivative $\partial_z^i\check{b}(z)$ is bounded for $i\in\mathbb{N}$ and 
\EQQ{\sup_{z\in\mathbb{R}^m}\lVert(({\colore \nabla_1}\check{b})^\top {\colore \nabla_1}\check{b})^{-1}(z)\rVert_{{\rm op}}<\infty.}
Moreover, 
  \EQN{{\rm Ker}(\tilde{a}(z,\theta))&\subset {\rm Ker}(\partial_{\theta_i}\tilde{a}(z,\theta)), \\
   {\rm Ker}(({\colore \nabla_1}\check{b})^\top(z))&\subset {\rm Ker}(({\colore \nabla_1}\check{b})^\top(z)\partial_{\theta_i}\tilde{a}\tilde{a}^+(z',\theta))
  }
for any $z,z'\in\mathbb{R}^m$, $1\leq i\leq d$, and $\theta \in\Theta$.
Furthermore, at least one of the following two conditions is satisfied;
\begin{enumerate}
\item $\check{b}$ is bounded;
\item ${\colore \nabla_2}\tilde{a}(z,\theta)=0$ and ${\colore \nabla_2}\check{b}(z)=0$ for any {\colore $z\in\mathbb{R}^m$} and $\theta\in\Theta$.
\end{enumerate} 
\end{description}
We need (C2) to satisfy $\partial_\theta F_{n,\theta,j}=B_{j,i,\theta}F_{n,\theta,j}$ in (B3). 
\begin{as}
See Section~\ref{degenerate-lamn-section} {\colore of the supplementary material~\cite{fuk-ogi20s} for the details}.
\end{as}
\begin{arXiv}
See Section~\ref{degenerate-lamn-section} in the appendix for the details.
\end{arXiv}
\begin{discuss}
{\colorr 局所化しないなら$z'=z$でもいい}
\end{discuss}

{\colore We can write $\tilde{a}^+=\tilde{a}^\top(\tilde{a}\tilde{a}^\top)^{-1}$ because $\tilde{a}\tilde{a}^\top$ is invertible.}
If $r=\kappa$ and (C1) is satisfied, {\colore then} we can easily check ${\rm Ker}(\tilde{a}(z,\theta))\subset {\rm Ker}(\partial_{\theta_i}\tilde{a}(z,\theta))$
{\colore because} $\tilde{a}(z,\theta)$ is invertible. Similarly, we can easily check 
${\rm Ker}(({\colore \nabla_1}\check{b})^\top(z))\subset {\rm Ker}(({\colore \nabla_1}\check{b})^\top(z)\partial_{\theta_i}\tilde{a}\tilde{a}^+(z',\theta))$
if $m-\kappa=\kappa$ and $({\colore \nabla_1}\check{b})^\top {\colore \nabla_1}\check{b}(z)$ is positive definite.

Let $\Psi_{t,\theta}=({\colore \nabla_1}\check{b})^\top\tilde{a}\tilde{a}^\top{\colore \nabla_1}\check{b}(UX_t,\theta)$, and let
  \EQN{\label{Gamma-def}
   \Gamma&=\bigg(\frac{1}{2}\int^1_0{\rm tr}((aa^\top)^+\partial_{\theta_i}(aa^\top)(aa^\top)^+\partial_{\theta_j}(aa^\top))(X_t,\theta_0)dt \\
   & \quad \quad +\frac{1}{2}\int^1_0{\rm tr}(\Psi_{t,\theta_0}^{-1}\partial_{\theta_i}\Psi_{t,\theta_0}\Psi_{t,\theta_0}^{-1}\partial_{\theta_j}\Psi_{t,\theta_0})dt\bigg)_{1\leq i,j\leq d}.
  }

\begin{description}
\item[Assumption (C3).] $\Gamma$ is positive definite almost surely.
\end{description}

\begin{theorem}\label{degenerate-LAMN-thm}
Assume (C1)--(C3). Then $\{P_{\theta,n}\}_{\theta,n}$ satisfies the LAMN property at $\theta=\theta_0$ with $\Gamma$.
\end{theorem}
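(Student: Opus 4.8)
The plan is to deduce the statement from Theorem~\ref{Malliavin-LAMN-thm} by exhibiting a normalized block decomposition of the observations for which (B1)--(B5) and (P$'$) hold, so that the LAMN property follows with $T(\theta_0)=\Gamma$. Since (C1) only provides boundedness of the \emph{first} derivatives of $a$ and $b$ (while higher $z$-derivatives and the Malliavin estimates require more), I would first reduce to coefficients with bounded derivatives of all orders by a localization argument in the spirit of Lemma~4.1 of Gobet~\cite{gob01}: replace $a,b$ by smooth truncations agreeing with them on a large compact set, observe that the two models coincide on an event of $P_{\theta_0,n}$-probability tending to one, and invoke the remark after Definition~\ref{LAMN-def} that Condition~(L) for the localized model transfers to the original one. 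After the fixed orthogonal change of variables $Y^\theta=UX^\theta$ of (C1), the system splits as in~(\ref{Y-SDE}) into the elliptic block driven directly by $\tilde a$ and the degenerate block driven only through the drift $\check b$; because $U$ is a bijection the experiment is unchanged, so I work with $Y^\theta$ throughout.

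For the block setup I take $t_j=j/n$, $m_n=n$, one block per interval $[t_{j-1},t_j]$ so that $k_j-k_{j-1}=m$ and $\bar k_n=m$ is bounded, and $\epsilon_n=n^{-1/2}$. By the Markov property the conditional law of $X_j^{n,\theta}$ given $\bar X_{j-1}$ depends only on the last observed point, so $(\Omega_j,\mathcal{G}_j,Q_j)$ is the Wiener space of the increment $W_\cdot-W_{t_{j-1}}$ and $F_{n,\theta,j}$ is the normalized increment of $Y^\theta$ started there: its $\kappa$ elliptic coordinates are scaled by $n^{1/2}$, matching $\tilde a\,(W_{t_j}-W_{t_{j-1}})$, while its $m-\kappa$ degenerate coordinates, after subtraction of the $\mathcal{F}_{t_{j-1}}$-measurable drift part, are scaled by $n^{3/2}$, matching the double integral $\nabla_1\check b\,\tilde a\int_{t_{j-1}}^{t_j}(W_s-W_{t_{j-1}})\,ds$. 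The Gaussian approximation $\tilde F_{n,\theta,j}$ of (B3) is the leading first-chaos term of this expansion, i.e.\ the linear functional $(W_j(h_{j,l}))_l$ whose elliptic and degenerate entries are exactly these stochastic integrals. Here (C2) enters decisively: the inclusions ${\rm Ker}(\tilde a)\subset{\rm Ker}(\partial_{\theta_i}\tilde a)$ and ${\rm Ker}((\nabla_1\check b)^\top)\subset{\rm Ker}((\nabla_1\check b)^\top\partial_{\theta_i}\tilde a\tilde a^+)$ are precisely what allow $\partial_{\theta_i}\tilde F_{n,\theta,j}=B_{j,i,\theta}\tilde F_{n,\theta,j}$ to hold, and the boundedness parts of (C1)--(C2) bound $B_{j,i,\theta}$ and $\partial_\theta B_{j,i,\theta}$ uniformly.

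Verification then proceeds condition by condition. Malliavin smoothness with uniformly bounded Sobolev norms (B1), and the higher-order smoothness part of (N2), follow from the standard differentiability theory for SDE solutions in Nualart~\cite{nua06} applied to the normalized, localized increments, using the $C^3$-in-$\theta$ and smooth-in-$z$ hypotheses. The Euler--Maruyama estimate $\lVert F_{n,\theta,j}-\tilde F_{n,\theta,j}\rVert_{3,p}+\lVert\partial_{\theta_i}(F_{n,\theta,j}-\tilde F_{n,\theta,j})\rVert_{2,p}\le C_p\rho_n$ in (B3) is obtained by a Taylor expansion of the flow carried one order beyond the leading chaos in each coordinate so that the degenerate direction is captured correctly, giving $\rho_n$ equal to a positive power of $n^{-1/2}$. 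Computing the Malliavin matrix $\tilde K_j$ of $\tilde F$ (block-diagonal to leading order, with elliptic block $\sim\tilde a\tilde a^\top$ and degenerate conditional block a fixed positive multiple of $\Psi_{t,\theta}$) shows it is uniformly nondegenerate exactly because $\Psi_{t,\theta}=(\nabla_1\check b)^\top\tilde a\tilde a^\top\nabla_1\check b$ is invertible under (C2); thus $\alpha_n$ stays bounded, and Lemma~\ref{tildeK-est-lemma} of Section~\ref{Mcal-section} gives the invertibility of $\tilde K_j$ and $\alpha_n\rho_n\bar k_n^2\to0$. With $\epsilon_n=n^{-1/2}$, $\bar k_n=m$, $m_n=n$ and $\alpha_n=O(1)$, the scaling requirement $\epsilon_n^2\bar k_n^4\sqrt{m_n}\alpha_n^2\to0$ of (B2) and both relations in~(\ref{rho-condition}) reduce to $\rho_n\to0$ and hold comfortably. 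For (B4) I let $G_j^n$ be $\epsilon_n^{-1}$ times the first-order score attached to~(\ref{Gj-def-eq}); Lemma~\ref{B4-suff-lemma} then supplies $E[G_j^n(G_j^n)^\top|\mathcal{G}_{j-1}]=\gamma_j$ and the Lyapunov bound, and a Riemann-sum computation of $\epsilon_n^2\sum_j\gamma_j(\bar X_{j-1}^{n,\theta_0})$ shows convergence in probability to $\Gamma$ of~(\ref{Gamma-def}), the elliptic block yielding the first trace integral in $(aa^\top)^+$ and the degenerate block the second in $\Psi^{-1}$; a stable martingale central limit theorem gives the joint limit~(\ref{B4-stable-conv}). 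Finally (N2) yields (B5), and (C3) is literally (P$'$), so Theorem~\ref{Malliavin-LAMN-thm} applies and, after undoing the localization, the LAMN property holds with $\Gamma$. I expect the main obstacle to be the uniform Malliavin nondegeneracy estimate for the \emph{true} (non-Gaussian) increment, namely the bounds $\sup\lVert[K_j^{-1}]_{il}\rVert_{2,8}\le\alpha_n$ of (B2) and the determinant moment bound of (N2), uniform in $\theta$, in the starting point, and in $n$: this is the genuinely hypoelliptic step where the degeneracy of the $y$-coordinates must be overcome through the structure inherited from $\Psi_{t,\theta}$ and $\tilde a\tilde a^\top$, and it is the content deferred to Section~\ref{Mcal-section}.
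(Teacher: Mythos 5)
Your overall strategy coincides with the paper's: normalize each increment of $Y^\theta=UX^\theta$ with the $\sqrt{n}$ and $n^{3/2}$ scalings (subtracting the drift term in the degenerate coordinates), take $m_n=n$, $\bar k_n=m$, $\epsilon_n=n^{-1/2}$, use the first-chaos Euler--Maruyama approximation $\tilde F_{n,\theta,j}$ whose relation $\partial_{\theta_i}\tilde F_{n,\theta,j}=B_{j,i,\theta}\tilde F_{n,\theta,j}$ is exactly what the kernel inclusions of (C2) provide, verify (B1)--(B5) with $\rho_n=n^{-1/2}$ and a martingale CLT for (B4), identify (C3) with (P$'$), and remove the extra boundedness assumptions by a Gobet-type localization. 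The paper runs the localization in the opposite order (Lemma~\ref{degenerate-L-lemma} establishes Condition~(L) under the strengthened hypothesis (C1$'$), and then Proposition~4.3.2 of Le~Cam transfers (L) to the original model as $q\to\infty$, with Remark~\ref{Gamma-pd-rem} finishing under (C3)); doing the truncation first, as you propose, is an immaterial difference.

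The genuine gap is your treatment of the nondegeneracy hypothesis (B2) (and the determinant-moment part of (N2)) for the true, non-Gaussian increment. You correctly single this out as the main obstacle, but you then assert that it is ``the content deferred to Section~\ref{Mcal-section}''. It is not: throughout Section~\ref{Mcal-section}, (B2) is a standing \emph{assumption}, never a conclusion --- Lemma~\ref{tildeK-est-lemma}, for instance, only converts a bound on $K_j^{-1}$ into one on $\tilde K_j^{-1}$ --- so there is nothing there to cite. What the proof actually requires, and what the paper supplies in a dedicated appendix, is Proposition~\ref{one-interval-nondeg-lemma} (together with Proposition~\ref{block-nondeg-lemma}): a bound $E_\mu[|\det \gamma_{\mathcal{X},\mathcal{Y}}|^{-p}]\leq C_p$ for the rescaled hypoelliptic system, uniform in the starting point, in $\theta$, and in $n$. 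This is a substantive argument rather than a corollary of the block structure of $\tilde K$: one bounds $\gamma_{\mathcal{X}}$ below by $(\tau/3M_2)I_{m_1}$ on a random time window where the Jacobian flow $\mathcal{U}_t$ stays close to $\mathcal{U}_1$, and then bounds the Schur complement of the degenerate block below by a constant multiple of $\tau'^3(\partial_x\check B_1)^\top\partial_x\check B_1$ via a second stopping time, with negative-moment estimates for $\tau$ and $\tau'$ closing the argument. Your remark that the degeneracy ``must be overcome through the structure inherited from $\Psi_{t,\theta}$ and $\tilde a\tilde a^\top$'' points in the right direction, but since no such estimate is available to quote, your proof has a hole precisely at the step that distinguishes the degenerate case from Gobet's elliptic one.
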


\begin{remark}
The proof of Theorem~\ref{degenerate-LAMN-thm} in Section~\ref{degenerate-lamn-section} 
\begin{as}
{\colorc of the supplementary file~\cite{fuk-ogi20s}}
\end{as}
shows that we obtain similar results when $\kappa=m$ and $aa^\top$ is positive definite
by ignoring $\check{b}$ and $\Psi_{t,\theta}$. This approach {\colore allows} another proof of the LAMN property for nondegenerate diffusion processes by Gobet~\cite{gob01}.
\end{remark}

\begin{remark}
The first term in the right-hand side of (\ref{Gamma-def}) is equal to $\Gamma$ in Gobet~\cite{gob01} for nondegenerate diffusion processes.
It is also the same as $\Gamma$ for the statistical model with observations $\{[Y_{j/n}]_l\}_{0\leq j\leq n, 1\leq l\leq \kappa}$.
Then, the second term in the right-hand side of (\ref{Gamma-def}) corresponds to additional information obtained by 
observation $\{[Y_{j/n}]_l\}_{0\leq j\leq n, \kappa+1\leq l\leq m}$ for the degenerate process.
\end{remark}

\begin{example}\label{degenerate-ex1}
Let $\kappa\in\mathbb{N}$. Let $X_t^\theta$ and $\bar{X}_t^\theta$ be {\colore a $\kappa$-dimensional diffusion process} satisfying
\begin{equation}
dX_t^\theta=d(X_t^\theta,\bar{X}_t^\theta,\theta)dt+c(X_t^\theta,\theta)dW_t,
\quad d\bar{X}_t^\theta=X_t^\theta dt, \quad t\in [0,1],
\end{equation}
where $\theta \in \Theta\subset \mathbb{R}^d$ and $W_t$ is a $\kappa$-dimensional standard Wiener process.
We assume that $cc^\top(x,\theta)$ is positive definite and $c(x,\theta)$ and $d(z,\theta)$ are smooth functions with bounded derivatives $\partial_xc$ and $\partial_zd$.
Then, (C1) and (C2) are satisfied with $U=I_{2\kappa}$. $\Gamma$ is given by
\begin{equation}\label{Gamma-ex}
\Gamma=\left(\int^1_0{\rm tr}((cc^\top)^{-1}\partial_{\theta_i}(cc^\top)(cc^\top)^{-1}\partial_{\theta_j}(cc^\top))(X_t^{\theta_0},\theta_0)dt\right)_{1\leq i,j\leq d}.
\end{equation}
If further $\Gamma$ is positive definite almost surely, {\colore then} we obtain the LAMN property of this model by Theorem~\ref{degenerate-LAMN-thm}. 
\end{example}
\begin{discuss}
{\colorr $\Gamma$の非退化性を言うにはGloter Jacod(2001b)のidentifiability assumptionをチェックすればよいが，$\Theta$が多次元の時は
この$\Gamma$の形からいうのと変わらないか．Uchida Yoshidaのnondegeneracyは退化しているSDEでは多分使えない}
\end{discuss}

{\colorn Example~\ref{degenerate-ex1} {\colore pertains to Langevin-type} molecular dynamics (\ref{Langevin-eq}).
Here we assumed that the position $X_t^\theta$ and {\colore velocity} $\bar{X}_t^\theta$ of {\colore a} molecule are observed at discrete time points.
In {\colora Example~\ref{int-obs-ex} of} Section~\ref{integrated-diffusion-subsection}, we deal with the case where we {\colore observe only} {\colora the position} $\bar{X}_t^\theta$.

With some restriction on the diffusion coefficient $c$, we can extend Example~\ref{degenerate-ex1} 
to the case {\colore where} ${\rm dim}(X_t^\theta)>{\rm dim}(\bar{X}_t^\theta)$.
\begin{example}
Let $\kappa'\leq \kappa$. Let $X_t^\theta$ be the same as in Example~2.1, 
and let $\bar{X}_t^\theta$ be a $\kappa'$-dimensional stochastic process satisfying
$[\bar{X}_t^\theta]_i=\int^t_0[X_s^\theta]_ids$ for $1\leq i\leq \kappa'$.
Moreover, let $c(x,\theta)=f(x,\theta)A$ for some $\mathbb{R}$-valued function $f$ and matrix $A$ independent of $x$ and $\theta$.
We assume that $AA^\top$ is positive definite, $f$ is positive-valued, 
and $f(x,\theta)$ and $d(z,\theta)$ are smooth functions
with bounded derivatives $\partial_xf$ and $\partial_zd$.
Then, (C1) and (C2) are satisfied {\colore because} $({\colore \nabla_1}\check{b})^\top=(I_{\kappa'} \ O_{\kappa',\kappa -\kappa'})$
and $\partial_\theta cc^{-1}(x,\theta)=\partial_\theta f f^{-1}(x,\theta)I_\kappa$.
We have $\Psi_{t,\theta}=f^2(X_t^\theta,\theta)([AA^\top]_{ij})_{1\leq i,j\leq \kappa'}$, and hence we have
  \EQNN{[\Gamma]_{ij}&=\frac{1}{2}\int^1_0\bigg\{\frac{2\partial_{\theta_i}f}{f}\frac{2\partial_{\theta_j}f}{f}(X_t,\theta_0)\cdot \kappa
   +\frac{2\partial_{\theta_i}f}{f}\frac{2\partial_{\theta_j}f}{f}(X_t,\theta_0)\cdot \kappa'\bigg\}dt \\
   &=2(\kappa+\kappa')\int^1_0\frac{\partial_{\theta_i}f\partial_{\theta_j}f}{f^2}(X_t,\theta_0)dt.
  }

If we only observe $(X_{k/n}^\theta)_{k=0}^n$, then $\Gamma$ in Gobet~\cite{gob01} is calculated as
\begin{equation*}
\Gamma=\bigg(2\kappa\int^1_0\frac{\partial_{\theta_i}f\partial_{\theta_j}f}{f^2}(X_t,\theta_0)dt\bigg)_{1\leq i,j\leq d}.
\end{equation*}
Therefore, we conclude that $\Gamma$ for observations $X_t^\theta$ and $\bar{X}_t^\theta$ 
is $(\kappa+\kappa')/\kappa$ times as much as the one for observations $X_t^\theta$.
\end{example}

}

\begin{example}
Let $X_t^\theta=(X_t^{\theta,1},X_t^{\theta,2})$ be a two-dimensional diffusion process satisfying
\begin{equation}
\left\{
\begin{array}{ll}
dX_t^{\theta,1}=&(d(X_t^{\theta,1},X_t^{\theta,2},\theta)+e(X_t^{\theta,1},X_t^{\theta,2}))dt+c(X_t^{\theta,1},X_t^{\theta,2},\theta)dW_t, \\
dX_t^{\theta,2}=&d(X_t^{\theta,1},X_t^{\theta,2},\theta)dt+c(X_t^{\theta,1},X_t^{\theta,2},\theta)dW_t, \\
\end{array}
\right.
\end{equation}
where $\theta \in \Theta\subset \mathbb{R}^d$ and $W_t$ is a one-dimensional standard Wiener process.
That is, {\colore the} diffusion coefficients of $X_t^{\theta,1}$ and $X_t^{\theta,2}$ are the same.
We assume that $c$ is positive-valued, $\sup_{x,y}|\partial_xe(x,y)|^{-1}<\infty$, and $c(x,y,\theta)$, $d(x,y,\theta)$, and $e(x,y)$ are smooth functions 
with bounded derivatives $\partial_zc$, $\partial_zd$, and $\partial_z^ie$ for $i\in\mathbb{N}$ (z=(x,y)). 
Moreover, we assume that at least one of the following two conditions holds true:
\begin{enumerate}
\item $e$ is bounded;
\item $e(x,y)=\tilde{e}(x+y)$ and $c(x,y,\theta)=\tilde{c}(x+y,\theta)$ for some functions $\tilde{e}$ and $\tilde{c}$.
\end{enumerate}
Then (C1) and (C2) are satisfied with
\begin{equation*}
U=\frac{1}{\sqrt{2}}\left(\begin{array}{cc}
1 & 1 \\
1 & -1
\end{array}\right),
\end{equation*}
{\colore and} $\Gamma$ is given by (\ref{Gamma-ex}).
If further $\Gamma$ is positive definite almost surely, {\colore then} we obtain the LAMN property of this model by Theorem~\ref{degenerate-LAMN-thm}. 

For the statistical model with observations $(X_{j/n}^{\theta,2})_{j=0}^n$, $\Gamma$ is equal to half of the one in (\ref{Gamma-ex}).
The above result shows that the efficient asymptotic variance for estimators does not depend on $e$ and is equal to just half of the one when we observe $(X_{j/n}^{\theta,2})_{j=0}^n$.
\end{example}
}

{\colorn
\begin{example}
Let $m/2\leq \kappa<m$. Let $X_t^\theta$ be an $m$-dimensional diffusion process satisfying
\begin{equation}\label{factor-model}
dX_t^\theta=e(X_t^\theta)dt+f(X_t^\theta,\theta)AdW_t,
\end{equation}
where $W$ is a $\kappa$-dimensional standard Wiener process, $f(z,\theta)$ is an $\mathbb{R}$-valued function,
and $A$ is an $m\times\kappa$ matrix independent of $z$ and $\theta$.
Let $U^\top(\Lambda \ O_{\kappa, m-\kappa})^\top V$ be {\colora the} singular value decomposition of $A$ for a $\kappa\times \kappa$ diagonal matrix $\Lambda$,
and orthogonal matrices $U$ and $V$ of size $m$ and $\kappa$, respectively.
Then we have 
\begin{equation*}
Uf(z,\theta)A=\left(
\begin{array}{c}
\tilde{c}(Uz,\theta) \\
O_{m-\kappa,\kappa}
\end{array}
\right), \quad Ue(z)=\left(
\begin{array}{c}
\tilde{e}(Uz) \\
\check{e}(Uz)
\end{array}
\right),
\end{equation*}
where $\tilde{c}(z,\theta)=f(U^\top z,\theta)\Lambda V$, and $\tilde{e}(z)$ and $\check{e}(z)$ are suitable functions. 

We assume that ${\rm rank}(A)=\kappa$ (that is, $\Lambda$ is invertible), $f$ is positive--valued, 
$f(z,\theta)$ and $e(z)$ are smooth functions, 
and $\partial_zf$, $\partial_z^ie$, and $\lVert (({\colore \nabla_1}\check{e})^\top {\colore \nabla_1}\check{e})^{-1}\rVert_{{\rm op}}$ are bounded for $i\in\mathbb{Z}_+$.
Then we obtain $\partial_\theta \tilde{c}\tilde{c}^{-1}(z,\theta)=\partial_\theta ff^{-1}(U^\top z,\theta)I_\kappa$,
and consequently (C1) and (C2) hold.
Moreover, we have 
  \EQQ{\Psi_{t,\theta}=f^2(X_t^{\theta_0},\theta)(({\colore \nabla_1}\check{e})^\top \Lambda^2 {\colore \nabla_1} \check{e})(UX_t^{\theta_0})}
and hence
\begin{equation*}
\Gamma=\bigg(2m\int^1_0\frac{\partial_{\theta_i}f\partial_{\theta_j}f}{f^2}(X_t^{\theta_0},\theta_0)dt\bigg)_{1\leq i,j\leq d}.
\end{equation*}
If $\Gamma$ is positive definite almost surely, {\colore then} we have the LAMN property of this model.

We can regard (\ref{factor-model}) as a multi-factor model for stock prices, where each component of $W$ is regarded as a factor 
{\colore that influences the} stock prices, $A$ {\colore comprises the} contributions of each factor to each stock, and $f$ is a scalar {\colore that depends} on {\colore the} stock prices.
The above results show that we obtain the LAMN property of such a degenerate model if the number $\kappa$ of factors is in $[m/2,m)$.
\end{example}

\begin{discuss}
{\colorr
\begin{equation*}
dX_t=e(X_t)dt+Ac(X_t,\theta)dW_t
\end{equation*}
で$A$: $m\times \kappa$行列，$c$: $\kappa\times \kappa$行列，$A=U^\top \Lambda$, $c=E(\theta)C(X_t)$, $E={\rm diag}((\theta_i)_{1\leq i\leq \kappa})$とすると，
$\partial_{\theta_i}\tilde{c}\tilde{c}^+=E_i\theta_i^{-1}$と書け，
${\rm Ker}(({\colore \nabla_1}\check{e})^\top)$が$(1,0,\cdots, 0), (0,1,0,\cdots, 0), \cdots, (0,\cdots, 0,1)$のいくつかで張られていたらOKだが，
これを満たす例がわかりづらいか 
}
\end{discuss}

\subsection{The LAMN property for partial observations}\label{integrated-diffusion-subsection}

In this section, we show the LAMN property for degenerate diffusion processes with partial observations.
Gloter and Gobet~\cite{glo-gob08} showed the LAMN property for a one-dimensional integrated diffusion process.
{\colora While this model is similar to the one in Example~\ref{degenerate-ex1}, 
{\colore the} observations are only the integrated process $\bar{X}_t^{\theta_0}$ (partial observations).}
Their proof depends on Aronson's estimate, which is difficult to obtain for the multi-dimensional process.
We can avoid {\colore the} Aronson-type estimate by using the {\colora scheme with the $L^2$ regularity condition} 
in Sections \ref{L2regu-subsection} and \ref{LAMN-Malliavin-subsection},
and {\colore can} consequently extend their results to {\colore the} multi-dimensional case.
We can also generalize {\colore the} observed components of $X_t^{\theta_0}$ and $\bar{X}_t^{\theta_0}$, which yields 
an interesting example of a stock process and integrated volatility observations (Example~\ref{int-vol-ex}).

Let $m\in\mathbb{N}$, and let $(\Omega,\mathcal{F},P)$, $W$, $D$, $H$, and $\Theta$ be the same as {\colore in} Section~\ref{degenerate-diffusion-subsection}.
{\colora We consider a process} $Y_t^\theta=(\tilde{Y}_t^\theta,\check{Y}_t^\theta)$
{\colora {\colore that} satisfies a slight restricted version of the stochastic differential equation (\ref{Y-SDE}): 
$(\tilde{Y}_0^\theta,\check{Y}_0^\theta)=(\tilde{z}_{{\rm ini}},\check{z}_{{\rm ini}})$, and}
\EQN{d\tilde{Y}_t^\theta&=\tilde{b}(\tilde{Y}_t^\theta,\check{Y}_t^\theta,\theta)dt+\tilde{a}(\tilde{Y}_t^\theta,\theta)dW_t, \\
d\check{Y}_t^\theta&=B\tilde{Y}_t^\theta dt,}
{\colora where $B$ is an $(m-\kappa)\times \kappa$ matrix such that $BB^\top$ is positive definite.
Let ${\colorc \mathcal{Q}}:\mathbb{R}^\kappa\to \mathbb{R}^\kappa$ {\colorc be a projection}.
We assume that $({\colorc \mathcal{Q}}\tilde{Y}_{k/n}^{\theta_0})_{k=0}^n$ and $({\colorc \check{Y}_{k/n}^{\theta_0}})_{k=0}^n$ are observed.
Let {\colorc $q_1={\rm rank}(\mathcal{Q})$, $q_2=m-\kappa$}, and let $q=q_1+q_2$. We assume that {\colorc $0\leq q_1< \kappa$}.

For a $k\times l$ matrix $A$, we denote ${\rm Im}(A)=\{Ax;x\in\mathbb{R}^l\}$. }
\begin{description}
\item[Assumption (C2$'$).] 
The derivatives $\partial_x^i\partial_\theta^j\tilde{a}(x,\theta)$ and $\partial_z^i\partial_\theta^j\tilde{b}(z,\theta)$ exist on $\mathbb{R}^m\times \Theta$ 
and can be extended to continuous functions on $\mathbb{R}^m\times \bar{\Theta}$ for $i\in\mathbb{Z}_+$ and $0\leq j\leq 3$.
Moreover, $\sup_{z,\theta}(|\partial_z\tilde{b}(z,\theta)|\vee |\partial_x\tilde{a}(x,\theta)|)<\infty$,
${\rm Ker}(\tilde{a}(x,\theta))\subset {\rm Ker}(\partial_{\theta_i}\tilde{a}(x,\theta))$,
and ${\rm Ker}(B)\subset {\rm Ker}(B\partial_{\theta_i}\tilde{a}\tilde{a}^+(x,\theta))$
for any $x\in\mathbb{R}^\kappa$, $1\leq i\leq d$, and $\theta \in\Theta$.
\item[Assumption (C4).] 
\EQ{\label{Q1Q2-cond}  {\rm Ker}({\colorc B})\subset {\rm Im}({\colorc \mathcal{Q}}),}
{\colorc and}
\begin{equation}\label{Q1Q2-cond2}
{\colorc \mathcal{Q}}\partial_{\theta_i} \tilde{a}\tilde{a}^+(x,\theta)=\partial_{\theta_i} \tilde{a}\tilde{a}^+(x,\theta){\colorc \mathcal{Q}}
\end{equation}
for any $1\leq i\leq d$, $x\in\mathbb{R}^\kappa$, and $\theta\in\Theta$.
\end{description}
\begin{discuss}
{\colorr $(I-Q_2)y$に依存させることもできるが，$\tilde{b}$が$(I-Q_2)y$に依存するかどうかで
${\rm Im}(B{\colorc \mathcal{Q}})\supset ({\rm Im}(Q_2))^\perp$を仮定したり，$F$や$K$の次元が変わったりするからやめておく．}
\end{discuss}
{\colora 
By (\ref{Q1Q2-cond}), we have
  \EQ{q_1={\colore \dim {\rm Im}}({\colorc \mathcal{Q}})\geq \dim {\rm Ker}({\colorc B})=\kappa-\dim {\rm Im}({\colorc B})= \kappa-q_2,}
which implies $q\geq \kappa$.

Let {\colorc $R_1:{\rm Im}(\mathcal{Q})\to \mathbb{R}^{q_1}$} and $R_3:{\rm Im}(I_\kappa-{\colorc \mathcal{Q}})\to \mathbb{R}^{\kappa-q_1}$
be any isomorphism on vector spaces. We denote $\tilde{Q}_1=R_1{\colorc \mathcal{Q}}$, $\tilde{Q}_2={\colorc B}$, and $\tilde{Q}_3=R_3(I_\kappa-{\colorc \mathcal{Q}})$.
For a $\kappa\times \kappa$ matrix $A$, we denote}
$\Upsilon_{i,j}(A)=\tilde{Q}_iA\tilde{Q}_j^\top$ for $1\leq i,j\leq 3$,
\EQQ{\Xi_1(A)=\MAT{cc}{\Upsilon_{1,1} & \Upsilon_{1,2}/2 \\ \Upsilon_{2,1}/2 & \Upsilon_{2,2}/3}(A),
\quad \Xi_2(A)=\MAT{cc}{O_{q_1,q_1} & \Upsilon_{1,2}/2 \\ O_{q_2,q_1} & \Upsilon_{2,2}/6}(A),}
\EQQ{\Xi_3(A)=\MAT{cc}{\Upsilon_{1,1} & \Upsilon_{1,2}/2 \\ \Upsilon_{2,1}/2 & 2\Upsilon_{2,2}/3}(A),}
{\colora and for $L\in\mathbb{N}$, $L\geq 3$ and $1\leq k,l\leq 2$, 
we define an $(Lq+{\colorc (\kappa-q_1)}(k-1))\times (Lq+{\colorc (\kappa-q_1)}(l-1))$ matrix $\psi_L^{k,l}(A)$ by}
  \EQQ{\psi_L^{1,1}(A)=\MAT{ccccc}{
   \Xi_1 & \Xi_2 & O_{q,q} & \cdots & O_{q,q} \\
   \Xi_2^\top & \Xi_3 & \ddots & \ddots & \vdots \\
   O_{q,q} & \ddots & \ddots & \ddots & O_{q,q} \\
   \vdots & \ddots & \ddots & \Xi_3 & \Xi_2 \\
   O_{q,q} & \cdots & O_{q,q} & \Xi_2^\top & {\colora \Xi_3}
   }(A), 
  }
  \EQQ{
   \psi_L^{1,2}(A)=\MAT{cc}{
    & O_{(L-1)q,\kappa-q_1} \\
   \psi_L^{1,1} & \Upsilon_{1,3}/2 \\
    & \Upsilon_{2,3}/6
   }(A), \quad \psi_L^{2,1}(A)=(\psi_L^{1,2})^\top(A),
  }
  \EQQ{
   \psi_L^{2,2}(A)=\MAT{cccc}{
   \multicolumn{4}{c}{\psi_L^{1,2}} \\ 
   {\colorc O_{\kappa-q_1,(L-1)q}} & \Upsilon_{3,1}/2 & \Upsilon_{3,2}/6 & \Upsilon_{3,3}/3
   }(A).
  }
{\colora Here we ignore $\Upsilon_{i,j}$ if ${\rm rank}(\tilde{Q}_i)=0$ or ${\rm rank}(\tilde{Q}_j)=0$. Let}
  \EQNN{\mathcal{T}_{k,l,L}({\colora x})&=\Big({\rm tr}(\partial_{\theta_i}(\psi_L^{k,k}(\tilde{a}\tilde{a}^\top)^{-1})({\colora x},\theta_0)
   \psi_L^{k,l}(\tilde{a}\tilde{a}^\top)({\colora x},\theta_0) \\
   &\quad \quad \times \partial_{\theta_j}(\psi_L^{l,l}(\tilde{a}\tilde{a}^\top)^{-1})({\colora x},\theta_0)
   \psi_L^{l,k}(\tilde{a}\tilde{a}^\top)({\colora x},\theta_0))\Big)_{1\leq i,j\leq d}.
  }

{\colorc To show the LAMN property of partial observations, we consider an augmented model generated by block observations
with some observations of $(I_\kappa-\mathcal{Q})\tilde{Y}$, following the idea of Gloter and Gobet~\cite{glo-gob08}.
The matrix $\psi^{k,l}_L$ corresponds to {\colore the} covariance matrix of {\colore the} block observations.
See Sections~\ref{aug-L-subsection} and~\ref{ori-L-subsection} 
\begin{as}
{\colore of} the supplementary {\colore material}~\cite{fuk-ogi20s} 
\end{as}
for the details.}
\begin{description}
\item[Assumption (C5).] 
{\colora There} exists an $\mathbb{R}^d\otimes \mathbb{R}^d$-valued continuous function $g({\colora x})$ such that
\EQ{\label{C5-eq} {\colora L^{-1}\mathcal{T}_{k,l,L}(x)\to g(x)}}
{\colora as $L\to\infty$ uniformly in $x$ on compact sets} for $1\leq k,l\leq 2$.
\end{description}
\begin{discuss}
{\colorr (C5)は局所化した後に$L^{-1}\mathcal{T}_{k,l,L}(x)\to g(x)$の一様収束が成り立ち，
$\sum_j\gamma_j\overset{P}\to \Gamma$, $\sup_nE[n^{-1}\sum_j|\gamma_j|]<\infty$等に使う．}
\end{discuss}

Let
\EQ{\label{Gamma'-def} \Gamma'={\colora \frac{1}{2}\int^1_0g(\tilde{Y}_t)dt}.}

\begin{description}
\item[Assumption (C6).] $\Gamma'$ is positive definite almost surely.
\end{description}

Let $P_{\theta,n}$ be the distribution of partial observations $({\colorc \mathcal{Q}}\tilde{Y}_{k/n}^{\theta})_{k=0}^n$ and $({\colorc \check{Y}_{k/n}^{\theta}})_{k=0}^n$.
\begin{theorem}\label{partial-LAMN-thm}
Assume (C2$'$) {\colore and} (C4)--(C6). Then $\{P_{\theta,n}\}_{\theta,n}$ satisfies the LAMN property at $\theta=\theta_0$ with $\Gamma'$.
\end{theorem}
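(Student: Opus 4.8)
The plan is to reduce Theorem~\ref{partial-LAMN-thm} to an application of Theorem~\ref{Malliavin-LAMN-thm}, with the limiting information $\Gamma$ in (B4) identified as the $\Gamma'$ of (\ref{Gamma'-def}). First I would localize as in Lemma~4.1 of Gobet~\cite{gob01}: introduce stopping times that keep $\tilde{Y}^\theta$ in a compact set and replace $\tilde{a},\tilde{b}$ by globally bounded coefficients with bounded derivatives, so that (C2$'$) upgrades to uniform bounds. Since Condition~(L) for the localized family implies (L) for the original one (exactly as exploited in the proof of Theorem~\ref{degenerate-LAMN-thm}), it suffices to work under such boundedness. The essential structural step, following Gloter and Gobet~\cite{glo-gob08}, is then to pass from single increments to expanding data blocks: choose a block length $L=L_n\to\infty$ with $m_n=\lfloor n/L_n\rfloor\to\infty$, group the observations $(\mathcal{Q}\tilde{Y}^{\theta}_{k/n},\check{Y}^{\theta}_{k/n})$ into $m_n$ consecutive blocks, and augment the experiment by observing the missing component $(I_\kappa-\mathcal{Q})\tilde{Y}^\theta$ at the block endpoints. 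By the Markov property this augmentation makes the blocks conditionally independent given the boundary state, which is what allows the block-by-block analysis of Section~\ref{LAMN-Malliavin-subsection}. The two values $k,l\in\{1,2\}$ in $\psi_L^{k,l}$ encode whether or not an endpoint carries the extra $(I_\kappa-\mathcal{Q})\tilde{Y}$ observation.

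Next I would set up the Malliavin framework on each block. Take $F_{n,\theta,j}$ to be the suitably $\sqrt{n}$-normalized vector of block increments and $\tilde{F}_{n,\theta,j}$ its Euler--Maruyama (Gaussian) approximation over the block; the relation $d\check{Y}=B\tilde{Y}\,dt$ and conditions (C2$'$), (C4) are precisely what guarantee $\partial_{\theta_i}\tilde{F}_{n,\theta,j}=B_{j,i,\theta}\tilde{F}_{n,\theta,j}$ as required in (B3), with (\ref{Q1Q2-cond}) ensuring the observed projection retains the components needed to identify $\tilde{a}$ and (\ref{Q1Q2-cond2}) making $\mathcal{Q}$ commute with $\partial_\theta\tilde{a}\tilde{a}^+$. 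The covariance matrix $\tilde{K}_j(\theta)$ of $\tilde{F}_{n,\theta,j}$ is, after the natural scaling, exactly the block-covariance matrix $\psi_L^{k,k}(\tilde{a}\tilde{a}^\top)(\tilde{Y}_{\cdot},\theta)$ assembled from the $\Xi_i$ and $\Upsilon_{i,j}$ blocks (the factors $1/3$, $1/6$ being the familiar covariances of integrated Brownian motion). I would then verify (B1) and (B2) from the uniform coefficient bounds together with spectral estimates for $\psi_L^{k,l}$, and (B3) from standard Euler--Maruyama error bounds, obtaining explicit orders for $\alpha_n$ (the moment bound on $K_j^{-1}$) and $\rho_n$ (the approximation error) in terms of $L_n$, and checking that the rate conditions $\epsilon_n^2\bar{k}_n^4\sqrt{m_n}\alpha_n^2\to0$ and (\ref{rho-condition}) hold for a suitable choice of $L_n$, where $\epsilon_n=n^{-1/2}$ and $\bar{k}_n\asymp L_n$.

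It then remains to establish (B4). I would use Lemma~\ref{B4-suff-lemma} to reduce (\ref{B4-stable-conv}) to a martingale central limit theorem: part~1 identifies the conditional covariance as $\gamma_j$, part~2 gives the Lyapunov-type negligibility of fourth moments, and the stable convergence of $\epsilon_n^2\sum_j\gamma_j(\bar{X}^{n,\theta_0}_{j-1})$ follows because, by the definition of $\gamma_j$ through $\Phi_{j,i}$, the block sums are Riemann approximations of the trace integrals in $\mathcal{T}_{k,l,L}$. Condition~(C5), namely $L^{-1}\mathcal{T}_{k,l,L}(x)\to g(x)$, is exactly what lets the per-block trace converge to $g(\tilde{Y}_t)$ so that $\epsilon_n^2\sum_j\gamma_j\to \tfrac12\int_0^1 g(\tilde{Y}_t)\,dt=\Gamma'$; crucially, the limit $g$ is required to be the same for all $k,l\in\{1,2\}$, which is the mechanism that makes the augmented model carry the same asymptotic information as the original partial-observation model, so that the extra endpoint observations are asymptotically negligible. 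Applying Theorem~\ref{Malliavin-LAMN-thm} yields Condition~(L) with $T(\theta_0)=\Gamma'$; condition~(C6) supplies (P$'$) and hence the LAMN property.

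The main obstacle will be the control of $\alpha_n$ and the choice of $L_n$. Because the partial observations are of integrated/degenerate type, the block covariance $\psi_L^{k,l}$ becomes increasingly ill-conditioned as $L\to\infty$, so bounding $\lVert [K_j^{-1}]_{il}\rVert_{2,8}$ uniformly over blocks requires genuine spectral analysis of these structured, block-tridiagonal matrices rather than the Aronson estimates used in~\cite{glo-gob08}; the growth of $\alpha_n$ in $L_n$ must then be reconciled with the competing requirement that $L_n\to\infty$ fast enough to recover the information lost by observing only $\mathcal{Q}\tilde{Y}$ and $\check{Y}$, yet slowly enough that (\ref{rho-condition}) and the (B2) rate hold. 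Establishing $\partial_\theta\tilde{F}_{n,\theta,j}=B_{j,i,\theta}\tilde{F}_{n,\theta,j}$ under (C2$'$)--(C4) together with the commutation (\ref{Q1Q2-cond2}), and verifying the common-limit requirement in (C5) so that the augmentation is harmless, are the other delicate points; the detailed block covariance computations underlying $\psi_L^{k,l}$ and $\mathcal{T}_{k,l,L}$ are carried out in Sections~\ref{aug-L-subsection} and~\ref{ori-L-subsection}.
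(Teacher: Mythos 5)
Your first half reproduces the paper's first step: localize, build the block-augmented experiment in which $(I_\kappa-\mathcal{Q})\tilde{Y}$ is added at the block endpoints, verify (B1)--(B5) for the augmented blocks (with $\epsilon_n=n^{-1/2}$, $\bar{k}_n$ of the order of the block size, $\rho_n=e_n/\sqrt{n}$), and get (B4) from Lemma~\ref{B4-suff-lemma}, a martingale CLT, and (C5), so that Theorem~\ref{Malliavin-LAMN-thm} yields Condition~(L) with limit $\Gamma'$. But this gives Condition~(L) for the \emph{augmented} model $\{P^{\rm aug}_{\theta,n}\}$, not for $\{P_{\theta,n}\}$, and your proposal essentially stops there. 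The theorem concerns the law of the partial observations only, and Theorem~\ref{Malliavin-LAMN-thm} cannot be applied to it directly: when components are hidden, the conditional law $P(X_j^{n,\theta}\in\cdot\,|\,\bar{X}_{j-1}^{n,\theta}=\bar{x}_{j-1})$ is not representable as a tractable functional $F_{n,\theta,j,\bar{x}_{j-1}}$ on Wiener space --- that is precisely why the augmentation is introduced. Your only bridge back to the original model is the remark that the common limit $g$ in (C5) for all $k,l$ makes ``the extra endpoint observations asymptotically negligible,'' but as stated this is a heuristic, not an argument: adding observations changes the likelihood ratio, and equality of the two experiments' limiting information matrices does not by itself transfer the expansion of $\log(dP^{\rm aug}_{\theta_0+h/\sqrt{n}}/dP^{\rm aug}_{\theta_0})$ to $\log(dP_{\theta_0+h/\sqrt{n},n}/dP_{\theta_0,n})$.

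The missing step is the content of Proposition~\ref{original-LAMN-prop} and the argument following it. One must construct statistics $\mathfrak{U}_j,\mathfrak{V}_j$ measurable with respect to the \emph{original} observations --- in particular, the unobservable block-initial value $\tilde{Y}_{t_{j,0}}$ entering $G_j^n$ and $\gamma_j$ must be replaced by the observable proxy $\dot{Y}_j=\mathcal{Q}\tilde{Y}_{t_{j,0}}+n\tilde{Q}_3B^+(\check{Y}_{t_{j,0}}-\check{Y}_{t_{j,-1}})$ --- and prove
\[
\log\frac{dP^{\rm aug}_{\theta_0+h/\sqrt{n}}}{dP^{\rm aug}_{\theta_0}}
-\frac{h}{\sqrt{n}}\cdot\sum_{j=0}^{L_n-1}\mathfrak{U}_j
+\frac{h^\top}{2n}\sum_{j=0}^{L_n-1}\mathfrak{V}_j h\overset{P}\to 0 .
\]
It is exactly here, in controlling $n^{-1/2}\sum_j(\tilde{\mathfrak{U}}_j-G_j^n)$, that the common limit in (C5) is used quantitatively, via ${\rm tr}(\mathcal{T}_{1,1,e_n}-2\mathcal{T}_{1,2,e_n}+\mathcal{T}_{2,2,e_n})=o(e_n)$ together with a martingale law of large numbers (Lemma~9 of Genon-Catalot and Jacod~\cite{gen-jac93}). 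Once the augmented log-likelihood is expanded in original-observable terms, an argument as in Proposition~4 of Gloter and Gobet~\cite{glo-gob08} transfers the expansion to the original likelihood ratio, giving (L) for $\{P_{\theta,n}\}$; localization and Remark~\ref{Gamma-pd-rem} with (C6) then finish the proof. A minor further point: the block length is not subject to the competing lower-bound growth rate you describe; $e_n$ may (and in fact must) diverge arbitrarily slowly, since the Malliavin nondegeneracy constants behind (B2) and (N2) depend on $e_n$ in an uncontrolled way, and no fast growth is needed to ``recover information'' --- that is supplied by (C5) as an $L\to\infty$ limit, not by the choice of $e_n$.
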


\begin{example}[Integral observations]\label{int-obs-ex}
Let $X_t^\theta$ and $\bar{X}_t^\theta$ be the same as {\colore in} Example~\ref{degenerate-ex1}.
We consider a statistical model with {\colora observations $(\bar{X}_{k/n}^{\theta_0})_{k=0}^n$}.
In this case, we have $m=2\kappa$, $B=I_\kappa$, ${\colorc \mathcal{Q}=O_{\kappa,\kappa}}$.
We assume that $cc^\top(x,\theta)$ is positive definite and {\colore that} $c(x,\theta)$ and $d(z,\theta)$ are smooth functions 
with bounded derivatives $\partial_xc$ and $\partial_zd$.
{\colore As in} Example~\ref{degenerate-ex1}, we have (C2$'$). Moreover, we can check (C4).
\begin{discuss}
{\colorr $\Theta$を$\theta_0$の周りで取り直せば$\bar{\Theta}$上のsmoothへの拡張はOK.}
\end{discuss}

We can see that $\psi_L^{2,2}(A)=V_L\otimes A$,
where {\colora $\otimes$ denotes} the Kronecker {\colora product}, and $V_L$ is {\colore an} $(L+1)\times (L+1)$ matrix satisfying 
\EQ{\label{VL-def} [V_L]_{ij}=(2/3)1_{\{i=j\}}+(1/6)1_{\{|i-j|=1\}}-(1/3)1_{\{i=j \ {\rm and} \ i\in\{1,L+1\}\}}.}

{\colora {\colore Because} we obtain similar equations for $\psi_L^{1,1}(A)$ and $\psi_L^{1,2}(A)$,
\begin{as}
together with Lemma~\ref{block-mat-lem} {\colorc in the supplementary {\colore material}~\cite{fuk-ogi20s}},
\end{as} 
\begin{arXiv}
together with Lemma~\ref{block-mat-lem},
\end{arXiv}
we have (\ref{C5-eq}) for
\EQ{\label{int-g-eq} g(x)=({\rm tr}((cc^\top)^{-1}\partial_{\theta_i}(cc^\top)(cc^\top)^{-1}\partial_{\theta_j}(cc^\top))(x,\theta_0))_{1\leq i,j\leq d}.}
\begin{discuss}
{\colorr 適当な$V_L^{1,2}$に対し，$\psi_L^{1,2}(A)=V_L^{1,2}\otimes A$. よって混合積性質から(\ref{C5-eq})が成立．
${\rm tr}(A\otimes B)={\rm tr}(A){\rm tr}(B)$. $cc^\top$は$x$に関してsmoothだからC5の収束は広義一様収束．}
\end{discuss}
Therefore, we have the LAMN property of this model if $\Gamma'$ in (\ref{Gamma'-def}) is positive definite almost surely.

This result is an extension of Gloter and Gobet~\cite{glo-gob08} to multi-dimensional processes.
Moreover, the result can be applied to the Langevin-type molecular dynamics in (\ref{Langevin-eq}) with {\colore positional} observations.
}
\end{example}

\begin{remark}\label{integrated-rem}
If we observe $(X_{k/n}^{\theta_0})_{k=0}^n$ instead of $(\bar{X}_{k/n}^{\theta_0})_{k=0}^n$, then Gobet~\cite{gob01} shows the LAMN property for {\colora this} model
with $\Gamma$ the same as (\ref{Gamma'-def}) {\colora and (\ref{int-g-eq})}. On the other hand, if we observe both $(X_{k/n}^{\theta_0})_{k=0}^n$ and $(\bar{X}_{k/n}^{\theta_0})_{k=0}^n$,
{\colore then} Example~\ref{degenerate-ex1} shows the LAMN property with $\Gamma$ twice {\colore that} in (\ref{Gamma'-def}).
Therefore, we can say that the efficient asymptotic variance with observations $(X_{k/n}^{\theta_0})_{k=0}^n$ and $(\bar{X}_{k/n}^{\theta_0})_{k=0}^n$ 
is half of {\colore that} with observations $(X_{k/n}^{\theta_0})_{k=0}^n$ or {\colore half of that} with $(\bar{X}_{k/n}^{\theta_0})_{k=0}^n$.
\end{remark}

\begin{example}[Observations of {\colora a} stock process and {\colore integrated} volatility] \label{int-vol-ex}
{\colora Let $W$ be a} two-dimensional standard Wiener process, {\colore and let} 
$c$ {\colora be an} $\mathbb{R}^2\otimes \mathbb{R}^2$-valued function {\colora with $c^j$ for $j\in\{1,2\}$.
Let $X_t={\colorc (X_t^i)_{i=1}^3}$ be a four-dimensional process satisfying}
\EQN{
dX_t^1&=d^1({\colorc X_t},\theta)dt+c^1(X_t^1,X_t^2,\theta)dW_t, \\
dX_t^2&=d^2({\colorc X_t},\theta)dt+c^2(X_t^1,X_t^2,\theta)dW_t, \\
d{\colorc X_t^3}&=X_t^2dt.
}
{\colora We assume {\colore that we observe} $((X_{k/n}^1,{\colorc X_{k/n}^3}))_{k=0}^n$. In this case, we have {\colorc $m=3$}, $\kappa=2$, $r=2$, 
{\colorc \EQ{{\colorc \mathcal{Q}}=\MAT{cc}{1 & 0 \\ 0 & 0}, \quad B=(0 \ 1).}}
}

We assume that $cc^\top(x,\theta)$ is positive definite {\colora for each $(x,\theta)$,} and $c(x,\theta)$, $d^1(z,\theta)$, and $d^2(z,\theta)$ are smooth functions 
with bounded derivatives $\partial_xc$, $\partial_zd^1$, and $\partial_zd^2$.
{\colorc We can check (\ref{Q1Q2-cond}).}

{\colora We consider the following two cases.
\begin{enumerate}
\item The case {\colore where} $c(x_1,x_2,\theta)=f(x_1,x_2,\theta)A$ for a matrix A and a positive--valued function $f(x_1,x_2,\theta)$:

We have that $AA^\top$ is positive definite and $\partial_{\theta_i}\tilde{a}\tilde{a}^+=\partial_{\theta_i}cc^{-1}=\partial_{\theta_i}ff^{-1}I_2$.
Then {\colorc (C2$'$) and (C4) are satisfied}.
Moreover, we obtain 
  \EQ{\psi_L^{k,l}(\partial_{\theta_i}(\tilde{a}\tilde{a}^\top))=\frac{2\partial_{\theta_i}f}{f}\psi_L^{k,l}(\tilde{a}\tilde{a}^\top).}
\begin{as}
Together with Lemma~\ref{block-mat-lem} {\colorc in the supplementary {\colore material}~\cite{fuk-ogi20s}},
\end{as} 
\begin{arXiv}
Together with Lemma~\ref{block-mat-lem}, 
\end{arXiv}
we have (C5) with 
  \EQQ{g(x_1,x_2)=\bigg(\frac{8\partial_{\theta_i}f\partial_{\theta_j}f}{f^2}(x_1,x_2,\theta_0)\bigg)_{i,j}.}
Therefore, we have the LAMN property if $\Gamma'$ in (\ref{Gamma'-def}) is positive definite almost surely.

\item The case {\colore where} $c(x_1,x_2,\theta)$ is a diagonal matrix for any $(x_1,x_2,\theta)$:\\
{\colore Because} $\partial_{\theta_i}\tilde{a}\tilde{a}^+$ also becomes a diagonal matrix, {\colorc (C2$'$) and (C4) are satisfied}.
Moreover, we have $\Upsilon_{1,2}=0$ and $\Upsilon_{1,3}=0$. Then, by rearranging {\colore the} rows and columns of $\psi_L^{2,2}$
by using an orthogonal matrix $\mathcal{V}_L$ of size $2L+1$, we have
\EQ{\mathcal{V}_L\psi_L^{2,2}(M)\mathcal{V}_L^\top=\MAT{cc}{[M]_{11}I_L & O_{L,L} \\ O_{L,L} & [M]_{22}V_L}}
for any diagonal matrix $M$ of size $2$, where $V_L$ {\colore is} defined in (\ref{VL-def}).
Together with Lemma~\ref{block-mat-lem} 
\begin{as}
{\colorc in the supplementary {\colore material}~\cite{fuk-ogi20s}}
\end{as}
and similar equations for $\psi_L^{1,2}$ and $\psi_L^{1,1}$, we have (\ref{C5-eq}) with
\EQ{g(x_1,x_2)=\bigg(\bigg(\frac{4\partial_{\theta_i}[c]_{11}\partial_{\theta_j}[c]_{11}}{[c]_{11}^2}
+\frac{4\partial_{\theta_i}[c]_{22}\partial_{\theta_j}[c]_{22}}{[c]_{22}^2}\bigg)(x_1,x_2,\theta_0)\bigg)_{1\leq i,j\leq d}.}
Then we have the LAMN property if $\Gamma'$ in (\ref{Gamma'-def}) is positive definite almost surely.
\end{enumerate}
}
\end{example}
\begin{remark}
{\colorc 
If $c^1(x_1,x_2,\theta)=\sqrt{x_2}v$ for a unit vector $v$ in Example~\ref{int-vol-ex}, {\colore then} we have $X_t^3=\langle X^1\rangle_t$, and therefore
$X^1$ represents a stock process for a stochastic volatility model and $X^3$ is the integrated volatility process.
If we observe daily stock {\colore prices} and realized volatility calculated from high-frequency data, {\colore then}
we can regard it as an approximation of the integrated volatility process.
{\colore Even though} $c^1=\sqrt{x_2}$ does not {\colore satisfy} our assumptions {\colore because} $\partial_xc$ is not bounded,
we can approximate this function by setting $c^1$ as a positive-valued smooth function satisfying $c^1(x_1,x_2,\theta)=\sqrt{x_2}$ on $\{|x_2|\geq \epsilon\}$
for small $\epsilon>0$.}
\end{remark}
}

\section{Malliavin calculus and the $L^2$ regularity condition}\label{Mcal-section}

In this section, we {\colore show} how to check (A1)--(A5) in Section~\ref{L2regu-subsection} under (B1)--({\colorp B5}).
The equations for density derivatives in Proposition~\ref{delLogP-eq} are crucial for the proof.
From these equations, we obtain Proposition~\ref{delp-p-est-prop} and Lemma~\ref{root-p-smooth-lemma}, which are necessary for checking (A1).

For a matrix $A$, {\colore $|A|$ denotes the Frobenius norm}, $|A|=\sqrt{\sum_{ij}|[A]_{ij}|^2}$.
Let 
\begin{equation*}
L^\theta(V)={\colore \sum_{k,k'}[K_j^{-1}(\theta)]_{k,k'} D_j[F_{n,\theta,j}]_{k'}[V]_k}
\end{equation*}
for a vector {\colore $V\in \mathbb{R}^{k_j-k_{j-1}}$}. 

The following proposition is essentially from Proposition~4.1 in~\cite{gob01} and Theorem~5 in~\cite{glo-gob08}.
To check (A1), we {\colorc need} an equation for $\partial_\theta^2p_j$.
\begin{as}
The proof is {\colore given in} Section~\ref{Mcal-proof-section} {\colore of} the supplementary {\colore material}~\cite{fuk-ogi20s}.
\end{as}
\begin{arXiv}
{\colorc The proof is left to Section~\ref{Mcal-proof-section} in the appendix.}
\end{arXiv}
\begin{proposition}\label{delLogP-eq}
Assume (B1) and (B2). Then $F_{n,\theta,j}$ admits a density denoted by $p_{j,\bar{x}_{j-1}}(x_j,\theta)$.
Moreover, $p_{j,\bar{x}_{j-1}}(x_j,\cdot ) \in C^2(\Theta)$, 
\begin{equation}\label{log-density-p-eq1}
\partial_\theta p_{j,\bar{x}_{j-1}}(x_j,\theta)=p_{j,\bar{x}_{j-1}}(x_j,\theta)E_j\big[\delta_j(L^\theta(\partial_\theta F_{n,\theta,j}))\big|F_{n,\theta,j}=x_j\big],
\end{equation}
and 
  \EQN{\label{log-density-p-eq2}
   \partial_\theta^2 p_{j,\bar{x}_{j-1}}(x_j,\theta)&=p_{j,\bar{x}_{j-1}}(x_j,\theta)E_j\big[\delta_j(L^\theta(\partial_\theta^2 F_{n,\theta,j}))
   +\delta_j(L^\theta(\mathfrak{A}_j))\big|F_{n,\theta,j}=x_j\big]
  }
almost everywhere in $x_j\in \mathbb{R}^{k_j-k_{j-1}}$, 
{\colorc where $\mathfrak{A}_j=(\delta_j(L^\theta(\partial_\theta F_{n,\theta,j}\partial_\theta {\colore [F_{n,\theta,j}]_k})))_k$}.
\end{proposition}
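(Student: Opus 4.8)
The plan is to combine Nualart's nondegeneracy criterion with the integration-by-parts (IBP) formula of Malliavin calculus, pushing everything to the level of test functions and then reading off the density derivatives. Throughout I abbreviate $F=F_{n,\theta,j}$ and write $\langle\cdot,\cdot\rangle=\langle\cdot,\cdot\rangle_{H_j}$. Under (B1) the components of $F$ lie in $\cap_p\mathbb{D}_j^{4,p}$, and under (B2) the matrix $K_j(\theta)$ is a.s.\ invertible with $[K_j^{-1}]_{il}\in\mathbb{D}_j^{2,8}$; by the smoothness criterion (Nualart~\cite{nua06}, Theorem~2.1.2, as used in Gobet~\cite{gob01} and Gloter and Gobet~\cite{glo-gob08}) this already guarantees that $F$ admits a density $p_{j,\bar{x}_{j-1}}(\cdot,\theta)$. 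The computational engine is the elementary identity: for $\varphi\in C_o^\infty$ and an $\mathbb{R}^{k_j-k_{j-1}}$-valued smooth random vector $V$,
\[
\sum_k\partial_k\varphi(F)[V]_k=\langle D_j\varphi(F),L^\theta(V)\rangle ,
\]
which follows from the chain rule $D_j\varphi(F)=\sum_k\partial_k\varphi(F)D_j[F]_k$, the symmetry of $K_j$, and $K_j^{-1}K_j=I$. Pairing this with the duality $E_j[\langle D_jG,u\rangle]=E_j[G\,\delta_j(u)]$ gives the master formula
\[
E_j\Big[\sum_k\partial_k\varphi(F)[V]_k\Big]=E_j\big[\varphi(F)\,\delta_j(L^\theta(V))\big].
\]

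For (\ref{log-density-p-eq1}) I would start from $E_j[\varphi(F)]=\int\varphi(x_j)\,p_{j,\bar{x}_{j-1}}(x_j,\theta)\,dx_j$, differentiate in $\theta_m$ (legitimate by the $L^p$-Fréchet differentiability of $F$ and the uniform bounds in (B1)) to obtain $E_j[\sum_k\partial_k\varphi(F)[\partial_{\theta_m}F]_k]$, apply the master formula with $V=\partial_{\theta_m}F$, and then condition on $F$:
\[
\partial_{\theta_m}\!\!\int\!\varphi\,p\,dx_j=E_j\big[\varphi(F)\,\delta_j(L^\theta(\partial_{\theta_m}F))\big]=\int\!\varphi(x_j)\,p(x_j,\theta)\,E_j\big[\delta_j(L^\theta(\partial_{\theta_m}F))\,\big|\,F=x_j\big]\,dx_j .
\]
Since $\varphi$ is arbitrary, this is (\ref{log-density-p-eq1}).

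For (\ref{log-density-p-eq2}) I would differentiate the intermediate identity $\partial_{\theta_m}E_j[\varphi(F)]=E_j[\sum_k\partial_k\varphi(F)[\partial_{\theta_m}F]_k]$ once more, in $\theta_{m'}$, splitting into the term carrying $\partial_{\theta_{m'}}\partial_{\theta_m}F$ and the term carrying the Hessian of $\varphi$. The first is handled exactly as above with $V=\partial_{\theta_{m'}}\partial_{\theta_m}F$, producing the $\delta_j(L^\theta(\partial_\theta^2F))$ contribution. For the Hessian term I would apply the IBP identity repeatedly: once to convert $\sum_l\partial_l(\partial_k\varphi)(F)[\partial_{\theta_{m'}}F]_l$ into $\langle D_j(\partial_k\varphi(F)),L^\theta(\partial_{\theta_{m'}}F)\rangle$, then use duality to reach $E_j[\sum_k\partial_k\varphi(F)\,\delta_j([\partial_{\theta_m}F]_k L^\theta(\partial_{\theta_{m'}}F))]$. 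The scalar-linearity $[\partial_{\theta_m}F]_kL^\theta(\partial_{\theta_{m'}}F)=L^\theta([\partial_{\theta_m}F]_k\partial_{\theta_{m'}}F)$ identifies the inner divergence as precisely the $k$-th component of $\mathfrak{A}_j$; a final application of the master formula with $V=\mathfrak{A}_j$ gives $E_j[\varphi(F)\,\delta_j(L^\theta(\mathfrak{A}_j))]$, and summing the two contributions yields (\ref{log-density-p-eq2}).

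The hard part will be the domain and integrability bookkeeping rather than the algebra. I must verify that $L^\theta(V)$ and $L^\theta(\mathfrak{A}_j)$ actually belong to the domain of $\delta_j$ with integrable divergences, and that every interchange of $\partial_\theta$ with $E_j$ is justified and produces $\theta$-continuous expressions, the latter being what upgrades the formulas to $p_{j,\bar{x}_{j-1}}(x_j,\cdot)\in C^2(\Theta)$ for a.e.\ $x_j$. This is exactly where the strength of (B1) and (B2) is consumed: the $\mathbb{D}_j^{4-l,p}$-regularity of $\partial_\theta^lF$ leaves enough derivatives after two $\theta$-differentiations and one Malliavin derivative, while the $\lVert[K_j^{-1}]_{il}\rVert_{2,8}$ bound controls the inverse Malliavin matrix appearing in $L^\theta$. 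Tracking Sobolev exponents through the products defining $L^\theta(V)$ and $\mathfrak{A}_j$ via Hölder's inequality and the Meyer inequalities for $\delta_j$ is routine but must be done to legitimize each step; I would relegate these estimates to the supporting computations, reducing the statement to the identities above.
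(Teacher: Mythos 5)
Your algebraic core is exactly the paper's: the duality/IBP identity $E_j[\sum_k\partial_k\varphi(F)[V]_k]=E_j[\varphi(F)\,\delta_j(L^\theta(V))]$ applied with $V=\partial_\theta F$ and $V=\partial_\theta^2F$, the identification $[\partial_{\theta}F]_k\,L^\theta(\partial_{\theta}F)=L^\theta([\partial_{\theta}F]_k\,\partial_{\theta}F)$ that produces $\mathfrak{A}_j$, and Nualart's Theorem~2.1.2 for existence of the density. The genuine gap is the inference ``since $\varphi$ is arbitrary, this is (\ref{log-density-p-eq1})''. What your computation yields is that for each fixed test function $\varphi$ the scalar map $\theta\mapsto\int\varphi\,p\,dx_j$ is differentiable with derivative $\int\varphi\,q\,dx_j$, where $q(x_j,\theta)=p(x_j,\theta)E_j[\delta_j(L^\theta(\partial_\theta F))\,|\,F=x_j]$. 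This does \emph{not} give that $p(x_j,\cdot)$ is differentiable for a.e.\ $x_j$ with $\partial_\theta p=q$: the interchange of $\partial_\theta$ with $\int\cdot\,dx_j$ is precisely the point at issue; the a.e.-$x_j$ null sets you produce depend on $\theta$ and on the direction of differentiation; and $q(\cdot,\theta)$ is itself a conditional expectation defined only up to a $\theta$-dependent null set, so you do not even have $\theta$-continuity of $q(x_j,\cdot)$ for fixed $x_j$, which any fundamental-theorem-of-calculus repair of your last step would require. This regularity upgrade is the actual content of the $C^2(\Theta)$ assertion; it is not the H\"older/Meyer integrability bookkeeping you propose to relegate to supporting computations.

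The paper closes this gap by never differentiating $p$ pointwise at the outset. It tests in the $\theta$ variable against $h\in C^\infty_o(\Theta)$, obtaining $-\int\partial_\theta h(\theta)\,p(x_j,\theta)\,d\theta=\int h(\theta)\,q(x_j,\theta)\,d\theta$ a.e.\ in $x_j$, i.e.\ $q$ is a \emph{weak} $\theta$-derivative of $p(x_j,\cdot)$; it derives the analogous weak-derivative identities up to order three (this is why (B1) requires $\partial_\theta^l[F_{n,\theta,j}]_i\in\mathbb{D}^{4-l,p}_j$ up to $l=3$, one order beyond what the statement displays); it uses separability of $C^\infty_o(\Theta)$ in the Sobolev norm so that a single null set in $x_j$ serves all $h$ simultaneously; and it then invokes the Sobolev embedding of Theorem~5.3 in Shigekawa~\cite{shi04} to conclude $p(x_j,\cdot)\in C^2(\Theta)$ a.e.\ in $x_j$, after which (\ref{log-density-p-eq1}) and (\ref{log-density-p-eq2}) follow by evaluating the weak identities pointwise (formally, taking $h$ to be a Dirac mass at $\theta$). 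Your argument needs this weak-formulation/embedding mechanism, or an explicit construction of $\theta$-continuous versions of the conditional expectations, before the final line can be asserted.
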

\begin{discuss}
{\colorr この二階微分の表現は非同期LAMNのLemma 3.6とも合っている}
\end{discuss}

\begin{as}
The proof of the following proposition is {\colore given in} Section~\ref{Mcal-proof-section} {\colore of} the supplementary {\colore material}~\cite{fuk-ogi20s}.
\end{as}
\begin{arXiv}
{\colorc The proof of the following proposition is left to Section~\ref{Mcal-proof-section} in the appendix.}
\end{arXiv}

\begin{proposition}\label{delp-p-est-prop}
Assume (B1) and (B2). Then
\begin{eqnarray}
\sup_{i,j,\bar{x}_{j-1},\theta}E_j[|\partial_{\theta_i}
 p_{j,\bar{x}_{j-1}}/p_{j,\bar{x}_{j-1}}|^41_{\{p_{j,\bar{x}_{j-1}}\neq
 0\}}(F_{n,\theta,j})]^{1/4}&\leq &C\alpha_n{\colord \bar{k}_n^2}, \label{delp-p-est1} \\ 
\quad
 \sup_{i,l,j,\bar{x}_{j-1},\theta}E_j[|\partial_{\theta_i}\partial_{\theta_l}
 p_{j,\bar{x}_{j-1}}/p_{j,\bar{x}_{j-1}}|^21_{\{p_{j,\bar{x}_{j-1}}\neq
 0\}}(F_{n,\theta,j})]^{1/2}&\leq &C\alpha_n^2{\colord \bar{k}_n^4}. \label{delp-p-est2}
\end{eqnarray}
\end{proposition}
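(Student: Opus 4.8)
The plan is to derive both estimates from the Malliavin representations of $\partial_\theta p_j$ and $\partial_\theta^2 p_j$ in Proposition~\ref{delLogP-eq}, reducing them to $L^p$ bounds on (iterated) divergences that are then controlled by Meyer's inequalities together with (B1) and (B2). Throughout, all constants come out uniform in $i,l,j,\bar{x}_{j-1},\theta$ precisely because the suprema in (B1) and (B2) are.

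First, Proposition~\ref{delLogP-eq} gives, on $\{p_{j,\bar{x}_{j-1}}\neq 0\}$,
\begin{equation*}
\frac{\partial_{\theta_i}p_{j,\bar{x}_{j-1}}}{p_{j,\bar{x}_{j-1}}}(x_j,\theta)=E_j\big[\delta_j(L^\theta(\partial_{\theta_i}F_{n,\theta,j}))\big|F_{n,\theta,j}=x_j\big].
\end{equation*}
Since $F_{n,\theta,j}$ has law with density $p_{j,\bar{x}_{j-1}}(\cdot,\theta)$, evaluating at $x_j=F_{n,\theta,j}$ and applying the conditional Jensen inequality with the tower property bounds the left-hand side of (\ref{delp-p-est1}) by $E_j[|\delta_j(L^\theta(\partial_{\theta_i}F_{n,\theta,j}))|^4]^{1/4}=\lVert \delta_j(L^\theta(\partial_{\theta_i}F_{n,\theta,j}))\rVert_{0,4}$, the indicator only helping since it is $\leq 1$. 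An analogous reduction using (\ref{log-density-p-eq2}) bounds the left-hand side of (\ref{delp-p-est2}) by $\lVert \delta_j(L^\theta(\partial_\theta^2F_{n,\theta,j}))+\delta_j(L^\theta(\mathfrak{A}_j))\rVert_{0,2}$.

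Second, I would invoke the continuity of the divergence on Malliavin--Sobolev spaces (Meyer's inequalities, Nualart~\cite{nua06}), $\lVert \delta_j(u)\rVert_{k,p}\leq C\lVert u\rVert_{k+1,p}$, and expand $L^\theta(V)=\sum_{k,k'}[K_j^{-1}(\theta)]_{k,k'}[V]_k D_j[F_{n,\theta,j}]_{k'}$ by the Leibniz rule, estimating each summand via the generalized H\"older inequality for $\lVert \cdot \rVert_{k,p}$. Each summand is bounded by a constant times $\lVert [K_j^{-1}(\theta)]_{k,k'}\rVert_{2,8}\leq \alpha_n$ (from (B2)) multiplied by Sobolev norms of $[V]_k$ and $[F_{n,\theta,j}]_{k'}$ that are uniformly bounded by (B1), while the double sum over $k,k'\in\{1,\dots,k_j-k_{j-1}\}$ contributes the factor $\bar{k}_n^2$. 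For the first-order term this yields $\lVert \delta_j(L^\theta(\partial_{\theta_i}F_{n,\theta,j}))\rVert_{0,4}\leq C\lVert L^\theta(\partial_{\theta_i}F_{n,\theta,j})\rVert_{1,4}\leq C\alpha_n\bar{k}_n^2$, which is (\ref{delp-p-est1}). The regularity $\partial_\theta^l[F_{n,\theta,j}]_i\in\mathbb{D}^{4-l,p}$ imposed in (B1) is exactly what keeps every factor in the Sobolev space required at each use of Meyer's inequality.

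Third, $\delta_j(L^\theta(\partial_\theta^2F_{n,\theta,j}))$ is handled identically using the $l=2$ case of (B1) and contributes $C\alpha_n\bar{k}_n^2$. The main obstacle is the iterated divergence $\delta_j(L^\theta(\mathfrak{A}_j))$, since each entry $[\mathfrak{A}_j]_k=\delta_j(L^\theta(\partial_\theta F_{n,\theta,j}\partial_\theta[F_{n,\theta,j}]_k))$ is itself a divergence. I would first bound $\lVert [\mathfrak{A}_j]_k\rVert_{1,p}\leq C\lVert L^\theta(\partial_\theta F_{n,\theta,j}\partial_\theta[F_{n,\theta,j}]_k)\rVert_{2,p}\leq C\alpha_n\bar{k}_n^2$ by the same mechanism, the inner factor being a product of two first $\theta$-derivatives, each in $\mathbb{D}^{3,p}$ by (B1). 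Feeding this into the outer divergence, $\lVert \delta_j(L^\theta(\mathfrak{A}_j))\rVert_{0,2}\leq C\lVert L^\theta(\mathfrak{A}_j)\rVert_{1,2}$, where the outer $[K_j^{-1}]_{k,k'}$ supplies a second factor $\alpha_n$ and the outer double sum a second factor $\bar{k}_n^2$, giving $C\alpha_n^2\bar{k}_n^4$. Since $\alpha_n\geq 1$ and $\bar{k}_n\geq 1$, the two contributions combine to (\ref{delp-p-est2}). The delicate point throughout is this bookkeeping: verifying that the Sobolev order stays high enough for each successive application of Meyer's inequality (which is why (B1) is assumed up to order four and $K_j^{-1}$ is controlled up to order two in (B2)), and tracking precisely how the two nested divergences and the two matrix-index sums accumulate the powers $\alpha_n^2$ and $\bar{k}_n^4$.
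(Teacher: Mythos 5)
Your proposal is correct and matches the paper's own argument essentially step for step: reduce via the representation in Proposition~\ref{delLogP-eq} and conditional Jensen to $L^p$ norms of the divergences, then apply Meyer's inequality $\lVert \delta_j(u)\rVert_{k,p}\leq C\lVert u\rVert_{k+1,p}$ and H\"older to the expansion of $L^\theta$, with (B2) supplying the factor $\alpha_n$ per occurrence of $K_j^{-1}$ and each double index sum supplying $\bar{k}_n^2$. The only cosmetic difference is that you bound the inner divergence $[\mathfrak{A}_j]_k$ first and then feed it into the outer one, whereas the paper applies the outer Meyer--H\"older step first and the inner one second; the estimates and hypotheses used are identical.
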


Let $\theta_h=\theta_0+\epsilon_nh$ for $h\in \mathbb{R}^d$,
  \EQNN{\mathcal{E}^1_j(x_j,\theta)&=\mathcal{E}_j^1(x_j,\theta,\bar{x}_{j-1})=(E_j[\delta_j(L^\theta(\partial_{\theta_i}F_{n,\theta,j,\bar{x}_{j-1}}))|F_{n,\theta,j,\bar{x}_{j-1}}=x_j])_{i=1}^d, \\
   \mathcal{E}^2_j(x_j,\theta)&=(E_j[\delta_j(L^\theta(\partial_{\theta_i}\partial_{\theta_l}F_{n,\theta,j}))
   +\delta_j(L^\theta({\colorc \mathfrak{A}_j}))|F_{n,\theta,j}=x_j])_{i,l=1}^d. 
  }
{\colord We set the conditional expectations equal to zero when $p_j(x_j,\theta)=0$.
\begin{discuss}
{\colorr そのようにできるということは任意のボレル集合$A$に対して
\begin{equation*}
E[E[\delta|X^\theta=\bar{x}_j]|_{y=X^\theta}1_A(X^\theta)]=E[\delta 1_A(X^\theta)]
\end{equation*}
を満たせばよいが，左辺は$\int E[\delta|X^\theta=\bar{x}_j]1_A(\bar{x}_j)p_{X^\theta}(\bar{x}_j)d\bar{x}_j$
に等しいから$p_{X^\theta}(\bar{x}_j)=0$の時は任意に選べる．
}
\end{discuss}
Then $\mathcal{E}_j^1(x_j,\theta)$ and $\mathcal{E}_j^2(x_j,\theta)$ are measurable with respect to $\theta$ almost everywhere in $x_j$
because 
\EQQ{[\mathcal{E}_j^1(x_j,\theta)]_i=(\partial_{\theta_i}p_j/p_j)1_{\{p_j\neq 0\}}
\quad {\rm and} \quad [\mathcal{E}_j^2(x_j,\theta)]_{il}=(\partial_{\theta_i}\partial_{\theta_l}p_j/p_j)1_{\{p_j\neq 0\}}.}}
\begin{discuss}
{\colorr $i$と$l$を交換しても$[\mathcal{E}^2_j]_{il}$の値が変わらないことはProp 3.1での構成の仕方を見ればわかる．}
\end{discuss}

\noindent
{\bf Proof of Theorem~\ref{Malliavin-LAMN-thm2}.}

We check (A1)--(A3) in Theorem~\ref{L2regu-thm} by setting 
  \EQQ{p_j(\theta)=p_j(x_j,\theta)=p_{j,\bar{x}_{j-1}}(x_j,\theta).}

For sufficiently large $n$, we have $\{\theta_{th}\}_{t\in[0,1]}\subset \Theta$,
  \EQNN{&E_{\theta_0}\bigg[\sum_{j=1}^{m_n}\int_{N_j}[\sqrt{p_j}(x_j,\theta_h)-\sqrt{p_j}(x_j,\theta_0)]^2dx_j\bigg] \\
   &\quad \leq 2E_{\theta_0}\bigg[\sum_{j=1}^{m_n}\int_{N_j}(p_j(x_j,\theta_h)+p_j(x_j,\theta_0))dx_j\bigg] \to 0  
  }
as $n\to\infty$ by (N1), and
\begin{eqnarray}
&&\int_{{\colord N_j^c}}\bigg\{\sqrt{p_j}(x_j,\theta_h)-\sqrt{p_j}(x_j,\theta_0)-\frac{\epsilon_n}{2}h\cdot  {\colorb \dot{\xi}_j(\theta_0)}\bigg\}^2dx_j \nonumber \\
&&\quad =\int_{{\colord M_j}}\bigg\{\int^1_0\epsilon_n{\colord \frac{\partial_\theta p_j}{2\sqrt{p_j}}}(x_j,\theta_{th})dt\cdot h-\epsilon_n{\colord \frac{\partial_\theta p_j}{2\sqrt{p_j}}}(x_j,\theta_0)\cdot h\bigg\}^2dx_j \nonumber \\
&&\quad
 =\int_{{\colord M_j}}\bigg\{\int^1_0\int^t_0\epsilon_n^2h^\top{\colord \partial_\theta \bigg(\frac{\partial_\theta p_j}{2\sqrt{p_j}}\bigg)}(x_j,{\colord \theta_{sh}})hdsdt\bigg\}^2dx_j \nonumber \\
&&\quad\leq \epsilon_n^4|h|^4\int^1_0E\bigg[{\colord \bigg\lVert\frac{\partial_\theta^2p_j}{2p_j}-\frac{\partial_\theta p_j\partial_\theta p_j^\top}{4p_j^2}\bigg\rVert_{{\rm op}}^21_{\{p_j\neq 0\}}}(F_{n,\theta_{sh},j},\theta_{sh})\bigg]ds. \nonumber 
\end{eqnarray}
Together with Proposition~\ref{delp-p-est-prop}, we have
\begin{equation}
\sum_{j=1}^{m_n}E\bigg[\int_{{\colord N_j^c}}\bigg\{\sqrt{p_j}(x_j,\theta_h)-\sqrt{p_j}(x_j,\theta_0)-\frac{\epsilon_n}{2}h\cdot  {\colorb \dot{\xi}_j(\theta_0)}\bigg\}^2dx_j\bigg] \to 0,
\end{equation}
which implies (A1).

Moreover, we have (A2) {\colore because}
\begin{equation*}
E_{\theta_0}\bigg[{\colord \frac{\partial_\theta p_j}{p_j}}1_{\{p_j\neq 0\}}(x_j,\theta_0)\bigg|{\colord \mathcal{F}_{j-1}}\bigg]=\int \partial_\theta p_j(x_j,\theta_0)dx_j=0,
\end{equation*}
{\colord where $\mathcal{F}_{j-1}$ is the one in Section~\ref{L2regu-subsection}.}

Further, Proposition~\ref{delp-p-est-prop} yields (A3).

\qed

In the following, we prove Theorem~\ref{Malliavin-LAMN-thm}.
To show (A5), we replace $\mathcal{E}_j^1(F_{n,\theta,j},\theta)$ by $(\mathcal{L}_{j,i,\bar{x}_{j-1}}(\tilde{F}_{n,\theta,j}))_{i=1}^d$,
and then we apply (B4). For that purpose, we first estimate {\colore the} difference between $K_j$ and $\tilde{K}_j$.
\begin{lemma}\label{tildeK-est-lemma}
Assume (B1)--(B3) and that $\alpha_n\rho_n\bar{k}_n^2\to 0$. Then, for any $1\leq j\leq m_n$ and $p>1$, $\tilde{K}_j(\theta)$ is an invertible matrix almost surely and satisfies
\begin{equation}
\sup_{i,l,j,\bar{x}_{j-1},\theta}\lVert [K_j(\theta)-\tilde{K}_j(\theta)]_{il}\rVert_{2,p} \leq C_p\rho_n,
\quad
{\colord \sup_{j,\bar{x}_{j-1},\theta}\lVert \tilde{K}_j^{-1}(\theta)\rVert_{{\rm op}}}\leq
C\alpha_n{\colorb \bar{k}_n}
\end{equation}
for sufficiently large $n$.
\end{lemma}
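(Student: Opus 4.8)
The plan is to regard $\tilde{K}_j(\theta)$ as the Malliavin matrix of the Gaussian proxy $\tilde{F}_{n,\theta,j}$ and to compare it with $K_j(\theta)$ entrywise. Since $D_j[\tilde{F}_{n,\theta,j}]_l=D_jW_j(h_{j,l})=h_{j,l}$, one has $[\tilde{K}_j]_{kl}=\langle D_j[\tilde{F}_{n,\theta,j}]_k,D_j[\tilde{F}_{n,\theta,j}]_l\rangle_{H_j}$, so bilinearity of the inner product yields the telescoping identity $[K_j-\tilde{K}_j]_{kl}=\langle D_j[F-\tilde{F}]_k,D_j[F]_l\rangle_{H_j}+\langle D_j[\tilde{F}]_k,D_j[F-\tilde{F}]_l\rangle_{H_j}$, where I abbreviate $F=F_{n,\theta,j}$, $\tilde{F}=\tilde{F}_{n,\theta,j}$ and suppress $\bar{x}_{j-1}$. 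The structural observation I would exploit throughout is that, for fixed $n,\theta,j,\bar{x}_{j-1}$, the vectors $h_{j,l}\in H_j$ are deterministic, so $\tilde{K}_j(\theta)$ and $\tilde{K}_j^{-1}(\theta)$ are deterministic matrices; this is precisely what converts a probabilistic bound into the claimed deterministic operator-norm estimate.

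For the first inequality I would invoke the Leibniz rule for the Malliavin derivative together with Hölder's inequality to obtain the product bound $\lVert\langle D_jG,D_jH\rangle_{H_j}\rVert_{2,p}\leq C\lVert G\rVert_{3,2p}\lVert H\rVert_{3,2p}$ for scalar $G,H\in\mathbb{D}^{3,2p}_j$, with $C$ independent of $n,j,\bar{x}_{j-1},\theta$. Applying this to each term of the telescoping identity and using that $\lVert[F-\tilde{F}]_{i'}\rVert_{3,2p}\leq C_p\rho_n$ by (B3), while $\lVert[F]_l\rVert_{3,2p}$ and $\lVert[\tilde{F}]_k\rVert_{3,2p}$ are uniformly bounded (the former by (B1), the latter since $\tilde{F}=F-(F-\tilde{F})$), yields $\sup_{i,l,j,\bar{x}_{j-1},\theta}\lVert[K_j-\tilde{K}_j]_{il}\rVert_{2,p}\leq C_p\rho_n$.

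For the invertibility and the bound on $\tilde{K}_j^{-1}$, I would first extract from (B2) a moment bound on the operator norm of $K_j^{-1}$: since $\lVert\cdot\rVert_{2,8}$ dominates the $L^8(Q_j)$-norm and $\lVert K_j^{-1}\rVert_{{\rm op}}\leq|K_j^{-1}|$, summing over the at most $\bar{k}_n^2$ entries gives $E_j[\lVert K_j^{-1}\rVert_{{\rm op}}^2]^{1/2}\leq\alpha_n\bar{k}_n$. The first inequality similarly gives $E_j[\lVert K_j-\tilde{K}_j\rVert_{{\rm op}}^2]^{1/2}\leq C\rho_n\bar{k}_n$, so by Cauchy--Schwarz $E_j[\lVert K_j^{-1}(K_j-\tilde{K}_j)\rVert_{{\rm op}}]\leq C\alpha_n\rho_n\bar{k}_n^2\to 0$. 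For $n$ large this is at most $1/4$, so by Markov's inequality the event $A=\{\lVert K_j^{-1}(K_j-\tilde{K}_j)\rVert_{{\rm op}}<1/2\}$ has $P(A)\geq 1/2$. On $A$, the Neumann series applied to $\tilde{K}_j=K_j(I-K_j^{-1}(K_j-\tilde{K}_j))$ shows $\tilde{K}_j$ is invertible with $\lVert\tilde{K}_j^{-1}\rVert_{{\rm op}}\leq 2\lVert K_j^{-1}\rVert_{{\rm op}}$; since $\tilde{K}_j$ is deterministic, invertibility holds identically, and integrating $\lVert\tilde{K}_j^{-1}\rVert_{{\rm op}}1_A\leq 2\lVert K_j^{-1}\rVert_{{\rm op}}$ over $A$ gives $\lVert\tilde{K}_j^{-1}\rVert_{{\rm op}}\leq 4E_j[\lVert K_j^{-1}\rVert_{{\rm op}}]\leq C\alpha_n\bar{k}_n$.

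The main obstacle I anticipate is establishing the product Sobolev estimate $\lVert\langle D_jG,D_jH\rangle_{H_j}\rVert_{2,p}\leq C\lVert G\rVert_{3,2p}\lVert H\rVert_{3,2p}$ with constants uniform in $n,j,\bar{x}_{j-1},\theta$; this requires expanding $D_j^2\langle D_jG,D_jH\rangle_{H_j}$ by the Leibniz rule into contractions of derivatives of $G$ and $H$ of order at most three and controlling them via Cauchy--Schwarz in the tensor Hilbert spaces $H_j^{\otimes l}$ and Hölder in $\omega$. The only other delicate point, the transfer from a probabilistic to a deterministic bound in the final step, is clean precisely because $\tilde{K}_j$ does not depend on $\omega$.
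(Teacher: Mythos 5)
Your proof is correct and follows essentially the same route as the paper's: the same telescoping decomposition of $[K_j-\tilde{K}_j]_{il}$ bounded via $\lVert F_i-\tilde{F}_i\rVert_{3,2p}\lVert F_l\rVert_{3,2p}\leq C_p\rho_n$ using (B1) and (B3), then the factorization $\tilde{K}_j=K_j(I-K_j^{-1}(K_j-\tilde{K}_j))$, the Cauchy--Schwarz estimate $E_j[\lVert K_j^{-1}(K_j-\tilde{K}_j)\rVert_{{\rm op}}]\leq C\alpha_n\rho_n\bar{k}_n^2\to 0$, a Neumann-series inversion on a positive-probability event, and the observation that determinism of $\tilde{K}_j$ upgrades the conclusion to a sure statement. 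Your final integration of $\lVert\tilde{K}_j^{-1}\rVert_{{\rm op}}1_A\leq 2\lVert K_j^{-1}\rVert_{{\rm op}}$ over $A$ is a slightly more explicit way of extracting the deterministic bound than the paper's ``with positive probability'' phrasing, but it is the same idea.
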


\begin{as}
The proof is {\colore given in} Section~\ref{Mcal-proof-section} {\colore of} the supplementary {\colore material}~\cite{fuk-ogi20s}.
\end{as}
\begin{arXiv}
{\colorc The proof is left to Section~\ref{Mcal-proof-section} in the appendix.}
\end{arXiv}

\begin{proposition}\label{delLogP-approx-prop}
Assume {\colore (B1)--(B3)} and that $\alpha_n\rho_n\bar{k}_n^2\to 0$ as $n\to \infty$. Then 
{\colore there exists a positive constant $C$} such that
\begin{equation}\label{delLogP-approx-prop-eq2}
\sup_{i,j,\bar{x}_{j-1},\theta}E_j\bigg[\bigg|\frac{\partial_{\theta_i} p_{j,\bar{x}_{j-1}}}{p_{j,\bar{x}_{j-1}}}1_{\{p_j\neq 0\}}(F_{n,\theta,j},\theta)-\mathcal{L}_{j,i,\bar{x}_{j-1}}(\tilde{F}_{n,\theta,j},\theta)\bigg|^2\bigg]^{1/2}
\leq C\alpha_n^2\rho_n{\colord \bar{k}_n^4}
\end{equation}
for sufficiently large $n$.
\end{proposition}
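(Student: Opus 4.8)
The plan is to combine the density-derivative representation of Proposition~\ref{delLogP-eq} with an explicit Gaussian computation for $\tilde{F}_{n,\theta,j}$ and the estimates of Lemma~\ref{tildeK-est-lemma}. First I would record the key algebraic identity. Let $\tilde{L}^\theta(V)=\sum_{k,k'}[\tilde{K}_j^{-1}(\theta)]_{k,k'}D_j[\tilde{F}_{n,\theta,j}]_{k'}[V]_k$ be the operator $L^\theta$ with $F_{n,\theta,j}$ replaced by its Gaussian surrogate, so that $D_j[\tilde{F}_{n,\theta,j}]_{k'}=h_{j,k'}$ and the associated Malliavin matrix is exactly $\tilde{K}_j$. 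Using $\partial_{\theta_i}\tilde{F}_{n,\theta,j}=B_{j,i,\theta}\tilde{F}_{n,\theta,j}$ from (B3) and the product rule $\delta_j(Gh)=GW_j(h)-\langle D_jG,h\rangle_{H_j}$ for deterministic $h\in H_j$, a direct calculation gives
\[
\delta_j\big(\tilde{L}^\theta(\partial_{\theta_i}\tilde{F}_{n,\theta,j})\big)
=\tilde{F}_{n,\theta,j}^\top B_{j,i,\theta}^\top\tilde{K}_j^{-1}(\theta)\tilde{F}_{n,\theta,j}-{\rm tr}(B_{j,i,\theta})
=\mathcal{L}_{j,i,\bar{x}_{j-1}}(\tilde{F}_{n,\theta,j},\theta).
\]
Thus $\mathcal{L}_{j,i}(\tilde{F}_{n,\theta,j},\theta)$ is itself the Skorokhod integral of the Gaussian analogue of $L^\theta(\partial_{\theta_i}F_{n,\theta,j})$; this is the bridge that lets me compare it with the left-hand side.

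Writing $p_j=p_{j,\bar{x}_{j-1}}$, Proposition~\ref{delLogP-eq} gives $(\partial_{\theta_i}p_j/p_j)1_{\{p_j\neq0\}}(F_{n,\theta,j},\theta)=E_j[\delta_j(L^\theta(\partial_{\theta_i}F_{n,\theta,j}))|F_{n,\theta,j}]$, so I would split the quantity to be estimated as
\begin{align*}
&E_j[\delta_j(L^\theta(\partial_{\theta_i}F_{n,\theta,j}))|F_{n,\theta,j}]-\mathcal{L}_{j,i}(\tilde{F}_{n,\theta,j},\theta)\\
&\quad=E_j\big[\delta_j(L^\theta(\partial_{\theta_i}F_{n,\theta,j}))-\delta_j(\tilde{L}^\theta(\partial_{\theta_i}\tilde{F}_{n,\theta,j}))\,\big|\,F_{n,\theta,j}\big]\\
&\quad\quad+\big(E_j[\mathcal{L}_{j,i}(\tilde{F}_{n,\theta,j},\theta)|F_{n,\theta,j}]-\mathcal{L}_{j,i}(\tilde{F}_{n,\theta,j},\theta)\big),
\end{align*}
using the identity above in the first line. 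The first term is bounded in $L^2(Q_j)$ by $\|\delta_j(L^\theta(\partial_{\theta_i}F_{n,\theta,j}))-\delta_j(\tilde{L}^\theta(\partial_{\theta_i}\tilde{F}_{n,\theta,j}))\|_{L^2}$ via Jensen's inequality. For the second term, since $\mathcal{L}_{j,i}(F_{n,\theta,j},\theta)$ is $\sigma(F_{n,\theta,j})$-measurable and $E_j[\,\cdot\,|F_{n,\theta,j}]$ is the $L^2$-orthogonal projection onto that subspace, its $L^2$-norm is at most $\|\mathcal{L}_{j,i}(\tilde{F}_{n,\theta,j},\theta)-\mathcal{L}_{j,i}(F_{n,\theta,j},\theta)\|_{L^2}$. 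Expanding the quadratic form and using $\|B_{j,i,\theta}\|_{\rm op}\le C\bar{k}_n$ (from (B3)), $\|\tilde{K}_j^{-1}\|_{\rm op}\le C\alpha_n\bar{k}_n$ (Lemma~\ref{tildeK-est-lemma}), the moment bounds on $F_{n,\theta,j},\tilde{F}_{n,\theta,j}$ from (B1) and (B3), and $\|F_{n,\theta,j}-\tilde{F}_{n,\theta,j}\|_{3,p}\le C_p\rho_n$, this term is one power of $\alpha_n$ below the target rate and hence negligible.

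For the main (first) term I would invoke the continuity of the divergence operator, $\|\delta_j(u)\|_{L^2}\le C\|u\|_{1,p}$ in the $H_j$-valued Sobolev norm, so that it suffices to bound $\|L^\theta(\partial_{\theta_i}F_{n,\theta,j})-\tilde{L}^\theta(\partial_{\theta_i}\tilde{F}_{n,\theta,j})\|_{1,p}$. I would split this $H_j$-valued difference into three pieces according to whether one replaces $K_j^{-1}$ by $\tilde{K}_j^{-1}$, $\partial_{\theta_i}F_{n,\theta,j}$ by $\partial_{\theta_i}\tilde{F}_{n,\theta,j}$, or $D_jF_{n,\theta,j}$ by $D_j\tilde{F}_{n,\theta,j}$, each sum ranging over at most $\bar{k}_n^2$ index pairs. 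The first piece is dominant: using $K_j^{-1}-\tilde{K}_j^{-1}=-K_j^{-1}(K_j-\tilde{K}_j)\tilde{K}_j^{-1}$ together with $\|[K_j^{-1}]_{il}\|_{2,8}\le\alpha_n$ (B2), $\|[K_j-\tilde{K}_j]_{il}\|_{2,p}\le C_p\rho_n$ and $\|\tilde{K}_j^{-1}\|_{\rm op}\le C\alpha_n\bar{k}_n$ (Lemma~\ref{tildeK-est-lemma}), and the moment bounds on $\partial_{\theta_i}F_{n,\theta,j}$ and $D_jF_{n,\theta,j}$ from (B1), an application of H\"older's inequality to each factor produces a contribution of order $\alpha_n^2\rho_n\bar{k}_n^4$. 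The two remaining pieces, which only involve the $O(\rho_n)$ differences of $\partial_\theta F$ and $DF$ against the single inverse factor $\tilde{K}_j^{-1}$, are of the same, lower order and are dominated. Collecting the three contributions gives the claimed bound $C\alpha_n^2\rho_n\bar{k}_n^4$, uniformly in $i,j,\bar{x}_{j-1},\theta$, for sufficiently large $n$.

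The main obstacle is this last step: the careful, term-by-term estimation of the $\|\cdot\|_{1,p}$-norm of the difference of the two random-integrand Skorokhod integrands, where one must track the interplay of the $\bar{k}_n^2$ summation ranges, the extra Malliavin derivative forced by the $\|\cdot\|_{1,p}$-norm (which is precisely why the second-order norm $\|\cdot\|_{2,8}$ of $K_j^{-1}$ is assumed in (B2)), and the operator-norm blow-up $\alpha_n\bar{k}_n$ of $\tilde{K}_j^{-1}$, so as to land exactly on the power $\alpha_n^2\rho_n\bar{k}_n^4$ rather than something larger.
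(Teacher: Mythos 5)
Your proposal is correct and follows essentially the same route as the paper's proof: the same identity $\delta_j({\bf L}_{j,i}^\theta)=\mathcal{L}_{j,i,\bar{x}_{j-1}}(\tilde{F}_{n,\theta,j},\theta)$ via the divergence product rule and (B3), the same decomposition into a conditional expectation of $\delta_j$ applied to the difference of integrands plus the remainder $r_i=\mathcal{L}_{j,i}(\tilde{F}_{n,\theta,j},\theta)-\mathcal{L}_{j,i}(F_{n,\theta,j},\theta)$, and the same three-way splitting with Lemma~\ref{tildeK-est-lemma} and the resolvent identity to land on $\alpha_n^2\rho_n\bar{k}_n^4$, with $r_i$ of lower order $\alpha_n\rho_n\bar{k}_n^3$. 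Your use of the $L^2$-projection optimality of $E_j[\cdot|F_{n,\theta,j}]$ is a marginally cleaner way to phrase the paper's bound on $E_j[r_i|F_{n,\theta,j}]-r_i$, but the estimates involved are identical.
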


\begin{proof}
{\colore For $V\in (\mathbb{D}_j^{1,p})^{k_j-k_{j-1}}$, 
we regard $D_jV=(D_j[V]_l)_l$ as a vector of size $k_j-k_{j-1}$.}
Let ${\bf L}_{j,i}^\theta=\partial_{\theta_i} \tilde{F}_{n,\theta,j}^\top \tilde{K}_j^{-1}(\theta)D_j\tilde{F}_{n,\theta,j}$.
{\colore First, we} show that
\begin{equation}\label{tildeL-approx}
\sup_{i,j,\bar{x}_{j-1},\theta}\lVert L^\theta(\partial_{\theta_i} F_{n,\theta,j})-{\bf L}_{j,i}^\theta\rVert_{\mathbb{D}^{1,p}(H_j)}\leq C_p\alpha_n^2\rho_n{\colord \bar{k}_n^4}.
\end{equation}

Condition~(B3) and Lemma~\ref{tildeK-est-lemma} yield estimates for 
\EQQ{(\partial_{\theta_i} F_{n,\theta,j}-\partial_{\theta_i} \tilde{F}_{n,\theta,j})^\top K_j^{-1}D_jF_{n,\theta,j}
\quad {\rm and}\quad \partial_{\theta_i} \tilde{F}_{n,\theta,j}^\top\tilde{K}_j^{-1}(D_jF_{n,\theta,j}-D_j\tilde{F}_{n,\theta,j}).}
{\colore Because} $K_j^{-1}-\tilde{K}_j^{-1}=\tilde{K}_j^{-1}(\tilde{K}_j-K_j)K_j^{-1}$, 
we also obtain an estimate for $\partial_{\theta_i} \tilde{F}_{n,\theta,j}^\top(K_j^{-1}-\tilde{K}_j^{-1})D_jF_{n,\theta,j}$.
Then we have (\ref{tildeL-approx}).
\begin{discuss}
{\colorr $|\partial_\theta F_{n,\theta,j}|\lVert \tilde{K}_j^{-1}\rVert_{{\rm op}}|(\tilde{K}_j-K_j)K_j^{-1}D_jF_{n,\theta,j}|\leq C\bar{k}_n^{1/2}\alpha_n\bar{k}_n\cdot \bar{k}_n^{1/2}\rho_n\bar{k}_n^2\alpha_n$.}
\end{discuss}

Moreover, Proposition~1.3.3 in Nualart~\cite{nua06} and (B3) yield
  \EQN{\label{deltaTildeL-est}
   \delta_j({\bf L}_{j,i}^\theta)&=\partial_{\theta_i} \tilde{F}_{n,\theta,j}^\top\tilde{K}_j^{-1}\delta_j(D_j\tilde{F}_{n,\theta,j})
   -{\rm tr}(\tilde{K}_j^{-1}\langle D_j\partial_{\theta_i} \tilde{F}_{n,\theta,j},D_j\tilde{F}_{n,\theta,j}\rangle_{H_j}) \\
   &=\tilde{F}_{n,\theta,j}^\top B_{j,i,\theta}^\top\tilde{K}_j^{-1}\tilde{F}_{n,\theta,j}
   -{\rm tr}(\tilde{K}_j^{-1}B_{j,i,\theta}\tilde{K}_j)=\mathcal{L}_{j,i,\bar{x}_{j-1}}(\tilde{F}_{n,\theta,j},\theta).
  }
\begin{discuss}
{\colorr \begin{equation*}
\sum_{k,n}\langle D^kB^{m,n}\tilde{F}^n,D^k\tilde{F}^l\rangle=\sum_nB^{mn}K_{n,l}=(BK)_{ml}.
\end{equation*}
}
\end{discuss}

Together with Proposition~\ref{delLogP-eq}, we have
  \EQNN{&\frac{\partial_{\theta_i} p_j}{p_j}1_{\{p_j\neq 0\}}(F_{n,\theta,j})-\mathcal{L}_{j,i,\bar{x}_{j-1}}(\tilde{F}_{n,\theta,j}, \theta) \\
   &\quad =E_j[\delta_j(L^\theta(\partial_{\theta_i} F_{n,\theta,j})-{\bf L}_{j,i}^\theta)|F_{n,\theta,j}] +E_j[r_i||F_{n,\theta,j}]-r_i,
  }
where $r_i=\mathcal{L}_{j,i,\bar{x}_{j-1}}(\tilde{F}_{n,\theta,j},\theta)-\mathcal{L}_{j,i,\bar{x}_{j-1}}(F_{n,\theta,j},\theta)$.
\begin{discuss}
{\colorr Nualart Prop 1.5.4より$\lVert \delta(u)\rVert_{0,p}\leq C_p\lVert u\rVert_{\mathbb{D}^{1,p}(H)}$で証明を見ると$C_p$は$j$によらずにとれる．$E[|r_i|^p]^{1/p}$の評価で$\bar{k}_n^3$がでる．}
\end{discuss}
Then we obtain (\ref{delLogP-approx-prop-eq2}) by (B3), (\ref{tildeL-approx}), {\colord and {\colore the fact} that
\begin{equation}\label{r-est}
E_j[|r_i|^p]^{1/p}\leq C_p\alpha_n\rho_n\bar{k}_n^3
\end{equation}
for any $p\geq 1$.
}

\end{proof}

{\colord
\begin{lemma}\label{root-p-smooth-lemma}
Assume (B1), (B2), and (N2). Then, for any $n\in\mathbb{N}$, $1\leq j\leq m_n$, and $h\in\mathbb{R}^d$ satisfying $\{\theta_{th}\}_{t\in [0,1]}\subset \Theta$, 
the function $\sqrt{p_{j,\bar{x}_{j-1}}} (x_j,\theta_{th})$ is absolutely continuous on $t\in[0,1]$
almost everywhere in $x_j$.
\end{lemma}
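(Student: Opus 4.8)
The plan is to fix $\bar{x}_{j-1}$, work pointwise in $x_j$, and reduce everything to a one–variable real–analysis statement. By Proposition~\ref{delLogP-eq} there is a Lebesgue–null set $\mathcal{N}_0\subset\mathbb{R}^{k_j-k_{j-1}}$, off which $p_{j,\bar{x}_{j-1}}(x_j,\cdot)\in C^2(\Theta)$; this $C^2$–regularity is all the present statement needs, while (N2) enters only later, when this pointwise conclusion is combined with integration over $x_j$ in the proof of (A1) and the moment bounds of Proposition~\ref{delp-p-est-prop} are invoked. Fix $x_j\notin\mathcal{N}_0$. Since $\Theta$ is convex and $\{\theta_{th}\}_{t\in[0,1]}\subset\Theta$, the map $f(t):=p_{j,\bar{x}_{j-1}}(x_j,\theta_{th})$ is a nonnegative $C^2$ function on $[0,1]$, with $f'(t)=\epsilon_n h^\top(\partial_\theta p_{j,\bar{x}_{j-1}})(x_j,\theta_{th})$. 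The lemma thus follows from the claim that for any $f\in C^2([0,1])$ with $f\ge 0$, the function $\sqrt{f}$ is absolutely continuous on $[0,1]$, with a.e.\ derivative $\tfrac{f'}{2\sqrt{f}}1_{\{f>0\}}$.

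To prove the claim I would regularize. For $\varepsilon>0$ set $g_\varepsilon:=\sqrt{f+\varepsilon}\in C^2([0,1])$, which is absolutely continuous, so that for $0\le s\le t\le 1$
\[
g_\varepsilon(t)-g_\varepsilon(s)=\int_s^t\frac{f'(u)}{2\sqrt{f(u)+\varepsilon}}\,du.
\]
As $\varepsilon\downarrow 0$ the left–hand side converges to $\sqrt{f(t)}-\sqrt{f(s)}$ by continuity of the square root, and the integrand converges pointwise to $\tfrac{f'(u)}{2\sqrt{f(u)}}1_{\{f(u)>0\}}$ (at an interior zero of $f$ one has $f'=0$, so the integrand vanishes there for every $\varepsilon$). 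Dominated convergence then gives $\sqrt{f(t)}-\sqrt{f(s)}=\int_s^t\tfrac{f'}{2\sqrt{f}}1_{\{f>0\}}\,du$ for all $s\le t$, which is exactly absolute continuity with the asserted derivative, provided I supply an integrable dominating function $\tfrac{|f'|}{2\sqrt{f}}1_{\{f>0\}}\ge\big|\tfrac{f'}{2\sqrt{f+\varepsilon}}\big|$.

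The main obstacle is precisely this domination, i.e.\ showing $\tfrac{|f'|}{2\sqrt{f}}1_{\{f>0\}}\in L^1([0,1])$. Here I would use the Landau--Kolmogorov inequality for nonnegative $C^2$ functions: with $M=\sup_{[0,1]}|f''|$, expanding $0\le f(u+\rho)\le f(u)+f'(u)\rho+\tfrac12 M\rho^2$ in the direction that decreases $f$ and optimizing over the available room yields $|f'(u)|^2\le 2Mf(u)$ at every point $u$ lying at distance at least $|f'(u)|/M$ from $\{0,1\}$, whence $\tfrac{|f'(u)|}{2\sqrt{f(u)}}\le\sqrt{M/2}$ there. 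Taking $\delta=\sup|f'|/M$, this bounds the integrand uniformly on $[\delta,1-\delta]$. On the two boundary strips one argues directly: if $f$ does not vanish at the endpoint the integrand is continuous, hence bounded; if $f$ vanishes there with nonzero one–sided derivative the integrand is of order $u^{-1/2}$, which is integrable; and in the remaining case $f(0)=f'(0)=0$ the one–sided Taylor estimates together with $f(u)\le\tfrac12 Mu^2$ again give a uniform bound. Collecting the cases shows the dominating function lies in $L^1$, which completes the argument and identifies
\[
\frac{d}{dt}\sqrt{p_{j,\bar{x}_{j-1}}}(x_j,\theta_{th})=\epsilon_n h^\top\frac{\partial_\theta p_{j,\bar{x}_{j-1}}}{2\sqrt{p_{j,\bar{x}_{j-1}}}}(x_j,\theta_{th})\,1_{\{p_{j,\bar{x}_{j-1}}>0\}}
\]
for a.e.\ $t$, the form of $\dot{\xi}_j$ needed to run the fundamental–theorem–of–calculus step of (A1) through the zeros of $p_j$ in the (N2) branch of Theorem~\ref{Malliavin-LAMN-thm}.
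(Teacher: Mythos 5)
Your argument is correct, but it takes a genuinely different route from the paper's. The paper proves the integrability $\int_0^1 |\partial_t p_{j,t}|\,(2\sqrt{p_{j,t}})^{-1}1_{\{p_{j,t}\neq 0\}}\,dt<\infty$ for almost every $x_j$ by a global argument: H\"older's inequality jointly in $(t,x_j)$, combined with $\sup_t\int p_{j,t}^{1-q/2}\,dx_j<\infty$, which comes from Proposition~2.1.5 of Nualart~\cite{nua06} and is exactly where (N2) (the moments of $\det K_j^{-1}$) enters; it then applies its own real-analysis criterion (Lemma~\ref{abs-conti-lemma}: continuity, differentiability a.e.\ off $f^{-1}(E)$ with $E$ finite, and an $L^1$ derivative imply absolute continuity). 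You instead work entirely pointwise in $x_j$: Proposition~\ref{delLogP-eq}, which needs only (B1)--(B2), makes $f(t)=p_{j,\bar x_{j-1}}(x_j,\theta_{th})$ a nonnegative $C^2$ function on $[0,1]$, and the Landau--Kolmogorov (Glaeser-type) inequality $|f'|^2\le 2f\sup|f''|$, with your boundary corrections, supplies the $L^1$ domination for the $\sqrt{f+\varepsilon}$ regularization, so no Malliavin-calculus integrability input is needed at all. Your route is therefore more elementary and in fact proves a stronger statement: the lemma holds under (B1)--(B2) alone, (N2) playing no role here (it remains needed elsewhere in the (N2) branch, e.g.\ for the identity $\partial_{\theta_i}\tilde K_j=B_{j,i,\theta}\tilde K_j+\tilde K_jB_{j,i,\theta}^\top$ in the proof of Lemma~\ref{L2-regu-lemma}). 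The paper's criterion, conversely, asks for much less regularity than two-sided $C^2$ control, but that extra generality is not exploited, since Proposition~\ref{delLogP-eq} provides $C^2$ smoothness anyway.

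Two small repairs to your boundary-strip analysis. First, the clause ``if $f$ does not vanish at the endpoint the integrand is continuous, hence bounded'' overlooks possible interior zeros of $f$ inside the strip; at such a zero $u_0\in(0,1)$ one has $f'(u_0)=0$, hence $|f'(u)|\le M|u-u_0|\le Mu$ near $u_0$, so the distance condition for your inequality is restored there and the integrand stays bounded. Second, in the case $f(0)=f'(0)=0$ the operative fact is again $|f'(u)|\le Mu$, which makes the inequality $|f'(u)|^2\le 2Mf(u)$ valid on the whole strip; the upper bound $f(u)\le\frac{1}{2}Mu^2$ cited in your proposal cannot by itself control $|f'|/(2\sqrt f)$, since one needs a bound of $|f'|$ by $\sqrt{f}$, not a bound on $f$ from above. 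Both repairs use only the inequality you already introduced.
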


\begin{as}
The proof is {\colore given in} Section~\ref{Mcal-proof-section} {\colore of} the supplementary {\colore material}~\cite{fuk-ogi20s}.
\end{as}
\begin{arXiv}
{\colorc The proof is left to Section~\ref{Mcal-proof-section} in the appendix.}
\end{arXiv}

\begin{lemma}\label{L2-regu-lemma}
Assume (B1)--(B3), (B5), and (\ref{rho-condition}). Then (A1) holds true.
\end{lemma}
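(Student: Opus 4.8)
The plan is to verify the $L^2$ regularity condition (A1) with $r_n=\epsilon_n I_d$ and the functions $\dot\xi_j$ prescribed in Theorem~\ref{Malliavin-LAMN-thm}, splitting according to the dichotomy in (B5): either (N1) or (N2) holds. In both cases I write $\theta_{th}=\theta_0+t\epsilon_n h$, so that the quantity to be controlled is
\begin{equation*}
\sum_{j=1}^{m_n}E_{\theta_0}\Big[\int\big\{\sqrt{p_j}(x_j,\theta_h)-\sqrt{p_j}(x_j,\theta_0)-\tfrac{\epsilon_n}{2}h\cdot\dot\xi_j(\theta_0)\big\}^2 dx_j\Big].
\end{equation*}
The common engine is a second-order expansion of $\theta\mapsto\sqrt{p_j}$ along the segment $\{\theta_{th}\}_{t\in[0,1]}$, whose Hessian-type remainder is estimated by Proposition~\ref{delp-p-est-prop}; summability then follows from $\epsilon_n^4 m_n\alpha_n^4\bar{k}_n^8\to0$, which is the square of the second requirement in (B2).

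When (N1) holds, this is precisely the computation already carried out in the proof of Theorem~\ref{Malliavin-LAMN-thm2}, and I would reproduce it: decompose $\int=\int_{N_j}+\int_{N_j^c}$; the integral over $N_j$ is dominated by $2\int_{N_j}(p_j(\theta_h)+p_j(\theta_0))dx_j$ and vanishes after summation by (N1); on $N_j^c=M_j\cup\{x_j:\sup_\theta p_j=0\}$ the integrand vanishes on the second set, while on $M_j$ the density is bounded below (using the continuity in Proposition~\ref{delLogP-eq}), so $t\mapsto\sqrt{p_j}(\theta_{th})$ is $C^2$ and the expansion is legitimate, producing the remainder $\epsilon_n^4|h|^4\int_0^1 E_{\theta_{sh}}[\|\tfrac{\partial_\theta^2 p_j}{2p_j}-\tfrac{\partial_\theta p_j\partial_\theta p_j^\top}{4p_j^2}\|_{\rm op}^2 1_{\{p_j\neq0\}}]ds$. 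Proposition~\ref{delp-p-est-prop} bounds the inner expectation by $C\alpha_n^4\bar{k}_n^8$, and (B2) closes the argument.

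When (N2) holds, the zero set of $p_j$ is not identified, so the justification of the expansion is the delicate point and is supplied by Lemma~\ref{root-p-smooth-lemma}: for a.e. $x_j$ the map $t\mapsto\sqrt{p_j}(x_j,\theta_{th})$ is absolutely continuous, whence the fundamental theorem of calculus gives $\sqrt{p_j}(\theta_h)-\sqrt{p_j}(\theta_0)=\int_0^1\epsilon_n h\cdot\tfrac{\partial_\theta p_j}{2\sqrt{p_j}}(\theta_{th})\,dt$, the a.e.\ derivative vanishing on $\{p_j=0\}$ because the nonnegative function $\sqrt{p_j}$ attains a minimum there (so the indicator $1_{\{p_j\neq0\}}$ in $\dot\xi_j$ is harmless). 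Subtracting the $t=0$ value of the same integrand as in the proof of Theorem~\ref{Malliavin-LAMN-thm2} and expanding once more on the portion of the segment where $p_j$ stays positive reduces the target to the same $O(\epsilon_n^2)$ Hessian remainder, now carrying $1_{\{p_j\neq0\}}$, so Proposition~\ref{delp-p-est-prop} and (B2) again give the conclusion. The strong nondegeneracy in (N2), namely $\sup_\theta E_j[|\det K_j^{-1}(\theta)|^p]<\infty$ for all $p$ together with the $\mathbb{D}^{r,p}$-regularity of $F_{n,\theta,j}$, is what guarantees the smoothness and integrability making these conditional expectations finite and Proposition~\ref{delp-p-est-prop} uniformly applicable.

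I expect the main obstacle to be the (N2) case, specifically controlling the contribution of those $x_j$ for which the segment $\{\theta_{th}\}$ passes through a zero of $p_j(x_j,\cdot)$: there the second-order expansion of $\sqrt{p_j}$ is not literally available, and one must argue that such $x_j$ contribute negligibly to the $L^2$ norm. This is exactly the role of the absolute continuity in Lemma~\ref{root-p-smooth-lemma} combined with the all-order negative moments of $\det K_j$ from (N2); making this rigorous \emph{without ever locating the zero set} is the crux that distinguishes the (N2) route both from the (N1) route and from the earlier Aronson-based arguments.
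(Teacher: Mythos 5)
Your (N1) branch is fine and coincides with the paper's (the paper simply invokes the proof of Theorem~\ref{Malliavin-LAMN-thm2}). The genuine gap is in your (N2) branch, and it is precisely the point you flag as "the crux" but then leave unresolved: you cannot reduce the problem to "the same $O(\epsilon_n^2)$ Hessian remainder, now carrying $1_{\{p_j\neq 0\}}$," because the first-order integrand $t\mapsto \frac{\partial_\theta p_j}{2\sqrt{p_j}}1_{\{p_j\neq 0\}}(x_j,\theta_{th})$ is in general \emph{not} absolutely continuous in $t$: it has $O(1)$ jumps where the segment crosses a zero of $p_j(x_j,\cdot)$. A one-dimensional caricature makes the failure quantitative: if $p_j(x_j,\theta)=(\theta-\theta^*)^2$ with $\theta^*$ inside the segment, then $\sqrt{p_j}=|\theta-\theta^*|$, the score equals $\pm 1$ on either side of $\theta^*$, and the pointwise Hessian quantity in Proposition~\ref{delp-p-est-prop} vanishes identically; the second-order expansion error is $2|\theta_h-\theta^*|=O(\epsilon_n)$, not $O(\epsilon_n^2)$. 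Such $x_j$ therefore contribute at order $\epsilon_n^2$ per term to the $L^2$ norm, and with $m_n$ terms this is not negligible unless the mass of such $x_j$ is controlled --- which is exactly what (N1) would assume and what (N2) alone (absolute continuity via Lemma~\ref{root-p-smooth-lemma} plus all-order negative moments of $\det K_j$) does \emph{not} give you. In effect your route attempts to prove (A1) from (B1), (B2), (N2) alone, a stronger statement than the lemma, and one that the hypotheses you never use --- (B3) and (\ref{rho-condition}) --- are there to prevent you from needing.

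The paper's mechanism is structurally different after the fundamental-theorem-of-calculus step (which you do have right). It never expands the true score in $t$. Instead it splits
\begin{equation*}
\frac{\partial_\theta p_j}{2\sqrt{p_j}}1_{\{p_j\neq 0\}}(x_j,\theta_{th})
=\underbrace{\mathfrak{G}_{j,t}}_{\text{score}\,-\,\text{Gaussian proxy}}
+\ \frac{\sqrt{p_j}}{2}\big(\mathcal{L}_{j,i,\bar{x}_{j-1}}(x_j,\theta_{th})\big)_{i=1}^d,
\end{equation*}
bounds the proxy error $\mathfrak{G}_{j,t}-\mathfrak{G}_{j,0}$ \emph{crudely by a supremum over $t$} (no expansion at all, see (\ref{l2-regu-eq2})) using Proposition~\ref{delLogP-approx-prop}, whose bound $C\alpha_n^2\rho_n\bar{k}_n^4$ rests on the Euler--Maruyama-type approximation in (B3); summability then comes from (\ref{rho-condition}), since $\epsilon_n^2 m_n\alpha_n^4\rho_n^2\bar{k}_n^8=(\epsilon_n^2 m_n\alpha_n^3\rho_n\bar{k}_n^6)(\alpha_n\rho_n\bar{k}_n^2)\to 0$. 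Only the proxy term $\frac{\sqrt{p_j}}{2}\mathcal{L}_{j,i}$ is expanded in $t$ (see (\ref{l2-regu-eq3})): this product \emph{is} absolutely continuous, because $\sqrt{p_{j,t}}$ is absolutely continuous by Lemma~\ref{root-p-smooth-lemma} while $\mathcal{L}_{j,i}$ is a quadratic polynomial in $x_j$ with coefficients smooth in $\theta$, so it has no zero-set pathology; its derivative is then controlled by Proposition~\ref{delp-p-est-prop} together with the algebra $\partial_{\theta_i}\tilde{K}_j=B_{j,i,\theta}\tilde{K}_j+\tilde{K}_jB_{j,i,\theta}^\top$ from (B3). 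In short: the jump contributions that defeat your expansion are absorbed into the proxy error and paid for with the rate $\rho_n$, not with the Hessian bound. To repair your proof you would have to import exactly this substitution step; without (B3) and (\ref{rho-condition}) the argument does not close.
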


\begin{proof}
If (N1) is satisfied, {\colore then} the proof of Theorem~\ref{Malliavin-LAMN-thm2} implies (A1). {\colore Thus,} we may assume (N2).
We fix $h\in\mathbb{R}^d$ and consider a sufficiently large $n$ so that $\{\theta_{th}\}_{t\in [0,1]}\subset \Theta$.
Thanks to Lemma~\ref{root-p-smooth-lemma}, $\partial_t\sqrt{p_{j,t}}$ exists almost everywhere in $t\in [0,1]$ and $\sqrt{p_{j,1}}-\sqrt{p_{j,0}}=\int^1_0\partial_t\sqrt{p_{j,t}}dt$
almost everywhere in $x_j\in\mathbb{R}^{k_j-k_{j-1}}$.
Moreover, we can see that $\partial_t\sqrt{p_{j,t}}=\partial_tp_{j,t}/(2\sqrt{p_{j,t}})$ when $p_{j,t}\neq 0$ by Proposition~\ref{delLogP-eq}.

For $t\in (0,1)$ such that $\partial_t\sqrt{p_{j,t}}$ exists and $p_{j,t}=0$, we have
\begin{equation*}
\liminf_{s\searrow t}\frac{\sqrt{p_{j,s}}}{s-t}\geq 0 \quad {\rm and} \quad \limsup_{s\nearrow t}\frac{\sqrt{p_{j,s}}}{s-t}\leq 0,
\end{equation*}
which imply $\partial_t\sqrt{p_{j,t}}=0$. Therefore, we obtain	
\begin{equation*}
\sqrt{p_{j,1}}-\sqrt{p_{j,0}}=\int^1_0\frac{\partial_tp_{j,t}}{2\sqrt{p_{j,t}}}1_{\{p_{j,t}\neq 0\}}dt.
\end{equation*}
Then we have
  \EQN{\label{l2-regu-eq1}
   &\sum_{j=1}^{m_n}\int \bigg(\sqrt{p_{j,1}}(x_j)-\sqrt{p_{j,0}}(x_j)-\frac{\partial_\theta p_j\cdot \epsilon_nh}{2\sqrt{p_j}}1_{\{p_j\neq 0\}}(x_j,\theta_0)\bigg)^2dx_j \\
   &\quad =\sum_{j=1}^{m_n}\int \bigg(\int^1_0\frac{\partial_\theta p_j\cdot \epsilon_nh}{2\sqrt{p_j}}1_{\{p_j\neq 0\}}(x_j,\theta_{th})dt-\frac{\partial_\theta p_j\cdot \epsilon_nh}{2\sqrt{p_j}}1_{\{p_j\neq 0\}}(x_j,\theta_0)\bigg)^2dx_j \\
   &\quad \leq \epsilon_n^2|h|^2\sum_{j=1}^{m_n}\int^1_0\int \bigg|\frac{\partial_\theta p_j}{2\sqrt{p_j}}1_{\{p_j\neq 0\}}(x_j,\theta_{th})-\frac{\partial_\theta p_j}{2\sqrt{p_j}}1_{\{p_j\neq 0\}}(x_j,\theta_0)\bigg|^2dx_jdt. 
  }

Let 
\begin{equation*}
\mathfrak{G}_{j,t}=\frac{\partial_\theta p_j}{2\sqrt{p_j}}1_{\{p_j\neq 0\}}(x_j,\theta_{th})-\frac{\sqrt{p_j}}{2}(\mathcal{L}_{j,i,\bar{x}_{j-1}}(x_j,\theta_{th}))_{i=1}^d,
\end{equation*}
then (\ref{r-est}), Proposition~\ref{delLogP-approx-prop}, and (\ref{rho-condition}) yield
  \EQN{\label{l2-regu-eq2}
   &\epsilon_n^2\sum_{j=1}^{m_n}\int^1_0\int |\mathfrak{G}_{j,t}-\mathfrak{G}_{j,0}|^2dx_jdt
   \leq 2\epsilon_n^2\sup_t \bigg(\sum_{j=1}^{m_n}\int (|\mathfrak{G}_{j,t}|^2+|\mathfrak{G}_{j,0}|^2)dx_j\bigg) \\
   &\quad \leq \epsilon_n^2\sup_t \bigg(\sum_{j=1}^{m_n}E_j\bigg[\bigg|\frac{\partial_\theta p_j}{p_j}1_{\{p_j\neq 0\}}
   -(\mathcal{L}_{j,i,\bar{x}_{j-1}})_{i=1}^d\bigg|^2(F_{n,\theta_{th},j},\theta_{th})\bigg]\bigg) \\
   &\quad \leq C\epsilon_n^2m_n(\alpha_n^4\rho_n^2\bar{k}_n^8+\alpha_n^2\rho_n^2\bar{k}_n^6) \to 0
  }
for any $\bar{x}_{j-1}$.

Moreover, {\colore because} the function $t\mapsto \sqrt{p_{j,t}}\mathcal{L}_{j,i,\bar{x}_{j-1}}(x_j,\theta_{th})$ is absolutely continuous, we have
  \EQN{\label{l2-regu-eq3}
   &\epsilon_n^2\sum_{j=1}^{m_n}\int^1_0\int \bigg(\frac{\sqrt{p_j}}{2}\mathcal{L}_{j,i}(x_j,\theta_{th})-\frac{\sqrt{p_j}}{2}\mathcal{L}_{j,i}(x_j,\theta_0)\bigg)^2dx_jdt \\
   &\quad =\epsilon_n^2\sum_{j=1}^{m_n}\int^1_0\int \bigg(\int^t_0\bigg(\frac{\sqrt{p_j}}{2}\partial_\theta\mathcal{L}_{j,i}+\frac{\partial_\theta p_j}{4\sqrt{p_j}}\mathcal{L}_{j,i}\bigg)(x_j,\theta_{sh})\cdot \epsilon_nhds\bigg)^2dx_jdt \\
   &\quad \leq C\epsilon_n^4m_n\sup_tE_j\bigg[\bigg|\frac{1}{2}\partial_\theta \mathcal{L}_{j,i}+\frac{\partial_\theta p_j}{4p_j}\mathcal{L}_{j,i}\bigg|^2(F_{n,\theta_{th},j},\theta_{th})\bigg]
  }
for any $1\leq i\leq d$. 

{\colore Because} 
 $\partial_\theta \tilde{K}_j^{-1}=-\tilde{K}_j^{-1}\partial_\theta \tilde{K}_j\tilde{K}_j^{-1}$ 
and 
\begin{equation*}
\partial_{\theta_i}\tilde{K}_j
  =\langle \partial_{\theta_i}D_j\tilde{F}_{n,\theta,j},D_j\tilde{F}_{n,\theta,j}\rangle + \langle D_j\tilde{F}_{n,\theta,j},\partial_{\theta_i}D_j\tilde{F}_{n,\theta,j}\rangle
  =B_{j,i,\theta}\tilde{K}_j+\tilde{K}_jB_{j,i,\theta}^\top,
\end{equation*}
\begin{discuss}
{\colorr $F_\theta,G_\theta$がFrechet微分可能なら内積の連続性より
\begin{eqnarray}
h^{-1}(\langle F_{\theta+h},G_{\theta+h}\rangle-\langle F_\theta,G_\theta\rangle)
&=&h^{-1}(\langle F_{\theta+h}-F_\theta,G_{\theta+h}\rangle+\langle F_\theta,G_{\theta+h}-G_\theta\rangle) \nonumber \\
&\to&\langle \partial_\theta F_\theta,G_\theta\rangle+\langle F_\theta,\partial_\theta G_\theta\rangle. \nonumber
\end{eqnarray}
}
\end{discuss}
Lemma~\ref{tildeK-est-lemma} yields
\EQN{\label{del-L-est}
&E_j[|\partial_{\theta_l}\mathcal{L}_{j,i,\bar{x}_{j-1}}(F_{n,\theta,j},\theta)|^2] \\
&\quad \leq CE_j[|F_{n,\theta,j}^\top\partial_{\theta_l}B_{j,i,\theta}^\top \tilde{K}_j^{-1}F_{n,\theta,j}|^2] \\
&\quad \quad +CE_j[|F_{n,\theta,j}^\top B_{j,i,\theta}^\top (B_{j,l,\theta}^\top\tilde{K}_j^{-1}+\tilde{K}_j^{-1}B_{j,l,\theta})F_{n,\theta,j}|^2] \\
&\quad \leq C\alpha_n^2\bar{k}_n^8
}
for any $\theta\in\Theta$.
Moreover, (\ref{r-est}) yields 
  \EQN{\label{L-F-est}
   &E_j[|\mathcal{L}_{j,i,\bar{x}_{j-1}}(F_{n,\theta,j})|^4] \\
   &\quad \leq CE_j[|\mathcal{L}_{j,i,\bar{x}_{j-1}}(F_{n,\theta,j})-\mathcal{L}_{j,i,\bar{x}_{j-1}}(\tilde{F}_{n,\theta,j})|^4]
   +CE_j[|\mathcal{L}_{j,i,\bar{x}_{j-1}}(\tilde{F}_{n,\theta,j})|^4] \\
   &\quad \leq C(\alpha_n\rho_n\bar{k}_n^3)^4+CE_j[|\partial_{\theta_i}\tilde{F}_{n,\theta,j}^\top \tilde{K}_j^{-1}\tilde{F}_{n,\theta,j}|^4]
   \leq C(\alpha_n\rho_n\bar{k}_n^3+\alpha_n\bar{k}_n^2)^4
  }
for any $\theta\in\Theta$.

(\ref{l2-regu-eq3})--(\ref{L-F-est}) and Proposition~\ref{delp-p-est-prop} yield
  \EQNN{&\epsilon_n^2\sum_{j=1}^{m_n}\int^1_0\int \bigg(\frac{\sqrt{p_j}}{2}\mathcal{L}_{j,i,\bar{x}_{j-1}}(x_j,\theta_{th})-\frac{\sqrt{p_j}}{2}\mathcal{L}_{j,i,\bar{x}_{j-1}}(x_j,\theta_0)\bigg)^2dx_jdt  \\
   &\quad \leq C\epsilon_n^4m_n(\alpha_n^2\bar{k}_n^8+\alpha_n^2\bar{k}_n^4(\alpha_n^2\rho_n^2\bar{k}_n^6+\alpha_n^2\bar{k}_n^4)).
  }
The right-hand side converges to zero by (B2) and (\ref{rho-condition}).
Together with (\ref{l2-regu-eq1}) and (\ref{l2-regu-eq2}), we obtain the conclusion.
\end{proof}
}

\noindent
{\bf Proof of Theorem~\ref{Malliavin-LAMN-thm}.}

{\colord Thanks to {\colorp Remark~\ref{Gamma-pd-rem},} Lemma~\ref{L2-regu-lemma}, and the proof of Theorem~\ref{Malliavin-LAMN-thm2},} 
it is sufficient to check (A4) and (A5) {\colorp under (B1)--(B5)}.
Let $X_j=X_j^{n,\theta_0}$, $\bar{X}_{j-1}=(X_1,\cdots, X_{j-1})$, and
\begin{equation*}
\mathcal{H}_j=E[\mathcal{E}_j^1(\mathcal{E}_j^1)^\top(X_j,\theta_0,\bar{X}_{j-1})|{\colord \sigma(\bar{X}_{j-1})}].
\end{equation*}
Then it suffices to show that 
\begin{equation}\label{A4-suff}
\sup_m\bigg(\epsilon_n^2\sum_{j=1}^{m_n}E[|\mathcal{H}_j|]\bigg)<\infty
\end{equation}
and
\begin{equation}\label{A5-suff}
\bigg(\epsilon_n\sum_{j=1}^{m_n}\mathcal{E}_j^1(X_j,\theta_0,\bar{X}_{j-1}),\epsilon_n^2\sum_{j=1}^{m_n}\mathcal{H}_j\bigg)
\overset{d}\to (\Gamma^{1/2}\mathcal{N},\Gamma).
\end{equation}

For sufficiently large $n$, (\ref{Gj-def-eq}) and Proposition~\ref{delLogP-approx-prop} yield
  \EQNN{&E[|\mathcal{E}_j^1(X_j,\theta_0,\bar{X}_{j-1})-G_j^n|^2|\mathcal{G}_{j-1}] \\
   &\quad =E_j[|\mathcal{E}_j^1(F_{n,\theta_0,j},\theta_0,\bar{x}_{j-1})-(\mathcal{L}_{j,i,\bar{x}_{j-1}}(\tilde{F}_{n,\theta_0,j},\theta_0))_{i=1}^d|^2]|_{\bar{x}_{j-1}=\bar{X}_{j-1}} \\
   &\quad \leq C\alpha_n^4\rho_n^2\bar{k}_n^8. 
  }
\begin{discuss}
{\colorr (\ref{Gj-def-eq})より任意のボレル関数$f$に対して
\begin{equation*}
E_j[f(\bar{x}_{j-1},F_{n,\theta,j,\bar{x}_{j-1}},\mathcal{L}_{j,i,\bar{x}_{j-1}}(\tilde{F}_{n,\theta,j,\bar{x}_{j-1}}))]|_{\bar{x}_{j-1}=X_{j-1}^{n,\theta_0}}
=E[f(X_{j-1}^{n,\theta_0},X_j^{n,\theta_0},G_j^n)|\mathcal{G}_{j-1}].
\end{equation*}
(∵$f(x,y,z)=1_A(x)1_B(y,z)$の時はOK. $\pi-\lambda$定理と極限操作より一般の場合もよい)

}
\end{discuss}
Together with (\ref{rho-condition}) {\colord and (B4)}, we obtain
  \EQN{\label{del-log-p-Gj-diff-est}
   &E\bigg[\bigg|\epsilon_n\sum_{j=1}^{m_n}\mathcal{E}_j^1(X_j,\theta_0,\bar{X}_{j-1})-\epsilon_n\sum_{j=1}^{m_n}G_j^n\bigg|^2\bigg] \\
   &\quad \leq \epsilon_n^2\sum_{j=1}^{m_n}E\bigg[E\bigg[\bigg|\mathcal{E}_j^1(X_j,\theta_0,\bar{X}_{j-1})-G_j^n\bigg|^2\bigg|\mathcal{G}_{j-1}\bigg]\bigg] \\
   &\quad =O(\epsilon_n^2m_n\alpha_n^4\rho_n^2\bar{k}_n^8) \to 0 
  }
as $n\to \infty$.
\begin{discuss}
{\colorr 
\begin{equation*}
E\bigg[\frac{\partial_\theta p_{j,X_{j-1}^{n,\theta_0}}}{p_{j,X_{j-1}^{n,\theta_0}}}(X_j^{n,\theta_0},\theta_0)\bigg|\mathcal{G}_{j-1}\bigg]
=E_j\bigg[\frac{\partial_\theta p_{j,\bar{x}_{j-1}}}{p_{j,\bar{x}_{j-1}}}(F_{n,\theta_0,j},\theta_0)\bigg]\bigg|_{\bar{x}_{j-1}=X_{j-1}^{n,\theta_0}}=0
\end{equation*}
に注意．
}
\end{discuss}

Let $\mathfrak{F}_j=({\colord\mathcal{L}_{j,i,\bar{x}_{j-1}}}(\tilde{F}_{n,\theta_0,j}))_{i=1}^d$, then we have
\begin{equation}\label{gamma-j-eq}
\gamma_j(\bar{x}_{j-1})=E_j[(\mathcal{L}_{j,i,\bar{x}_{j-1}}(\tilde{F}_{n,\theta_0,j})\mathcal{L}_{j,l,\bar{x}_{j-1}}(\tilde{F}_{n,\theta_0,j}))_{il}],
\end{equation}
{\colorb and} $\sup_{\bar{x}_{j-1}}E_j[|\mathfrak{F}_j|^2]^{1/2}=O(\alpha_n
\bar{k}_n^2+{\colord \alpha_n\rho_n\bar{k}_n^3})$ {\colord by (\ref{L-F-est})}. 
Together with (\ref{rho-condition}) {\colord and Propositions~\ref{delp-p-est-prop} and~\ref{delLogP-approx-prop}}, we have
  \EQN{\label{gamma-diff}
   &\sup_{\bar{x}_{j-1}}|E_j[\mathcal{E}_j^1(\mathcal{E}_j^1)^\top(F_{n,\theta_0,j},\theta_0,\bar{x}_{j-1})]-\gamma_j(\bar{x}_{j-1})| \\
   &\quad \leq C\sup_{\bar{x}_{j-1}}\bigg(E_j[\mathcal{E}_j^1|^2]^{\frac{1}{2}}E_j[|\mathcal{E}_j^1-\mathfrak{F}_j|^2]^{\frac{1}{2}}
   +E_j[|\mathfrak{F}_j|^2]^{\frac{1}{2}}E_j[|\mathcal{E}_j^1-\mathfrak{F}_j|^2]^{\frac{1}{2}}\bigg) \\
   &\quad =O({\colord \alpha_n\bar{k}_n^2\cdot \alpha_n^2\rho_n \bar{k}_n^4+(\alpha_n\bar{k}_n^2+\alpha_n\rho_n\bar{k}_n^3)\alpha_n^2\rho_n\bar{k}_n^4)})
   =o(\epsilon_n^{-2}m_n^{-1}).
  }

Then, (\ref{del-log-p-Gj-diff-est}), (\ref{gamma-diff}), and (B4) yield
\begin{equation*}
\sup_n\bigg(\epsilon_n^2\sum_{j=1}^{m_n}E[|\mathcal{H}_j|]\bigg)
=\sup_n\bigg(\epsilon_n^2\sum_{j=1}^{m_n}E[|\gamma(\bar{X}_{j-1})|]+O(1)\bigg)<\infty
\end{equation*}
and
  \EQNN{
   &\bigg(\epsilon_n\sum_{j=1}^{m_n}\mathcal{E}_j^1(X_j,\theta_0,\bar{X}_{j-1}),\epsilon_n^2\sum_{j=1}^{m_n}\mathcal{H}_j\bigg) \\
   &\quad=\bigg(\epsilon_n\sum_{j=1}^{m_n}G_j^n, \epsilon_n^2\sum_{j=1}^{m_n}\gamma_j({\colorn \bar{X}_{j-1}})\bigg)+o_p(1)
   \overset{d}\to(\Gamma^{1/2}\mathcal{N},\Gamma). 
  }
\qed

\begin{small}
\bibliographystyle{abbrv}
\bibliography{referenceLibrary_mathsci,referenceLibrary_other}
\end{small}

\appendix

\section{Proof of Theorem~\ref{L2regu-thm}}\label{L2regu-proof-section}

We use a similar approach to {\colore that for} Theorem~1 in Jeganathan~\cite{jeg82}.
{\colore First, we} show that
\begin{equation}\label{Xni-conv}
\sum_{i=1}^{m_n}(h^{\top}{\colorb r_n}{\colorb \eta_i\eta_i^{\top}}{\colorb r_n}h-h^{\top}T_nh)\to0
\end{equation}
in $P_{\theta_0,n}$-probability, which corresponds to {\colore Lemma~1} in~\cite{jeg82}.
\begin{discuss}
{\colorr $T_n$の収束先がdeterministicでないのでMcLeish1974の証明と少しだけ変わる．}
\end{discuss}

Let $X_{n,i}=|h^{\top}{\colorb r_n}{\colorb \eta_i}|$. We denote $P_{\theta_0,n}$ by $P$, $E_{\theta_0}$ by $E$, and {\colore $E_{\theta_0}[\cdot |\mathcal{F}_{j-1}]$} by $E_{(j)}$.
By (A4), for any {\colorb $\eta >0$} there exists $K>0$ such that 
\begin{equation}\label{addina}
 \sup_nP[\sum_{i=1}^{m_n}E_{(i)}[X_{n,i}^2]>K]<\eta/4.
\end{equation}
Let {\colorb $\delta >0$ and then take
$\epsilon >0 $ with  $16\epsilon^2(K+\epsilon^2)/\delta^2<\eta$}.
Let $W_{n,i}=X_{n,i}1_{\{|X_{n,i}|\leq \epsilon,\sum_{j=1}^iE_{(j)}[X_{n,j}^2]\leq K\}}$.
For sufficiently large $n$, we obtain
\begin{equation*}
P[X_{n,i}\neq W_{n,i} \ {\rm for \ some} \ i]\leq \sum_{i=1}^{m_n}P[|X_{n,i}|>\epsilon]+P[\sum_{i=1}^{m_n}E_{(i)}[X_{n,i}^2]>K]<\eta/2.
\end{equation*}
Moreover, (A3) yields
\EQNN{&\sum_{i=1}^{m_n}E_{(i)}[X_{n,i}^2-W_{n,i}^2] \\
&\quad \leq \sum_{i=1}^{m_n}E_{(i)}[X_{n,i}^21_{\{|X_{n,i}|>\epsilon\}}]+\sum_{i=1}^{m_n}E_{(i)}[X_{n,i}^2]1_{\{\sum_{j=1}^{m_n}E_{(j)}[X_{n,j}^2]>K\}} \\
&\quad \leq \delta/2+\sum_{i=1}^{m_n}E_{(i)}[X_{n,i}^2]1_{\{\sum_{j=1}^{m_n}E_{(j)}[X_{n,j}^2]>K\}},
}
\begin{discuss}
{\colorr $\sum_{j=1}^iE_{(j)}[X_{n,j}^2]$は$\mathcal{F}_{i-1}$可測だから
\begin{eqnarray}
\sum_iE_{(i)}[X_{n,i}^21_{\{|X_{n,i}|\leq \epsilon,\sum_{j=1}^iE_{(j)}[X_{n,j}^2]>K\}}
&\leq &\sum_iE_{(i)}[X_{n,i}^2]1_{\{\sum_{j=1}^iE_{(j)}[X_{n,j}^2]>K\}} \nonumber \\
&\leq &\sum_iE_{(i)}[X_{n,i}^2]1_{\{\sum_{j=1}^{m_n}E_{(j)}[X_{n,j}^2]>K\}}. \nonumber
\end{eqnarray}}
\end{discuss}
and  {\colorb so by (\ref{addina}) {\colore we have}
\begin{equation*}
 P[\sum_{i=1}^{m_n}E_{(i)}[X_{n,i}^2-W_{n,i}^2]>\delta/2] < \eta/4.
\end{equation*}
Further,}
  \EQNN{E[(\sum_{i=1}^{m_n}(W_{n,i}^2-E_{(i)}[W_{n,i}^2]))^2]&=\sum_{i=1}^{m_n}E[W_{n,i}^4-E_{(i)}[W_{n,i}^2]^2] \\
   &\leq \epsilon^2E[\sum_{i=1}^{m_n}E_{(i)}[W_{n,i}^2]] \leq \epsilon^2(K+\epsilon^2).
  }
\begin{discuss}
{\colorr 
\begin{equation*}
E[\sum_{i=1}^{m_n}E_{(i)}[W_{n,i}^2]]\leq E[\sum_iE_{(i)}[|X_{n,i}|^21_{\{|X_{n,i}|\leq \epsilon\}}]1_{\{\sum_{j=1}^{i-1}E_{(j)}[X_{n,j}^2]\leq K\}}].
\end{equation*}
$\sum_{j=1}^{i-1}E_{(j)}[X_{n,j}^2]\leq K$となる最大の$i$に対して，右辺の被積分関数は$\sum_{i'=1}^iE_{(i')}[X_{n,i'}^21_{\{|X_{n,i'}|\leq \epsilon\}}]\leq K+\epsilon^2$.
}
\end{discuss}

Therefore, we obtain
  \EQNN{&P[|\sum_{i=1}^{m_n}(X_{n,i}^2-E_{(i)}[X_{n,i}^2])|>\delta] \\
   &\quad \leq P[X_{n,i}\neq W_{n,i} \ {\rm for \ some} \ i]+P[\sum_{i=1}^{m_n}E_{(i)}[X_{n,i}^2-W_{n,i}^2]>\delta/2] \\
   &\qquad +P[|\sum_{i=1}^{m_n}(W_{n,i}^2-E_{(i)}[W_{n,i}^2])|>\delta/2] \\
   &\quad \leq 3\eta/4+4/\delta^2E[|\sum_{i=1}^{m_n}(W_{n,i}^2-E_{(i)}[W_{n,i}^2])|^2]<\eta.
  }
{\colore Because} $\eta,\delta>0$ are arbitrary, (\ref{Xni-conv}) holds true.

(A5) corresponds to {\colore Lemma~2} in~\cite{jeg82}.
\begin{discuss}
{\colorr Lem 2はHall 1977を適用する条件が合わなくなる.}
\end{discuss}
Moreover, by setting $\dot{\eta}_{nj}(\theta_0,h)=(p_j(\theta_0+{\colorb r_n}h)^{1/2}p_j(\theta_0)^{-1/2}-1)1_{\{p_j(\theta_0)\neq 0\}}$,
we obtain the following similarly to Lemmas~5--7 in~\cite{jeg82}:
\begin{discuss}
{\colorr ${\colorb \dot{\xi}_j}\to \dot{\xi}_j, \eta_j\to \eta_j, \dot{\eta}_{nj}\to \eta_j$. 
Lem 5--7を示すにはLem 3,4が必要．
Lemma 3は(A1)条件と$|c^2-d^2|$の不等式を使っているだけだからＯＫ．
Lemma 4も(A1),(A4)条件と$|c^2-d^2|$の不等式とLemma 3を使っているだけ．
(\ref{jeg-eq3})はLemma 5のstatementを$T\to T_n$としている．
Lemma 4より
\begin{equation*}
\sum_{j=1}^{m_n}E\bigg[\bigg|\dot{\eta}_{nj}^2(\theta_0,h)-\frac{1}{4}h^{\top}{\colorb r_n}\eta_j\eta_j^{\top}{\colorb r_n}h\bigg|\bigg]\to 0
\end{equation*}
なのでチェビシェフの不等式と(\ref{Xni-conv})と合わせて(\ref{jeg-eq3})が得られる．
（Lemma 5の証明の途中で実質的に(\ref{jeg-eq3})を示している）（Lem 5は後の証明でTightnessを利用しているだけだから(\ref{jeg-eq3})の形で問題ない．）

(\ref{jeg-eq2})はLemma 6の証明からすぐ得られる．(\ref{jeg-eq1})は(\ref{jeg-eq2})と(\ref{jeg-eq3})と$\sum_j|\dot{\eta}_{nj}|^3\leq \max_j|\dot{\eta}_{nj}|\times \sum_j|\dot{\eta}_{nj}|^2$と，
(A5)より$\{T_n\}_n$が$P_{\theta_0,n}$-tightであることを使えばよい．
(\ref{jeg-eq4})は(\ref{condE-eq})とLem 4,3に対応する結果を使えば
\begin{eqnarray}
2\sum_{j=1}^{m_n}E_{\theta_0}[\dot{\eta}_{nj}(\theta_0,h)|\bar{x}_{j-1}]
&=&2\sum_{j=1}^{m_n}\bigg\{\int \sqrt{p_j(\theta_0+{\colorb r_n}h)p_j(\theta_0)}d\mu_j-1\bigg\} \nonumber \\
&=&-\sum_{j=1}^{m_n}\int \xi_{n,j}^2(\theta_0,h)d\mu_j \nonumber \\
&=&-\frac{1}{4}\sum_{j=1}^{m_n}\int (1-Z_j)|h^{\top}{\colorb r_n}{\colorb \dot{\xi}_j}|^2d\mu_j+o_p(1) \nonumber \\
&=&-\frac{1}{4}\sum_{j=1}^{m_n}E_{\theta_0}[|h^{\top}{\colorb r_n}\eta_j|^2|\bar{x}_{j-1}]+o_p(1) \nonumber \\
&=&-\frac{1}{4}h^{\top}T_nh+o_p(1). \nonumber
\end{eqnarray}
よって(A2)より，$Y_j=2\dot{\eta}_{nj}(\theta_0,h)-h^{\top}{\colorb r_n}\eta_j$に対して
\begin{equation*}
\sum_{j=1}^{m_n}(Y_j-E_{\theta_0}[Y_j|\bar{x}_{j-1}]) \to 0
\end{equation*}
in $P_{\theta_0,n}$ probabilityをしめせばよい．
\begin{equation*}
E_{\theta_0}\bigg[\bigg(\sum_{j=1}^{m_n}(Y_j-E_{\theta_0}[Y_j|\bar{x}_{j-1}])\bigg)^2\bigg]\leq \sum_{j=1}^{m_n}E_{\theta_0}[Y_j^2]
\end{equation*}
なので，Lem 4の証明の最後の(2.13)よりＯＫ．

}
\end{discuss}
\begin{eqnarray}
\bigg|\sum_{j=1}^{m_n}\dot{\eta}_{nj}^2(\theta_0,h)-\frac{1}{4}h^{\top}T_nh\bigg|\to 0, \label{jeg-eq3} \\
\max_{1\leq j\leq m_n}|\dot{\eta}_{nj}(\theta_0,h)|\to 0, \label{jeg-eq2} \\ 
\sum_{j=1}^{m_n}|\dot{\eta}_{nj}(\theta_0,h)|^3\to 0, \label{jeg-eq1} \\
\bigg|2\sum_{j=1}^{m_n}\dot{\eta}_{nj}(\theta_0,h)-h^{\top}{\colorb r_n}\sum_{j=1}^{m_n}\eta_j+\frac{1}{4}h^{\top}T_nh\bigg|\to 0, \label{jeg-eq4}
\end{eqnarray}
in $P_{\theta_0,n}$-probability.

For any $h\in\mathbb{R}^d$, (\ref{jeg-eq2}) and Taylor's formula yield
  \EQNN{\log\frac{dP_{\theta_0+{\colorb r_n}h,n}}{dP_{\theta_0,n}}
   &=2\sum_{j=1}^{m_n}\log(1+\dot{\eta}_{nj}(\theta_0,h)) \\
   &=2\sum_{j=1}^{m_n}\dot{\eta}_{nj}(\theta_0,h)-\sum_{j=1}^{m_n}\dot{\eta}_{nj}^2(\theta_0,h)
   +\sum_{j=1}^{m_n}\alpha_{nj}|\dot{\eta}_{nj}(\theta_0,h)|^3 
  }
with probability {\colore tending} to one, where $|\alpha_{nj}|\leq 1$.
\begin{discuss}
{\colorr 一に近づく確率で見ればいいから$p_j\neq 0$上だけ考えればいい（$p_j$のどれかが零なら結合分布のdensityが零だから全部非零で考えればいい）．
さらに$\dot{\eta}$の絶対値が$1$未満で考えればよくてその上では$p_j(\theta_0+{\colorb r_n}h)\neq 0$.
$\mu$に関するdensityは$p_j$の積で書かれ，尤度比の絶対連続部分は$p_j$積の比にindicatorがついたものになるからＯＫ．
\begin{equation*}
\frac{1}{3!}\partial_x^3\log(1+x)=\frac{1}{3(1+x)^3}
\end{equation*}
だから，$\dot{\eta}_{nj}|\leq 1-3^{-1/3}$なら$|\alpha_{nj}|\leq 1$ととれる．
}
\end{discuss}

Together with (\ref{jeg-eq1}), we have
\begin{equation*}
\bigg|\log\frac{dP_{\theta_0+{\colorb r_n}h,n}}{dP_{\theta_0,n}}
-2\sum_{j=1}^{m_n}\dot{\eta}_{nj}(\theta_0,h)+\sum_{j=1}^{m_n}\dot{\eta}_{nj}^2(\theta_0,h)\bigg|\to 0
\end{equation*}
in $P_{\theta,n}$-probability. Therefore, (\ref{jeg-eq3}),(\ref{jeg-eq4}), and (A5) yield 
{\colorp Condition~(L).

If further (P) is satisfied, {\colore then} the LAMN property holds by Remark~\ref{Gamma-pd-rem}.}

\qed

\section{Degenerate diffusion models}\label{degenerate-lamn-section}

In this section, we prove Theorem~\ref{degenerate-LAMN-thm}.

We set $X_j^{n,\theta}$ in Section~\ref{LAMN-Malliavin-subsection} as
\begin{equation*}
X_j^{n,\theta}=\left(
\begin{array}{cc}
\sqrt{n} & 0 \\
0 & n\sqrt{n}
\end{array}
\right)\left(Y_{j/n}^\theta-Y_{(j-1)/n}^\theta-
\left(\begin{array}{c} 0 \\ \check{b}(Y_{(j-1)/n}^\theta)/n  \end{array}\right)\right).
\end{equation*}
\begin{discuss}
{\colorr $(Y_{j/n})_{j=0}^n$と$(Y_{j/n}-Y_{(j-1)/n})_{j=0}^n$は同一視される．後者のdensityを$p_j$と書くと，
\begin{equation*}
\log\frac{dP_{\theta',n}}{dP_{\theta,n}}=\sum_j\log\frac{p_j(\theta')}{p_j(\theta)}
\end{equation*}
で$Y_{(j-1)/n}$: givenでの$Y_{j/n}-Y_{(j-1)/n}-(0^\top \ \check{b}(Y_{(j-1)/n})^\top)^\top$のdensityは定数項を引いただけなので
$p_j(x,y+\check{b}(Y_{(j-1)/n}))$と書け，対応する確率測度を$P'_{\theta,n}$として，
\begin{equation*}
\log\frac{dP_{\theta',n}}{dP_{\theta,n}}=\log\frac{dP'_{\theta',n}}{dP'_{\theta,n}}
\end{equation*}
がわかる
}
\end{discuss}
Let $\mathcal{W}=(\mathcal{W}_t)_{{\colorc t\geq 0}}$ be an $r$-dimensional standard Wiener process on a canonical probability space, and let
$\mathcal{X}_t^{n,\theta}=\mathcal{X}_t^{n,\theta,z_0}$ and $\mathcal{Y}_t^{n,\theta}=\mathcal{Y}_t^{n,\theta,z_0}$ be $\kappa$- and $(m-\kappa)$-dimensional diffusion processes, respectively, satisfying
$(\mathcal{X}_0^{n,\theta}, \mathcal{Y}_0^{n,\theta})=0$, and
\begin{eqnarray}
\left\{
\begin{array}{ll}
d\mathcal{X}_t^{n,\theta}&=\tilde{b}_n(t,\mathcal{X}_t^{n,\theta},\mathcal{Y}_t^{n,\theta},\theta)dt+\tilde{a}_n(t,\mathcal{X}_t^{n,\theta},\mathcal{Y}_t^{n,\theta},\theta)d\mathcal{W}_t, \\
d\mathcal{Y}_t^{n,\theta}&=\check{b}_n(t,\mathcal{X}_t^{n,\theta},\mathcal{Y}_t^{n,\theta})dt,
\end{array}
\right. \nonumber
\end{eqnarray}
where $z_0=(x_0,y_0)\in\mathbb{R}^m$ and 
\begin{eqnarray}
\tilde{a}_n(t,x,y,\theta)&=&\tilde{a}(x_0+n^{-1/2}x,y_0+n^{-3/2}(y+t\sqrt{n}\check{b}(z_0)),\theta), \nonumber \\
\tilde{b}_n(t,x,y,\theta)&=&n^{-1/2}\tilde{b}(x_0+n^{-1/2}x,y_0+n^{-3/2}(y+t\sqrt{n}\check{b}(z_0)),\theta), \nonumber \\
\check{b}_n(t,x,y)&=&\sqrt{n}(\check{b}(x_0+n^{-1/2}x,y_0+n^{-3/2}(y+t\sqrt{n}\check{b}(z_0)))-\check{b}(z_0)). \nonumber
\end{eqnarray}
\begin{discuss}
{\colorr $t$を入れないと$\mathcal{Y}_0$がゼロにできないからモーメント評価が難しくなるか}
\end{discuss}
Then, $F_{n,\theta,j}$ in Section~\ref{LAMN-Malliavin-subsection} is given by 
\begin{equation}\label{F-def}
F_{n,\theta}=((\mathcal{X}_1^{n,\theta})^\top,(\mathcal{Y}_1^{n,\theta})^\top)^\top
\end{equation}
{\colorc with $m_n=n$ and $k_j=jm$ {\colore because}
  \EQN{\label{mcx-mcy-dist} &\bigg\{\MAT{c}{\mathcal{X}_t^{n,\theta} \\ \mathcal{Y}_t^{n,\theta}}\bigg\}_{t\geq 0} \\
   &\quad \overset{d}=\bigg\{\MAT{cc}{\sqrt{n} & 0 \\ 0 & n\sqrt{n}}
   \bigg(Y_{\frac{t+j-1}{n}}^\theta-Y_{\frac{j-1}{n}}^\theta-
   \MAT{c}{ 0 \\ t\check{b}(Y_{(j-1)/n}^\theta)/n}\bigg)\bigg\}_{t\geq 0}\bigg|_{Y_{(j-1)/n}^\theta=z_0}.
  }
}
\begin{discuss}
{\colorr $t'=nt-(j-1)$と変数変換すると
\begin{eqnarray}
X_j^{n,\theta}&=&\left(
\begin{array}{cc}
\sqrt{n} & 0 \\
0 & n\sqrt{n}
\end{array}
\right)\bigg\{\int_{(j-1)/n}^{j/n}\left(\begin{array}{c} 
\tilde{b}(Y_t^\theta,\theta) \\ \check{b}(Y_t^\theta)-\check{b}(Y_{(j-1)/n}^\theta) 
\end{array}\right)dt
+\int_{(j-1)/n}^{j/n}
\left(\begin{array}{c} \tilde{a}(Y_t^\theta,\theta) \\ 0 \end{array}\right)dW_t \bigg\} \nonumber \\
&=&\int_0^1\left(\begin{array}{c} 
n^{-1/2}\tilde{b}(Y_{t'/n+(j-1)/n}^\theta,\theta) \\
\sqrt{n}(\check{b}(Y_{t'/n+(j-1)/n}^\theta)-\check{b}(Y_{(j-1)/n}^\theta)) \end{array}\right)dt'
+\int_0^1\left(\begin{array}{c} \tilde{a}(Y_{t'/n+(j-1)/n}^\theta,\theta) \\ 0 \end{array}\right)dW_{t'} \nonumber \\
&\sim&\int_0^1\left(\begin{array}{c} 
n^{-1/2}\tilde{b}(x_0+n^{-1/2}\mathcal{X}_{t'}^{n,\theta},y_0+n^{-3/2}(\mathcal{Y}_{t'}^{n,\theta}+t'\sqrt{n}\check{b}(z_0)),\theta) \\
\sqrt{n}(\check{b}(x_0+n^{-1/2}\mathcal{X}_{t'}^{n,\theta},y_0+n^{-3/2}(\mathcal{Y}_{t'}^{n,\theta}+t'\sqrt{n}\check{b}(z_0)))-\check{b}(z_0)) \end{array}\right)dt' \nonumber \\
&&+\int_0^1\left(\begin{array}{c} \tilde{a}(x_0+n^{-1/2}\mathcal{X}_{t'}^{n,\theta},y_0+n^{-3/2}(\mathcal{Y}_{t'}^{n,\theta}+t'\sqrt{n}\check{b}(z_0)),\theta) \\ 0 \end{array}\right)dW_{t'}. \nonumber
\end{eqnarray}
なぜならば
\begin{equation*}
\int_{(j-1)/n}^{j/n}a_sdW_s=\lim_{m\to \infty}\sum_{k=1}^ma_{\frac{j-1}{n}+\frac{k-1}{nm}}(W_{\frac{j-1}{n}+\frac{k}{nm}}-W_{\frac{j-1}{n}+\frac{k-1}{nm}})
\sim \frac{1}{\sqrt{n}}(W_{k/m}-W_{(k-1)/m})\sim \int^1_0a_{(j-1)/n+t'/n}d\mathcal{W}_{t'},
\end{equation*}
}
\end{discuss}

We first assume the following condition, which is stronger than (C1).
\begin{description}
\item[Assumption (C1$'$).] (C1) is satisfied, $\partial_z^i\partial_\theta^j \tilde{a}$ and $\partial_z^i\partial_\theta^j \tilde{b}$
are bounded for $i\in\mathbb{Z}_+$ and $0\leq j\leq 3$, and 
  \EQQ{\sup_{z,\theta}\lVert (\tilde{a}{\colora \tilde{a}^\top})^{-1}(z,\theta)\rVert_{{\rm op}}<\infty.}
\end{description}
{\colore First, we} show Condition~(L) under (C1$'$) and (C2) by using Theorem~\ref{Malliavin-LAMN-thm}, and then {\colore we} weaken the assumptions to (C1) and (C2)
by the localization technique similar to Lemma~4.1 of Gobet~\cite{gob01}. \\

Define
\begin{equation}\label{tildeF-def}
\tilde{F}_{n,\theta}=\left(\begin{array}{c}
\tilde{a}(z_0,\theta)\mathcal{W}_1 \\
({\colore \nabla_1}\check{b})^\top(z_0)\tilde{a}(z_0,\theta)\int^1_0\mathcal{W}_tdt
\end{array}\right), 
\end{equation}
and
\begin{equation*}
\tilde{X}_j^{n,\theta}=\left(\begin{array}{c}
\sqrt{n}\tilde{a}(Y_{(j-1)/n}^\theta,\theta)(W_{j/n}-W_{(j-1)/n}) \\
n^{3/2}({\colore \nabla_1}\check{b})^\top(Y_{(j-1)/n}^\theta)\tilde{a}(Y_{(j-1)/n}^\theta,\theta)\int^{j/n}_{(j-1)/n}(W_t-W_{(j-1)/n})dt
\end{array}\right),
\end{equation*}
then we have 
\begin{equation*}
\partial_\theta \tilde{F}_{n,\theta}=\left(
\begin{array}{c}
\partial_\theta \tilde{a}(z_0,\theta)\mathcal{W}_1 \\
({\colore \nabla_1} \check{b})^\top(z_0)\partial_\theta\tilde{a}(z_0,\theta)\int^1_0\mathcal{W}_tdt
\end{array}
\right),
\end{equation*}
and $\tilde{K}_j$ in Section~\ref{LAMN-Malliavin-subsection} can be calculated as
\begin{discuss}
{\colorr $\tilde{X}_j^{n,\theta}$と$\tilde{F}_j$の対応は$t$を変数変換すれば$n^{3/2}\int_{(j-1)/n}^{j/n}(W_t-W_{(j-1)/n})dt\sim \int^1_0W_tdt$などとなるのでOK.}
\end{discuss}
\begin{equation*}
\tilde{K}(\theta)=\left(
\begin{array}{cc}
\tilde{a}\tilde{a}^\top(z_0,\theta) & (1/2)\tilde{a}\tilde{a}^\top{\colore \nabla_1}\check{b}(z_0,\theta) \\
(1/2)({\colore \nabla_1}\check{b})^\top\tilde{a}\tilde{a}^\top(z_0,\theta) & (1/3)({\colore \nabla_1}\check{b})^\top\tilde{a}\tilde{a}^\top{\colore \nabla_1}\check{b}(z_0,\theta)
\end{array}
\right).
\end{equation*}
\begin{discuss}
{\colorr $E[(W_{j/n}-W_{(j-1)/n})\int^{j/n}_{(j-1)/n}(W_t-W_{(j-1)/n})dt]=\int^{j/n}_{(j-1)/n}(t-(j-1)/n)dt=(2n^2)^{-1}$.}
\end{discuss}

We denote by $V^\perp$ the orthogonal complement of a subspace $V$ on a vector space.
Let $\tilde{B}_{i,\theta}=\partial_{\theta_i}\tilde{a}\tilde{a}^+(z_0,\theta)$ 
and ${\colora \mathcal{C}_{i,\theta}}=({\colore \nabla_1} \check{b})^\top \partial_{\theta_i}\tilde{a}\tilde{a}^+{\colore \nabla_1}\check{b}(({\colore \nabla_1}\check{b})^\top {\colore \nabla_1} \check{b})^{-1}(z_0,\theta)$.
Then {\colore because} $\tilde{a}^+\tilde{a}$ is a projection to $({\rm Ker}(\tilde{a}))^\perp$
\begin{discuss}
{\colorr 吉田Thm 2.4 and ${\rm Im}(\tilde{a}^\top)={\rm Ker}(\tilde{a})^\perp$}
\end{discuss}
and $({\rm Ker}(\partial_{\theta_i}\tilde{a}))^\perp\subset ({\rm Ker}(\tilde{a}))^\perp$ by (C2), we have
\begin{equation}\label{B3-check}
\tilde{B}_{i,\theta}\tilde{a}(z_0,\theta)=\partial_{\theta_i}\tilde{a}(z_0,\theta)
\end{equation}
for $1\leq i\leq d$ and $\theta\in\Theta$. Moreover, because ${\colore \nabla_1}\check{b}(({\colore \nabla_1}\check{b})^\top{\colore \nabla_1} \check{b})^{-1}({\colore \nabla_1}\check{b})^\top$
is a projection to $({\rm Ker}(({\colore \nabla_1}\check{b})^\top))^\perp$, (C2) yields
\begin{equation}\label{B3-check2}
{\colora \mathcal{C}_{i,\theta}}({\colore \nabla_1}\check{b})^\top\tilde{a}(z_0,\theta)=({\colore \nabla_1}\check{b})^\top \partial_{\theta_i}\tilde{a}\tilde{a}^+\tilde{a}(z_0,\theta)
=({\colore \nabla_1}\check{b})^\top \partial_{\theta_i}\tilde{a}(z_0,\theta)
\end{equation}
for $1\leq i\leq d$ and $\theta\in\Theta$. Then, by setting 
\begin{equation}\label{B-def}
B_{i,\theta}=\left(
\begin{array}{cc}
\tilde{B}_{i,\theta} & O_{\kappa, m-\kappa} \\
O_{m-\kappa,\kappa} & {\colora \mathcal{C}_{i,\theta}}
\end{array}
\right),
\end{equation}
(\ref{B3-check}) and (\ref{B3-check2}) yield $\partial_\theta \tilde{F}_{n,\theta}=B_{i,\theta}\tilde{F}_{n,\theta}$.\\

{\colore For an $\mathbb{R}^N$-valued random variable $V=(V_l)_{l=1}^N\in (\mathbb{D}_j^{1,p})^N$, we regard $D_tV$ as an $r\times N$ matrix.}
\begin{discuss}
{\colorr ${\rm Ker}(\tilde{a})\not\subset{\rm Ker}(\partial_{\theta_i}\tilde{a})$ならある$x$があって$\tilde{a}x=0$かつ$\partial_{\theta_i}\tilde{a}x\neq 0$.
よって任意の行列$B$に対して$\partial_{\theta_i}\tilde{a}x\neq B\tilde{a}x$となり，(B3)が成り立たない．
$\partial_{\theta_i}\tilde{a}=B\tilde{a}$となる$B$が存在する時，
$\partial_{\theta_i}\tilde{a}\tilde{a}^+=B\tilde{a}\tilde{a}^+=B$と書ける．
また，(B3)を満たすためには$({\colore \nabla_1}\check{b})^\top\partial_{\theta_i}\tilde{a}\tilde{a}^+=\mathcal{C}_i({\colore \nabla_1}\check{b})^\top$となる$C_i$も必要だが，
このような$\mathcal{C}_i$は
\begin{equation*}
\mathcal{C}_i=\mathcal{C}_i({\colore \nabla_1} \check{b})^\top {\colore \nabla_1} \check{b}(({\colore \nabla_1} \check{b})^\top {\colore \nabla_1} \check{b})^{-1}
=({\colore \nabla_1} \check{b})^\top \partial_{\theta_i} \tilde{a}\tilde{a}^+{\colore \nabla_1} \check{b}(({\colore \nabla_1} \check{b})^\top {\colore \nabla_1} \check{b})^{-1}
\end{equation*}
と書ける．$\tilde{a}^+=\tilde{a}^\top(\tilde{a}\tilde{a}^\top)^{-1}$
}
\end{discuss}

\begin{lemma}\label{degenerate-L-lemma}
Assume (C1$'$) and (C2). Then Condition~(L) is satisfied for $\{P_{\theta,n}\}_{\theta,n}$.
\end{lemma}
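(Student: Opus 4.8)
The plan is to deduce Condition~(L) from Theorem~\ref{Malliavin-LAMN-thm} by verifying its hypotheses (B1)--(B5) for the family $\{P_{\theta,n}\}$ associated with the rescaled variables $F_{n,\theta,j}$ of (\ref{F-def}), under the identifications $m_n=n$, $k_j=jm$ (so that $\bar{k}_n=m$ is bounded), $r_n=\epsilon_nI_d$ and $\epsilon_n=n^{-1/2}$. The natural candidate for the Gaussian approximation in (B3) is $\tilde{F}_{n,\theta}$ of (\ref{tildeF-def}), for which the crucial relation $\partial_\theta\tilde{F}_{n,\theta}=B_{i,\theta}\tilde{F}_{n,\theta}$ has already been established through (\ref{B3-check}) and (\ref{B3-check2}). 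Because (C1$'$) forces all derivatives of the rescaled coefficients $\tilde{a}_n,\tilde{b}_n,\check{b}_n$ to be bounded uniformly in $n$, standard Malliavin-calculus estimates for SDEs supply the uniform Sobolev bounds on $F_{n,\theta,j}$ and its $\theta$-derivatives required in (B1) and in (N2).

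For (B2) I would first show that the deterministic matrix $\tilde{K}(\theta)$ displayed above is invertible with uniformly bounded inverse: its Schur complement relative to the invertible block $\tilde{a}\tilde{a}^\top$ equals $\frac{1}{12}(\nabla_1\check{b})^\top\tilde{a}\tilde{a}^\top\nabla_1\check{b}$, which is positive definite since $\tilde{a}\tilde{a}^\top$ is uniformly elliptic under (C1$'$) and $\nabla_1\check{b}$ has full column rank by the boundedness of $((\nabla_1\check{b})^\top\nabla_1\check{b})^{-1}$ in (C2). Lemma~\ref{tildeK-est-lemma} then transfers this to $K_j$, giving a bounded constant $\alpha_n=O(1)$; with $\bar{k}_n=m$ and $\epsilon_n=n^{-1/2}$ the requirement $\epsilon_n^2\bar{k}_n^4\sqrt{m_n}\alpha_n^2\to0$ is immediate. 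The approximation bound in (B3), with $\rho_n=O(n^{-1/2})$, is an Euler--Maruyama-type estimate comparing the solution $F_{n,\theta}$ of the rescaled system to the frozen-coefficient Gaussian $\tilde{F}_{n,\theta}$; the time rescaling supplies the $n^{-1/2}$ factor, and the rate conditions in (\ref{rho-condition}) hold because $\epsilon_n^2m_n=1$ while $\alpha_n,\bar{k}_n=O(1)$.

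The heart of the argument is (B4). I would take $\mathcal{G}_j=\sigma(\bar{X}^{n,\theta_0}_j)$ and let $G_j^n$ be the vector whose conditional law given $\mathcal{G}_{j-1}$ is that of $(\mathcal{L}_{j,i,\bar{x}_{j-1}}(\tilde{F}_{n,\theta_0,j},\theta_0))_i$, so that (\ref{Gj-def-eq}) holds by the Markov property of the diffusion; the identity $E[\mathcal{L}_{j,i}(\tilde{F}_{n,\theta_0,j},\theta_0)]={\rm tr}(B_{j,i,\theta_0}^\top\tilde{K}_j^{-1}\tilde{K}_j)-{\rm tr}(B_{j,i,\theta_0})=0$, valid because $\tilde{F}_{n,\theta_0,j}\sim N(0,\tilde{K}_j)$, yields $E[G_j^n\mid\mathcal{G}_{j-1}]=0$. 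Lemma~\ref{B4-suff-lemma}(1) identifies $E[G_j^n(G_j^n)^\top\mid\mathcal{G}_{j-1}]=\gamma_j$, and its Lyapunov bound (part~2) holds since $\epsilon_n^4m_n=n^{-1}$ with $\alpha_n,\bar{k}_n$ bounded. It then remains to establish the stable convergence (\ref{B4-stable-conv}): a martingale CLT controls $\epsilon_n\sum_jG_j^n$, while $\epsilon_n^2\sum_j\gamma_j(\bar{X}^{n,\theta_0}_{j-1})$ is a Riemann sum converging to $\Gamma$. Inserting the explicit block forms of $\tilde{K}_j$ and $B_{i,\theta}$ into $\gamma_j=(2\,{\rm tr}(\Phi_{j,i}\tilde{K}_j\Phi_{j,i'}\tilde{K}_j))_{i,i'}$ and using $\tilde{a}^+=\tilde{a}^\top(\tilde{a}\tilde{a}^\top)^{-1}$, the trace splits into the $\tilde{a}\tilde{a}^\top$-block and the $\Psi_{t,\theta}$-block, reproducing exactly the two integrals in (\ref{Gamma-def}). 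Finally (B5) holds through (N2) under the uniform ellipticity of (C1$'$), and Theorem~\ref{Malliavin-LAMN-thm} delivers Condition~(L).

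The main obstacle I anticipate is this last computation inside (B4): carrying the degenerate block structure through $\Phi_{j,i}$ and the trace, verifying that the off-diagonal $(\nabla_1\check{b})$-coupling contributes the second integral of (\ref{Gamma-def}) rather than spurious cross terms, and securing the uniform-in-$\bar{x}_{j-1}$ control needed for the Riemann-sum limit. The degeneracy itself --- only $\kappa<m$ directly driven directions, the remaining ones acquiring noise through the integral term rescaled by $n\sqrt{n}$ --- is what makes both the nondegeneracy of $\tilde{K}$ and the identification of the limit $\Gamma$ substantially more delicate than in the elliptic case.
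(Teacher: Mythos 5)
Your overall strategy is the same as the paper's: verify (B1)--(B5) for the rescaled variables $F_{n,\theta,j}$ of (\ref{F-def}) with the Gaussian approximation $\tilde F_{n,\theta}$ of (\ref{tildeF-def}), the matrices $B_{i,\theta}$ of (\ref{B-def}), $\epsilon_n=n^{-1/2}$, $m_n=n$, $\bar k_n=m$, $\rho_n=n^{-1/2}$, and then invoke Theorem~\ref{Malliavin-LAMN-thm}. Your outline of (B1), (B3) and the structure of (B4) is consistent with the paper's proof. However, your verification of (B2) contains a genuine gap: you propose to prove that the \emph{deterministic} matrix $\tilde K(\theta)$ has a bounded inverse (your Schur-complement computation is correct and matches $S=\tfrac{1}{12}\check\varphi(\tilde a\tilde a^\top)$ in the paper) and then to ``transfer'' this to $K_j$ via Lemma~\ref{tildeK-est-lemma}. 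That lemma runs in the opposite direction: it \emph{assumes} (B2), i.e.\ $\sup\lVert [K_j^{-1}(\theta)]_{il}\rVert_{2,8}\leq\alpha_n$, together with (B3), and deduces invertibility and bounds for $\tilde K_j^{-1}$. The reverse implication is false as an argument: the perturbation $K_j-\tilde K_j$ is controlled only in $\lVert\cdot\rVert_{2,p}$ norm (size $\rho_n$), not almost surely, so one cannot conclude from nondegeneracy of $\tilde K$ that $K_j$ is a.s.\ invertible, let alone obtain the negative-moment/Sobolev bounds on $[K_j^{-1}]_{il}$ (note (B2) demands the $\lVert\cdot\rVert_{2,8}$ norm, which requires two Malliavin derivatives of $K_j^{-1}$ and hence negative moments of $\det K_j$). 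Supplying exactly this estimate is the technical heart of the lemma: the paper proves it through Proposition~\ref{one-interval-nondeg-lemma} (and Lemma~\ref{gamma_x_inv-lemma}), a dedicated stopping-time argument yielding $E_\mu[|\det\gamma_{\mathcal X,\mathcal Y}|^{-p}]\leq C_p$ for the degenerate system, with constants independent of the initial point precisely because of the dichotomy in (C2) ($\check b$ bounded or $\nabla_2\check b\equiv 0$), which enters via (\ref{nondeg-prop-further}). Your proposal never produces this ingredient.

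A secondary, smaller issue is your choice $\mathcal G_j=\sigma(\bar X_j^{n,\theta_0})$ in (B4) with $G_j^n$ defined only through its conditional law: Condition~(\ref{Gj-def-eq}) requires the \emph{joint} conditional law of $(X_j^{n,\theta_0},G_j^n)$ to match that of $(F_{n,\theta_0,j},(\mathcal L_{j,i,\bar x_{j-1}}(\tilde F_{n,\theta_0,j}))_i)$, and $\mathcal L_{j,i}(\tilde F_{n,\theta_0,j})$ is a functional of the underlying Brownian increments, which are not measurable with respect to the observations. The paper sidesteps this by defining $G_{j,i}^n$ explicitly as a quadratic form in $\tilde X_j^{n,\theta_0}$ and taking $\mathcal G_t=\sigma(W_s;s\leq t)$, the Brownian filtration; this also makes the orthogonality conditions (vanishing conditional covariations with $W$ and with martingales orthogonal to $W$) available, so that Jacod's stable limit theorem applies. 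Your ``martingale CLT'' step needs this stable-convergence machinery spelled out, since (\ref{B4-stable-conv}) is a joint convergence with the random limit $\Gamma$.
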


\begin{proof}
Thanks to Theorem~\ref{Malliavin-LAMN-thm}, it is sufficient to check (B1)--(B5).
Let $\tilde{a}_{n,t}=\tilde{a}_n(t,\mathcal{X}_t^{n,\theta},\mathcal{Y}_t^{n,\theta},\theta)$,
$\tilde{b}_{n,t}=\tilde{b}_n(t,\mathcal{X}_t^{n,\theta},\mathcal{Y}_t^{n,\theta},\theta)$,
and $\check{b}_{n,t}=\check{b}_n(t,\mathcal{X}_t^{n,\theta},\mathcal{Y}_t^{n,\theta})$. 

{\colore First, we} check (B1). (C1$'$) implies $\sup_{n,\theta}E[\sup_t|\mathcal{X}_t^{n,\theta}|^p]^{1/p}\leq C_p$.
Moreover, we obtain
  \EQ{\label{check-b-eq} \check{b}_{n,s}=\int^1_0\bigg(({\colore \nabla_1}\check{b})^\top\mathcal{X}_s^{n,\theta}+\frac{1}{n}({\colore \nabla_2}\check{b})^\top{\colorc \dot{\mathcal{Y}}_s^{n,\theta}}\bigg)
   \bigg(x_0+u\frac{\mathcal{X}_s^{n,\theta}}{\sqrt{n}},y_0+u\frac{{\colorc \dot{\mathcal{Y}}_s^{n,\theta}}}{n^{3/2}}\bigg)du,
  }
{\colorc where $\dot{\mathcal{Y}}_s^{n,\theta}=\mathcal{Y}_s^{n,\theta}+s\sqrt{n}\check{b}(z_0)$.}
Therefore, if ${\colore \nabla_2}\check{b}\equiv 0$, {\colore then}
\begin{equation}
\sup_{n,\theta}E[\sup_t|\mathcal{Y}_t^{n,\theta}|^p]^{1/p}\leq \sup_{n,\theta,t}E[|\check{b}_{n,t}|^p]^{1/p}\leq C_p.
\end{equation}
If $\check{b}$ is bounded, {\colore then}
\begin{equation*}
E[|\check{b}_{n,t}|^p]\leq C_p+C_pE[|\mathcal{Y}_t^{n,\theta}|^p]\leq C_p+C_p\int^t_0E[|\check{b}_{n,s}|^p]ds,
\end{equation*}
and hence Gronwall's inequality yields
\begin{equation*}
\sup_{n,\theta}E[\sup_t|\mathcal{Y}_t^{n,\theta}|^p]^{1/p}\leq \sup_{n,\theta,t}E[|\check{b}_{n,t}|^p]^{1/p}<\infty.
\end{equation*}
Then we have $\sup_{n,\theta}\lVert F_{n,\theta}\rVert_{0,p}<\infty$ under (C1$'$) and (C2).

Let $\mathcal{Z}_t^{n,\theta}=((\mathcal{X}_t^{n,\theta})^\top,(\mathcal{Y}_t^{n,\theta})^\top)^\top$.
Theorem~2.2.1 in~\cite{nua06} yields
\begin{eqnarray}
D_t\mathcal{Z}_r^{n,\theta}&=&(\tilde{a}_{n,t}^\top \quad O_{r,m-\kappa})
+\int^r_t(D_t\mathcal{Z}_s^{n,\theta}\partial_z\tilde{b}_{n,s} \quad D_t\mathcal{Z}_s^{n,\theta}\partial_z\check{b}_{n,s})ds \nonumber \\
&&+\bigg(\bigg(\int^r_t\sum_{k,l}[D_t\mathcal{Z}_s^{n,\theta}]_{ik}\partial_{z_k}[\tilde{a}_{n,s}]_{jl}d\mathcal{W}_s^l\bigg)_{ij} \quad O_{r,m-\kappa}\bigg) \nonumber
\end{eqnarray}
for $r\geq t$.
Then (C1$'$) and Gronwall's inequality yield $\sup_{n,\theta}E[\lVert DF_{n,\theta}\rVert_H^p]^{1/p}<\infty$.
By using Lemma~2.2.2 in~\cite{nua06}, we similarly obtain $\sup_{n,\theta}\lVert F_{n,\theta}\rVert_{k,p}<\infty$ for any $k\in\mathbb{Z}_+$ and $p\geq 1$.

Theorem~39 in Chapter V of Protter~\cite{pro90} yields
  \EQNN{\MAT{c}{\partial_\theta \mathcal{X}_t^{n,\theta} \\ \partial_\theta \mathcal{Y}_t^{n,\theta}}
   &=\int^t_0\MAT{c}{\partial_\theta \tilde{b}_{n,s}+(\partial_z\tilde{b}_{n,s})^\top\partial_\theta \mathcal{Z}_s^{n,\theta} \\
   (\partial_z\check{b}_{n,s})^\top \partial_\theta \mathcal{Z}_s^{n,\theta}}ds \\
   &\quad +\int^t_0\MAT{c}{\partial_\theta \tilde{a}_{n,s}+\sum_k\partial_\theta [\mathcal{Z}_s^{n,\theta}]_k\partial_{z_k}\tilde{a}_{n,s} \\ 0}d\mathcal{W}_s.
  }
Together with Theorem~2.2.1 and Lemma~2.2.2 in~\cite{nua06}, we have $\sup_{n,\theta}\lVert \partial_\theta F_{n,\theta}\rVert_{3,p}<\infty$.
Similarly, we obtain $\sup_{n,\theta}\lVert \partial_\theta^lF_{n,\theta}\rVert_{4-l,p}<\infty$ for any $p\geq 1$ and $0\leq l\leq 3$, which implies (B1).

{\colore Next, we show (B2).} Under (C1$'$) and (C2), we have
\begin{equation*}
\sup_{n,t,z,\theta}\left(\lVert \partial_z\tilde{b}_n(t,z,\theta)\rVert_{{\rm op}}\vee \lVert \partial_t^i\partial_z^j\check{b}_n(t,z)\rVert_{{\rm op}} 
\vee \max_{1\leq i\leq m}\lVert \partial_{z_i}^l\tilde{a}_n(t,z,\theta)\rVert_{{\rm op}}\right)<\infty
\end{equation*}
for $i\in\{0,1\}$, $j\in\{1,2,3\}$, and $l\in\{0,1\}$, and 
\begin{equation*}
\sup_{n,t,z,\theta}\left(\lVert ((\partial_x\check{b}_n)^\top\partial_x\check{b}_n)^{-1}(t,z)\rVert_{{\rm op}}\vee \lVert (\tilde{a}_n\tilde{a}_n^\top)^{-1}(t,z,\theta)\rVert_{{\rm op}}\right)<\infty.
\end{equation*}
Together with Proposition~\ref{one-interval-nondeg-lemma}, we obtain (B2), 
where $\epsilon_n=1/\sqrt{n}$, $m_n=n$, $\bar{k}_n=m$, and $\alpha_n$ is a constant independent of $n$.
Furthermore, by setting (\ref{B-def}), we have (N2) and $\partial_\theta \tilde{F}_{n,\theta}=B_{i,\theta}\tilde{F}_{n,\theta}$.

To verify (B3) with $\rho_n=1/\sqrt{n}$, we only need to check $\lVert \partial_\theta^lF_{n,\theta}-\partial_\theta^l\tilde{F}_{n,\theta}\rVert_{3-l,p}\leq C_p/\sqrt{n}$
for $l\in \{0,1\}$ and $p>1$. {\colore First, we} have
  \EQN{\label{F-tildeF-eq} F_{n,\theta}-\tilde{F}_{n,\theta}&=\int^1_0\MAT{c}{
   \tilde{b}_{n,s} \\ \check{b}_{n,s}-({\colore \nabla_1}\check{b})^\top(z_0)\tilde{a}(z_0,\theta)\mathcal{W}_s}ds \\
   &\quad +\int^1_0\MAT{c}{\tilde{a}_{n,s}-\tilde{a}(z_0,\theta) \\ 0} d\mathcal{W}_s.
  } 
{\colore Because}
\begin{equation}
\tilde{a}_{n,s}-\tilde{a}(z_0,\theta)
=\frac{1}{\sqrt{n}}\int^1_0\bigg({\colore \nabla_1}\tilde{a}\mathcal{X}_s^{n,\theta}+\frac{1}{n}{\colore \nabla_2}\tilde{a}{\colorc \dot{\mathcal{Y}}_s^{n,\theta}}\bigg)
\bigg(x_0+u\frac{\mathcal{X}_s^{n,\theta}}{\sqrt{n}},y_0+u\frac{{\colorc \dot{\mathcal{Y}}_s^{n,\theta}}}{n^{3/2}}\bigg)du, \nonumber
\end{equation} 
we have
\begin{equation}\label{X-tildeA-eq}
\sup_{n,\theta}E\left[\sup_t|\mathcal{X}_t^{n,\theta}-\tilde{a}(z_0,\theta)\mathcal{W}_t|^p\right]^{1/p}\leq \frac{C_p}{\sqrt{n}}.
\end{equation}
Together with (\ref{check-b-eq}), we have
\begin{equation}\label{check-b-eq2}
\sup_{n,\theta}E\left[\sup_t|\check{b}_{n,t}-({\colore \nabla_1}\check{b})^\top(z_0)\tilde{a}(z_0,\theta)\mathcal{W}_t|^p\right]^{1/p}\leq \frac{C_p}{\sqrt{n}}.
\end{equation}
(\ref{F-tildeF-eq})--(\ref{check-b-eq2}) yield $\lVert F_{n,\theta}-\tilde{F}_{n,\theta}\rVert_{0,p}\leq C_p/\sqrt{n}$.

Similarly to above, Theorem~39 in Chapter V of Protter~\cite{pro90} and Theorem~2.2.1 and Lemma~2.2.2 in~\cite{nua06} yield
$\lVert \partial_\theta^lF_{n,\theta}-\partial_\theta^l\tilde{F}_{n,\theta}\rVert_{3-l,p}\leq C_p/\sqrt{n}$ for $p\geq 1$ and $l\in\{0,1\}$,
and consequently (B3) holds.

\begin{discuss}
{\colorr
\begin{eqnarray}
D_tF_{n,\theta}-D_t\tilde{F}_{n,\theta}&=&\left(\begin{array}{c}
\tilde{a}_{n,t}^\top-\tilde{a}^\top (z_0,\theta) \\
0
\end{array}\right) +\int^1_t\left(\begin{array}{c}
D_t\mathcal{Z}_s^{n,\theta}\partial_z\tilde{b}_{n,s} \\
D_t\mathcal{Z}_s^{n,\theta}\partial_z\check{b}_{n,s}-\tilde{a}^\top{\colore \nabla_1}\check{b}(z_0,\theta)
\end{array}\right)ds \nonumber \\
&&+ \left(\begin{array}{c}
(\int^1_t\sum_{k,l}[D_t\mathcal{Z}_s^{n,\theta}]_{ik}\partial_{z_k}[\tilde{a}_{n,s}]_{jl}d\mathcal{W}_s^l)_{ij} \\
0
\end{array}\right). \nonumber
\end{eqnarray}
$\partial_{z_k}\tilde{a}_{n,s}$, $\tilde{a}_{n,t}^\top-\tilde{a}_0^\top$, $D_t\mathcal{Z}_s^{n,\theta}-\tilde{a}^\top$などから$1/\sqrt{n}$がでる．
\begin{eqnarray}
\partial_\theta F_{n,\theta}-\partial_\theta \tilde{F}_{n,\theta}&=&\int^1_0\left(\begin{array}{c}
\partial_\theta \tilde{b}_{n,s}+(\partial_z\tilde{b}_{n,s})^\top \partial_\theta \mathcal{Z}_s^{n,\theta} \\
(\partial_z \check{b}_{n,s})^\top \partial_\theta \mathcal{Z}_s^{n,\theta} -({\colore \nabla_1}\check{b})^\top(z_0)\partial_\theta \tilde{a}(z_0,\theta)\mathcal{W}_s
\end{array}\right)ds \nonumber \\
&&+\int^1_0\left(\begin{array}{c}
\partial_\theta \tilde{a}_{n,s}-\partial_\theta \tilde{a}(z_0,\theta)+\sum_k\partial_\theta [\mathcal{Z}_s^{n,\theta}]_k\partial_{z_k}\tilde{a}_{n,s} \\
0
\end{array}\right)d\mathcal{W}_s. \nonumber
\end{eqnarray}
}
\end{discuss}

Finally, we show (B4). We denote $\check{\varphi}(A)=({\colore \nabla_1}\check{b})^\top (z_0)A{\colore \nabla_1}\check{b}(z_0)$ for a $\kappa\times \kappa$ matrix $A$.
Let $S=(1/12)\check{\varphi}(\tilde{a}\tilde{a}^\top)(z_0,\theta)$.
Then for $\epsilon=\inf_{z,\theta}(\lVert (\tilde{a}\tilde{a}^\top)^{-1}(z,\theta)\rVert_{{\rm op}}^{-1})$,
$\epsilon'=\inf_z(\lVert (({\colore \nabla_1}\check{b})^\top{\colore \nabla_1}\check{b})^{-1}(z)\rVert_{{\rm op}}^{-1})$, and $y\in \mathbb{R}^{m-\kappa}$, we obtain
\begin{equation*}
y^\top \check{\varphi}(\tilde{a}\tilde{a}^\top)y
\geq \epsilon y^\top ({\colore \nabla_1}\check{b})^\top{\colore \nabla_1}\check{b}y \geq \epsilon \epsilon'|y|^2,
\end{equation*}
which implies that $S$ is positive definite.
Together with (0.8.5.6) in Horn and Johnson~\cite{hor-joh13}, we have
\begin{equation*}
\tilde{K}^{-1}(\theta)=\left(
\begin{array}{cc}
(\tilde{a}\tilde{a}^\top)^{-1}+(1/4){\colore \nabla_1}\check{b} S^{-1}({\colore \nabla_1}\check{b})^\top & -(1/2){\colore \nabla_1}\check{b}S^{-1} \\
-(1/2)S^{-1}({\colore \nabla_1} \check{b})^\top & S^{-1}
\end{array}
\right)(z_0,\theta).
\end{equation*}

Let $\tilde{K}_0=\tilde{K}(\theta_0)$ and $\mathcal{G}_t=\sigma(W_s;s\leq t)$. We can set $G_j^n=(G_{j,i}^n)_{1\leq i\leq d}$ and $\gamma_j$ in (B4) by
\begin{equation*}
G_{j,i}^n=\frac{1}{2}(\tilde{X}_j^{n,\theta_0})^\top (B_{i,\theta_0}^\top \tilde{K}_0^{-1}+\tilde{K}_0^{-1}B_{i,\theta_0})\tilde{X}_j^{n,\theta_0}-{\rm tr}(B_{i,\theta_0})
\end{equation*}
and 
\begin{equation*}
[\gamma(z_0)]_{ii'}=\frac{1}{2}{\rm tr}(\tilde{K}_0^{-1}(B_{i,\theta_0}\tilde{K}_0+\tilde{K}_0B_{i,\theta_0}^\top)
\tilde{K}^{-1}_0(B_{i',\theta_0}\tilde{K}_0+\tilde{K}_0B_{i',\theta_0}^\top)).
\end{equation*}

Repeated use of (\ref{B3-check}) and (\ref{B3-check2}) yields
\begin{equation}
B_{i,\theta_0}\tilde{K}_0+\tilde{K}_0B_{i,\theta_0}^\top
=\left(\begin{array}{cc}
\partial_{\theta_i}(\tilde{a}\tilde{a}^\top) & (1/2)\partial_{\theta_i}(\tilde{a}\tilde{a}^\top){\colore \nabla_1}\check{b} \\
(1/2)({\colore \nabla_1}\check{b})^\top\partial_{\theta_i}(\tilde{a}\tilde{a}^\top) & (1/3)\check{\varphi}(\partial_{\theta_i}(\tilde{a}\tilde{a}^\top))
\end{array}\right)(z_0,\theta_0),
\end{equation}
\begin{discuss}
{\colorr 
\begin{equation*}
B_{i,\theta_0}\tilde{K}_0=\left(\begin{array}{cc}
\partial_{\theta_i}\tilde{a}\tilde{a}^\top & (1/2)\partial_{\theta_i}\tilde{a}\tilde{a}^\top{\colore \nabla_1}\check{b} \\
(1/2)({\colore \nabla_1}\check{b})^\top \partial_{\theta_i}\tilde{a}\tilde{a}^\top & (1/3)\check{\varphi}(\partial_{\theta_i}\tilde{a}\tilde{a}^\top)
\end{array}\right),
\end{equation*}
\begin{equation*}
\tilde{K}_0B_{i,\theta_0}^\top=\left(\begin{array}{cc}
\tilde{a}\partial_{\theta_i}\tilde{a}^\top & (1/2)\tilde{a}\partial_{\theta_i}\tilde{a}^\top{\colore \nabla_1}\check{b} \\
(1/2)({\colore \nabla_1}\check{b})^\top \tilde{a}\partial_{\theta_i}\tilde{a}^\top & (1/3)\check{\varphi}(\tilde{a}\partial_{\theta_i}\tilde{a}^\top)
\end{array}\right).
\end{equation*}
}
\end{discuss}
and hence
  \EQN{
   &\tilde{K}_0^{-1}(B_{i,\theta_0}\tilde{K}_0+\tilde{K}_0B_{i,\theta_0}^\top) \\
   &\quad =\MAT{cc}{
    (\tilde{a}\tilde{a}^\top)^{-1}\partial_{\theta_i}(\tilde{a}\tilde{a}^\top) & (1/2)(\tilde{a}\tilde{a}^\top)^{-1}\partial_{\theta_i}(\tilde{a}\tilde{a}^\top){\colore \nabla_1}\check{b}-{\colorc \mathcal{S}_i} \\
    O_{m-\kappa,\kappa} & (1/12)S^{-1}\check{\varphi}(\partial_{\theta_i}(\tilde{a}\tilde{a}^\top))
   }(z_0,\theta_0),
  }
{\colorc where $\mathcal{S}_i=(1/24){\colore \nabla_1}\check{b}S^{-1}\check{\varphi}(\partial_{\theta_i}(\tilde{a}\tilde{a}^\top))(z_0,\theta_0)$.}
Together with {\colore the} equations
  \EQQ{U\partial_\theta (aa^\top) U^\top=\MAT{cc}{
    \partial_\theta(\tilde{a}\tilde{a}^\top) & O_{\kappa,m-\kappa} \\
    O_{m-\kappa,\kappa} & O_{m-\kappa,m-\kappa}
   }
  } 
and
  \EQQ{U(aa^\top)^+U^\top=(Uaa^\top U^\top)^+=\MAT{cc}{
    (\tilde{a}\tilde{a}^\top)^{-1} & O_{\kappa,m-\kappa} \\
    O_{m-\kappa,\kappa} & O_{m-\kappa,m-\kappa}
   }, 
  }
\begin{discuss}
{\colorr
\begin{equation*}
Uaa^\top U^\top=\left(
\begin{array}{cc}
\tilde{a}\tilde{a}^\top & 0 \\
0 & 0
\end{array}
\right), 
\quad U\partial_\theta a=\left(
\begin{array}{c}
\partial_\theta \tilde{a} \\
0
\end{array}
\right)
\end{equation*}
$a^+U^\top=(Ua)^+=(\tilde{a}^+ \ 0)$となることが，右辺がムーア・ペンローズの４条件を満たすことからわかる．}
\end{discuss}
we have
  \EQNN{[\gamma(z_0)]_{ii'}&={\rm tr}((aa^\top)^+\partial_{\theta_i}(aa^\top)(aa^\top)^+\partial_{\theta_{i'}}(aa^\top))(U^\top z_0,\theta_0)/2 \\
   &\quad +{\rm tr}(\check{\varphi}(\tilde{a}\tilde{a}^\top)^{-1}\check{\varphi}(\partial_{\theta_i}(\tilde{a}\tilde{a}^\top))
   \check{\varphi}(\tilde{a}\tilde{a}^\top)^{-1}\check{\varphi}(\partial_{\theta_{i'}}(\tilde{a}\tilde{a}^\top)))(z_0,\theta_0)/2.
  }

Therefore, we obtain
\begin{equation}\label{degenerate-Gamma-conv}
\frac{1}{n}\sum_{j=1}^n\gamma(UX_{(j-1)/n})\overset{P}\to \Gamma
\end{equation}
as $n\to \infty$.
Moreover, Lemma~\ref{B4-suff-lemma} {\colore yields}
\begin{equation}\label{degenerate-B4-check}
\frac{1}{n}\sum_{j=1}^{[nt]}E[G_j^n(G_j^n)^\top|\mathcal{G}_{(j-1)/n}]\overset{P}\to \Gamma_t,
\quad \frac{1}{n^2}\sum_{j=1}^nE[|G_j^n|^4|\mathcal{G}_{(j-1)/n}]\overset{P}\to 0
\end{equation}
for $t\in (0,1]$, where $\Gamma_t$ is defined by replacing {\colore the} interval of integration in the definition of $\Gamma$ with $[0,t]$.
Furthermore, it is easy to see that
  \EQNN{\frac{1}{\sqrt{n}}\sum_{j=1}^{[nt]}E[G_j^n(W_{j/n}-W_{(j-1)/n})^\top|\mathcal{G}_{(j-1)/n}]&=0,\\
   \frac{1}{\sqrt{n}}\sum_{j=1}^{[nt]}E[G_j^n(N_{j/n}-N_{(j-1)/n})|\mathcal{G}_{(j-1)/n}]&=0
  }
for any $t\in [0,1]$ and any bounded $(\mathcal{G}_t)_{t\in[0,1]}$-martingale $N=(N_t)_{t\in[0,1]}$ orthogonal to $W$.
Together with Theorem~3.2 in Jacod~\cite{jac97}, we have
$\sum_{j=1}^nG_j^n/\sqrt{n}\to \Gamma^{1/2}\mathcal{N}$
stably as $n\to\infty$, which implies (B4).

\end{proof}

\noindent
{\bf Proof of Theorem~\ref{degenerate-LAMN-thm}.}

For $q>0$, let $\phi_q^1:\mathbb{R}^\kappa\to \mathbb{R}^\kappa$ and $\phi_q^2:\mathbb{R}^{m-\kappa}\to \mathbb{R}^{m-\kappa}$ be $C^\infty$ functions with compact {\colore support}
satisfying $\phi_q^1(x)=x$ on $\{|x|\leq q\}$ and $\phi_q^2(y)=y$ on $\{|y|\leq q\}$.
Let $\phi_q(z)=(\phi_q^1(x),\phi_q^2(y))$, and let
\begin{equation*}
a_q(z,\theta)=U^\top {\colorc \MAT{c}{\tilde{a}(\phi_q(Uz),\theta) \\ 0}}, \quad b_q(z,\theta)=U^\top \left(\begin{array}{c}
\tilde{b}(\phi_q(Uz),\theta) \\
\check{b}(Uz)
\end{array}\right).
\end{equation*}
Let $P_{\theta,q,n}$ be the corresponding probability measure, 
and ${\colorp V_{q,n}}, T_{q,n}, W_q, T_q$ {\colore correspond} to ${\colorp V_n},T_n,W,T$, respectively.
Then (C1$'$) and (C2) are satisfied for the statistical model of $P_{\theta,q,n}$,
and hence this model satisfies Condition~(L) by Lemma~\ref{degenerate-L-lemma}.
\begin{discuss}
{\colorr $\tilde{a}_q(z,\theta)=\tilde{a}(\phi_q(z),\theta)$から$\tilde{a}_q\tilde{a}_q^\top$は非退化で${\colore \nabla_2}\tilde{a}\equiv 0$なら${\colore \nabla_2}\tilde{a}_q\equiv 0$．
$\tilde{b}_q=\tilde{b}(U\phi_q(U^\top z),\theta)$などから他の条件のOK．}
\end{discuss}
Moreover, we have
\begin{equation*}
\log\frac{dP_{\theta_0+h/\sqrt{n},q,n}}{dP_{\theta_0,q,n}}
-(h^\top {\colorp V_{q,n}}+\frac{1}{2}h^\top {\colorp T_{q,n}}h)\to 0
\end{equation*}
in $P_{\theta_0,q,n}$-probability, and $\mathcal{L}((T_{q,n},{\colorp V_{q,n}})|P_{\theta_0,q,n})\to \mathcal{L}((T_q,{\colorp T_q^{1/2}}W_q))$.
By letting $q\to \infty$, Proposition~4.3.2 in Le~Cam~\cite{lec86} yields {\colorp Condition~(L)} for $\{P_{\theta,n}\}_{\theta,n}$.
{\colorp Then Remark~\ref{Gamma-pd-rem} leads to the conclusion.}

\qed

\begin{discuss}
{\colorr
$Ub=(\tilde{b}^\top,\check{b}^\top)^\top$より，$(m-\kappa)\times m$行列$P$と$m\times m$行列$Q$を$[P]_{ij}=\delta_{i,j+\kappa}$, $[Q]_{ij}=\delta_{ij}1_{\{i\leq \kappa\}}$と定めると，
\begin{equation*}
U(\partial_z b)^\top=\left(\begin{array}{cc}
({\colore \nabla_1} \tilde{b})^\top & ({\colore \nabla_2}\tilde{b})^\top \\
({\colore \nabla_1} \check{b})^\top & ({\colore \nabla_2}\check{b})^\top
\end{array}\right), \quad
PU(\partial_zb)^\top Q=({\colore \nabla_1}\check{b}^\top \ 0).
\end{equation*}
}
\end{discuss}

\section{Partial observation models}\label{integrated-diffusion-LAMN-section}

In this section, we prove Theorem~\ref{partial-LAMN-thm} by using the results in Section~\ref{LAMN-Malliavin-subsection}.
It is difficult to apply the scheme in Section~\ref{LAMN-Malliavin-subsection} directly 
{\colore because} the conditional distribution $P(X_j^{n,\theta}\in \cdot |{\colore \bar{X}_{j-1}^{n,\theta}=\bar{x}_{j-1}})$ is complicated
{\colora when some components are hidden}.
Therefore, we follow the idea of~\cite{glo-gob08}, that is, we first show {\colorp Condition~(L)} of an augmented model 
obtained by adding some observations of {\colorn $(I_\kappa-{\colorc \mathcal{Q}})\tilde{Y}_t$} to the {\colorn partial} observations.
Then we show the LAMN property of the original model by approximating the log-likelihood ratios of the augmented model 
using functionals of the original observations.

\begin{discuss}
{\colorr ＜$\check{b}$が一般の場合＞
\EQQ{\check{Y}_{k/n}-\check{Y}_{(k-1)/n}=\int_{(k-1)/n}^{k/n}\check{b}(Y_t^\theta)dt
\approx \check{b}(Y_{k/n})/n+{\colore \nabla_1}\check{b}/n(\tilde{Y}_{k/n}-\tilde{Y}_{(k-1)/n})}
を使って観測されない変数$\tilde{Y}_{k/n}$を複製する必要があるが，差分形の和をとる必要があり誤差項が
積みあがる上，${\colore \nabla_1}\check{b}(z_0)$の$z_0$を変えていかないといけないので一般化逆行列をとってもうまくいかない

＜ムーアペンローズの一般化逆行列の性質＞\\
$(AA^+)^\top=AA^+$, $(A^+A)^\top =A^+A$, $A$が正方行列なら$A^+A$は${\rm Im} A^\top$への射影
}
\end{discuss}

\subsection{An augmented model}

We divide the whole observation interval $[0,1]$ into several blocks
and show {\colorp Condition~(L)} for an augmented model that is obtained by adding an observation of {\colorn $(I_\kappa-{\colorc \mathcal{Q}})\tilde{Y}_t$} for each block.
Let $({\colorc e_n})_{n=1}^\infty$ be a sequence of positive integers {\colore that} diverges to infinity very slowly
({\colore the} precise diverge rate of ${\colorc e_n}$ is specified in {\colorn Lemma~\ref{partial-obs-L-lemma}}).
Let $L_n=\lfloor (n-1)/{\colorc e_n}\rfloor$ {\colora and $t_{j,k}=(k+j{\colorc e_n})/n$}. We consider an augmented model 
{\colora generated by observation blocks} 
  \EQ{\label{block-obs} {\colorn \{\{{\colorc \mathcal{Q}}\tilde{Y}_{{\colora t_{j,k}}},{\colorc \check{Y}_{{\colora t_{j,k}}}}\}_{k=1}^{{\colorc e_n}},(I_\kappa-{\colorc \mathcal{Q}})\tilde{Y}_{{\colora t_{j+1,0}}}\}}}
{\colora for $0\leq j\leq L_n-1$ and 
  \EQ{\label{block-obs2} \{{\colorc \mathcal{Q}}\tilde{Y}_{t_{L_n,k}},{\colorc \tilde{Y}_{t_{L_n,k}}}\}_{k=1}^{n-{\colorc e_n}L_n}.}
}
\begin{discuss}
{\colorr ${\colorc e_n}L_n=n$となると困るから$n-1$. $(n-1)/{\colorc e_n}-1<L_n\leq (n-1)/{\colorc e_n}$より${\colorc e_n}L_n<n\leq {\colorc e_n}(L_n+1)$.}
\end{discuss}

{\colora
{\colore Because} $I_\kappa-{\colorc \mathcal{Q}}$ and {\colorc $B^+B$} are projections 
to $({\rm Im}({\colorc \mathcal{Q}}))^\perp$ and $({\rm Ker}({\colorc B}))^\perp$, respectively, {\colorc (C4) implies}
\begin{discuss}
{\colorr ${\colorc \mathcal{Q}}$は射影だから$I-{\colorc \mathcal{Q}}$も射影}
\end{discuss}
\EQ{\label{Q-eq} {\colorc \tilde{Q}_3B^+}B=\tilde{Q}_3.}

Then, we can approximate
\EQQ{n{\colorc \tilde{Q}_3B^+}(\check{Y}_{k/n}-\check{Y}_{(k-1)/n})=n{\colorc \tilde{Q}_3B^+}B\int_{(k-1)/n}^{k/n}\tilde{Y}_t dt 
\approx \tilde{Q}_3\tilde{Y}_{k/n}.}

Therefore,} {\colorn we set $X_j^{n,\theta}$ in Section~\ref{LAMN-Malliavin-subsection} as 
  \EQNN{X_j^{n,\theta}=&\Big\{\big\{\sqrt{n}\tilde{Q}_1\Delta_{n,j,1}\tilde{Y},{\colorc n^{3/2}}(\Delta_{n,j,1}\check{Y}-B\tilde{Y}_{t_{j,0}}/n)\big\}, \\
   &~~ \big\{\sqrt{n}\tilde{Q}_1\Delta_{n,j,k}\tilde{Y},{\colorc n^{3/2}}\Delta_{n,j,k}^2\check{Y}\big\}_{k=2}^{{\colorc e_n}}, 
   \sqrt{n}(\tilde{Q}_3{\colorc \tilde{Y}_{t_{j+1,0}}}-n{\colorc \tilde{Q}_3B^+}\Delta_{n,j,{\colorc e_n}}\check{Y})\Big\} }
{\colora for $0\leq j\leq L_n-1$, and 
  \EQNN{X_{L_n}^{n,\theta}&=\Big\{\big\{\sqrt{n}\tilde{Q}_1\Delta_{n,L_n,1}\tilde{Y}, {\colorc n^{3/2}}(\Delta_{n,L_n,1}\check{Y}-B\tilde{Y}_{t_{L_n,0}}/n)\big\}, \\
   &\quad\quad  \big\{\sqrt{n}\tilde{Q}_1\Delta_{n,L_n,k}\tilde{Y},{\colorc n^{3/2}}\Delta_{n,L_n,k}^2\check{Y}\big\}_{2\leq k\leq n-{\colorc e_n}L_n}\Big\},}
which are obtained as a linear transformation of block observations (\ref{block-obs}) and (\ref{block-obs2}).}
Here we denote 
  $\Delta_{n,j,l} V =V_{t_{j,l}}-V_{t_{j,l-1}}$, 
  $\Delta_{n,j,l'}^2 V =\Delta_{n,j,l'}V-\Delta_{n,j,l'-1}V$
for $l\geq 1$, $l'\geq 2$ and a stochastic process $V=(V_t)_{t\in [0,1]}$.

\begin{discuss}
{\colorr Augは$\tilde{Q}_3\tilde{Y}_{t_{j,0}}$を付け足したもの．線形変換で$X_j^{n,\theta}$に変わる．$\tilde{Q}_3\tilde{Y}_1$は付け加えない}
\end{discuss}
\begin{discuss}
{\colorr 尤度関数の不変性はgeneralNoteを参照}
\end{discuss}

{\colora {\colorc Thanks to (\ref{mcx-mcy-dist}),} {\colore the} corresponding $F_{n,\theta,j}$ and $\tilde{F}_{n,\theta,j}$ are defined by}
  \EQQ{F_{n,\theta}=\big\{\tilde{Q}_1\Delta_k{\colorc \mathcal{X}^{n,\theta}},{\colorc \Delta_k^2\mathcal{Y}^{n,\theta}}\big\}_{k=1}^{{\colora {\colorc e_n}}}
   \cup \big\{\tilde{Q}_3{\colorc \mathcal{X}^{n,\theta}_{{\colorc e_n}}}-{\colorc \tilde{Q}_3B^+}({\colorc \mathcal{Y}^{n,\theta}_{{\colorc e_n}}}-{\colorc \mathcal{Y}^{n,\theta}_{{\colorc e_n}-1}})\big\}}
and 
  \EQNN{\tilde{F}_{n,\theta}
   &=\bigg\{\tilde{Q}_1\tilde{a}({\colorc x_0},\theta)\Delta_k\mathcal{W},\tilde{Q}_2\tilde{a}({\colorc x_0},\theta)\int^1_0(\mathcal{W}_{t+k-1}-\mathcal{W}_{(t+k-2)\vee 0})dt\bigg\}_{k=1}^{{\colorc e_n}} \\
   &\quad \quad \bigcup\bigg\{\tilde{Q}_3\tilde{a}({\colorc x_0},\theta)\int^1_0(\mathcal{W}_{{\colorc e_n}}-\mathcal{W}_{t+{\colorc e_n}-1})dt\bigg\}}
{\colora for $0\leq j\leq L_n-1$, where $\mathcal{X}_t^{n,\theta}$, $\mathcal{Y}_t^{n,\theta}$ {\colorc are defined} in Section~\ref{degenerate-lamn-section},
  \EQQ{F'_{n,\theta}=\big\{\tilde{Q}_1\Delta_k{\colorc \mathcal{X}^{n,\theta}},{\colorc \Delta_k^2\mathcal{Y}^{n,\theta}}\big\}_{k=1}^{{\colora n-{\colorc e_n}L_n}},}
and 
  \EQQ{\tilde{F}'_{n,\theta}=\bigg\{\tilde{Q}_1\tilde{a}({\colorc x_0},\theta)\Delta_k\mathcal{W},\tilde{Q}_2\tilde{a}({\colorc x_0},\theta)\int^1_0(\mathcal{W}_{t+k-1}-\mathcal{W}_{(t+k-2)\vee 0})dt\bigg\}_{k=1}^{n-{\colorc e_n}L_n}.}
Here, we denote 
  $\Delta_l \mathcal{V} =\mathcal{V}_l-\mathcal{V}_{l-1}$, {\colorc $\Delta_1^2\mathcal{V}=\Delta_1\mathcal{V}$, and}
  $\Delta_{l'}^2 \mathcal{V} =\Delta_{l'}\mathcal{V}-\Delta_{l'-1}\mathcal{V}$
for $l\geq 1$, $l'\geq 2$, and a stochastic process $\mathcal{V}={\colorc (\mathcal{V}_t)_{t\geq 0}}$.
}

\begin{discuss}
{\colorr 
  \EQQ{\Delta_k^2{\colorc \mathcal{Y}^{n,\theta}}=n^{3/2}\bigg\{\bigg(\check{Y}_{t_{j,k}}-\check{Y}_{t_{j,k-1}}-\frac{B\tilde{Y}_{t_{j,0}}}{n}\bigg)
   -\bigg(\check{Y}_{t_{j,k-1}}-\check{Y}_{t_{j,k-2}}-\frac{B\tilde{Y}_{t_{j,0}}}{n}\bigg)\bigg\}
   =n^{3/2}\Delta_{n,j,k}^2\check{Y}
  }
  \EQQ{\tilde{Q}_3\mathcal{X}_{{\colorc e_n}}-{\colorc \tilde{Q}_3B^+}(\mathcal{Y}_{{\colorc e_n}}-\mathcal{Y}_{{\colorc e_n}-1})
   \sim \tilde{Q}_3(\sqrt{n}(\tilde{Y}_{t_{j+1,0}}-\tilde{Y}_{t_{j,0}})
   -{\colorc \tilde{Q}_3B^+}(n^{3/2}(\Delta_{n,j,{\colorc e_n}}\check{Y}-B\tilde{Y}_{t_j,0}/n))
   =\sqrt{n}(\tilde{Q}_3\tilde{Y}_{t_{j+1,0}}-n{\colorc \tilde{Q}_3B^+}\Delta_{n,j,{\colorc e_n}}\check{Y}).
  }
}
\end{discuss}

{\colora Moreover, we define} 
  \EQNN{\tilde{X}_j^{n,\theta}&=\bigg(\Big(\sqrt{n}\tilde{Q}_1\tilde{a}_j\Delta_{n,j,k}W,
   n^{3/2}\tilde{Q}_2\tilde{a}_j\int_{t_{j,k-1}}^{t_{j,k}}(W_t-W_{(t-{\colorc 1/n})\vee t_{j,0}})dt\Big)_{k=1}^{{\colorc e_n}}, \\
   &\qquad \quad n^{3/2}\tilde{Q}_3\tilde{a}_j\int_{t_{j,{\colorc e_n}-1}}^{t_{j,{\colorc e_n}}}(W_{t_{j,{\colorc e_n}}}-W_t)dt\bigg)
  }
{\colora for $0\leq j\leq L_n-1$, and $\tilde{X}_{L_n}^{n,\theta}$ similarly, where $\tilde{a}_j=\tilde{a}(\tilde{Y}_{t_{j,0}},\theta)$.}
\begin{discuss}
{\colorr $X_{L_n}^{n,\theta}$の作り方は$1\leq k\leq n-{\colorc e_n}L_n-1$までは同じで$k\geq n-{\colorc e_n}L_n$の時は$0$.}
\end{discuss}

\subsection{Condition~(L) for the augmented model}\label{aug-L-subsection}

{\colore First, we} show Condition~(L) for the augmented model under a stronger condition.

\begin{description}
\item[Assumption (C2$''$).] (C2$'$) is satisfied, {\colora $\sup_{x,\theta}\lVert (\tilde{a}\tilde{a}^\top)^{-1}(x,\theta)\rVert_{{\rm op}}<\infty$}, 
and $\partial_x^i\partial_\theta^k{\colora \tilde{a}}(x,\theta)$ and $\partial_x^i\partial_y^j\partial_\theta^k{\colora \tilde{b}}(x,y,\theta)$ are bounded
{\colora for $i,j\in \mathbb{Z}_+$ and $0\leq k\leq 3$}.
{\colorc \item[Assumption (C5$'$).] (C5) holds, $g$ is bounded, and the convergence (\ref{C5-eq}) holds uniformly in $x$.}
\end{description}
{\colora Let $P_{\theta,n}^{{\rm aug}}$ be the probability measure induced by the augmented model.}
\begin{lemma}\label{partial-obs-L-lemma}
Assume (C2$''$), (C4), and (C5$'$). Then Condition~(L) is satisfied for $\{P_{\theta,n}^{{\rm aug}}\}_{\theta,n}$
{\colora by setting suitable $({\colorc e_n})_{n\in\mathbb{N}}$}.
\end{lemma}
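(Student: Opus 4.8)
The plan is to verify conditions (B1)--(B5) of Section~\ref{LAMN-Malliavin-subsection} for the augmented model and then invoke Theorem~\ref{Malliavin-LAMN-thm}, which delivers Condition~(L) directly. Throughout I would work with the parameters $\epsilon_n=1/\sqrt{n}$, $m_n=L_n+1\sim n/e_n$, and $\bar{k}_n\sim q\,e_n$, since each block $X_j^{n,\theta}$ bundles $\sim e_n$ normalized increments together with the single extra coordinate carrying $\tilde{Q}_3$. The role of the slowly diverging sequence $(e_n)$ is precisely to make all the growth constraints in (B2) and (\ref{rho-condition}) simultaneously satisfiable; these constraints force $e_n$ to grow more slowly than a fixed power of $n$, and I would fix such a rate only at the end, once the dependence of $\alpha_n$ and $\rho_n$ on $e_n$ is pinned down.

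For (B1) and (B3) I would proceed exactly as in the proof of Lemma~\ref{degenerate-L-lemma}: the block data $F_{n,\theta,j}$ and its Gaussian surrogate $\tilde{F}_{n,\theta,j}$ are built from the rescaled processes $\mathcal{X}^{n,\theta},\mathcal{Y}^{n,\theta}$ of Section~\ref{degenerate-lamn-section} run over $[0,e_n]$, so the per-component moment and Malliavin--Sobolev bounds follow from (C2$''$), Gronwall's inequality, and Theorem~2.2.1 and Lemma~2.2.2 of Nualart~\cite{nua06} together with the flow representation of Protter~\cite{pro90}. The key structural identity for (B3) is $\partial_\theta\tilde{F}_{n,\theta,j}=B_{j,i,\theta}\tilde{F}_{n,\theta,j}$, which follows from the kernel inclusions in (C2$'$) exactly as in (\ref{B3-check})--(\ref{B3-check2}), while the commutation (\ref{Q1Q2-cond2}) in (C4) guarantees that the $\tilde{Q}_i$-projected coordinates inherit the same relation. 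The approximation error is $\rho_n=1/\sqrt{n}$ per component, as in (\ref{F-tildeF-eq})--(\ref{check-b-eq2}), uniformly over the block.

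The decisive and most delicate ingredient is (B2). Here I must control $\lVert\tilde{K}_j^{-1}\rVert_{\rm op}$, where $\tilde{K}_j$ is the covariance of the Gaussian block and coincides, up to the change of coordinates defining $\tilde{Q}_1,\tilde{Q}_2,\tilde{Q}_3$, with the block matrix $\psi_{e_n}^{k,l}(\tilde{a}\tilde{a}^\top)$. Because the hidden integrated component enters only through time integrals of the driving increments, this matrix degenerates as the block length grows and its smallest eigenvalue decays polynomially in $e_n$; consequently $\alpha_n$ is a fixed power of $e_n$ rather than a constant, which is the essential difference from the complete-observation case. I would obtain this bound via a non-degeneracy estimate for the block Malliavin matrix (the block analogue of the single-interval estimate used in Lemma~\ref{degenerate-L-lemma}) combined with the matrix-algebra identities of Lemma~\ref{block-mat-lem}, transfer it from $\tilde{K}_j$ to $K_j$ through Lemma~\ref{tildeK-est-lemma}, and then choose $e_n\to\infty$ slowly enough that $\epsilon_n^2\bar{k}_n^4\sqrt{m_n}\alpha_n^2\to 0$ and both displays in (\ref{rho-condition}) hold. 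I expect this bookkeeping---bounding the inverse block covariance and balancing it against $m_n$, $\bar{k}_n$, and $\rho_n$---to be the main obstacle.

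Finally, for (B4) and (B5) I would take the martingale differences $G_j^n$ to be the quadratic forms $\mathcal{L}_{j,i,\bar{x}_{j-1}}(\tilde{F}_{n,\theta_0,j})$ of the Gaussian block, verify the matching identity (\ref{Gj-def-eq}) from the distributional identity (\ref{mcx-mcy-dist}), and read off $E[G_j^n(G_j^n)^\top|\mathcal{G}_{j-1}]=\gamma_j$ together with the $L^4$-negligibility from Lemma~\ref{B4-suff-lemma}. The convergence $\epsilon_n^2\sum_j\gamma_j(\bar{X}_{j-1}^{n,\theta_0})\to\Gamma'$ is where (C5$'$) enters: since $\gamma_j$ coincides with one half of the block trace $\mathcal{T}_{k,l,e_n}$ evaluated at $\tilde{Y}_{t_{j,0}}$, the uniform limit $e_n^{-1}\mathcal{T}_{k,l,e_n}(x)\to g(x)$ turns $\epsilon_n^2\sum_j\gamma_j$ into $\tfrac{e_n}{2n}\sum_j g(\tilde{Y}_{t_{j,0}})+o_p(1)$, which is a Riemann sum converging to $\tfrac12\int_0^1 g(\tilde{Y}_t)\,dt=\Gamma'$; the boundedness and uniformity in (C5$'$) also give the required uniform integrability. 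Stable convergence then follows from the martingale central limit theorem of Jacod~\cite{jac97} exactly as in (\ref{degenerate-B4-check}), and (B5) holds because (N2) is available from (B1)--(B3) as in Lemma~\ref{degenerate-L-lemma}. Assembling these verifications yields (B1)--(B5) and hence Condition~(L) for $\{P_{\theta,n}^{\rm aug}\}_{\theta,n}$ with a suitable slowly diverging $(e_n)$.
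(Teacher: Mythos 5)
Your proposal follows essentially the same route as the paper's proof: verify (B1)--(B5) for the augmented model with $\epsilon_n=1/\sqrt{n}$, $m_n\sim n/e_n$, $\bar{k}_n\sim qe_n$, using the arguments of Lemma~\ref{degenerate-L-lemma} for (B1)/(B3), the block Malliavin non-degeneracy estimate (Proposition~\ref{block-nondeg-lemma}) for (B2), the Gaussian quadratic forms $G_j^n$ together with (C5$'$), Lemma~\ref{B4-suff-lemma}, and Jacod's stable CLT for (B4), condition (N2) for (B5), and a final choice of slowly diverging $e_n$ to absorb all $e_n$-dependent constants. Two slips are worth noting but are harmless given that last step: the paper's approximation error is $\rho_n=e_n/\sqrt{n}$ rather than $1/\sqrt{n}$ (the coefficient-freezing error accumulates over a block of rescaled length $e_n$), and (B2) is obtained directly for the Malliavin matrix $K_j$ of $F_{n,\theta,j}$ from the block non-degeneracy estimate, with Lemma~\ref{tildeK-est-lemma} then transferring bounds to $\tilde{K}_j^{-1}$ --- the opposite direction from what you describe.
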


\begin{proof}
{\colora We check (B1)--(B5) in Theorem~\ref{Malliavin-LAMN-thm}.
First, {\colore the} boundedness of $\tilde{a}_n$ and $\tilde{b}_n$ yields}
  \EQQ{{\colorc \sup_{\theta,t\in [0,{\colorc e_n}]}E[|\mathcal{X}^{n,\theta}_t-\mathcal{X}^{n,\theta}_{(t-1)\vee 0}|^q]<\infty, 
   \quad \sup_{k,\theta}E[|\Delta_k^2\mathcal{Y}^{n,\theta}|^q]<\infty.}}
\begin{discuss}
{\colorr グロンウォールの不等式より$E[|D_t\mathcal{Z}_r|^p]\leq Ce^{{\colorc e_n}}$. 
  \EQQ{D_t(\mathcal{X}_r-\mathcal{X}_{r-1}=\tilde{a}_t1_{[r-1,r]}(t)+\int^r_tD_t\mathcal{Z}_s\partial_z\tilde{b}_{n,s}ds
   +\int^r_tD_t\mathcal{Z}_s\partial_z\tilde{a}_{n,s}d\mathcal{W}_s
}
で$\partial_z\tilde{b},\partial_z\tilde{a}\leq C/\sqrt{n}$なので，グロンウォールより
  \EQQ{E[|D_t(\mathcal{X}_r-\mathcal{X}_{r-1}|^p]
   \leq \left\{\begin{array}{cc} \frac{C}{\sqrt{n}}\exp(2{\colorc e_n}/\sqrt{n})=o({\colorc e_n^{-1}}) & {\rm if} \ t< r-1 \\
   C\exp({\colorc e_n}/\sqrt{n})\leq C & {\rm if} \ t\geq r-1 \end{array} \right.
  }
よって$\sup_r\lVert \mathcal{X}_r-\mathcal{X}_{r-1}\rVert_{l,p}\leq C_p$.
$D_t\mathcal{Z}_r=\mathcal{U}_r\mathcal{U}_{r-l}^{-1}(D_t\mathcal{Z}_{r-l})^\top$を使った評価も可．
同様に$\sup_r\lVert \mathcal{X}_r-\mathcal{X}_{r-1}\rVert_{l,p}\leq C_{l,p}$ for $l\in\mathbb{N}$.
ゆえに
  \EQQ{\sup_k\lVert \Delta_k^2\mathcal{Y}\rVert_{l,p}\leq C_p, 
   \quad \lVert \tilde{Q}_3\mathcal{X}_{{\colorc e_n}}-{\colorc \tilde{Q}_3B^+}(\mathcal{Y}_{{\colorc e_n}}-\mathcal{Y}_{{\colorc e_n}-1})\rVert_{l,p}\leq C_p.
  }
$\lVert\partial_\theta^iF\rVert_{l,p}$の評価も同様．$D\partial_\theta F$は$D\mathcal{Z}$項などは$\int^k_{k-1}$をとっても問題ない．
$D\partial \mathcal{Z}$項は$D\mathcal{Z}$と同様グロンウォールを二回使う．

$D\mathcal{Z}$に対して使うGronwall's inequalityは積分範囲が$1$か$2$なのでOK.
\EQQ{\partial_\theta{\colorc \mathcal{Y}^{n,\theta}_t}=\int^t_0B\partial_\theta\mathcal{X}_sds, 
\quad \partial_\theta \mathcal{X}_t=O({\colorc e_n})+O(n^{-1/2})\int^t_0\partial_\theta \mathcal{Z}_sd\mathcal{W}_s}
より$E[|\partial_\theta \mathcal{X}_t|^q]^{1/q}\leq {\colorc e_n}\leq C_p{\colorc e_n}$.}
\end{discuss}
{\colora Then by a similar argument to Lemma~\ref{degenerate-L-lemma}, we have
  {\colorc \EQQ{\lVert \partial_\theta^{l'}{\colore [F_{n,\theta}]_k}\rVert_{l,p}\vee \lVert \partial_\theta^{l'}{\colore [F'_{n,\theta}]_k}\rVert_{l,p}<\infty}} 
for any $l\in \mathbb{N}$, {\colorc $l'\in\mathbb{N}$,} $p\geq 1$, and $1\leq k\leq {\colorc e_n}$.
\begin{discuss2}
{\colorg ここはリバイズで丁寧にやるか}
\end{discuss2}

{\colorc Let $\tilde{C}_{i,\theta}(x)=B\partial_{\theta_i}\tilde{a}\tilde{a}^+(x,\theta)B^\top(BB^\top)^{-1}$.}
(\ref{B3-check2}), (C2$''$), and (C4) yield}
  \EQN{\label{Q-commute-eq} \tilde{Q}_1\partial_\theta\tilde{a}&=R_1{\colorc \mathcal{Q}}\partial_\theta \tilde{a}\tilde{a}^+\tilde{a}=R_1\partial_\theta\tilde{a}\tilde{a}^+R_1^{-1}\tilde{Q}_1\tilde{a}, \\
   \tilde{Q}_2\partial_{\theta_i}\tilde{a}&={\colorc \tilde{C}_{i,\theta}B\tilde{a}=\tilde{C}_{i,\theta}\tilde{Q}_2\tilde{a}}, \\
   \tilde{Q}_3\partial_\theta \tilde{a}&=R_3(I_\kappa-{\colorc \mathcal{Q}})\partial_\theta \tilde{a}\tilde{a}^+\tilde{a}
   =R_3\partial_\theta \tilde{a}\tilde{a}^+(I_\kappa-{\colorc \mathcal{Q}})\tilde{a}
   =R_3\partial_\theta \tilde{a}\tilde{a}^+R_3^{-1}\tilde{Q}_3\tilde{a},}
{\colora and consequently we obtain $\partial_{\theta_i}\tilde{F}_{n,\theta}=B_{i,\theta}\tilde{F}_{n,\theta}$ for }
  \EQQ{B_{i,\theta}{\colorc (x_0)}={\rm diag}((R_1\partial_{\theta_i} \tilde{a}\tilde{a}^+R_1^{-1},{\colorc \tilde{C}_{i,\theta}})_{k=1}^{{\colorc e_n}},R_3\partial_{\theta_i}\tilde{a}\tilde{a}^+R_3^{-1}){\colora ({\colorc x_0},\theta)}.}

{\colora Moreover, similarly to the argument in Section~\ref{degenerate-lamn-section}, we have
  $\lVert \partial_\theta^l {\colorc [F_{n,\theta}]_k}-\partial_\theta^l{\colorc [\tilde{F}_{n,\theta}]_k}\rVert_{3-l,p}\leq C_p{\colorc e_n}/\sqrt{n}$
{\colorc for $1\leq k\leq q$}.
\begin{discuss}
{\colorr $B'_{i,\theta}={\rm diag}(()_{k=1}^{n-{\colorc e_n}L_n})$に対し，$\partial_\theta\tilde{F}'=B'\tilde{F}'$.}
\end{discuss}
For $2\leq k\leq {\colorc e_n}$, we have}
  \EQQ{{\colorc ([F_{n,\theta}-\tilde{F}_{n,\theta}]_{(k-1)q+l})_{l=1}^{q_1}}=\tilde{Q}_1\bigg(\int_{k-1}^k\tilde{b}_{n,s}ds+\int^k_{k-1}(\tilde{a}_{n,s}-\tilde{a}({\colorc x_0},\theta))d\mathcal{W}_s\bigg),}
  \EQNN{{\colorc ([F_{n,\theta}-\tilde{F}_{n,\theta}]_{(k-1)q+q_1+l})_{l=1}^{q_2}}
   &=\tilde{Q}_2\int_{k-1}^k({\colorc \mathcal{X}^{n,\theta}_t}-{\colorc \mathcal{X}^{n,\theta}_{t-1}}-{\colorc \tilde{a}(x_0,\theta)}(\mathcal{W}_t-\mathcal{W}_{t-1}))dt,}
{\colorc and}
  \EQNN{&{\colorc ([F_{n,\theta}-\tilde{F}_{n,\theta}]_{{\colorc e_n}q+l})_{l=1}^{\kappa-q_1}} \\
   &\quad =\tilde{Q}_3\bigg({\colorc \mathcal{X}^{n,\theta}_{{\colorc e_n}}}-\int_{{\colorc e_n}-1}^{{\colorc e_n}}{\colorc \mathcal{X}^{n,\theta}_t}dt-{\colorc \tilde{a}(x_0,\theta)}\int_{{\colorc e_n}-1}^{{\colorc e_n}}(\mathcal{W}_{{\colorc e_n}}-\mathcal{W}_t)dt\bigg) \\
   &\quad =\tilde{Q}_3\int_{{\colorc e_n}-1}^{{\colorc e_n}}({\colorc \mathcal{X}^{n,\theta}_{{\colorc e_n}}}-{\colorc \tilde{a}(x_0,\theta)}\mathcal{W}_{{\colorc e_n}}-{\colorc \mathcal{X}^{n,\theta}_t}+{\colorc \tilde{a}(x_0,\theta)}\mathcal{W}_t)dt.}
\begin{discuss}
{\colorr $\tilde{a}_{n,s}-\tilde{a}_0$に微積分学の基本定理を使えばいい．
$D$がついたら$\partial_z\tilde{b}$, $\partial_x\tilde{a}$等がつくからOK. $\partial_\theta$もOK.
$\mathcal{X}_t-\tilde{a}(x_0,\theta)\mathcal{W}$もやはり似たような$\tilde{a}_{n,s}-\tilde{a}_0$を含む形でOK.}
\end{discuss}

{\colora Then, by a similar argument to the proof of Lemma~\ref{degenerate-L-lemma}, we have}
  $\lVert \partial_\theta^l [F_{n,\theta}]_k-\partial_\theta^l[\tilde{F}_{n,\theta}]_k\rVert_{3-l,p}\leq C_p{\colorc e_n}/\sqrt{n}$ 
for {\colore $1\leq k\leq e_nq+\kappa-q_1$}, {\colora and a similar estimate for 
  $\partial_\theta^l [F'_{n,\theta}]_k-\partial_\theta^l[\tilde{F}'_{n,\theta}]_k$.} 
{\colora Hence we obtain (B3) with $\rho_n={\colorc e_n}/\sqrt{n}$.

Let $\mathcal{K}_n{\colorc (\theta)}$ be the Malliavin matrix of
  $\{\{\tilde{Q}_1{\colorc \mathcal{X}^{n,\theta}_j},{\colorc \mathcal{Y}^{n,\theta}_j}\}_{j=1}^{{\colorc e_n}},\tilde{Q}_3{\colorc \mathcal{X}^{n,\theta}_{{\colorc e_n}}}\}$.}
{\colore By} Proposition~\ref{block-nondeg-lemma}, (0.8.5.3) in Horn and Johnson~\cite{hor-joh13}, {\colore and} 
the fact that {\colora $\lVert D{\colorc \mathcal{X}^{n,\theta}_j}\rVert_{1,p}$ and $\lVert D{\colorc \mathcal{Y}^{n,\theta}_j}\rVert_{1,p}$ are bounded},
we have $E[\det \mathcal{K}_n|^{-p}]\leq \tilde{c}(p,{\colorc e_n})$, where $\tilde{c}(p,{\colorc e_n})$ is a positive constant 
depending {\colore on} $p$ and ${\colorc e_n}$.
\begin{discuss}
{\colorr $\mathcal{K}_n$は$\gamma_{{\colorc e_n}}$ in Appendix Cの部分行列なので$(\det \mathcal{K}_n)^{-1}=(\det\gamma_{{\colorc e_n}})^{-1}\det G$
($G$は吉田教科書のブロック行列公式の$G$).}
\end{discuss}

{\colore Because} there exists an invertible matrix $M_n$ {\colorc depending on $e_n$} such that 
  ${\colora \tilde{K}_n(\theta)=\langle D\tilde{F}_{n,\theta},D\tilde{F}_{n,\theta}\rangle}=M_n\mathcal{K}_nM_n^\top$,
there exists a constant $c(p,{\colorc e_n})$ such that $\sup_\theta E[|\det {\colora \tilde{K}_n}(\theta)|^{-p}]\leq c(p,{\colore e_n})$.
{\colora {\colore Because} we also have a similar estimate for $\langle D\tilde{F}'_{n,\theta},D\tilde{F}'_{n,\theta}\rangle$,
\begin{discuss}
{\colorr ${\colorc \mathcal{Q}}$を射影する段階で$k$の数を削れば同様．}
\end{discuss}
we have (B2) and (\ref{rho-condition}) by letting ${\colorc e_n}$ {\colore diverge} to infinity sufficiently slowly.
Similarly, we have (N2) {\colorc (the upper bound of $\sup_\theta E_j[|\det K_j^{-1}(\theta)|^p]$ can depend on $n$ in (N2))}.}
\begin{discuss}
{\colorr 任意の$p$に対して$c(p,e_n)\leq n^\delta$となるようなコントロールは無理だがB2は$p=8$だけ．N2はupper boundが$n$に依存していい．}
\end{discuss}

{\colora Now we only need to show (B4).}
We have $\tilde{K}_n(\theta)=\psi_{{\colorc e_n}}^{2,2}({\colora \tilde{a}\tilde{a}^\top(z_0,\theta)})$, 
and $B_{i,\theta}\tilde{K}_n(\theta)+\tilde{K}_n(\theta)B_{i,\theta}^\top=\psi_{{\colorc e_n}}^{2,2}(\partial_{\theta_i}({\colora \tilde{a}\tilde{a}^\top({\colorc x_0},\theta)}))$ 
{\colora by (\ref{Q-commute-eq}).}

{\colore We define $G_j^n=(G_{j,i}^n)_{i=1}^d$ in Section~\ref{LAMN-Malliavin-subsection} by}
  \EQN{\label{G-def} G_{j,i}^n&=\frac{1}{2}(\tilde{X}_j^{n,\theta_0})^\top(B_{i,\theta_0}^\top\tilde{K}_n^{-1}{\colora (\theta_0)}
   +\tilde{K}_n^{-1}{\colora (\theta_0)}{\colorc B_{i,\theta_0}}){\colora \big|_{{\colorc x_0=\tilde{Y}_{t_{j,0}}}}}\tilde{X}_j^{n,\theta_0} \\
   &\quad -{\rm tr}(B_{i,\theta_0}){\colora \big|_{{\colorc x_0=\tilde{Y}_{t_{j,0}}}}}.}
{\colora Then, $\gamma_j(\bar{x}_{j-1})$ in Section~\ref{LAMN-Malliavin-subsection} is calculated as 
  \EQ{[\gamma_j({\colorc x_0})]_{ii'}=\frac{1}{2}{\rm tr}(\psi_{{\colorc e_n}}^{2,2}(\tilde{a}\tilde{a}^\top)^{-1}
   \psi_{{\colorc e_n}}^{2,2}(\partial_{\theta_i}(\tilde{a}\tilde{a}^\top))\psi_{{\colorc e_n}}^{2,2}(\tilde{a}\tilde{a}^\top)^{-1}
   \psi_{{\colorc e_n}}^{2,2}(\partial_{\theta_{i'}}(\tilde{a}\tilde{a}^\top)))({\colorc x_0},\theta_0)}
for $0\leq j\leq L_n-1$, and $\gamma_{L_n}({\colorc x_0})$ is similarly calculated with the estimate
  \EQ{\label{gamma-Ln-est} \frac{1}{n}|[\gamma_{L_n}(\tilde{Y}_{t_{L_n,0}})]_{ii'}|\leq \frac{{\colorc C}\alpha_n^2{\colorc e_n^2\cdot e_n^2}}{n}\to 0}
as $n\to \infty$ {\colorc by Lemma~\ref{tildeK-est-lemma}.}

Therefore, {\colorc (C5$'$) implies that there exists $n_0\in\mathbb{N}$ such that 
  \EQQ{\sup_{n\geq n_0}\bigg(\frac{1}{n}\sum_{j=0}^{L_n}E[|\gamma_j(\tilde{Y}_{t_{j,0}})|]\bigg)<\infty,}
and}
  \EQ{\frac{1}{n}\sum_{j=0}^{{\colorc L_n}}[{\colorc \gamma_j}(\tilde{Y}_{t_{j,0}})]_{ii'}
   -\frac{1}{{\colorc 2}}\sum_{j=0}^{L_n-1}\int_{t_{j,0}}^{t_{j+1,0}}[g(\tilde{Y}_t)]_{ii'}dt\overset{P}\to 0.}
Lemma~\ref{B4-suff-lemma}, Theorem~3.2 in Jacod~\cite{jac97}, and the inequality 
  \EQQ{\lVert\psi_{e_n}^{2,2}(\partial_{\theta_i}(\tilde{a}\tilde{a}^\top))(x_0,\theta)\rVert_{{\rm op}}<C{\colore e_n}}
yield (B4) with $\mathcal{G}_{\colorc j}=\sigma(W_s;s\leq {\colorc t_{j,0}})$.
\begin{discuss}
{\colorr $\mathcal{G}_t$はusual conditionを満たさないが，Jacod~\cite{jac97}ではfiltrationのright-continuityしか仮定しておらず，$W_t$のfiltrationなのでそれは言える．}
\end{discuss}

}
\end{proof}
}

\begin{discuss}
{\colorr $\Delta_2^2{\colorc \mathcal{Y}^{n,\theta}}=\int^1_0(\mathcal{W}_{t+1}-\mathcal{W}_t)dt$,
$E[{\colorc \mathcal{X}^{n,\theta}_1}{\colorc \mathcal{Y}^{n,\theta}_1}]\approx \int^1_0tdt=1/2$, 
\EQQ{E[({\colorc \mathcal{Y}^{n,\theta}_1})^2]=\int^1_0\int^1_0t\wedge sdtds=\int^1_0((1-s)s+s^2/2)ds=1/2-1/6=1/3.}
$E[{\colorc \mathcal{X}^{n,\theta}_1}\Delta_2^2{\colorc \mathcal{Y}^{n,\theta}}]=\int^1_0(1-t)dt=1/2$,
\EQQ{E[{\colorc \mathcal{Y}^{n,\theta}_1}\Delta_2^2{\colorc \mathcal{Y}^{n,\theta}}]=E[\int^1_0W_tdt\int^1_0(W_{s+1}-W_s)ds]=\int^1_0\int^1_0(t-s)\vee 0dtds
=\int^1_0\int^{1-s}_0tdtds=\int^1_0\frac{(1-s)^2}{2}ds=1/6.}
\EQNN{E[(\Delta_2^2{\colorc \mathcal{Y}^{n,\theta}})^2]&=\int^1_0\int^1_0E[(W_{t+1}-W_t)(W_{s+1}-W_s)]dsdt
=2\int^1_0\int^t_0(s+1-t)dsdt=2\int^1_0(t(1-t)+t^2/2)dt \\
&=2(1/2-1/6)=2/3.}
}
\end{discuss}

\subsection{{\colore Approximation of the log-likelihood ratio}}\label{ori-L-subsection}

We show that the log-likelihood ratio $\log({\colora dP_{\theta_0+h/\sqrt{n},n}^{{\rm aug}}/dP_{\theta_0}^{{\rm aug}}})$
can be approximated by a random variable {\colore that} is observable in the original model.
{\colora Let $X'_j=([X_j^{n,\theta_0}]_k)_{k=1}^{q{\colorc e_n}}$ (removing the last {\colorc element} of $X_j^{n,\theta_0}$),
  $\dot{Y}_0=\tilde{z}_{{\rm ini}}$, $\dot{Y}_j={\colorc \mathcal{Q}}\tilde{Y}_{t_{j,0}}+n({\colorc \tilde{Q}_3B^+}(\check{Y}_{t_{j,0}}-\check{Y}_{t_{j,-1}}))$ for $j\geq 1$.
Let {\colorc $\mathfrak{U}_j=(\mathfrak{U}_{i,j})_{i=1}^d$}, where 
  \EQNN{{\colorc \mathfrak{U}_{i,j}}&=-{\colorc \frac{1}{2}}\Big\{X'^\top_j\partial_{\theta_i}(\psi_{{\colorc e_n}}^{1,1}(\tilde{a}\tilde{a}^\top)^{-1})(\dot{Y}_j,{\colorc \theta_0})X'_j \\
   &\qquad\qquad +{\rm tr}(\partial_{\theta_i}(\psi_{{\colorc e_n}}^{1,1}(\tilde{a}\tilde{a}^\top)^{-1})
   \psi_{{\colorc e_n}}^{1,1}(\tilde{a}\tilde{a}^\top))(\dot{Y}_j,{\colorc \theta_0})\Big\},
  }
{\colorc 
  \EQQ{\mathfrak{V}_j=\bigg(\frac{1}{2}{\rm tr}(\psi_{e_n}^{1,1}(\tilde{a}\tilde{a}^\top)^{-1}\psi_{e_n}^{1,1}(\partial_{\theta_i}(\tilde{a}\tilde{a}^\top))
   \psi_{e_n}^{1,1}(\tilde{a}\tilde{a}^\top)^{-1}\psi_{e_n}^{1,1}(\partial_{\theta_{i'}}(\tilde{a}\tilde{a}^\top)))(\dot{Y}_j,\theta_0)\bigg)_{1\leq i,i'\leq d}
  }
for $0\leq j\leq L_n-1$.}
Then $\mathfrak{U}_j$ and $\mathfrak{V}_j$ are functionals of the original observations.
}

\begin{proposition}\label{original-LAMN-prop}
Assume (C2$''$), (C4), and (C5$'$). Then
\begin{equation*}
{\colora \log\frac{dP_{\theta_0+h/\sqrt{n}}^{{\rm aug}}}{dP_{\theta_0}^{{\rm aug}}}
-{\colorc \frac{h}{\sqrt{n}}\cdot \sum_{j=0}^{L_n-1}\mathfrak{U}_j+\frac{h^\top}{2n}\sum_{j=0}^{L_n-1}\mathfrak{V}_j h}\overset{P}\to 0}
\end{equation*}
{\colorc as $n\to\infty$} for any ${\colora h}\in\mathbb{R}^d$.
\end{proposition}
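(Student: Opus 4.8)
The plan is to transfer the expansion of the augmented log-likelihood ratio already produced while proving Lemma~\ref{partial-obs-L-lemma} onto quantities measurable with respect to the original observations. By Lemma~\ref{partial-obs-L-lemma} the family $\{P_{\theta,n}^{\rm aug}\}$ satisfies Condition~(L), and the arguments in the proofs of Theorems~\ref{L2regu-thm} and~\ref{Malliavin-LAMN-thm} (specialized to $\epsilon_n=1/\sqrt n$, $m_n=L_n$, $\bar k_n=O(e_n)$) give the sharper statement
\begin{equation*}
\log\frac{dP_{\theta_0+h/\sqrt n}^{\rm aug}}{dP_{\theta_0}^{\rm aug}}=\frac{h}{\sqrt n}\cdot\sum_{j=0}^{L_n}G_j^n-\frac{h^\top}{2n}\sum_{j=0}^{L_n}\gamma_j(\tilde{Y}_{t_{j,0}})h+o_p(1),
\end{equation*}
where $G_j^n$ and $\gamma_j$ are the Gaussian-surrogate central sequence (\ref{G-def}) and information, both built from the $\psi_{e_n}^{2,2}$ covariance structure. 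It therefore suffices to prove that $\frac1{\sqrt n}\sum_{j=0}^{L_n-1}(G_j^n-\mathfrak{U}_j)\to 0$ and $\frac1n\sum_{j=0}^{L_n-1}(\gamma_j(\tilde{Y}_{t_{j,0}})-\mathfrak{V}_j)\to 0$ in $P$-probability, and that the terminal remainder block $j=L_n$ is negligible: its information is $O(e_n/n)$ by (\ref{gamma-Ln-est}) and its central-sequence contribution is $O(\sqrt{e_n/n})$ in $L^2$, both vanishing as $n\to\infty$.

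The passage from $(G_j^n,\gamma_j)$ to the observable $(\mathfrak{U}_j,\mathfrak{V}_j)$ rests on three per-block comparisons. First, the $\psi_{e_n}^{2,2}$ structure, which carries the extra augmenting coordinate $\tilde{Q}_3\tilde{Y}$, must be replaced by $\psi_{e_n}^{1,1}$; since $\psi_{e_n}^{2,2}$ is $\psi_{e_n}^{1,1}$ bordered by the fixed number $\kappa-q_1$ of rows and columns, the block-matrix comparison of Lemma~\ref{block-mat-lem}, together with the operator-norm control on these matrices and their inverses (Proposition~\ref{block-nondeg-lemma} and Lemma~\ref{tildeK-est-lemma}), shows that the corresponding quadratic forms and traces differ by $O(1)$ per block rather than by the $O(e_n)$ size of the bulk. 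Second, the Gaussian surrogate $\tilde{X}_j^{n,\theta_0}$ must be replaced by the actual observation block $X'_j$; this is governed by the $L^p$ bound $\lVert F_{n,\theta_0}-\tilde{F}_{n,\theta_0}\rVert\leq C\rho_n$ with $\rho_n=e_n/\sqrt n$ established in the proof of Lemma~\ref{partial-obs-L-lemma}, exactly as in Propositions~\ref{delp-p-est-prop} and~\ref{delLogP-approx-prop}. Third, the unobserved evaluation point $\tilde{Y}_{t_{j,0}}$ must be replaced by $\dot{Y}_j$; by (\ref{Q-eq}) and $nB^+(\check{Y}_{t_{j,0}}-\check{Y}_{t_{j,-1}})=nB^+B\int_{t_{j,-1}}^{t_{j,0}}\tilde{Y}_s\,ds$ the vector $\dot{Y}_j$ recovers an invertible linear image of $\tilde{Y}_{t_{j,0}}$ up to an $O(1/\sqrt n)$ error, so the Lipschitz continuity of $\tilde a\tilde a^\top$ and its $\theta$-derivatives makes this a negligible perturbation of the coefficients.

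With these comparisons in hand I would dispose of the information term by a crude $L^1$ bound: each summand $\gamma_j(\tilde{Y}_{t_{j,0}})-\mathfrak{V}_j$ is $O(1)$ up to the coefficient and surrogate errors, so $\frac1n\sum_{j=0}^{L_n-1}$ contributes $O(L_n/n)=O(1/e_n)\to 0$, the remaining errors being killed by the rate conditions. The central-sequence term is more delicate and must be handled in $L^2$ after conditioning on $\mathcal{G}_{j-1}=\sigma(W_s;s\leq t_{j,0})$. Both $G_j^n$ and $\mathfrak{U}_j$ are approximately conditionally centered, so the martingale-difference structure yields $E\big[\big|\frac1{\sqrt n}\sum_j(G_j^n-\mathfrak{U}_j)\big|^2\big]\leq \frac1n\sum_jE[|G_j^n-\mathfrak{U}_j|^2]$ plus a conditional-bias contribution; the variance part is then estimated as in (\ref{del-log-p-Gj-diff-est}) with $m_n=L_n$, $\bar k_n=O(e_n)$, $\rho_n=e_n/\sqrt n$ and $\alpha_n=\alpha_n(e_n)$, the resulting error rate being a fixed power of $e_n$ divided by $n$, which is forced to zero by letting $e_n\to\infty$ sufficiently slowly. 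This is exactly the place where the divergence rate of $e_n$ promised in Lemma~\ref{partial-obs-L-lemma} is used.

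The main obstacle is the central-sequence term. Being a sum of order $L_n$ summands each of size $O(\sqrt{e_n})$, it cannot be controlled termwise in absolute value; one must genuinely exploit that the per-block discrepancies are conditional martingale differences and that their conditional biases---coming from the drift, from the discretization and surrogate error, and from the replacement $\tilde{Y}_{t_{j,0}}\mapsto\dot{Y}_j$---are small enough that even after the factor $\sqrt n$ they sum to $o_p(1)$. Reconciling the competing demands (small surrogate error wants $e_n$ small relative to $\sqrt n$, while the negligibility of the border and remainder blocks wants $e_n$ large) is what pins down the admissible rate of $e_n$, and verifying that such a rate exists under (C2$''$), (C4), and (C5$'$) is the crux of the argument.
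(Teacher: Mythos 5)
Your overall architecture coincides with the paper's: reduce, via Lemma~\ref{partial-obs-L-lemma} and the negligibility of the terminal block (Lemma~\ref{B4-suff-lemma} and (\ref{gamma-Ln-est})), to the two comparisons $\frac{1}{\sqrt n}\sum_{j=0}^{L_n-1}(\mathfrak{U}_j-G_j^n)\overset{P}\to 0$ and $\frac1n\sum_{j=0}^{L_n-1}(\mathfrak{V}_j-\gamma_j(\tilde Y_{t_{j,0}}))\overset{P}\to 0$, then split each into surrogate error, evaluation-point error, and structure mismatch. The genuine gap is in your treatment of the structure mismatch, i.e.\ the difference between the $\psi^{1,1}_{e_n}$-based form $\tilde{\mathfrak{U}}_j$ (in $\tilde X'_j$) and the $\psi^{2,2}_{e_n}$-based form $G_j^n$ (in $\tilde X_j^{n,\theta_0}$), both evaluated at $\tilde Y_{t_{j,0}}$. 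This difference carries no power of $n$ in the denominator at all --- it is a pure mismatch between two conditionally centered quadratic forms built on the same Gaussian vector, each of conditional variance of exact order $e_n$ --- so it can never be ``a fixed power of $e_n$ divided by $n$'', and estimates in the style of (\ref{del-log-p-Gj-diff-est}) simply do not apply to it; what is needed is that its conditional variance per block is $o(e_n)$. Your bordering argument via Lemma~\ref{block-mat-lem} would deliver this (indeed $O(1)$ per block) only if $\lVert(\psi^{1,1}_{e_n})^{-1}\rVert_{{\rm op}}$, the Schur-complement inverse, and their $\theta$-derivatives were bounded uniformly in $e_n$. The tools you cite give nothing of the sort: Lemma~\ref{tildeK-est-lemma} yields $\lVert\tilde K_n^{-1}\rVert_{{\rm op}}\leq C\alpha_n\bar k_n$ with $\bar k_n\asymp e_n$ and $\alpha_n=\alpha(e_n)$ uncontrolled, and Proposition~\ref{block-nondeg-lemma} yields $E_\mu[|\det\gamma_L|^{-p}]\leq C_{pL}$ with a constant that may grow arbitrarily in $L=e_n$. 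With such bounds the per-block estimate is some uncontrolled $C(e_n)$, and $\frac1n\sum_{j=0}^{L_n-1}C(e_n)\asymp C(e_n)/e_n$ need not vanish for \emph{any} choice of $e_n\to\infty$: slow growth of $e_n$ rescues only error terms containing a genuine power of $n$ in the denominator, which this one does not.

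The paper closes precisely this hole with (C5$'$) --- this is the only place where the cross-traces $\mathcal{T}_{1,2,L}$, $\mathcal{T}_{2,1,L}$ appearing in (C5) are consumed, and your proposal never touches them, which is the telltale sign of the gap. One checks $E[\tilde{\mathfrak{U}}_{i,j}-G_{j,i}^n\,|\,\mathcal{G}_{t_{j,0}}]=0$ and computes the conditional variance \emph{exactly}:
\begin{equation*}
\frac1n\sum_{j}E[|\tilde{\mathfrak{U}}_j-G_j^n|^2\,|\,\mathcal{G}_{t_{j,0}}]
=\frac1n\sum_{j}{\rm tr}\big(\mathcal{T}_{1,1,e_n}-2\mathcal{T}_{1,2,e_n}+\mathcal{T}_{2,2,e_n}\big)(\tilde Y_{t_{j,0}}),
\end{equation*}
which tends to zero because, by (C5$'$), all three traces equal $e_n\,g(\tilde Y_{t_{j,0}})+o(e_n)$ uniformly, so the cancellation happens at the level of the common limit $g$ rather than through operator-norm bounds; Lemma~9 of Genon-Catalot and Jacod then finishes the central-sequence comparison, and the identical device (\ref{tildeMcv-conv}) handles $\tilde{\mathfrak{V}}_j-\gamma_j$, where your crude per-block $L^1$ bound fails for the same reason. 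A secondary but real issue: for the replacement $\tilde Y_{t_{j,0}}\mapsto\dot Y_j$, plain Lipschitz continuity again produces a bound of type $C(e_n)/e_n$; the paper's (\ref{tr-est}) avoids this by exploiting that $\dot Y_j-\tilde Y_{t_{j,0}}$ is an $n^{-1/2}$-order martingale difference plus an $n^{-1}$-order term, so that after a first-order Taylor expansion the leading contribution is a martingale transform whose variance does carry a factor $1/n$.
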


\begin{proof}
{\colore Because} the augmented model satisfies Condition~(L) {\colorc and 
\EQQ{G_{L_n}^n/\sqrt{n}\overset{P}\to 0}
by Lemma~\ref{B4-suff-lemma} and (\ref{gamma-Ln-est})}, it is sufficient to show that 
\begin{equation*}
{\colora {\colorc \frac{1}{\sqrt{n}}}\sum_{j=0}^{L_n-1}{\colorc (\mathfrak{U}_j-G_j^n)}\overset{P}\to 0
\quad {\colorc {\rm and} \quad \frac{h^\top}{n}\sum_{j=0}^{L_n-1}(\mathfrak{V}_j-\gamma_j(\tilde{Y}_{t_{j,0}}))h\overset{P}\to 0,}}
\end{equation*}
{\colora where $G_j^n=(G_{j,i}^n)_{i=1}^d$.

Let 
  ${\colore \mathfrak{J}_i}(x)=\partial_{\theta_i}(\psi_{{\colorc e_n}}^{1,1}(\tilde{a}\tilde{a}^\top)^{-1})(x,{\colorc \theta_0})$
and
  $\tilde{X}'_j=([\tilde{X}_j^{n,\theta_0}]_k)_{k=1}^{q{\colorc e_n}}$. Let
  {\colorc \EQN{\tilde{\mathfrak{U}}_{i,j}&=-\frac{1}{2}\left\{\tilde{X}'^\top_j{\colore \mathfrak{J}_i}(\tilde{Y}_{t_{j,0}})\tilde{X}'_j
   +{\rm tr}({\colore \mathfrak{J}_i}\psi_{{\colorc e_n}}^{1,1}(\tilde{a}\tilde{a}^\top))(\tilde{Y}_{t_{j,0}},{\colorc \theta_0})\right\}, \\
   \tilde{\mathfrak{V}}_j
   &=\bigg(\frac{1}{2}{\rm tr}({\colore \mathfrak{J}_i}\psi_{e_n}^{1,1}(\tilde{a}\tilde{a}^\top)
   {\colore \mathfrak{J}_{i'}}\psi_{e_n}^{1,1}(\tilde{a}\tilde{a}^\top))(\tilde{Y}_{t_{j,0}},\theta_0)\bigg)_{1\leq i,i'\leq d}.
  }}

{\colore First, we} show that $\sum_j({\colorc \mathfrak{U}_{i,j}-\tilde{\mathfrak{U}}_{i,j}}){\colorc /\sqrt{n}}\overset{P}\to 0$.

Let $\tilde{b}_j=\tilde{b}(\tilde{Y}_{t_{j,0}},\check{Y}_{t_{j,0}},\theta_0)$.
{\colore Because}
  \EQNN{&\Delta_{n,j,k}\tilde{Y}-\tilde{a}_j\Delta_{n,j,k}W \\
   &\quad =\int_{t_{j,k-1}}^{t_{j,k}}(\tilde{a}(\tilde{Y}_t,\theta_0)-\tilde{a}_j)dW_t+\tilde{b}_j\Delta_{n,j,k}t+O_p\bigg(\frac{{\colorc e_n}}{n^{3/2}}\bigg) \\
   &\quad =\int_{t_{j,k-1}}^{t_{j,k}}\sum_{l=1}^\kappa[\tilde{a}_j(W_t-W_{t_{j,0}})]_l\partial_{x_l}\tilde{a}_jdW_t+\frac{\tilde{b}_j}{n}+O_p\bigg(\frac{{\colorc e_n^2}}{n^{3/2}}\bigg)
  }
\begin{discuss}
{\colorr $\tilde{b}(\tilde{Y}_t,\check{Y}_t,\theta_0)-\tilde{b}_j$は$\partial_x\tilde{b}$部分は$\tilde{Y}_t$の係数が有界なのでよい．
${\colore \nabla_2}\check{b}$部分は$B\int^t_{t_{j,0}}\tilde{Y}_sds$なのでOK.}
\end{discuss}
{\colorc for $k\geq 1$}, and
  \EQN{&\Delta_{n,j,k}^2\check{Y}-B\tilde{a}_j\int_{t_{j,k-1}}^{t_{j,k}}(W_t-W_{{\colorc t-1/n}})dt \\
   &\quad =B\int_{t_{j,k-1}}^{t_{j,k}}\int_{t-{\colorc 1/n}}^t(\tilde{a}(\tilde{Y}_s,\theta_0)-\tilde{a}_j)dW_sdt
   +\frac{B\tilde{b}_j}{{\colorc n^2}}+O_p\bigg(\frac{{\colorc e_n}}{n^{5/2}}\bigg) \\
   &\quad =B\int_{t_{j,k-1}}^{t_{j,k}}\int^t_{t-{\colorc 1/n}}\sum_{l=1}^\kappa [\tilde{a}_j(W_s-W_{t_{j,0}})]_l\partial_{x_l}\tilde{a}_jdW_sdt
   +\frac{B\tilde{b}_j}{{\colorc n^2}} +O_p\bigg(\frac{{\colorc e_n^2}}{n^{5/2}}\bigg)
  }
for {\colorc $k\geq 2$},
together with a similar estimate for 
  \EQQ{\Delta_{n,j,1}\check{Y}-B\check{Y}_{t_{j,0}}/n-B\tilde{a}_j\int_{t_{j,0}}^{t_{j,1}}(W_t-W_{t_{j,0}})dt,}
we have
\EQ{\label{X-tildeX-eq} X_j'-\tilde{X}'_j={\colorc X''_j}+O_p(n^{-1}{\colorc e_n^2}),}
where ${\colorc X''_j}=O_p(n^{-1/2}{\colorc e_n})$ {\colorc and $E[\tilde{X}'_j(X''_j)^\top|\mathcal{G}_{t_{j,0}}]=0$.}

Moreover, {\colore because} $\sup_x\lVert (\psi_{{\colorc e_n}}^{1,1})^{-1}(\tilde{a}\tilde{a}^\top(x,\theta_0))\rVert_{{\rm op}}\leq C\alpha_n{\colorc e_n}$ {\colorc by Lemma~\ref{tildeK-est-lemma},} 
and $\dot{Y}_j-\tilde{Y}_{t_{j,0}}$ is equal to an $n^{-1/2}$-order martingale difference plus an $n^{-1}$-order term, we have
  \EQN{\label{tr-est} &\frac{1}{\sqrt{n}}\sum_{j=1}^{L_n-1}\left\{{\rm tr}({\colore \mathfrak{J}_i}\psi_{{\colorc e_n}}^{1,1}(\tilde{a}\tilde{a}^\top))(\dot{Y}_j,{\colorc \theta_0})
   -{\rm tr}({\colore \mathfrak{J}_i}\psi_{{\colorc e_n}}^{1,1}(\tilde{a}\tilde{a}^\top))(\tilde{Y}_{t_{j,0}},{\colorc \theta_0})\right\} \\
   &=\frac{1}{\sqrt{n}}\sum_{j=1}^{L_n-1}\int_0^1\partial_x{\rm tr}({\colore \mathfrak{J}_i}\psi_{{\colorc e_n}}^{1,1}(\tilde{a}\tilde{a}^\top))
   (u\dot{Y}_j+(1-u)\tilde{Y}_{t_{j,0}},{\colorc \theta_0})du\cdot (\dot{Y}_j-\tilde{Y}_{t_{j,0}}) \\
   &=\frac{1}{\sqrt{n}}\sum_{j=1}^{L_n-1}\partial_x{\rm tr}({\colore \mathfrak{J}_i}\psi_{{\colorc e_n}}^{1,1}(\tilde{a}\tilde{a}^\top))(\tilde{Y}_{t_{j-1,0}})
   (\dot{Y}_j-\tilde{Y}_{t_{j,0}})+O_p(n^{-1/2}\cdot n\cdot n^{-1}\alpha_n^4{\colorc e_n^7}) \\
   &\overset{P}\to 0 
  }
as $n\to \infty$ {\colorc by rearranging $e_n$ if necessary.
Similarly, we have
  \EQ{\label{mcv-diff} \frac{h^\top }{n}\sum_{j=0}^{L_n-1}(\mathfrak{V}_j-\tilde{\mathfrak{V}}_j)h\overset{P}\to 0.}
}

(\ref{X-tildeX-eq}), (\ref{tr-est}), and a similar estimate yield
  \EQN{\label{mcu-diff} &{\colorc \frac{1}{\sqrt{n}}\sum_j(\mathfrak{U}_{i,j}-\tilde{\mathfrak{U}}_{i,j})} \\
   &\quad =-\frac{1}{2\sqrt{n}}\sum_j\big\{2\tilde{X}'^\top_j{\colore \mathfrak{J}_i}(\dot{Y}_j)(X'_j-\tilde{X}'_j)+(X'_j-\tilde{X}'_j)^\top{\colore \mathfrak{J}_i}(\dot{Y}_j)(X'_j-\tilde{X}'_j) \\
   &\quad \quad \quad \quad \quad \quad \quad +\tilde{X}'^\top_j({\colore \mathfrak{J}_i}(\dot{Y}_j)-{\colore \mathfrak{J}_i}(\tilde{Y}_{t_{j,0}}))\tilde{X}'_j\big\}+o_p(1) \\
   &\quad =-\frac{1}{\sqrt{n}}\sum_j\tilde{X}'^\top_j{\colore \mathfrak{J}_i}(\dot{Y}_j){\colorc X''_j} \\
   &\qquad -\frac{1}{2\sqrt{n}}\sum_j{\rm tr}(({\colore \mathfrak{J}_i}(\dot{Y}_j)-{\colore \mathfrak{J}_i}(\tilde{Y}_{t_{j,0}}))\psi_{{\colorc e_n}}^{1,1}(\tilde{a}\tilde{a}^\top(\tilde{Y}_{t_{j,0}},{\colorc \theta_0}))) \\
   &\quad \overset{P}\to 0.
  }
\begin{discuss}
{\colorr $\lVert (\psi_{{\colorc e_n}}^{1,1})^{-1}\rVert_{{\rm op}}\leq \alpha_n$ ($\tilde{Y}_{t_{j,0}}$ではなく
$\dot{Y}_j$だが評価はProposition~\ref{one-interval-nondeg-lemma}の評価を満たしているので同様．)}
\end{discuss}
{\colorc Moreover, (C5$'$) yields
  \EQ{\label{tildeMcv-conv} \frac{h^\top}{n}\sum_{j=0}^{L_n-1}(\tilde{\mathfrak{V}}_j-\gamma_j(\tilde{Y}_{t_{j,0}}))h\overset{P}\to 0.}
} 

{\colorc Thanks to (\ref{mcv-diff})--(\ref{tildeMcv-conv}),} it is sufficient to show 
  {\colorc \EQ{\frac{1}{\sqrt{n}}\sum_{j=0}^{L_n-1}\{\tilde{\mathfrak{U}}_j-G_j^n\}\overset{P}\to 0.}}
We can easily check
  $E[{\colorc \tilde{\mathfrak{U}}_{i,j}-G_{j,i}^n}|\mathcal{G}_{t_{j,0}}]=0$
and (C5$'$) yields
  {\colorc \EQN{&\frac{1}{n}\sum_jE[|\tilde{\mathfrak{U}}_j-G_j^n|^2|\mathcal{G}_{t_{j,0}}] \\
   &\quad =\frac{1}{n}\sum_jE[|\tilde{\mathfrak{U}}_j|^2-2\tilde{\mathfrak{U}}_j\cdot G_j^n+|G_j^n|^2|\mathcal{G}_{t_{j,0}}] \\
   &\quad =\frac{1}{n}\sum_j{\rm tr}(\mathcal{T}_{1,1,{\colorc e_n}}(\tilde{Y}_{t_{j,0}})-2\mathcal{T}_{1,2,{\colorc e_n}}(\tilde{Y}_{t_{j,0}})
   +\mathcal{T}_{2,2,{\colorc e_n}}(\tilde{Y}_{t_{j,0}}))\overset{P}\to 0.
  }}
} 

Then Lemma~9 in Genon-Catalot and Jacod \cite{gen-jac93} yields the conclusion.

\end{proof}
\begin{discuss}
{\colorr Gloter and GobetのProposition 3の最後の方でやっているブロック行列の端の部分の評価が${\colorc e_n}$がどんなにゆっくり無限大に発散しても成り立つことが，${\colorc e_n}$を遅く発散できることの本質．}
\end{discuss}

\subsection{{\colore Proof of Theorem~\ref{partial-LAMN-thm}}}

{\colore In light of Proposition~\ref{original-LAMN-prop},}
a similar argument to Proposition~4 in~\cite{glo-gob08} yields
\begin{equation*}
{\colora \log\frac{dP_{\theta_0+h/\sqrt{n},n}}{dP_{\theta_0,n}}
-h\cdot \sum_{j=0}^{L_n-1}{\colorc \mathfrak{U}_j}+\frac{1}{2}h^\top\sum_{j=0}^{L_n-1}{\colorc \mathfrak{V}_j}h\overset{P}\to 0.}
\end{equation*}
Therefore, we obtain Condition~(L) of the original model under {\colora (C2$''$), (C4), and (C5$'$)}.\\

{\colora Let $\phi_q^1$ and $\phi_q^2$ be the same as the ones in the proof of Theorem~\ref{degenerate-LAMN-thm}.
Let $P_{\theta,q,n}$ be the probability measure generated by replacing the coefficients $\tilde{a}(x,\theta)$ and $\tilde{b}(z,\theta)$
by $\tilde{a}_q(x,\theta)=\tilde{a}(\phi_q^1(x),\theta)$ and $\tilde{b}_q(z,\theta)=\tilde{b}(\phi_q^1(x),\phi_q^2(y),\theta)$ ($z=(x,y)$), respectively.
Then (C2$''$), (C4), and (C5$'$) are satisfied for $\{P_{\theta,q,n}\}_{\theta,n}$.
Therefore, similarly to the proof of Theorem~\ref{degenerate-LAMN-thm}, we have the conclusion.
}

{\colorc 
\section{Proofs of the results in Section~\ref{Mcal-section}}\label{Mcal-proof-section}

\subsection{Proof of Lemma~\ref{B4-suff-lemma}}

(\ref{gamma-j-eq}) and (\ref{Gj-def-eq}) {\colore yield} Point~1. Moreover, we have
  \EQNN{&\epsilon_n^4\sum_{j=1}^{m_n}E[|G_j^n|^4|\mathcal{G}_{j-1}] \\
   &\quad \leq C\epsilon_n^4\sum_{j=1}^{m_n}E_j\bigg[\bigg|\tilde{F}_{n,\theta,j}^\top 
   \frac{B_{j,i,\theta_0}^\top\tilde{K}_j^{-1}(\theta_0)+\tilde{K}_j^{-1}(\theta_0)B_{j,i,\theta_0}}{2}\tilde{F}_{n,\theta,j}\bigg|^4\bigg]\bigg|_{\tilde{x}_{j-1}=\tilde{X}_{j-1}^{n,\theta_0}} \\
   &\quad \leq C\epsilon_n^4m_n\alpha_n^8\bar{k}_n^8\sup_{j,i,\theta,\tilde{x}_{j-1}}
   \lVert \tilde{K}_j(\theta_0)B_{j,i,\theta_0}^\top+B_{j,i,\theta_0}\tilde{K}_j(\theta_0)\rVert_{{\rm op}}^4\to 0 
  }
as $n\to\infty$.

\qed

\subsection{Proof of Proposition~\ref{delLogP-eq}}

Theorem~2.1.2 in~\cite{nua06} shows that $F_{n,\theta,j}$ admits a density $p_{j,\bar{x}_{j-1}}(x_j,\theta)$.

For any $g\in C^1_b(\mathbb{R}^{k_j-k_{j-1}})$ and $h\in C^\infty_o(\Theta)$, we have
  \EQNN{&-\int \partial_\theta h(\theta)E_j[g(F_{n,\theta,j})]d\theta \\
   &\quad =\int h(\theta)\partial_\theta E_j[g(F_{n,\theta,j})]d\theta \\
   &\quad =\int h(\theta)E_j\bigg[\sum_k\frac{\partial g}{\partial y_k}\partial_\theta {\colore [F_{n,\theta,j}]_k}\bigg]d\theta \\
   &\quad =\int h(\theta)E_j\bigg[\sum_{k,k'}\langle D_jg(F_{n,\theta,j}),D_j{\colore [F_{n,\theta,j}]_{k'}}\rangle_H[K_j^{-1}(\theta)]_{kk'}\partial_\theta {\colore [F_{n,\theta,j}]_k}\bigg]d\theta \\
   &\quad =\int h(\theta)\int g(x_j)E_j[\delta_j(L^\theta(\partial_\theta F_{n,\theta,j}))
    |F_{n,\theta,j}=x_j]p_{j,\bar{x}_{j-1}}(x_j,\theta)dx_jd\theta. 
  }
\begin{discuss}
{\colorr $L^\theta(\partial_\theta F_{n,\theta,j})\in \mathbb{D}^{1,2}$が必要．最初の等式は部分積分， 二つ目の等式は$\partial_\theta F(\theta)=\partial_\theta F(\theta_0)+\partial_\theta^2 F(\theta_0)(\theta-\theta_0)$
をつかって$\sup_\theta \lVert \partial_\theta F\rVert$を局所的に評価できることからLebesgueの微分定理を使う．
フレシェ微分ができるなら合成関数の微分もできるだろう．
$L^\theta(\partial_\theta F)\in \mathbb{D}^{1,p}$が必要だから$\partial_\theta F\in \mathbb{D}^{1,p}$が必要．}
\end{discuss}
Therefore, we obtain
  \EQN{\label{d-theta-eq}
   &-\int \partial_\theta h(\theta)p_{j,\bar{x}_{j-1}}(x_j,\theta)d\theta \\
   &\quad =\int h(\theta)E_j[\delta_j(L^\theta(\partial_\theta {\colore F_{n,\theta,j}}))|F_{n,\theta,j}=x_j]p_{j,\bar{x}_{j-1}}(x_j,\theta)d\theta
  }
almost everywhere in $x_j\in \mathbb{R}^{k_j-k_{j-1}}$ for any $h\in C^\infty_o(\Theta)$. 
{\colore Because} $C^\infty_o(\Theta)$ is separable with respect to {\colore the} Sobolev norm $\lVert \cdot \rVert_W^{1,2}$, 
we have (\ref{d-theta-eq}) for any $h\in C^\infty_o(\Theta)$ almost everywhere in $x_j$.
Similarly, we can obtain an equation for $\int \partial_\theta^l h(\theta)p_{j,\bar{x}_{j-1}}(x_j,\theta)d\theta$ for $l=2,3$, then Theorem~5.3 in Shigekawa~\cite{shi04} yields
$p_{j,\bar{x}_{j-1}}(x_j,\cdot ) \in C^2(\Theta)$ almost everywhere in $x_j\in \mathbb{R}^{k_j-k_{j-1}}$.
Then a similar argument with $h=\delta_\theta$ (Dirac delta) yields (\ref{log-density-p-eq1}).

Similarly, we obtain
\begin{eqnarray}
&&\int g(x_j)\partial_\theta^2 p_{j,\bar{x}_{j-1}}(x_j,\theta)dx_j \nonumber \\
&&\quad =E_j\bigg[\sum_k\frac{\partial g}{\partial x_k}\partial_\theta^2{\colore [F_{n,\theta,j}]_k}
+\sum_{k,l}\frac{\partial^2g}{\partial x_k\partial x_l}\partial_\theta {\colore [F_{n,\theta,j}]_k}\partial_\theta {\colore [F_{n,\theta,j}]_l}\bigg] \nonumber \\
&&\quad =E_j\big[g(F_{n,\theta,j})\delta_j(L^\theta(\partial_\theta^2 F_{n,\theta,j})) \big]
+E_j\bigg[\sum_k\frac{\partial g}{\partial x_k}(F_{n,\theta,j})
{\colorc [\mathfrak{A}_j]_k}\bigg] \nonumber \\
&&\quad =E_j\big[g(F_{n,\theta,j})\delta_j(L^\theta(\partial_\theta^2 F_{n,\theta,j})) \big]
+E_j[g(F_{n,\theta,j})\delta_j(L^\theta({\colorc \mathfrak{A}_j}))] \nonumber
\end{eqnarray}
for any $g\in C^2_b(\mathbb{R}^{k_j-k_{j-1}})$, which implies (\ref{log-density-p-eq2}).

\qed

\subsection{Proof of Proposition~\ref{delp-p-est-prop}}

The first inequality is obtained {\colore because}
  \EQNN{&E[|\delta_j(L^\theta(\partial_{\theta_i}F_{n,\theta,j}))|^4]^{1/4} \\
   &\quad \leq {\colord C}{\colore \bigg\lVert \sum_{k,l}\partial_{\theta_i}[F_{n,\theta,j}]_k
   [K_j^{-1}(\theta)]_{k,l}D_j[F_{n,\theta,j}]_l\bigg\rVert_{1,4}} \\
   &\quad \leq C\sum_{k,l}\lVert \partial_{\theta_i}{\colore [F_{n,\theta,j}]_k}\rVert_{1,16}
   \lVert[K_j^{-1}(\theta)]_{k,l}\rVert_{1,8}\lVert {\colore [F_{n,\theta,j}]_l}\rVert_{2,16}\leq C\alpha_n\bar{k}_n^2.
  }
The estimate for $\delta_j(L^\theta(\partial_{\theta_i}\partial_{\theta_l}F_{n,\theta,j}))$ is similarly obtained. Moreover, we have
\begin{eqnarray}
&&\lVert \delta_j(L^\theta((\delta_j(L^\theta(\partial_{\theta_i}F_{n,\theta,j}\partial_{\theta_l}{\colore [F_{n,\theta,j}]_k})))_k))\rVert_{0,2} \nonumber \\
&&\quad \leq {\colord C}{\colore \bigg\lVert \sum_{k,l}\delta_j(L^\theta(\partial_{\theta_i}F_{n,\theta,j}\partial_{\theta_l}{\colore [F_{n,\theta,j}]_k})) [K_j^{-1}(\theta)]_{k,l}D_j[F_{n,\theta,j}]_l\bigg\rVert_{1,2}} \nonumber \\
&&\quad \leq {\colord C\alpha_n\bar{k}_n^2}\sum_k\lVert \delta_j(L^\theta(\partial_{\theta_i}F_{n,\theta,j}\partial_{\theta_l}{\colore [F_{n,\theta,j}]_k}))\rVert_{1,4} \nonumber \\
&&\quad \leq {\colore C\alpha_n\bar{k}_n^2\sum_k\bigg\lVert
 \partial_{\theta_l}{\colore [F_{n,\theta,j}]_k}\sum_{l,l'}\partial_{\theta_i}[F_{n,\theta,j}]_l
 [K_j^{-1}(\theta)]_{l,l'}D_j[F_{n,\theta,j}]_{l'}\bigg\rVert_{2,4}} \nonumber \\
&&\quad \leq {\colord C\alpha_n^2\bar{k}_n^4}. \nonumber 
\end{eqnarray}
\qed

\subsection{Proof of Lemma~\ref{tildeK-est-lemma}}

We denote $F_i=[F_{n,\theta,j}]_i$ and $\tilde{F}_i=[\tilde{F}_{n,\theta,j}]_i$. {\colore Because}
\begin{equation*}
\lVert [K_j-\tilde{K}_j]_{il}\rVert_{2,p}\leq \lVert \langle D_j(F_i-\tilde{F}_i),D_jF_l\rangle \rVert_{2,p}+\lVert \langle D_j\tilde{F}_i,D_j(F_l-\tilde{F}_l)\rangle \rVert_{2,p}
\end{equation*}
and
  \EQNN{\lVert \langle D_j(F_i-\tilde{F}_i),D_jF_l\rangle \rVert_{2,p}&\leq \lVert \lVert D_j(F_i-\tilde{F}_i)\rVert_H \lVert D_jF_l\rVert_H \rVert_{2,p} \\
   &\leq \lVert F_i-\tilde{F}_i\rVert_{3,2p}\lVert F_l\rVert_{3,2p}\leq C_p\rho_n, 
  }
we have 
\begin{equation*}
\sup_{i,l,j,\bar{x}_{j-1},\theta}\lVert [K_j(\theta)-\tilde{K}_j(\theta)]_{il}\rVert_{2,p} \leq C_p\rho_n.
\end{equation*}
\begin{discuss}
{\colorr $D(H)^{2,p}$のノルムは$E[\lVert u\rVert_H^p]$で定義される．ちなみに$H\otimes H$の正規直交基底は$(e_m\otimes e_n)_{m,n}$でノルムもそこから定義される}
\end{discuss}

Moreover, we have
 $\tilde{K}_j=K_j+(\tilde{K}_j-K_j)=K_j(I+K_j^{-1}(\tilde{K}_j-K_j))$,
 $E[\lVert K_j^{-1}\rVert_{{\rm op}}^p]^{1/p}\leq \alpha_n{\colorb \bar{k}_n}$,
and 
\begin{equation*}
E[\lVert K_j^{-1}(\tilde{K}_j-K_j)\rVert_{{\rm op}}]\leq {\colord C\alpha_n\bar{k}_n\cdot \rho_n\bar{k}_n}\to 0
\end{equation*}
as $n\to \infty$, by (B2) and {\colore the fact} that $\alpha_n\rho_n\bar{k}_n^2\to 0$.
Then, for sufficiently large $n$, we have $\lVert K_j^{-1}(\tilde{K}_j-K_j)\rVert_{{\rm op}}<1/2$ with positive probability,
and therefore $\tilde{K}_j^{-1}$ exists and 
\begin{equation*}
\lVert \tilde{K}_j^{-1}\rVert_{{\rm op}}\leq \lVert K_j^{-1}\rVert_{{\rm
 op}}(1-\lVert K_j^{-1}(\tilde{K}_j-K_j)\rVert_{{\rm op}})^{-1}\leq
 C\alpha_n{\colorb \bar{k}_n}
\end{equation*}
with positive probability.
{\colore Because} $\tilde{K}_j$ is deterministic, we obtain $|[\tilde{K}_j^{-1}]_{il}|\leq C\alpha_n{\colorb\bar{k}_n}$.
\qed
}

\subsection{{\colorc Proof of Lemma~\ref{root-p-smooth-lemma}}}
{\colord

\begin{lemma}\label{abs-conti-lemma}
Let $f:[0,1]\to \mathbb{R}$ be a continuous function and $E\subset \mathbb{R}$ be a finite set. 
Assume that the derivative $\dot{f}(t)$ exists almost everywhere in $t\in f^{-1}(E^c)$ and $\int_{f^{-1}(E^c)}|\dot{f}(t)|dt<\infty$.
Then, $f$ is absolutely continuous on $[0,1]$.
\end{lemma}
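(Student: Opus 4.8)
The plan is to establish the integral representation $f(t)=f(0)+\int_{[0,t]\cap f^{-1}(E^{c})}\dot f(s)\,ds$ for every $t\in[0,1]$; since $\dot f\,\mathbf 1_{f^{-1}(E^{c})}\in L^{1}[0,1]$ by hypothesis, the right-hand side is absolutely continuous in $t$, which is the desired conclusion. First I would set $K=f^{-1}(E)$ and $U=f^{-1}(E^{c})$. Because $f$ is continuous and $E$ is finite (hence closed), $K$ is closed and $U$ is open in $[0,1]$, so $U=\bigsqcup_{i}I_{i}$ is a countable disjoint union of (relatively) open intervals $I_{i}=(\alpha_{i},\beta_{i})$, while $f(K)\subseteq E$.

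The first substantive step is to show that $f$ is absolutely continuous on each closed component $\bar I_{i}$ with $f(t)-f(\alpha_{i})=\int_{(\alpha_{i},t]}\dot f\,ds$ there. On the open interval $I_{i}\subseteq U$ the derivative $\dot f$ exists and is integrable, and the classical fundamental theorem (a continuous function that is differentiable on an open interval with integrable derivative is absolutely continuous on the closure, via the Vitali--Carath\'eodory argument) applies; the endpoint values are recovered from continuity of $f$ and integrability of $\dot f$ over $I_{i}$. \textbf{This is the main obstacle:} mere almost-everywhere differentiability with integrable derivative does \emph{not} force absolute continuity on a component (the Cantor function is continuous with a.e.\ vanishing, integrable derivative yet fails to be absolutely continuous), so one genuinely needs $f$ differentiable at \emph{every} point of $U$. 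In the application this is automatic, since Proposition~\ref{delLogP-eq} makes $p_{j,\bar x_{j-1}}(x_{j},\cdot)$ of class $C^{2}$, whence $\sqrt{p_{j}}$ is $C^{1}$ on $\{p_{j}\neq 0\}=U$.

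Granting component-wise absolute continuity, I would finish by a measure/cardinality argument. Set $G(t)=\int_{[0,t]\cap U}\dot f(s)\,ds$, which is absolutely continuous with $G'=\dot f\,\mathbf 1_{U}$ a.e.; in particular $G'=0$ a.e.\ on $K$, so absolute continuity of $G$ gives $|G(K)|\le\int_{K}|G'|\,ds=0$. Put $\phi=f-G$. On each $\bar I_{i}$ the step above shows $\phi$ is constant, say $\phi\equiv v_{i}$, so $\phi\big(\bigcup_{i}\bar I_{i}\big)=\{v_{i}\}_{i}$ is countable. On the other hand $\phi(K)\subseteq\bigcup_{c\in E}\big(c-G(K)\big)$ is a finite union of null sets, hence null. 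Since $[0,1]=\big(\bigcup_{i}\bar I_{i}\big)\cup K$ (any point of $\bar U$ outside the $\bar I_i$ lies in $K$), the range $\phi([0,1])$ is the union of a countable set and a null set, hence has measure zero; but $\phi$ is continuous, so $\phi([0,1])$ is an interval, and a measure-zero interval is a single point. Therefore $\phi$ is constant, equal to $\phi(0)=f(0)$, which is exactly the representation $f=f(0)+G$.

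I could alternatively route the conclusion through the Banach--Zarecki theorem, checking that $f$ satisfies Luzin's condition (N) (null subsets of $U$ map to null sets by component-wise absolute continuity, and $f(K)\subseteq E$ is finite) and is of bounded variation ($\sum_{i}V(f;\bar I_{i})=\int_{U}|\dot f|<\infty$); but the $\phi$-constancy route is more direct and avoids a separate bounded-variation estimate across $K$.
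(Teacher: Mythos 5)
Your proof is correct, but it takes a genuinely different route from the paper's. The paper proves the single inequality $|f(t)-f(s)|\leq\int_{f^{-1}(E^c)\cap[s,t]}|\dot f(u)|\,du$ for all $s<t$ (absolute continuity of $f$ then follows from absolute continuity of the integral of $|\dot f|1_{f^{-1}(E^c)}$), and it does so by an $\epsilon$-level-crossing argument: writing $E=\{a_1,\dots,a_k\}$, it defines last-exit times $s_j$ from the levels $a_{k_1+j-1}+\epsilon$ and first-hitting times $t_j$ of $a_{k_1+j}-\epsilon$, so that $f$ avoids $E$ on each $(s_j,t_j)$; it applies the fundamental theorem of calculus on these intervals, bounds the finitely many transition pieces by $2\epsilon$ each, and lets $\epsilon\to 0$, with separate treatment of the cases $f(s)\in E$ or $f(t)\in E$. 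You instead decompose $U=f^{-1}(E^c)$ into countably many open components, prove the FTC on each closed component, and dispose of $f^{-1}(E)$ globally with the $\phi=f-G$ device: $\phi$ is constant on each component closure, maps $f^{-1}(E)$ into the null set $E-G(f^{-1}(E))$, and so has a null connected image, hence is constant. Finiteness of $E$ enters for you through that null-set step, and for the paper through the error proportional to $\epsilon$ over the finitely many crossings. Your route avoids the crossing-time bookkeeping and endpoint case analysis, at the cost of invoking two standard facts about AC functions (the bound $|G(A)|\leq\int_A|G'|\,dt$ for measurable $A$, and the everywhere-differentiable FTC); the paper's argument is more hands-on, needing only the interval FTC and the intermediate value theorem. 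Your Banach--Zarecki alternative would work as well.

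The caveat you flag is real, and it applies equally to the paper's proof, not just to yours. Under the literal hypothesis---differentiability only a.e.\ on $f^{-1}(E^c)$---the lemma is false: the Cantor function with, say, $E=\{2\}$ satisfies every stated assumption ($\dot f=0$ a.e.\ with integral zero) yet is not absolutely continuous. The paper's step $|f(t_j)-f(s_j)|\leq\int_{s_j}^{t_j}|\dot f(u)|\,du$ silently makes the same strengthening that you make explicit, since the FTC on $[s_j,t_j]$ cannot be deduced from a.e.\ differentiability alone. In the only place the lemma is invoked (Lemma~\ref{root-p-smooth-lemma}, with $f(t)=\sqrt{p_{j,t}(x_j)}$ and $E=\{0\}$), Proposition~\ref{delLogP-eq} gives differentiability at every point of $f^{-1}(E^c)$, exactly as you observe, so the corrected statement suffices for the application; but the hypothesis of the lemma should read ``everywhere'' rather than ``almost everywhere'' on $f^{-1}(E^c)$.
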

\begin{proof}
We may assume that $E$ is not empty. We denote $E=\{a_1,\cdots, a_k\}$ for some $k\in \mathbb{N}$ and $a_1<\cdots <a_k$.
It is sufficient to show that 
\begin{equation}\label{f-integral-eq}
|f(t)-f(s)|\leq \int_{f^{-1}(E^c)\cap [s,t]}|\dot{f}(u)|du
\end{equation}
for any $0\leq s<t\leq 1$.

Fix $s,t\in [0,1]$ satisfying $s<t$. First, we assume that $f(t),f(s)\not\in E$. We only show the case {\colore where} there exist $k_1, k_2$ such that $1\leq k_1\leq k_2\leq k$, $f(s)<a_{k_1}$, and $a_{k_2}<f(t)$.
Other cases are proved in a similar way.
\begin{discuss}
{\colorr 後は逆向きの可能性と同じブロックに入る場合}
\end{discuss}

Let $K=k_2-k_1+1$ and $E_\epsilon:=\{y\in\mathbb{R}|\min_{x\in E}|x-y|\leq \epsilon\}$. We set a positive number $\epsilon$ 
so that $\epsilon<\min_{2\leq l\leq k}|a_l-a_{l-1}|/2$ and $f(t),f(s)\not\in E_\epsilon$.
We inductively define
  \EQNN{s_0&=\sup\{u\in[s,t]|f(u)=f(s)\}, \\
   s_j&=\sup\{u\in(t_{j-1},t]|f(u)=a_{k_1+j-1}+\epsilon\} \quad (j=1,\cdots, K), \\
   t_j&=\inf\{u\in (s_j,t]|f(u)=a_{k_1+j}-\epsilon\} \quad (j=0,\cdots, K-1), \\
   t_K&=\inf\{u\in (s_K,t]|f(u)=f(t)\}.
  }
Then, we obtain $f^{-1}(E)\cap(\cup_{j=0}^K(s_j,t_j))=\emptyset$ and 
\begin{eqnarray}
|f(t)-f(s)|&\leq&\sum_{j=0}^K|f(t_j)-f(s_j)|+\sum_{j=1}^K|f(s_j)-f(t_{j-1})| \nonumber \\
&\leq&\sum_{j=0}^K\int^{t_j}_{s_j}|\dot{f}(u)|du+2K\epsilon\leq \int_{f^{-1}(E^c)\cap [s,t]}|\dot{f}(u)|du+2K\epsilon. \nonumber
\end{eqnarray}
By letting $\epsilon\to 0$, we obtain (\ref{f-integral-eq}).

In the case {\colore where} $f(s)=a_{k_1}$ for some $k_1\in \{1,\cdots, k\}$, 
then by setting $s_1,t_1,\cdots, s_K,t_K$ similarly, we have
\begin{equation*}
|f(t)-f(s)|\leq \sum_{j=1}^K|f(t_j)-f(s_j)|+\epsilon
\end{equation*}
for sufficiently small $\epsilon$, and consequently we have (\ref{f-integral-eq}).
We can similarly show (\ref{f-integral-eq}) in the case that $f(t)=a_{k_2}$ for some $k_2\in \{1,\cdots, k\}$.
\begin{discuss}
{\colorr $f$がcontiでないと$s_j,t_j$がwell-definedでないのと,$(s_j,t_j)\cap f^{-1}(E)=\emptyset$とならない．}
\end{discuss}
\end{proof}

{\colorc
\noindent 
{\bf Proof of Lemma~\ref{root-p-smooth-lemma}.}
Let $p,q>1$ with $1/p+1/q=1$ and $1-q/2>0$. We abbreviate $p_{j,t}(x_j)=p_{j,\bar{x}_{j-1}}(x_j,\theta_{th})$.
Then, H${\rm \ddot{o}}$lder's inequality yields
  \EQNN{&\int \int^1_0 \sqrt{p_{j,t}} (x_j)|\mathcal{E}_j^1(x_j,\theta_{th})|dtdx_j \\
   &\quad =\int^1_0E_j\bigg[\frac{1}{\sqrt{p_{j,t}}}|E_j[\delta_j(L^{\theta_{th}}(\partial_{\theta_i}F_{n,\theta_{th},j}))|F_{n,\theta_{th},j}]|1_{\{p_{j,t}\neq 0\}}\bigg]dt \\
   &\quad \leq  \sup_{t\in [0,1]}\bigg\{E_j[|\delta_j(L^{\theta_{th}}(\partial_{\theta_i}F_{n,\theta_{th},j}))|^p]^{1/p}
   \bigg(\int p_{j,t}^{1-q/2}(x_j)dx_j\bigg)^{1/q}\bigg\}.
  }
Proposition~2.1.5 in Nualart~\cite{nua06} and its proof yield $\sup_t\int p_{j,t}^{1-q/2}(x_j)dx_j<\infty$ under (N2).
Together with (B1) and (B2), we obtain 
  \EQQ{\int \int^1_0 \sqrt{p_{j,t}} (x_j)|\mathcal{E}_j^1(x_j,\theta_{th})|dtdx_j<\infty,} 
which implies 
\begin{equation}\label{root-p-derivative-est}
\int^1_0 \frac{|\partial_t p_{j,t}|}{\sqrt{p_{j,t}}}(x_j)1_{\{p_{j,t}\neq 0\}}dt
=\int^1_0 \sqrt{p_{j,t}} (x_j)|\mathcal{E}_j^1(x_j,\theta_{th})|dt<\infty
\end{equation}
almost everywhere in $x_j$.

The function $\sqrt{p_{j,t}}$ has derivative $\partial_t\sqrt{p_{j,t}}=\partial_t p_{j,t}/(2\sqrt{p_{j,t}})$ if $\sqrt{p_{j,t}}\neq 0$ by Proposition~\ref{delLogP-eq}.
Therefore, Lemma~\ref{abs-conti-lemma} and (\ref{root-p-derivative-est}) yield the conclusion.

\qed
}
}

{\colorp
\section{Nondegeneracy of the Malliavin matrix for degenerate diffusion processes}

\subsection{{\colorn The Malliavin matrix on a section}}

Let $m_1,m_2,r,{\colorn L}\in\mathbb{N}$.
Let $(\mathfrak{X},\mathfrak{F},\mu)$ be the canonical probability space associated with {\colore an} $r$-dimensional Wiener process $\mathcal{W}=(\mathcal{W}_t)_{t\in[0,{\colorn L}]}$.
Let $\mathcal{X}_t$ and $\mathcal{Y}_t$ be $m_1$- and $m_2$-dimensional diffusion processes, respectively, 
on a Wiener space satisfying $(\mathcal{X}_0,\mathcal{Y}_0)=(x_0,y_0)$ and	
\begin{eqnarray}
d\mathcal{X}_t&=&\tilde{B}(t,\mathcal{X}_t,\mathcal{Y}_t)dt+\tilde{A}(t,\mathcal{X}_t,\mathcal{Y}_t)d\mathcal{W}_t, \nonumber \\
d\mathcal{Y}_t&=&\check{B}(t,\mathcal{X}_t,\mathcal{Y}_t)dt, \nonumber
\end{eqnarray}
where $x_0\in\mathbb{R}^{m_1}$, $y_0\in\mathbb{R}^{m_2}$, {\colore and} $\tilde{B}(t,x,y)$, $\tilde{A}(t,x,y)$, and $\check{B}(t,x,y)$ {\colore are} $\mathbb{R}^{m_1}$-, $\mathbb{R}^{m_1}\otimes \mathbb{R}^r$- and $\mathbb{R}^{m_2}$-valued functions, respectively.
We assume that the derivatives ${\colore \partial_{(x,y)}}\tilde{B}$, $\partial_t^i{\colore \partial_{(x,y)}^j}\check{B}$, and ${\colore \partial_{(x,y)}^l}\tilde{A}$ 
exist and are continuous with respect to $(t,x,y)$ for $i\in \{0,1\}$, $j\in \{1,2,3\}$, and $l\in\{0,1\}$.
We denote $z=(x,y)$ and by $E_\mu$ the expectation with respect to $\mu$. 
Let $\mathfrak{H}=L^2([0,{\colorn L}];\mathbb{R}^r)$ and $D$ be the {\colore Malliavin--Shigekawa} derivative operator associated with $\mathcal{W}$. 
Let $\gamma_{\mathcal{X},\mathcal{Y}}$ be the Malliavin matrix of $(\mathcal{X}_1,\mathcal{Y}_1)$, that is,
\begin{equation*}
\gamma_{\mathcal{X},\mathcal{Y}}=\left(
\begin{array}{cc}
\langle D\mathcal{X}_1,D\mathcal{X}_1\rangle_{\mathfrak{H}} & \langle D\mathcal{X}_1,D\mathcal{Y}_1\rangle_{\mathfrak{H}} \\
\langle D\mathcal{Y}_1,D\mathcal{X}_1\rangle_{\mathfrak{H}} & \langle D\mathcal{Y}_1,D\mathcal{Y}_1\rangle_{\mathfrak{H}}
\end{array}
\right).
\end{equation*}

{\colorc For a multi-index $(i_1,\cdots, i_l)$, we denote $|A_{i_1,\cdots,i_l}|^2=\sum_{i_1,\cdots, i_l}A_{i_1,\cdots,i_l}^2$.}
\begin{proposition}\label{one-interval-nondeg-lemma}
Assume that there exist constants $M_1$ and $M_2$ such that
\begin{equation}\label{nondeg-prop-cond1}
\sup_{t,x,y}\bigg({\colorc |{\colore \partial_{(x,y)}^l}\tilde{B}(t,x,y)| \vee |\partial_t^i{\colore \partial_{(x,y)}^j}\check{B}(t,x,y)| 
\vee |{\colore \partial_{(x,y)}^l}\tilde{A}(t,x,y)|}\bigg)\leq M_1
\end{equation}
for $i\in \{0,1\}$, $j\in \{1,2,3\}$, and $l\in \{0,1\}$, and
\begin{equation}\label{nondeg-prop-cond2}
\sup_{t,x,y}(\lVert ((\partial_x\check{B})^\top\partial_x \check{B})^{-1}(t,x,y)\rVert_{{\rm op}} \vee \lVert (\tilde{A}\tilde{A}^\top)^{-1}(t,x,y)\rVert_{{\rm op}})\leq M_2.
\end{equation}
Then $\gamma_{\mathcal{X},\mathcal{Y}}$ is positive definite almost surely, and for any $p\geq 1$, 
there exists a constant $C_p$ depending only on {\colorc $x_0$, $y_0$}, $p$, $m_1$, $m_2$, $M_1$, and $M_2$
such that $E_\mu[|\det \gamma_{\mathcal{X},\mathcal{Y}}|^{-p}]\leq C_p$.
{\colorc If further 
\EQ{ \label{nondeg-prop-further} \partial_y\check{B}\equiv 0\quad {\rm or} \quad |\check{B}|\leq M_1,} 
{\colore then $C_p$ depends on neither} $x_0$ nor $y_0$.}
\end{proposition}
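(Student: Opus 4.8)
The plan is to deduce the inverse-moment bound from a standard criterion for the negative moments of a Malliavin covariance matrix: it suffices to show that $\gamma_{\mathcal{X},\mathcal{Y}}$ has finite moments of every order and that, uniformly over unit vectors $\xi=(\xi_1,\xi_2)\in\mathbb{R}^{m_1}\times\mathbb{R}^{m_2}$, the small-ball estimate $\mu(\xi^\top\gamma_{\mathcal{X},\mathcal{Y}}\xi<\epsilon)\le C_q\epsilon^q$ holds for every $q$ and all small $\epsilon$ (cf.\ Section~2.3 of Nualart~\cite{nua06}). First I would record the moment estimates. Since $\tilde{A},\tilde{B}$ and their space derivatives are bounded by $M_1$ while $(\tilde{A}\tilde{A}^\top)^{-1}$ is bounded by $M_2$, Gronwall's inequality together with the Burkholder--Davis--Gundy inequality yields uniform moment bounds for $\mathcal{X}$, for the first variation process $U_t$ of $\mathcal{Z}=(\mathcal{X},\mathcal{Y})$ and its inverse, and for $D\mathcal{X}$, $D\mathcal{Y}$ and their iterated derivatives; the only quantity that need not be uniform is $\mathcal{Y}$, whose increments are governed by the possibly unbounded $\check{B}$.

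For the small-ball estimate I would use the identity $\xi^\top\gamma_{\mathcal{X},\mathcal{Y}}\xi=\int_0^1|\phi(s)|^2\,ds$ with $\phi(s)=\xi_1^\top D_s\mathcal{X}_1+\xi_2^\top D_s\mathcal{Y}_1$, and exploit the two time scales near $s=1$. Writing $D_s\mathcal{Z}_1=U_1U_s^{-1}\sigma(s)$ with $\sigma(s)=(\tilde{A}(s,\mathcal{Z}_s)^\top,0)^\top$, one has $D_s\mathcal{X}_1=\tilde{A}(s,\mathcal{Z}_s)^\top+O(1-s)$ at leading order, whereas $D_s\mathcal{Y}_1=(1-s)(\partial_x\check{B})^\top\tilde{A}^\top(s,\mathcal{Z}_s)+O((1-s)^2)$, since the degenerate component inherits randomness only through the drift term $\partial_x\check{B}\,D_s\mathcal{X}$. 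Factoring out the elliptic matrix $\tilde{A}$ (using $\tilde{A}\tilde{A}^\top\ge M_2^{-1}I$) and integrating over $s\in[1-\delta,1]$, the leading contribution is bounded below by $\int_0^\delta|\xi_1+u(\partial_x\check{B})\xi_2|^2\,du$. A Gram-determinant computation for the linearly independent functions $1$ and $u$ on $[0,\delta]$, combined with the full-rank hypothesis $(\partial_x\check{B})^\top\partial_x\check{B}\ge M_2^{-1}I$, produces a deterministic lower bound of order $\delta^3$ that simultaneously controls $|\xi_1|$ and $|\xi_2|$; taking $\delta$ to be a small power of $\epsilon$ then separates this leading term from $\epsilon$.

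The hard part will be controlling the remainder and fluctuation terms in this expansion, which sit at the same critical order as the deterministic lower bound, so a crude application of Markov's inequality to their moments does not suffice. I would handle this with the quantitative semimartingale argument underlying the Hörmander--Malliavin method, a Norris-type lemma (cf.\ the proof of Hörmander's theorem in Nualart~\cite{nua06}): regarding $s\mapsto U_s^{-1}\sigma(s)$ as an Itô process, smallness of $\int_{1-\delta}^1|\phi(s)|^2\,ds$ forces smallness of its drift and martingale parts at each scale, which in turn forces $\xi_1$ and the projection of $\xi_2$ onto the range of $\partial_x\check{B}$ to be small, contradicting $|\xi|=1$; quantifying the exceptional probability gives the required $C_q\epsilon^q$ bound.

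Finally, to track the dependence on $(x_0,y_0)$, I would observe that $\tilde{A},\tilde{B}$ and their derivatives are bounded, so the moments of $\mathcal{X}$, of the variation flow, and of the Malliavin derivatives of first order are already uniform in the base point; the non-uniform quantities are the increments $\mathcal{Y}_1-\mathcal{Y}_s$, which enter the higher-order remainders through the $y$-derivatives of the coefficients and are controlled by the size of $\check{B}$ along trajectories. When $\partial_y\check{B}\equiv0$ this feedback disappears, and when $|\check{B}|\le M_1$ the increments of $\mathcal{Y}$ are bounded uniformly; in either case all of the estimates above become independent of $x_0$ and $y_0$, which yields the concluding assertion.
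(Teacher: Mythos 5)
Your leading-order analysis coincides with the paper's: the two-scale expansion $D_s\mathcal{X}_1\approx \tilde{A}_s^\top$, $D_s\mathcal{Y}_1\approx (1-s)\tilde{A}_s^\top\partial_x\check{B}_1$ is exactly the paper's approximation $Z_{t,0}$, and your Gram-determinant bound $\int_0^\delta|\xi_1+u\,\partial_x\check{B}_1\xi_2|^2du\gtrsim \delta^3(|\xi_1|^2+|\xi_2|^2)$ is what the paper extracts from $F_0$ via the pairing inequality (\ref{mat-eq2}) applied at the two times $t$ and $t-\tau'/2$ (which eliminates the projection matrix $M$). The organizational difference — you reduce to a uniform small-ball estimate $\mu(\xi^\top\gamma_{\mathcal{X},\mathcal{Y}}\xi<\epsilon)\leq C_q\epsilon^q$ over the unit sphere, while the paper factorizes $\det\gamma_{\mathcal{X},\mathcal{Y}}=\det\gamma_{\mathcal{X}}\det F$ through the Schur complement and treats the elliptic block (Lemma~\ref{gamma_x_inv-lemma}) and $F$ separately — is legitimate, since the entries of $\gamma_{\mathcal{X},\mathcal{Y}}$ have moments of all orders.

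The genuine gap is in your treatment of the remainder. First, the Norris-type argument cannot be run under the stated hypotheses: it requires $s\mapsto \mathcal{U}_s^{-1}A_s$ to be an It\^o process with controlled characteristics, hence an application of It\^o's formula to $\tilde{A}(s,\mathcal{Z}_s)$; but (\ref{nondeg-prop-cond1}) assumes $\tilde{A}$ to be only once continuously differentiable in $(x,y)$ and merely continuous in $t$ (the $C^{1,2}$-type regularity is imposed only on $\check{B}$), so $A_s$ need not be a semimartingale at all. Second, the premise that forces you to Norris — that the remainders "sit at the same critical order" so that Markov's inequality cannot separate them — is a miscalibration. Decomposing the error as the paper does, one has pathwise $|Z_{s,1}-Z_{s,0}|\leq C(1-s)\sup_{u\in[s,1]}\big(\lVert\mathcal{U}_u-\mathcal{U}_1\rVert_{{\rm op}}+\lVert\partial_x\check{B}_u-\partial_x\check{B}_1\rVert_{{\rm op}}\big)$ and $|Z_{s,2}|\leq C(1-s)^2\sup_{s\leq u\leq 1}\lVert D_s\check{B}_u\rVert_{{\rm op}}$; the sup-deviations over $[1-\delta,1]$ have $L^p$ norms of order $\delta^{1/2}$ (using BDG for $\mathcal{U}$ and It\^o's formula only for $\partial_x\check{B}$, which is smooth enough by hypothesis), so the error contributes order $\delta^4$ in $L^p$ to $\int_{1-\delta}^1|\phi(s)|^2ds$, strictly below the main term of order $\delta^3$. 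Hence the high-moment Markov bound you dismiss is precisely what closes the argument; this is what the paper's stopping time $\tau'$ encodes, and it also produces the stated dependence of the constants on $(x_0,y_0)$: the only unbounded quantity entering these moments is $\check{B}$ itself, through the term $\partial_x\partial_y\check{B}\cdot\check{B}$ in the It\^o expansion of $\partial_x\check{B}$, which vanishes when $\partial_y\check{B}\equiv 0$ and is bounded when $|\check{B}|\leq M_1$, i.e., under (\ref{nondeg-prop-further}).
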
 

To prove Proposition~\ref{one-interval-nondeg-lemma}, {\colore first we show the} nondegeneracy of 
the Malliavin matrix $\gamma_\mathcal{X}=\langle D\mathcal{X}_1,D\mathcal{X}_1\rangle_{\mathfrak{H}}$ for $\mathcal{X}_1$.
Let 
\begin{equation*}
B(t,x,y)=\left(
\begin{array}{c}
\tilde{B}(t,x,y) \\
\check{B}(t,x,y)
\end{array}
\right), \quad 
A(t,x,y)=\left(
\begin{array}{c}
\tilde{A}(t,x,y)  \\
O_{m_2,r}
\end{array}
\right),
\end{equation*}
$B_t=B(t,\mathcal{X}_t,\mathcal{Y}_t)$, and $A_t=A(t,\mathcal{X}_t,\mathcal{Y}_t)$.
We define an $(m_1+m_2)\times (m_1+m_2)$ matrix-valued process $(\mathcal{U}_t)_{t\in [0,{\colorn L}]}$ 
by a stochastic integral equation
\begin{equation*}
[\mathcal{U}_t]_{ij}=\delta_{ij}+\sum_{k=1}^{m_1+m_2}\int^t_0[{\colore \nabla}B_s]_{ki}[\mathcal{U}_s]_{kj}ds + \sum_{k,l=1}^{m_1+m_2}\int^t_0[{\colore [\nabla]_k}A_s]_{il}[\mathcal{U}_s]_{kj}d\mathcal{W}_s^l,
\end{equation*}
{\colore where $\nabla=\partial_{(x,y)}$.}
Then by the argument in Section~2.3.1 of Nualart~\cite{nua06}, 
$\mathcal{U}_t$ is invertible and we have
\begin{eqnarray}
[\mathcal{U}_t^{-1}]_{ij}&=&\delta_{ij}-\sum_{k=1}^{m_1+m_2}\int^t_0[\mathcal{U}_s^{-1}]_{ik}\bigg(
[{\colore \nabla}B_s]_{jk}-\sum_{l,\alpha=1}^{m_1+m_2}[{\colore [\nabla]_\alpha}A_s]_{kl}[{\colore [\nabla]_j}A_s]_{\alpha l}\bigg)ds \nonumber \\
&&- \sum_{k,l=1}^{m_1+m_2}\int^t_0[\mathcal{U}^{-1}_s]_{ik}[{\colore [\nabla]_j}A_s]_{kl}d\mathcal{W}_s^l. \nonumber
\end{eqnarray}
Moreover, we obtain 
\begin{equation}\label{DZ-eq}
{\colorn (D_r\mathcal{Z}_t)^\top=\mathcal{U}_t\mathcal{U}^{-1}_rA_r1_{\{r\leq t\}}, }
\end{equation}
where $\mathcal{Z}_t=(\mathcal{X}_t^\top,\mathcal{Y}_t^\top)^\top$.

\begin{lemma}\label{gamma_x_inv-lemma}
Under the assumptions of Proposition~\ref{one-interval-nondeg-lemma}, $\gamma_{\mathcal{X}}$ is an invertible matrix almost surely and 
for all $p\geq 1$, there exists a positive constant $C'_p$ depending only on $p$, $m_1$, $M_1$, and $M_2$ such that
\begin{equation*}
E_\mu[|\det(\gamma_{\mathcal{X}})|^{-p}]\leq C'_p.
\end{equation*}
\end{lemma}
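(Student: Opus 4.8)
The plan is to reduce the negative-moment bound for $\det\gamma_{\mathcal X}$ to a lower bound for the smallest eigenvalue of $\gamma_{\mathcal X}$, and then to exploit that the diffusion part of $\mathcal X$ is \emph{globally} uniformly elliptic through \eqref{nondeg-prop-cond2}. Set $\beta=\inf_{|\xi|=1}\xi^\top\gamma_{\mathcal X}\xi$. Since $\gamma_{\mathcal X}$ is symmetric and nonnegative, $(\det\gamma_{\mathcal X})^{-1}=\det(\gamma_{\mathcal X}^{-1})\le\|\gamma_{\mathcal X}^{-1}\|_{{\rm op}}^{m_1}=\beta^{-m_1}$, so it suffices to show $E_\mu[\beta^{-p}]<\infty$ for every $p\ge1$, with a bound independent of $(x_0,y_0)$.

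Next I would rewrite the quadratic form using \eqref{DZ-eq}. Let $\pi_1:\mathbb R^{m_1+m_2}\to\mathbb R^{m_1}$ be the projection onto the first $m_1$ coordinates, and for a unit vector $\xi\in\mathbb R^{m_1}$ put $v=\mathcal U_1^\top\pi_1^\top\xi$ and $g_r=(\mathcal U_r^{-1})^\top v$. Because $A_r=(\tilde A_r^\top,\,O_{m_2,r})^\top$, \eqref{DZ-eq} gives $(D_r\mathcal X_1)^\top\xi=A_r^\top g_r=\tilde A_r^\top\pi_1 g_r$, hence
\[
\xi^\top\gamma_{\mathcal X}\xi=\int_0^1|\tilde A_r^\top\pi_1 g_r|^2\,dr\ \ge\ \lambda\int_0^1|\pi_1 g_r|^2\,dr,
\]
where $\lambda=\big(\sup_{t,x,y}\|(\tilde A\tilde A^\top)^{-1}\|_{{\rm op}}\big)^{-1}\ge M_2^{-1}>0$ by \eqref{nondeg-prop-cond2}. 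The crucial point is that this ellipticity bound is valid for \emph{all} $r\in[0,1]$, so I never need $\tilde A_r$ to be close to $\tilde A_1$; this is what lets me avoid controlling the (possibly large) motion of $\mathcal Y$. Using $\mathcal U_1\mathcal U_1^{-1}=I$ one checks $\pi_1 g_1=\pi_1\pi_1^\top\xi=\xi$, so $|\pi_1 g_1|=1$.

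I would then localize near $r=1$. Since $|\pi_1(g_r-g_1)|\le\|\mathcal U_r^{-1}-\mathcal U_1^{-1}\|_{{\rm op}}\,\|\mathcal U_1\|_{{\rm op}}$, define
\[
\tau=\sup\Big\{s\in[0,1]:\ \sup_{1-s\le r\le1}\|\mathcal U_r^{-1}-\mathcal U_1^{-1}\|_{{\rm op}}\,\|\mathcal U_1\|_{{\rm op}}\le\tfrac12\Big\},
\]
which does \emph{not} depend on $\xi$. For $r\in[1-\tau,1]$ we have $|\pi_1 g_r|\ge\tfrac12$, whence $\xi^\top\gamma_{\mathcal X}\xi\ge\tfrac{\lambda}{4}\tau$ uniformly in $\xi$, and therefore $\beta\ge\tfrac{\lambda}{4}\tau$. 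It remains to bound $E_\mu[\tau^{-p}]$ uniformly in $(x_0,y_0)$. By the argument in Section~2.3.1 of Nualart~\cite{nua06}, $\mathcal U$ and $\mathcal U^{-1}$ solve linear stochastic differential equations whose coefficients are $\nabla B$ and $\nabla A$ evaluated along the path; by \eqref{nondeg-prop-cond1} these are bounded by $M_1$ regardless of $(x_0,y_0)$. Consequently $\sup_t\|\mathcal U_t\|_{{\rm op}}$, $\sup_t\|\mathcal U_t^{-1}\|_{{\rm op}}$ and the H\"older seminorm of $t\mapsto\mathcal U_t^{-1}$ have all moments, with bounds depending only on the dimensions, $M_1$, and $p$; a standard Kolmogorov/Burkholder--Davis--Gundy estimate then bounds $\tau$ from below by a negative power of these quantities and yields $E_\mu[\tau^{-p}]\le C$. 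This is exactly why $C'_p$ is free of $x_0,y_0$ and depends only on $p,m_1,M_1,M_2$.

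The main obstacle is uniformity: producing one random time $\tau$ that works for every $\xi$ simultaneously, and controlling $E_\mu[\tau^{-p}]$ uniformly in the (possibly unbounded) initial data and coefficient $\check B$. Both are resolved by the single structural observation that the entire estimate uses only the \emph{global} ellipticity \eqref{nondeg-prop-cond2} together with the \emph{derivatives} of the coefficients, which enter solely through $\mathcal U,\mathcal U^{-1}$ and are bounded by $M_1$; neither the size of $\check B$ nor the values of $x_0,y_0$ ever appears. Note that the extra hypothesis \eqref{nondeg-prop-further} plays no role for $\gamma_{\mathcal X}$; it is needed only in Proposition~\ref{one-interval-nondeg-lemma}, where the motion of $\mathcal Y$ itself must be controlled.
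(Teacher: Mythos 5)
Your proof is correct and follows essentially the same route as the paper's: both arguments localize the Malliavin covariance integral to a random interval $[1-\tau,1]$ on which $\mathcal{U}_t^{-1}$ (equivalently $\mathcal{U}_t$) stays close to its value at $t=1$, use the uniform ellipticity $\tilde{A}\tilde{A}^\top \ge M_2^{-1}I_{m_1}$ to obtain $\gamma_{\mathcal{X}} \ge c\,\tau\, I_{m_1}$, and conclude from all negative moments of $\tau$, which depend only on $p$, $m_1$, $M_1$, $M_2$ through the linear equations for $\mathcal{U}$, $\mathcal{U}^{-1}$. The differences are cosmetic: you phrase the bound via the smallest eigenvalue rather than the paper's matrix inequality, and you define $\tau$ through the increments of $\mathcal{U}^{-1}$ rather than of $\mathcal{U}$.
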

\begin{proof}
{\colorc Let $\mathcal{I}=(I_{m_1} \ O_{m_1,m_2})$.} 
Let $\tau=1-\sup\{t\in [0,1];(\lVert\mathcal{U}_1\rVert_{{\rm op}} +
 \lVert\mathcal{U}_1^{-1}\rVert_{{\rm
 op}})^3\lVert\mathcal{U}_t-\mathcal{U}_1\rVert_{{\rm
 op}}>(48\sqrt{3}M_1^2M_2)^{-1} \wedge (1/6)\} \vee 0$.
\begin{discuss}
{\colorr 参考：$t=1$の時$\lVert \mathcal{U}_t-\mathcal{U}_1\rVert_{{\rm op}} =0$で，$\tilde{A},\tilde{B},\check{B}$は有界なので$\tau>0$. }
\end{discuss}
Then, {\colore because} $\tilde{A}\tilde{A}^{\top}\geq (1/M_2) I_{m_1}$, we have
  \EQNN{\gamma_{\mathcal{X}}&=\int^1_0(D_t\mathcal{X}_1)^\top D_t\mathcal{X}_1dt \\
   &=\int^1_0{\colorc \mathcal{I}}\mathcal{U}_1\mathcal{U}_t^{-1}A_tA_t^\top(\mathcal{U}_t^{\top})^{-1}\mathcal{U}_1^{\top}{\colorc \mathcal{I}^\top}dt \\
   &\geq \frac{1}{M_2} \int^1_{1-\tau}{\colorc \mathcal{I}}\mathcal{U}_1\mathcal{U}_t^{-1}
   {\colorc \mathcal{I}^\top\mathcal{I}}(\mathcal{U}_t^{\top})^{-1}\mathcal{U}_1^{\top}{\colorc \mathcal{I}^\top}dt. 
  }
For $x\in\mathbb{R}^{m_1}$ and $t\in [1-\tau,1]$, simple calculations show that
  \EQNN{&x^{\top}{\colorc \mathcal{I}}\mathcal{U}_1\mathcal{U}_t^{-1}{\colorc \mathcal{I}^\top\mathcal{I}}
   (\mathcal{U}_t^{\top})^{-1}\mathcal{U}_1^{\top}{\colorc \mathcal{I}^\top}x \\
   &\quad \geq |x|^2 - |x|^2\bigg\lVert{\colorc \mathcal{I}}\mathcal{U}_1(\mathcal{U}_t^{-1}-\mathcal{U}_1^{-1})
   {\colorc \mathcal{I}^\top\mathcal{I}}(\mathcal{U}_t^{\top})^{-1}\mathcal{U}_1^{\top}{\colorc \mathcal{I}^\top}\bigg\rVert_{{\rm op}} \\
   &\qquad - |x|^2\bigg\lVert{\colorc \mathcal{I}}((\mathcal{U}_t^{\top})^{-1}-(\mathcal{U}_1^{\top})^{-1})\mathcal{U}_1^{\top}{\colorc \mathcal{I}^\top}\bigg\rVert_{{\rm op}} \\
   &\quad \geq |x|^2 - 4|x|^2\lVert \mathcal{U}_t-\mathcal{U}_1\rVert_{{\rm op}} \lVert \mathcal{U}_1^{-1}\rVert^2_{{\rm op}}\lVert \mathcal{U}_1\rVert_{{\rm op}}
   -2|x|^2\lVert \mathcal{U}_t-\mathcal{U}_1\rVert_{{\rm op}} \lVert \mathcal{U}_1^{-1}\rVert_{{\rm op}}\\
   &\quad \geq |x|^2/3. 
  }
Here we used Proposition~2.7 in Chapter~II of Conway~\cite{con85}, the equation $\mathcal{U}_1(\mathcal{U}_t^{-1}-\mathcal{U}_1^{-1})=(\mathcal{U}_1-\mathcal{U}_t)\mathcal{U}_t^{-1}$, and {\colore the fact} that
\begin{equation}\label{Ut-norm}
\lVert \mathcal{U}_t^{-1}\rVert_{{\rm op}} \leq \lVert\mathcal{U}_1^{-1}\rVert_{{\rm op}} (1-\lVert \mathcal{U}_1^{-1}\rVert_{{\rm op}} \lVert \mathcal{U}_t-\mathcal{U}_1\rVert_{{\rm op}} )^{-1} \leq 2\lVert\mathcal{U}_1^{-1}\rVert_{{\rm op}}.
\end{equation}
Hence, we have $\gamma_{\mathcal{X}} \geq \frac{\tau}{3M_2}I_{m_1}$.

Moreover, for any $q>0$, there exists a constant $C''_p$ depending only on $p$, $m_1$, $M_1$, and $M_2$ such that 
  \EQNN{&\mu[\tau<1/t] \\
   &\quad \leq  \mu\left[(\lVert\mathcal{U}_1\rVert_{{\rm op}} 
   + \lVert\mathcal{U}_1^{-1}\rVert_{{\rm op}})^3\sup_{0\leq s\leq 1/t}\lVert\mathcal{U}_{1-s}-\mathcal{U}_1\rVert_{{\rm op}}
   \geq \frac{1}{48\sqrt{3}M_1^2M_2} \wedge \frac{1}{6}\right] \\
   &\quad \leq C''_q(1/t)^{2q}  
  }
for any $t>1$.
Together with the equation $E_\mu[\tau^{-q}]=\int^{\infty}_0\mu[\tau<(1/t)^{1/q}]dt$, we obtain the conclusion.

\end{proof}

\noindent
{\bf Proof of Proposition~\ref{one-interval-nondeg-lemma}.}
{\colore First, we} have
\begin{eqnarray}
\langle D\mathcal{Y}_1,D\mathcal{Y}_1\rangle_{\mathfrak{H}}=\int^1_0(D_t\mathcal{Y}_1)^\top D_t\mathcal{Y}_1dt, 
\quad \langle D\mathcal{Y}_1,D\mathcal{X}_1\rangle_{\mathfrak{H}}=\int^1_0(D_t\mathcal{Y}_1)^\top D_t\mathcal{X}_1dt. \nonumber 
\end{eqnarray}

The determinant formula for a partitioned matrix (see (0.8.5.3) in Horn and Johnson~\cite{hor-joh13}) yields
$\det(\gamma_{\mathcal{X},\mathcal{Y}})=\det(\gamma_{\mathcal{X}})\det F$, where
\begin{eqnarray}
F&=&\langle D\mathcal{Y}_1,D\mathcal{Y}_1\rangle_{\mathfrak{H}}-\langle D\mathcal{Y}_1,D\mathcal{X}_1\rangle_{\mathfrak{H}}\gamma^{-1}_{\mathcal{X}}\langle D\mathcal{X}_1,D\mathcal{Y}_1\rangle_{\mathfrak{H}}. \nonumber
\end{eqnarray}
Therefore, thanks to Lemma~\ref{gamma_x_inv-lemma}, it is sufficient to show that for any $p\geq 1$,
there exists a positive constant $C'''_p$ depending only on $p$, $m_1$, $m_2$, $M_1$, and $M_2$ such that $E_\mu[|\det F|^{-p}]\leq C'''_p$.

Let $M=\gamma^{-1}_{\mathcal{X}}\int^1_0(D_s\mathcal{X}_1)^{\top}D_s\mathcal{Y}_1ds$. Then, $F$ can be rewritten as
\EQ{\label{F-eq} F=\int^1_0\bigg(D_t\mathcal{Y}_1-D_t\mathcal{X}_1M\bigg)^{\top}\bigg(D_t\mathcal{Y}_1-D_t\mathcal{X}_1M\bigg)dt.}
We also have
  \EQN{\label{DY-eq}
   D_t\mathcal{Y}_1&=\int^1_tD_t(\check{B}(s,\mathcal{Z}_s))ds
   =\int^1_t(D_t\mathcal{X}_s\partial_x\check{B}_s+D_t\mathcal{Y}_s\partial_y\check{B}_s)ds \\	
   &=A_t^\top(\mathcal{U}_t^{-1})^\top\int^1_t\mathcal{U}_s^\top{\colorc \mathcal{I}^\top}\partial_x\check{B}_sds
   +\int^1_tD_t\mathcal{Y}_s\partial_y\check{B}_sds \\
   &=:Z_{t,1}+Z_{t,2}, 
  }
where $\check{B}_s=\check{B}(s,\mathcal{Z}_s)$.
\begin{discuss}
{\colorr $D_t\mathcal{X}_1=A_t^\top(\mathcal{U}_t^{-1})^\top \mathcal{U}_1^\top (I_{m_1} \ O)^\top$.}
\end{discuss}
Let $Z_{t,0}=(1-t)A_t^\top(\mathcal{U}_t^{-1})^\top \mathcal{U}_1^\top(I_{m_1} \ O_{m_1,m_2})^\top \partial_x\check{B}_1$ and 
  \EQNN{\tau'&=\tau \wedge \bigg(1-\sup\Big\{t\in(0,1);\sup_{s\in [t,1]}\lVert D_t\check{B}_s\rVert_{{\rm op}}> (12\sqrt{6}M_1M_2(1-t))^{-1}\Big\}\bigg) \\
   &\quad \wedge \bigg(1-\sup\Big\{t\in(0,1);\sup_{s\in [t,1]}\lVert \partial_x\check{B}_s-\partial_x\check{B}_1\rVert_{{\rm op}}> (32\sqrt{3}M_1M_2)^{-1}\Big\}\bigg).
  }
Then, {\colore because} $\check{B}$ is linear growth, for any $q>0$, there exists a constant $\tilde{C}_q$ such that
  \EQN{\label{tau'-est} \mu(\tau'\leq 1/t)&\leq \mu(\tau\leq 1/t)+\mu\bigg(\sup_{s\in [0,1/t]}\sup_{u\in[s,1]}\lVert D_{1-s}\check{B}_{1-u}\rVert_{{\rm op}}>\frac{t}{12\sqrt{6}M_1M_2}\bigg) \\
   &\quad +\mu\bigg(\sup_{s\in[0,1/t]}\lVert \partial_x \check{B}_{1-s}-\partial_x\check{B}_1\rVert_{{\rm op}}>\frac{1}{32\sqrt{3}M_1M_2}\bigg) \\
   &\leq \tilde{C}_q(1/t)^{2q} 
  }
for any $t\geq 1$, which implies that $E_\mu[\tau'^{-q}]$ is finite.
\begin{discuss}
{\colorr 伊藤の公式を使うときに$\check{B}$が生で出てくるからlinear growthとグロンウォールで評価する．$D_t\check{B}_s={\colore \partial_{(x,y)}}\check{B}_sA_t^\top (\mathcal{U}_t^{-1})^\top \mathcal{U}_s^\top$より，
$\sup_{s\in [0,1/t]}\sup_{u\in[s,1]}\lVert D_{1-s}\check{B}_{1-u}\rVert_{{\rm op}}$がモーメント評価できる．}
\end{discuss}

Let $\bar{\mathcal{U}}_t={\colorc \mathcal{I}}\mathcal{U}_1\mathcal{U}_t^{-1}{\colorc \mathcal{I}^\top}$ and
\begin{equation*}
F_0=\int^1_{1-\tau'}(Z_{t,0}-D_t\mathcal{X}_1M)^\top (Z_{t,0}-D_t\mathcal{X}_1M)dt.
\end{equation*}
By {\colore the} matrix inequality
\begin{equation}\label{mat-eq1}
-A^\top A-B^\top B\leq A^\top B+B^\top A\leq A^\top A +B^\top B
\end{equation}
\begin{discuss}
{\colorr $A^\top A +B^\top B-A^\top B-B^\top A=(A-B)^\top(A-B)\geq 0$など}
\end{discuss}
for matrices $A$ and $B$ of the same size, we have
\begin{equation}\label{mat-eq2}
(C-M)^{\top}(C-M)+(D-M)^{\top}(D-M)\geq (C-D)^{\top}(C-D)/2
\end{equation}
for matrices $C$ and $D$.
\begin{discuss}
{\colorr $(A+B)^\top (A+B)\leq 2A^\top A+2B^\top B$より$A=C-M$, $B=M-D$とすればよい}
\end{discuss}

Together with the inequalities $\tilde{A}_t\tilde{A}_t^{\top} \geq (1/M_2) I_{m_1}$ and $(\partial_x\check{B}_1)^\top \partial_x\check{B}_1\geq (1/M_2)I_{m_2}$, we obtain
\EQNN{
F_0&=\int^1_{1-\tau'}{\colorc \mathfrak{B}(t)^\top} \bar{\mathcal{U}}_t\tilde{A}_t\tilde{A}_t^\top \bar{\mathcal{U}}_t^\top{\colorc \mathfrak{B}(t)}dt \\
&\geq \frac{1}{M_2}\int^1_{1-\tau'}{\colorc \mathfrak{B}(t)^\top} \bar{\mathcal{U}}_t\bar{\mathcal{U}}_t^\top{\colorc \mathfrak{B}(t)}dt \\
&\geq \frac{1}{M_2}\inf_{t\in[1-\tau',1]}(\lVert (\bar{\mathcal{U}}_t\bar{\mathcal{U}}_t^\top)^{-1}\rVert_{{\rm op}}^{-1})
\int^1_{1-\tau'}{\colorc \mathfrak{B}(t)^\top} {\colorc \mathfrak{B}(t)}dt \\
&=\frac{1}{M_2}\inf_{t\in[1-\tau',1],|x|=1}|x^\top\bar{\mathcal{U}}_t\bar{\mathcal{U}}_t^\top x| \\
&\quad\times \int^1_{1-\tau'/2}\bigg\{{\colorc \mathfrak{B}(t)^\top}{\colorc \mathfrak{B}(t)}
+{\colorc \mathfrak{B}(t-\tau'/2)^\top}{\colorc \mathfrak{B}(t-\tau'/2)}\bigg\}dt \\
&\geq \frac{1}{2M_2}\cdot \frac{4}{9}
\int^1_{1-\tau'/2}\frac{\tau'^2}{4}(\partial_x\check{B}_1)^\top\partial_x\check{B}_1dt\geq \frac{\tau'^3}{36M_2^2}I_{m_2},
}
{\colorc where $\mathfrak{B}(t)=(1-t)\partial_x\check{B}_1-M$.}
Here we used {\colore the fact} that
  \EQN{|\bar{\mathcal{U}}_t^\top x|^2
   &=|x+{\colorc \mathcal{I}}(\mathcal{U}_t^{-1})^\top(\mathcal{U}_1-\mathcal{U}_t)^\top{\colorc \mathcal{I}^\top}x|^2 \\
   &\geq\left(|x|-\lVert \mathcal{U}_t^{-1}\rVert_{{\rm op}}\lVert \mathcal{U}_1-\mathcal{U}_t\rVert_{{\rm op}}|x|\right)^2
   \geq (1-2/6)^2=4/9 
  }
for $t\in [1-\tau',1]$ and $|x|=1$.

Let $F'$ be similarly defined to $F$ by changing the interval of integration to $[1-\tau',1)$.
{\colore Because} (\ref{mat-eq1}) yields
  \EQN{\label{mat-eq3} C^{\top}C-D^{\top}D &=(C-D)^{\top}(C-D)+D^{\top}(C-D)+(C-D)^\top D \\
   &\geq (C-D)^\top (C-D)-2(C-D)^{\top}(C-D)-D^{\top}D/2  \\
   &=-(C-D)^{\top}(C-D)-D^{\top}D/2 
  }
for matrices $C$ and $D$ of the same size, together with (\ref{F-eq}), (\ref{DY-eq}), and (\ref{mat-eq1}), we obtain
  \EQNN{F'-F_0&\geq -\frac{1}{2}F_0-\int^1_{1-\tau'}(Z_{t,1}+Z_{t,2}-Z_{t,0})^\top(Z_{t,1}+Z_{t,2}-Z_{t,0})dt \\
   &\geq -\frac{1}{2}F_0 -2\int^1_{1-\tau'}(Z_{t,1}-Z_{t,0})^\top (Z_{t,1}-Z_{t,0})dt-2\int^1_{1-\tau'}Z_{t,2}^\top Z_{t,2}dt.
  }
{\colore Because} 
\begin{equation*}
Z_{t,1}-Z_{t,0}
=A_t^\top(\mathcal{U}_t^{-1})^\top\int^1_t\Big\{(\mathcal{U}_s-\mathcal{U}_1)^\top{\colorc \mathcal{I}^\top}\partial_x\check{B}_s
+\mathcal{U}_1^\top{\colorc \mathcal{I}^\top}(\partial_x\check{B}_s-\partial_x\check{B}_1)\Big\}ds,
\end{equation*}
we have
  \EQNN{&F'-F_0/2 \\
   &\quad \geq  -4M_1^4\tau'^3\sup_{t\in[1-\tau',1]}(\lVert \mathcal{U}_t^{-1}\rVert^2_{{\rm op}})
   \sup_{t\in[1-\tau',1]}(\lVert \mathcal{U}_t-\mathcal{U}_1\rVert^2_{{\rm op}})I_{m_2} \\
   &\quad \quad -4M_1^2\tau'^3\sup_{t\in[1-\tau',1]}\lVert \mathcal{U}_1\mathcal{U}_t^{-1}\rVert_{{\rm op}}^2
   \sup_{t\in[1-\tau',1]}\lVert \partial_x\check{B}_t-\partial_x\check{B}_1\rVert_{{\rm op}}^2I_{m_2} \\
   &\quad \quad -2M_1^2\tau'^3\sup_{1-\tau'\leq t\leq s\leq 1}\lVert D_t\mathcal{Y}_s\rVert_{{\rm op}}^2I_{m_2} \\
   &\quad \geq  -16M_1^4\tau'^3\lVert \mathcal{U}_1^{-1}\rVert_{{\rm op}}^2\sup_{t\in [1-\tau',1]}\lVert \mathcal{U}_t-\mathcal{U}_1\rVert_{{\rm op}}^2I_{m_2} \\
   &\quad \quad -\frac{64}{9}M_1^2\tau'^3\sup_{t\in [1-\tau',1]}\lVert \partial_x \check{B}_t-\partial_x\check{B}_1\rVert_{{\rm op}}^2 I_{m_2} \\
   &\qquad -2M_1^2\tau'^3\sup_{1-\tau'\leq t\leq s\leq 1}\{(1-t)^2\lVert D_t\check{B}_s\rVert_{{\rm op}}^2\}I_{m_2} \\
   &\quad \geq  -\frac{\tau'^3}{432M_2^2}I_{m_2}-\frac{\tau'^3}{432M_2^2}I_{m_2}-\frac{\tau'^3}{432M_2^2}I_{m_2}. 
  }
Here we used (\ref{Ut-norm}) and {\colore the fact} that 
$\lVert \mathcal{U}_1\mathcal{U}_t^{-1}\rVert_{{\rm op}}\leq 1+\lVert \mathcal{U}_1-\mathcal{U}_t\rVert_{{\rm op}}\lVert \mathcal{U}_t^{-1}\rVert_{{\rm op}}\leq 4/3$
for $t\in[1-\tau',1]$.
Therefore, we conclude that
\begin{equation*}
\det F\geq \det F'\geq \det\bigg(\frac{1}{2}F_0-\frac{\tau'^3}{144M_2^2}I_m\bigg) \geq \bigg(\frac{\tau'^3}{144M_2^2}\bigg)^{m_2}.
\end{equation*}

{\colorc If further (\ref{nondeg-prop-further}) is satisfied, {\colore then} the upper bound in (\ref{tau'-est}) {\colore depends on neither} $x_0$ nor $y_0$
{\colore because} 
  \EQQ{\partial_x\check{B}_t-\partial_x\check{B}_s=\int^t_s\sum_i\partial_x\partial_{y_i}\check{B}_u[\check{B}_u]_idu
   +({\rm terms \ with \ bounded \ moments})}
by It${\rm \hat{o}}$'s formula.
}

\qed
}

{\colorn
\subsection{The Malliavin matrix of block observations}

Let $\gamma_l$ be the Malliavin matrix of $((\mathcal{X}_j,\mathcal{Y}_j))_{j=1}^l$.
\begin{proposition}\label{block-nondeg-lemma}
Assume the conditions of Proposition~\ref{one-interval-nondeg-lemma}.
Let $C_p$ be the one in Proposition~\ref{one-interval-nondeg-lemma}.
Then, $\gamma_L$ is positive definite almost surely, and $E_\mu[|\det \gamma_L|^{-p}]\leq C_{pL}$
for any $p\geq 1$. 
\end{proposition}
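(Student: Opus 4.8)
The plan is to reduce the negative-moment bound for $\det\gamma_L$ to $L$ copies of the single-interval estimate already obtained in Proposition~\ref{one-interval-nondeg-lemma}. Write $\mathcal{Z}_j=(\mathcal{X}_j^\top,\mathcal{Y}_j^\top)^\top$ and set $\Sigma_j=\int_{j-1}^j(D_t\mathcal{Z}_j)^\top D_t\mathcal{Z}_j\,dt$, the contribution to the Malliavin matrix of $\mathcal{Z}_j$ coming only from the last block $[j-1,j]$. The central claim I would establish is the factorised lower bound $\det\gamma_L\geq\prod_{j=1}^L\det\Sigma_j$.

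To prove this claim I would argue by induction on $L$ using Schur complements. I would partition $\gamma_L$ so that the first $L-1$ groups of observations form the leading block $\gamma_{L-1}$ while $\mathcal{Z}_L$ forms the trailing block; the determinant formula for partitioned matrices (as in (0.8.5.3) of Horn and Johnson~\cite{hor-joh13}, already invoked above) gives $\det\gamma_L=\det\gamma_{L-1}\cdot\det S_L$, where $S_L$ is the Schur complement, i.e. the Gram matrix in $\mathfrak{H}$ of the residuals of the components of $D\mathcal{Z}_L$ after projecting onto the span of $\{D\mathcal{Z}_i:1\leq i\leq L-1\}$. The key structural observation comes from (\ref{DZ-eq}): since $D_t\mathcal{Z}_i$ vanishes for $t>i$, every $D\mathcal{Z}_i$ with $i\leq L-1$ is supported on $[0,L-1]$, hence orthogonal in $\mathfrak{H}=L^2([0,L];\mathbb{R}^r)$ to the restriction of $D\mathcal{Z}_L$ to $[L-1,L]$. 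Writing each component of $D\mathcal{Z}_L$ as its $[0,L-1]$-part plus its $[L-1,L]$-part, the projection annihilates only within $[0,L-1]$, so the $[L-1,L]$-component survives intact and is orthogonal to the remaining residual. Therefore $S_L=\Sigma_L+R_L$ with $R_L$ nonnegative definite, whence $S_L\geq\Sigma_L$ and $\det S_L\geq\det\Sigma_L$ by monotonicity of the determinant in the order of nonnegative definite matrices. Combined with the inductive hypothesis $\det\gamma_{L-1}\geq\prod_{j\leq L-1}\det\Sigma_j$, this closes the induction.

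With the factorisation in hand, I would apply H\"older's inequality with $L$ conjugate exponents to get $E_\mu[|\det\gamma_L|^{-p}]\leq\prod_{j=1}^L E_\mu[(\det\Sigma_j)^{-pL}]^{1/L}$. For each $j$, conditioning on $\mathcal{F}_{j-1}=\sigma(\mathcal{W}_s:s\leq j-1)$ and using the Markov property, $\Sigma_j$ is, after the time-shift $[j-1,j]\to[0,1]$, exactly a single-interval Malliavin matrix of the type treated in Proposition~\ref{one-interval-nondeg-lemma}, started at $(\mathcal{X}_{j-1},\mathcal{Y}_{j-1})$ with coefficients satisfying (\ref{nondeg-prop-cond1})--(\ref{nondeg-prop-cond2}) with the same constants $M_1,M_2$. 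Hence $E_\mu[(\det\Sigma_j)^{-pL}\mid\mathcal{F}_{j-1}]\leq C_{pL}$; taking expectations and multiplying collapses the product to $C_{pL}$. Positive definiteness of $\gamma_L$ follows from that of each $\Sigma_j$, which is Proposition~\ref{one-interval-nondeg-lemma} applied pathwise.

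The main obstacle is the Schur-complement step: one must verify carefully that projecting out (equivalently, conditioning on) the earlier observations does not degrade the nondegeneracy supplied by the final block, which is precisely where the orthogonal splitting $\mathfrak{H}=\bigoplus_j L^2([j-1,j];\mathbb{R}^r)$ together with the adaptedness embodied in (\ref{DZ-eq}) enters. A secondary delicate point is that the per-step constant $C_{pL}$ must not depend on the random intermediate initial values $(\mathcal{X}_{j-1},\mathcal{Y}_{j-1})$; this is exactly the uniform-in-$(x_0,y_0)$ regime of Proposition~\ref{one-interval-nondeg-lemma} guaranteed by (\ref{nondeg-prop-further}), and in its absence one would instead absorb the dependence through moment bounds on $(\mathcal{X}_{j-1},\mathcal{Y}_{j-1})$.
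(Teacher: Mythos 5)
Your proposal is correct, and its skeleton is the same as the paper's: isolate the last-block contribution $\Sigma_j=\int_{j-1}^j(D_t\mathcal{Z}_j)^\top D_t\mathcal{Z}_j\,dt$ (the paper's $\tilde{\gamma}_j$), reduce $\det\gamma_L$ to $\prod_{j=1}^L\det\tilde{\gamma}_j$, apply generalized H\"older with exponent $L$, and bound each factor by applying Proposition~\ref{one-interval-nondeg-lemma} conditionally on $\sigma(\mathcal{W}_s;s\le j-1)$ via the Markov/flow structure. Where you genuinely diverge is the determinant reduction. The paper uses the flow relation from (\ref{DZ-eq}), namely $(D_t\mathcal{Z}_l)^\top=\mathcal{U}_l\mathcal{U}_{l-1}^{-1}(D_t\mathcal{Z}_{l-1})^\top$ for $t\le l-1$, so that a block column operation produces the \emph{exact} identity $\det\gamma_l=\det\gamma_{l-1}\det\tilde{\gamma}_l$; you use only the support property $D_t\mathcal{Z}_i=0$ for $t>i$, identify the Schur complement as the Gram matrix of residuals, and conclude the \emph{inequality} $\det\gamma_l\ge\det\gamma_{l-1}\det\Sigma_l$ from the orthogonal splitting of $\mathfrak{H}$ and monotonicity of the determinant on the positive semidefinite cone. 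Your inequality suffices for the moment bound and is more robust — it would survive even if the early portion of $D\mathcal{Z}_l$ were not an exact linear image of $D\mathcal{Z}_{l-1}$ — at the mild cost of needing a.s.\ invertibility of $\gamma_{l-1}$ in the induction, which you correctly supply; the paper's identity is sharper and avoids that hypothesis. Two smaller points in your favor: your H\"older step carries the exponent $1/L$ on each factor, which is evidently a typo-level omission in the paper's final display; and you explicitly flag that chaining the one-interval estimate at the random restart points $(\mathcal{X}_{j-1},\mathcal{Y}_{j-1})$ requires the constant $C_{pL}$ to be uniform in the initial value, i.e., the regime (\ref{nondeg-prop-further}) of Proposition~\ref{one-interval-nondeg-lemma} — a point the paper's proof passes over silently, though it holds in all of its applications.
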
 

\begin{proof}
We may assume that $L\geq 2$. Let $2\leq l\leq L$. {\colore Because} (\ref{DZ-eq}) implies 
\begin{equation*}
(D_t\mathcal{Z}_l)^\top =\mathcal{U}_l\mathcal{U}_t^{-1}A_t=\mathcal{U}_l{\colorc \mathcal{U}_{l-1}^{-1}}(D_t\mathcal{Z}_{l-1})^\top
\end{equation*}
for $t\leq l-1$, we have
\begin{equation*}
\langle D\mathcal{Z}_j,D\mathcal{Z}_l\rangle_{\mathfrak{H}}
=\langle D\mathcal{Z}_j,D\mathcal{Z}_{l-1}\rangle_{\mathfrak{H}}(\mathcal{U}_{l-1}^{-1})^\top\mathcal{U}_l^\top
\end{equation*}
for $j\leq l-1$, and
  \EQNN{\langle D\mathcal{Z}_l,D\mathcal{Z}_l\rangle_{\mathfrak{H}}
   &=\int^l_0(D_t\mathcal{Z}_l)^\top D_t\mathcal{Z}_ldt \\
   &=\langle D\mathcal{Z}_l,D\mathcal{Z}_{l-1}\rangle_{\mathfrak{H}}(\mathcal{U}_{l-1}^{-1})^\top\mathcal{U}_l^\top
   +\int^l_{l-1}(D_t\mathcal{Z}_l)^\top D_t\mathcal{Z}_ldt. 
  }

Then, by setting $\tilde{\gamma}_l=\int^l_{l-1}(D_t\mathcal{Z}_l)^\top D_t\mathcal{Z}_ldt$, we have $\det \gamma_l=\det \gamma_{l-1} \det \tilde{\gamma}_l$
{\colore because} 
\begin{equation*}
\left(\begin{array}{c}
\langle D\mathcal{Z}_1,D\mathcal{Z}_{l-1}\rangle_{\mathfrak{H}}(\mathcal{U}_{l-1}^{-1})^\top\mathcal{U}_l^\top \\
\cdots \\
\langle D\mathcal{Z}_l,D\mathcal{Z}_{l-1}\rangle_{\mathfrak{H}}(\mathcal{U}_{l-1}^{-1})^\top\mathcal{U}_l^\top
\end{array}
\right)
\end{equation*}
is a linear combination of $\{([\gamma_l]_{ij})_{1\leq i\leq (m_1+m_2)l}\}_{(m_1+m_2)(l-2)<(m_1+m_2)(l-1)}$.
Therefore, Proposition~\ref{one-interval-nondeg-lemma} implies that 
\begin{equation*}
E_\mu[|\det \gamma_L|^{-p}]=E_\mu\bigg[\bigg|\prod_{l=1}^L\det \tilde{\gamma}_l\bigg|^{-p}\bigg]
\leq \prod_{l=1}^LE_\mu[|\det \tilde{\gamma}_l|^{-pL}]\leq C_{pL}.
\end{equation*}
\end{proof}
}

{\colora
\section{An auxiliary lemma related to partitioned matrices}

\begin{lemma}\label{block-mat-lem}
Let $A_1$, $A_2$, $B$, and $C$ be matrices of suitable size so that 
\EQQ{\MAT{cc}{A_i & B \\ B^\top & C}}
is a partitioned matrix for $i=1,2$. 
Assume that $A_1$ and $C-B^\top A_1^{-1}B$ are invertible.
Then {\colore we have}
  \EQN{\label{block-mat-eq} &A_1^{-1}(A_2 \ B)\MAT{cc}{A_1 & B \\ B^\top & C}^{-1}\MAT{c}{A_2 \\ B^\top} \\
   &\quad =(A_1^{-1}A_2)^2+A_1^{-1}(A_2A_1^{-1}-I)B(C-B^\top A_1^{-1}B)^{-1}B^\top (A_1^{-1}A_2-I).}
In particular, the right-hand side of (\ref{block-mat-eq}) is equal to the unit matrix if $A_1=A_2$.
\end{lemma}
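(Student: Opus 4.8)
The plan is to invoke the Schur-complement form of the inverse of the partitioned matrix and then reduce the identity to a direct block multiplication. Write $S=C-B^\top A_1^{-1}B$ for the Schur complement of $A_1$, which is invertible by hypothesis. Then by the standard partitioned-inverse formula ((0.8.5.6) in Horn and Johnson~\cite{hor-joh13}),
\[
\MAT{cc}{A_1 & B \\ B^\top & C}^{-1}
=\MAT{cc}{A_1^{-1}+A_1^{-1}BS^{-1}B^\top A_1^{-1} & -A_1^{-1}BS^{-1} \\ -S^{-1}B^\top A_1^{-1} & S^{-1}}.
\]

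Next I would substitute this expression into the left-hand side of (\ref{block-mat-eq}) and expand the product $(A_2\ B)\,(\cdot)\,\MAT{c}{A_2 \\ B^\top}$ block by block. Collecting the result, the bare term $A_2A_1^{-1}A_2$ appears together with four terms carrying the factor $S^{-1}$, namely $A_2A_1^{-1}BS^{-1}B^\top A_1^{-1}A_2$, $-A_2A_1^{-1}BS^{-1}B^\top$, $-BS^{-1}B^\top A_1^{-1}A_2$, and $BS^{-1}B^\top$. Pre-multiplying the whole sum by $A_1^{-1}$ turns the first term into $(A_1^{-1}A_2)^2$, which is the leading term on the right-hand side of (\ref{block-mat-eq}). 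To finish, I would verify that the four residual $S^{-1}$-terms assemble into the claimed cross term: expanding $A_1^{-1}(A_2A_1^{-1}-I)BS^{-1}B^\top(A_1^{-1}A_2-I)$ gives
\[
A_1^{-1}A_2A_1^{-1}BS^{-1}B^\top A_1^{-1}A_2 - A_1^{-1}A_2A_1^{-1}BS^{-1}B^\top - A_1^{-1}BS^{-1}B^\top A_1^{-1}A_2 + A_1^{-1}BS^{-1}B^\top,
\]
which matches the four residual terms above. This establishes (\ref{block-mat-eq}).

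For the final assertion, setting $A_1=A_2$ gives $A_1^{-1}A_2=I$, so $(A_1^{-1}A_2)^2=I$ while the factor $A_1^{-1}A_2-I$ in the cross term vanishes; hence the right-hand side reduces to the unit matrix. Equivalently, when $A_1=A_2$ the row $(A_2\ B)$ is the first block-row of the partitioned matrix $M$, so that $(A_2\ B)M^{-1}=(I\ 0)$ and the whole expression collapses to $A_1^{-1}A_1=I$. The computation is purely algebraic, and the only point requiring care is the bookkeeping that pairs the four $S^{-1}$-terms with the factored cross term; thus I expect no substantive obstacle beyond keeping the non-commuting factors in their correct order.
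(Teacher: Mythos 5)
Your proof is correct and follows exactly the paper's approach: the paper's own proof is just the one-line remark that the identity follows from a simple calculation using the partitioned-inverse (Schur complement) formula (0.8.5.6) in Horn and Johnson, which is precisely the expansion you carry out. Your block-by-block bookkeeping of the four $S^{-1}$-terms and the specialization $A_1=A_2$ are both accurate, so nothing is missing.
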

\begin{proof}
A simple calculation yields the conclusion by using (0.8.5.6) in Horn and Johnson~\cite{hor-joh13}.
\end{proof}
}

\end{document}